\documentclass[11pt,english]{smfart}

\usepackage[T1]{fontenc}
\usepackage[english,french]{babel}

\usepackage{amssymb,url,xspace,smfthm}
\usepackage{amsmath}
\usepackage{graphicx}
\usepackage{xcolor}

\usepackage[algo2e,ruled, linesnumbered]{algorithm2e}
\usepackage{cite}


\makeatletter
    \def\ps@copyright{\ps@empty
    \def\@oddfoot{\hfil\small\copyright 2021, \SMF}}
\makeatother

\newcommand{\SMF}{Soci\'et\'e Ma\-th\'e\-Ma\-ti\-que de France}
\newcommand{\BibTeX}{{\scshape Bib}\kern-.08em\TeX}
\newcommand{\T}{\S\kern .15em\relax }
\newcommand{\AMS}{$\mathcal{A}$\kern-.1667em\lower.5ex\hbox
        {$\mathcal{M}$}\kern-.125em$\mathcal{S}$}

\tolerance 400
\pretolerance 200

\newtheorem{proposition}{Proposition}[section]
\newtheorem{assumption}{Assumption}[section]
\newtheorem{theorem}{Theorem}[section]
\newtheorem{lemma}{Lemma}[section]
\newtheorem{remark}{Remark}[section]

\def\RR{\mathbb R}
\def\EE{\mathbb E}
\def\DH{{\mathcal{D}}}
\def\e{\varepsilon}

\def\F{\mathcal{F}}
\def\w{v}

\def\theta{z}

\def\var{{\mathbb Var}}

\def\cov{{\mathbb Cov}}

\def\LL{L^1_2(\DH\times\RR^{d_v})}

\def\LH{L^1_2(\RR^{d_v})}

\def\llambda{\Lambda}
\def\LLBi{L^1_2(\DH\times\RR^{d_v};L^2(\Omega))}
\def\LHBi{L^1_2(\RR^{d_v};L^2(\Omega))}
\def\mb{\mathbf}
\newcommand{\D}{\mathcal{D}}

\def\vs{\lambda_S}

\def\vi{\lambda_I}

\def\vr{\lambda_R}
\def\be{\begin{equation}}
	\def\ee{\end{equation}}
\def\bea{\begin{eqnarray}}
	\def\eea{\end{eqnarray}}
\def\beas{\begin{eqnarray*}}
	\def\eeas{\end{eqnarray*}}

\newcommand{\h}{\tilde f}
\newcommand{\R}{\ensuremath{\mathbb{R}}}
\newcommand{\g}{q}
\newcommand{\VV}{\mathbb Var}
\newcommand{\CC}{\mathbb Cov}

\def\fP{\em \sf}
\def\ind{\hspace{0.25in}}
\def\cir{\makebox[0.5cm]{$\circ$}}
\def\IR{\mathop{\mbox{\rm Sround}}\nolimits}

\renewcommand{\epsilon}{\varepsilon}

\title[Multi-fidelity methods for kinetic equations]{Multi-fidelity methods for uncertainty propagation in kinetic equations}

\author{Giacomo Dimarco}
\address{Department of Mathematics and Computer Science\\ 
University of Ferrara, Via Machiavelli 30, Ferrara, 44121, Italy}
\email{giacomo.dimarco@unife.it}

\author{Liu Liu}
\address{Department of Mathematics\\ 
The Chinese University of Hong Kong, Shatin, N.T., Hong Kong SAR}
\email{lliu@math.cuhk.edu.hk}

\author{Lorenzo Pareschi}
\address{Department of Mathematics and Computer Science\\ 
University of Ferrara, Via Machiavelli 30, Ferrara, 44121, Italy}
\email{lorenzo.pareschi@unife.it}

\author{Xueyu Zhu}
\address{Department of Mathematics\\ 
University of Iowa, Iowa City, IA 52242, USA}
\email{xueyuzhu@uiowa.edu}

\keywords{Kinetic equations, Boltzmann equation, uncertainty quantification, moment methods, hydrodynamical limits, multi-fidelity methods, surrogate models}

\begin{document}
\def\smfbyname{}

\begin{abstract}
The construction of efficient methods for uncertainty quantification in kinetic equations represents a challenge due to the high dimensionality of the models: often the computational costs involved become prohibitive. On the other hand, precisely because of the curse of dimensionality, the construction of simplified models capable of providing approximate solutions at a computationally reduced cost has always represented one of the main research strands in the field of kinetic equations. Approximations based on suitable closures of the moment equations or on simplified collisional models have been studied by many authors. In the context of uncertainty quantification, it is therefore natural to take advantage of such models in a multi-fidelity setting where the original kinetic equation represents the high-fidelity model, and the simplified models define the low-fidelity surrogate models. The scope of this article is to survey some recent results about multi-fidelity methods for kinetic equations that are able to accelerate the solution of the uncertainty quantification process by combining high-fidelity and low-fidelity model evaluations with particular attention to the case of compressible and incompressible hydrodynamic limits. We will focus essentially on two classes of strategies: multi-fidelity control variates methods and bi-fidelity stochastic collocation methods. The various approaches considered are analyzed in light of the different surrogate models used and the different numerical techniques adopted. Given the relevance of the specific choice of the surrogate model, an application-oriented approach has been chosen in the presentation. 
 
\end{abstract}

\begin{altabstract}
La construction de méthodes rapides pour la quantification de l'incertitude dans les équations cinétiques représente un défi en raison de la grande dimensionnalité des modèles qui impliquent souvent des coûts de calcul prohibitifs. D'autre part, précisément à cause de fléau de la dimension, la construction de modèles simplifiés capables de fournir des solutions approchées à un coût de calcul réduit a toujours représenté l'un des principaux axes de recherche dans le domaine des équations cinétiques. Des approximations basées sur des fermetures appropriées des équations des moments ou sur des modèles collisionnels simplifiés ont été étudiées par de nombreux auteurs. Dans le cadre de la quantification de l'incertitude, il est donc naturel utiliser tels modèles dans un cadre multi-fidélité où l'équation cinétique d'origine représente le modèle haute fidélité, et les modèles simplifiés définissent les modèles à basse fidélité. Ce travail passe en revue quelques résultats récents sur les méthodes multi-fidélité pour les équations cinétiques qui accélèrent la résolution du processus de quantification des incertitudes en combinant des évaluations de modèles haute fidélité et basse fidélité avec une attention particulière au cas des limites hydrodynamiques compressibles et incompressibles. Nous nous concentrerons essentiellement sur deux classes de stratégies : les méthodes des variables de contrôle multi-fidélité et les méthodes de collocation stochastique bi-fidélité. Les différentes approches sont analysées à la lumière des différents modèles simplifiés utilisés et des différentes techniques numériques adoptées. Compte tenu de la pertinence du choix spécifique du modèle de substitution simplifié, une approche orientée aux applications a été choisie dans la présentation.
\end{altabstract}
\maketitle

\tableofcontents

\section{Introduction}
The Boltzmann equation is used to model a very large number of different phenomena ranging from rarefied gas flows such as those found in hypersonic aerodynamics, gases in vacuum technologies, or fluids inside microelectromechanical devices^^>\cite{Cer,cercignani}, to the description of social and biological phenomena^^>\cite{PT2,naldi}. For these reasons, the development of efficient and accurate numerical methods for solving kinetic equations and in particular the Boltzmann equation has experienced great commitment in the past to which contributed many researchers working in different fields^^>\cite{bird1970direct,nanbu1980direct,Hu1,Hu2,Gamba1,Gamba2,Russo1,Russo2,CHWW}. We refer to^^>\cite{DPR,DP15,Russo1,pareschi2001introduction,rjasanow} for recent monographs, collections and surveys.

In spite of the vast amount of existing research on the approximation of Boltzmann and related equations, the study of kinetic equations with stochastic terms has been considered only in recent years^^>\cite{PDL,PZ2020,HPY,JPH,RHS,ZJ,LJ-UQ,GJL,Poette1,Poette2}. See in particular the recent collection^^>\cite{JinPareschi} and the survey^^>\cite{parUQ}. We refer also to^^>\cite{LeMK,PIN_book,PDL,NTW,PDL} for related researches in computational fluid dynamics and hyperbolic conservation laws.

On the other hand, these uncertainties arise naturally in many problems where these models are frequently used. In particular, incomplete knowledge of the interaction mechanism between the particles, imprecise measurements of the initial and boundary data, and unknown details of the domain geometry or forcing terms represent problems that are nearly impossible to overcome in a fully deterministic environment. At the same time, modeling the impact of these uncertainties in the solution is critical to providing reliable results for decision and design processes in applications. 

Most of the literature on quantification of uncertainties in kinetic equations is based on the use of Stochastic-Galerkin methods built via generalized Polynomial Chaos (gPC) expansions^^>\cite{HJ,ZJ, LJ-UQ,JPH,RHS,DJL,Frank}. Only recently these problems have been analyzed in the framework of statistical sampling methods based on Monte Carlo (MC) techniques^^>\cite{Giles, DPUQ1, DPUQ2, DPZ, HPY}. 
We also cite a related research direction where Monte Carlo sampling has been used in the physical space while the random space is still approximated through gPC expansions^^>\cite{CPZ,PZ2020,albi2015MPE,Poette3}.

In particular, when dealing with kinetic equations, non-intrusive sampling methods, such as MC sampling, have several advantages over an intrusive gPC approach, as they allow to be used in combination with existing deterministic numerical solvers designed to satisfy certain relevant physical properties^^>\cite{DP15,DPR}. This permits also to use implementations via fast algorithms and parallelization techniques which are essential to reduce the computational complexity^^>\cite{DLNR}. Furthermore, Monte Carlo methods are very effective when the probability distribution of the random inputs is not known analytically or lacks of regularity while the approaches based on stochastic orthogonal polynomials may be impossible to use or may produce low accurate results^^>\cite{Giles,MSS, Xu}.

One of the challenges central to uncertainty quantification for kinetic equations is the simulation cost. Most of the existing algorithms are mainly developed based on the direct resolution of the main reference model, the so-called high-fidelity model. For many complex systems, in particular, the kinetic equations with multidimensional random inputs, an accurate high-fidelity deterministic simulation can be so time-consuming and memory demanding that only a few high-fidelity simulations can be afforded. Many stochastic algorithms require repetitive implementations of the deterministic solver, the overall accurate stochastic simulation can be difficult and even computationally infeasible. However, there usually exist some approximate, less complex low-fidelity models which compared to the high-fidelity models, usually contain simplified physics and/or are simulated on a coarser physical mesh, and consequently, own a cheaper computational cost. Although their
accuracy may not be high, the low-fidelity models are designed in such a way that they can resolve or capture certain important features of the underlying problem and produce reliable and qualitative predictions. Recently, there has been a surging interest in developing efficient uncertainty quantification algorithms by leveraging the strengths of multiple models where costs and fidelity, to be intended as the capacity of correctly describing the problem under consideration, vary. This approach is known in the literature as the multi-fidelity method^^>\cite{peherstorfersurvey,fernandez2016review,park2017remarks}.

In this work, we precisely survey several recent results about multi-fidelity methods for kinetic equations that accelerate the solution of the uncertainty quantification process based on stochastic samples by combining high-fidelity and low-fidelity model evaluations. This will be done through the introduction of multi-fidelity control variates techniques and bi-fidelity stochastic collocation approaches. 
The first class of methods uses the knowledge of the space spanned by one or more inexpensive low-fidelity models to improve the accuracy of the Monte Carlo solution at the high-fidelity level^^>\cite{DPUQ1,DPUQ2,DUQU}. 
The choice of the low fidelity models follows the classical legacy of kinetic equations based on developing a hierarchy of reduced order models through suitable fluid-dynamics and diffusive scalings. Note that this requires the adoption of a suitable asymptotic-preserving solver for the high-fidelity model in order to take full advantage of the multiscale control variate^^>\cite{DP15}. 

However, the choice of the samples for the high fidelity solver remains arbitrary. The second methodology, is based on a simpler bi-fidelity setting where a single low-fidelity model is used to effectively inform the selection of representative points in the parameter space and then employ this information to construct accurate approximations to high-fidelity solutions^^>\cite{ZNX14,zhu2017multi,zhu2017multi2, munipalli2018multifidelity, LZ,GZJ20}. 
As a result, it does not necessarily require that the low-fidelity and high-fidelity to reside in the same physical space.

Consistent with the discussion above, the remainder of this survey is divided into two main parts. The first part addresses the construction of multi-fidelity Monte Carlo methods based on single or multiple control variates. The focus of this part will be primarily on the Boltzmann equation of rarefied gas dynamics^^>\cite{DPUQ1,DPUQ2,Cer}. The concepts will then be extended to the case of kinetic equations in the social and economic sciences where we also discuss how to couple the approach with direct simulation Monte Carlo solvers in the physical space^^>\cite{PT2,PTZ}. The second part will cover bi-fidelity approximations based on an appropriate choice of collocation nodes.  Initially, the Boltzmann equation of rarefied gas dynamics will still be discussed.  Subsequently, the bi-fidelity approach will be extended to linear transport kinetic equations in the diffusive limit, both in the classical case of neutron transport^^>\cite{LPZ,LK} and in the context of epidemiology^^>\cite{EpidemicSurvey,BLPZ}. Some final remarks conclude this review.

\section{Fidelity spectrum of kinetic models}
The fidelity of the kinetic models can vary along a broad spectrum between low- and high-fidelity. This sections provide examples of kinetic models across the fidelity spectrum, while defining the benefits and limitations of each model.
We first focus on the Boltzmann equation of rarefied gas dynamics (RGD), including in our discussion the related moment equations, their possible closures, and the Bhatnagar-Gross-Krook (BGK) approximation. We next illustrate the quasi-invariant limit for some homogeneous Boltzmann equation of interest in the socio-economic sciences. Finally, we consider kinetic equations of linear transport in the diffusion limit both in the case of classical neutron transport and in recent applications in epidemiology. In Table \ref{tab:sum} we summarized the various low- and high-fidelity models.

\begin{table}[htp]
\caption{Fidelity spectrum of different kinetic models}
{\small
\begin{center}
\begin{tabular}{lll}
\hline\hline\\[-.3cm]
{\bf High-fidelity} & {\bf Low-fidelity} & {\bf Application}\\[+.1cm]
\hline\hline\\[-.3cm]
Homogeneous Boltzmann& - Maxwellian steady state & Trend to equilibrium\\ 
 & - Homogeneous BGK & \\[+.1cm] 
\hline\\[-.3cm]
Full Boltzmann & - Euler equations & Rarefied gas dynamics\\ 
 & - Navier-Stokes equations & \\ 
 & - Full BGK & \\[+.1cm] 
\hline\\[-.3cm]
Boltzmann-type models& - Mean-field steady state& Socio-economy\\ 
 & - Mean-field limit& \\[+.1cm] 
\hline\\[-.3cm]
Linear transport& - Diffusion limit & Neutron transport\\ 
 & - Goldstein-Taylor model & \\[+.1cm] 
\hline\\[-.3cm]
Compartmental transport& - Reaction-diffusion limit & Epidemiology\\ 
 & - Two-velocity models & \\[+.1cm] 
\hline
\end{tabular}
\end{center}
}
\label{tab:sum}
\end{table}%

\subsection{The Boltzmann equation of rarefied gas dynamics}
\label{sec:bolt}
We consider kinetic equations of the general form^^>\cite{Cer, DPUQ1}
\be
\partial_t f+ {\w}\cdot \nabla_x f = \frac1{\varepsilon}Q(f,f),
\label{eq:Boltzmann}
\ee
where $f=f({z},x,{\w},t)$, $t\ge 0$, $x\in\DH \subseteq \RR^{d_x}$, ${\w}\in\RR^{d_{\w}}$, $d_x,d_{\w}\ge 1$, and ${z}\in\Omega\subseteq\RR^{d_{z}}$, $d_{z} \geq 1$, is a {random variable}. In \eqref{eq:Boltzmann} the parameter $\varepsilon > 0$ is the Knudsen number and the particular structure of the interaction term $Q(f,f)$ depends on the kinetic model considered.
The most famous example is represented by the nonlinear Boltzmann equation of rarefied gas dynamics 
\be
Q(f,f)=\int_{S^{d_{\w}-1}\times\RR^{d_{\w}}} B({\w},{\w}_*,\omega,{z}) (f'f'_*-f f_*)\,d{\w}_*\,d\omega
\label{eq:Qcoll}
\ee
where $d_v \geq 2$ and 
\be
v'=\frac12(v+v_*)+\frac12(|v-v_*|\omega),\quad v_*'=\frac12(v+v_*)-\frac12(|v-v_*|\omega).
\ee
{We consider the variable hard sphere (VHS) case with
\be
B({\w},{\w}_*,\omega,{z}) = b(z)|\w-\w_*|^\alpha,\qquad -d_v < \alpha \leq 1.\label{VHS}
\ee}
The Boltzmann operator $Q(f,f)$ is such that the local
conservation properties are satisfied, i.e. \be\int_{\RR^{d_v}} \phi(v) Q(f,f)\,
dv=:\langle \phi\, Q(f,f)\rangle=0, \label{eq:QC}\ee where
$\phi(v)=\left(1,v,{|v|^2}/{2}\right)^T$ are the collision invariants. In
addition, the operator satisfies the entropy inequality \be
\frac{d}{dt}H(f) = \int_{\RR^{d_v}} Q(f,f)\log f dv
\leq 0,\quad H(f)=\int_{\RR^{d_v}}f\log f\,dv. \label{eq:entropy} \ee
As a consequence, the functions such that
$Q(f,f)=0$ are local Maxwellian equilibrium functions
\be f^{\infty}(\rho,u,T)=\frac{\rho}{(2\pi
	T)^{d_v/2}}\exp\left(\frac{-|u-v|^{2}}{2T}\right), \label{eq:M}\ee
where $\rho$, $u$, $T$ are the density, mean velocity and
temperature of the gas in the $x$-position and at time $t$ defined as
\be (\rho,\rho u,E)^T=\langle \phi\, f \rangle, \qquad
T=\frac1{d_v\rho}(E-\rho|u|^2). \ee 
Integrating now (\ref{eq:Boltzmann}) against the collision
invariants in the velocity space leads to the following set of non closed conservations laws \be
\partial_t \langle \phi\, f\rangle+{\rm div}_x
\langle v\otimes\phi f\rangle=0.\label{eq:macr}\ee

\begin{figure}
\begin{center}
\includegraphics[scale=0.39]{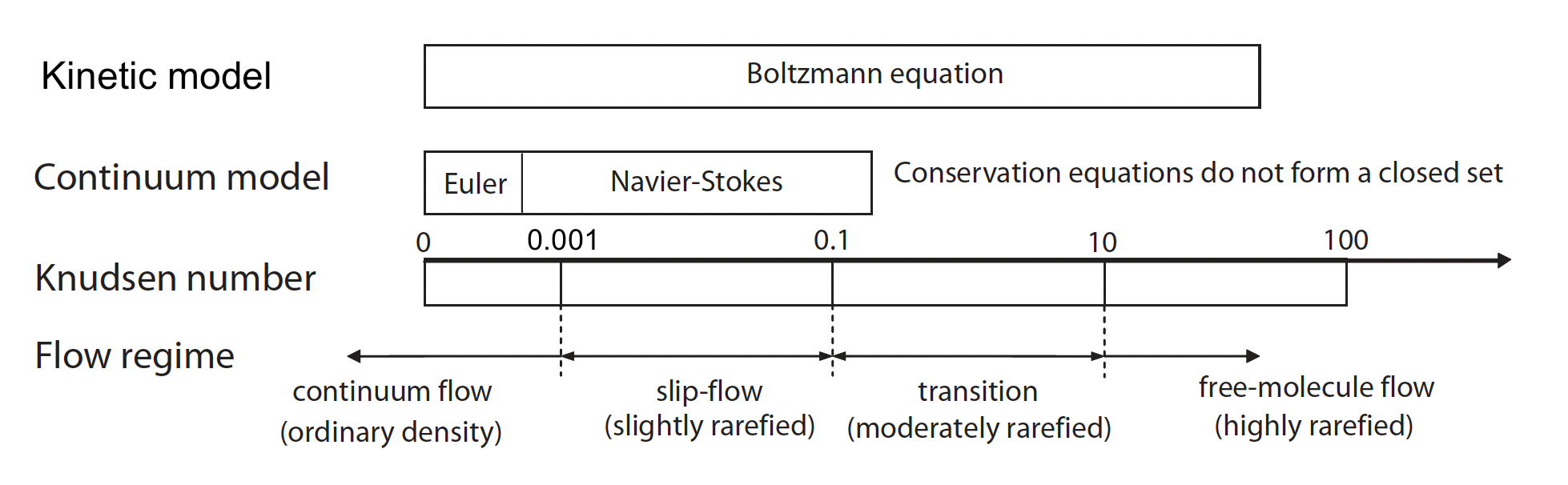}
\end{center}
\caption{The different scales of the Boltzmann equation in rarefied gas dynamics and the corresponding reduced order models}
\label{fig:scales}
\end{figure}

Close to fluid regimes, the mean free path between collisions, and therefore the Knudsen number, is very small (see Figure \ref{fig:scales}). In this situation, passing to the limit $\varepsilon\to 0$ we formally obtain $Q(f,f)=0$ from (\ref{eq:Boltzmann}) and so $f=f^{\infty}$. Thus, at least formally, we recover the closed hyperbolic system of compressible Euler equations
\be
\partial_t U+\nabla_x \cdot \F(U)=0
\label{eq:Euler} \ee with
$U=\langle \phi\, f^{\infty}\rangle = (\rho,\rho u,E)^T$ and
\[
\F(U)=\langle v\otimes\phi\, f^{\infty}\rangle=\left(
\begin{array}{c}
	\rho u  \\
	\rho u \otimes u+pI  \\
	Eu+pu     
\end{array}
\right),\]\[
p=\rho T,\quad T=\frac{1}{3}\left(\frac{2E}{\rho}-|u|^2\right),
\]
where $I$ is the identity matrix. For small but non zero values of the Knudsen number, the evolution
equation for the moments can be derived by the so-called
Chapman-Enskog expansion~\cite{Cer}. This originates the compressible Navier-Stokes equations
as a second order approximation with respect to $\varepsilon$ to the
solution of the Boltzmann equation
\be
\partial_t U+\nabla_x \cdot \F(U)=\varepsilon\,\nabla_x \cdot \D(\nabla_x U)
\label{eq:NavierStokes} \ee with
\begin{eqnarray}
	&\D(\nabla_x U)=
	\left(
	\begin{array}{c}
		0 \\
		\mu\tau(u)\\
		\kappa\nabla_x T+\mu\tau(u)\cdot u     
	\end{array}
	\right),\\
	& 
	\tau(u)=\frac12\left(\nabla_x u +(\nabla_x u)^T -\frac2{3}{\rm div}_x uI\right),
	\label{eq:NSsigma}
\end{eqnarray}
with the viscosity $\mu$ and the thermal conductivity $\kappa$ defined according to the Boltzmann operator^^>\cite{Cer}. The Prandtl number in this setting is defined as the ratio $Pr = 5\mu/(2\kappa)$.

A simplified kinetic model is obtained by replacing \eqref{eq:Qcoll} by a relaxation operator towards the local Maxwellian state. This model is usually referred to as BGK model since its introduction by Bhatnagar, Gross and Krook~\cite{BGK} 
\begin{eqnarray}
	Q(f)(v) = \nu (f^{\infty}[f]-f).
	\label{eq:pbgk}
\end{eqnarray}
In (\ref{eq:pbgk}) the function {$M[f]$} is the local Maxwellian and $\nu$, in general, is proportional to the density and depends on the temperature $\nu(\rho,T)$. For instance, a frequently used law is given by $\nu=C\rho T^{1-\eta}$, where $C>0$ is a constant and $\eta$ is the exponent of the viscosity law of the gas~\cite{Mi00}. Conservation of mass, momentum and energy as well as Boltzmann's H-theorem are readily satisfied and the equilibrium solutions are Maxwellians. Furthermore, the model has the correct fluid dynamic limit, since as $\varepsilon\to 0$ formally the moments $\rho$, $\rho u$, and $E$ satisfy the compressible Euler equations (\ref{eq:Euler}). However, this model exhibits some unphysical features, such as an unrealistic Prandtl number $Pr=1$. The correct Navier-Stokes limit \eqref{eq:NavierStokes} can be recovered using more sophisticated BGK models such as the Ellipsoidal Statistical BGK (ES-BGK) models~\cite{Holway}.

The Boltzmann equation \eqref{eq:Boltzmann} and its low-fidelity counterparts such as the BGK equation \eqref{eq:pbgk} and the compressible Euler \eqref{eq:Euler} and Navier-Stokes \eqref{eq:NavierStokes} systems will be employed as leading examples both for the development of the control variate multi-fidelity methods in Section 3 as well as in the case of the stochastic collocation bi-fidelity approach in Section 4.

\subsection{Kinetic models for the social sciences}
\label{model_socio}
Recently, Boltzmann-type models have also become quite popular for describing binary dynamics between agents, representing individuals interacting in a society (see^^>\cite{PT2} for an introduction to the topic). In presence of uncertainties, the pair of interacting agents is characterized by the pre-interaction states $v,w\in V \subseteq \mathbb R$ and the post-interaction states $v^{\prime}, w^{\prime} \in V$ obtained as follows^^>\cite{DPZ,PTZ}
\begin{equation}
	\label{micro}
	\begin{split} 
		v^{\prime} &= v + \gamma [(p_1(z)-1) v+ q_1(z)w] +D(v,z)\ \eta,\\
		w^{\prime} &= w + \gamma [p_2(z) v+ (q_2(z)-1) w] +D(w,z)\ \eta^*,
	\end{split}
\end{equation}
where $\gamma >0$ is a given constant, $p_i,q_i$,  $i=1,2,$ are suitable interaction functions depending on a random variable $z \in \Omega \subseteq \mathbb R^{d_z}$. Furthermore, $\eta$ and $\eta^*$ are i.i.d. random variables with zero mean and variance ${\sigma}^2$. The function $D(\cdot,z)$ defines the local relevance of the diffusion. 

Introducing the distribution function $f = f(t,w,z)$, its evolution is given in terms of the Boltzmann-type equation \begin{equation}\label{SymBoltzmannModel}
	\begin{split}
		&\frac{d}{dt} \int_V f(t,w,z)\ \varphi(w) dw \\
		&= \frac{1}{2} \left\langle\iint_{V^2} (\varphi(w^{\prime}) +\varphi(v^{\prime})-\varphi(w)-\varphi(v)) f(t,w,z) f(t,v,z)    dwdv \right\rangle_{\eta}
	\end{split}
\end{equation}
being $\varphi:V\rightarrow \mathbb R$ any observable quantity which may be expressed as a function of the microscopic state $w$ of the agents. The symbol $\left\langle \cdot \right\rangle_{\eta}$ denotes the expectation with respect to $\eta$. Taking $\varphi(w)=1$ is easily seen that the number of agents is conserved in time.

In contrast to the classical Boltzmann case \eqref{eq:Boltzmann}, the equilibrium states of the socio-economic Boltzmann models like \eqref{SymBoltzmannModel} are often unknown. A way to achieve some insight into the long time behavior of such systems is to consider the quasi-invariant interaction limit^^>\cite{PT2,pareschi2006self,Vill}. To that aim, given a small parameter $\varepsilon >0$, let us introduce the scaling
\begin{equation}
	\gamma\to\epsilon,\quad \sigma\to\sqrt{\epsilon}\sigma,\quad t\to\epsilon  t, 
	\label{scaling}
\end{equation}
and denote by $f_\epsilon(t,w,z)$ the scaled distribution. 
Thus, small values of $\epsilon$ correspond to the case in which elementary interactions \eqref{micro} produce minimal modification of $v$ and $w$ and, at the same time, the frequency of such interactions increases like $1/\epsilon$. Then, the distribution $ f_\epsilon$ is solution to
\begin{equation}\label{ScaledBol}
	\begin{split}
		&\frac{d}{dt} \int_V \varphi(w){f_\epsilon}(t,w, z)\ dw=\\
		&\frac{1}{2\epsilon}\left \langle \iint_{V^2} (\varphi(w^{\prime})+\varphi(v^{\prime})-\varphi(v)-\varphi(w))   {f_\epsilon}(t, w,z){f_\epsilon}(t,v,z)dwdv \right\rangle_{\eta}. 
	\end{split}
\end{equation}
Now, for small values of $\epsilon$ using a Taylor expansion of the interactions 
it can be shown that in the limit $\epsilon \rightarrow 0$, $ f_\epsilon$ converges, up to subsequences, to a distribution function $\h = \h(t,w,z)$ which is weak solution to the following Fokker-Planck equation 
\begin{equation}\label{FP}
	\begin{split}
		\partial_t {\h}(t,w,z)&+\partial_w\left[\left( \int_V P(v,w,z) \h(t,v,z)dv \right)\h(t,w,z)\right]\\
		& = \frac{{\sigma}^2}{2} \partial_w^2(D^2(w,z) {\h}(t, w,z)), 
	\end{split} 
\end{equation}
where 
\[
P(v,w,z) = \dfrac{1}{2} \left[(p_1(z)+q_2(z)-2)w + (p_2(z)+q_1(z))v \right]
\]
provided that suitable boundary conditions are taken for $w\in\partial V$. 

We shortly present now two socio-economic models used in the rest of the survey as examples for developing our multi-fidelity Monte Carlo methods. The first is an opinion formation model where we set $V = [-1,1]$ and the binary interaction rules read^^>\cite{PT2,APTZ17,PTTZ}
\begin{align}\label{InterOpinion}
	\begin{split}
		&v^{\prime}= v + \epsilon  p(|w-v|,z) (w-v)+ D(v)   \eta, \\
		&w^{\prime}= w+\epsilon p(|v-w|,z)( v-w) +D(w) \eta,
	\end{split}
\end{align}
where the function $0\leq p(|v-w|,z) \leq 1$ weights the compromise tendency with respect to the relative opinion $|v-w|$. 
In the quasi-invariant limit, the following model is obtained 
\[
\begin{split}
	\partial_t \h(t,w,z) &+ \partial_w \left[\left(\int_V p(|v-w|,z)(v-w)\h(t,v,z)dv  \right) \h(t,w,z) \right] \\
	&= \dfrac{\sigma^2}{2} \partial_w^2 (D^2(w,z) \h(t,w,z)).
\end{split}
\]
If $p(|v-w|,z) = p(z)$ in \eqref{InterOpinion}, $D(w,z)= 1-w^2$, 
and we consider uncertainties in the initial distribution, the steady state distribution of the Fokker-Planck model reads
\begin{equation}\label{eq:steady_max}
	\begin{split}
		{\h}_{\infty}(w,z) =\,\, &C(z)\ (1+w)^{-2+\frac{p(z) m(z)}{2{\sigma} ^2}}\times\\
		&  (1-w)^{-2-\frac{p(z) m(z)}{2 {\sigma}^2}} \ \exp\left\{-\frac{p(z) (1- m(z)w)}{\sigma^2\ (1-w^2)} \right\},
	\end{split}
\end{equation}
where $C(z)$  is a normalization factor such that $\int_V \h_{\infty}(w,z)dw = 1$. Other form of equilibrium distribution can be determined as well^^>\cite{PTTZ}. 

For example the choice $D(w) = \sqrt{1-w^2}$ produce a Beta-type steady state of the form 
\begin{equation}\label{eq:steady_beta}
	\h_{\infty}(w,z)= 2^{1-\frac{2}{{\sigma^2}}       }\frac{1}{\textrm{B}\Big( \frac{1+m(z)}{{\sigma^2}} ,   \frac{1-m(z)}{{\sigma^2}}  \Big)}\ (1+w)^{\frac{1+m(z)}{{\sigma^2}}-1}  (1-w)^{\frac{1-m(z)}{{\sigma^2}}-1},
\end{equation}
where $\textrm{B}(\cdot,\cdot)$ is the Beta function. We refer to^^>\cite{PTTZ} for a detailed discussion.

The second example, known as the Cordier-Pareschi-Toscani (CPT) model^^>\cite{PT2,CPT} for wealth exchanges between agents composing a simple economy. The wealth variable is here allowed to take values on the positive half line therefore $V=\R^+$. The binary interactions with reference to \eqref{micro} are such that $p_1 = q_2 = q(z)$ and $p_2 = q_1 = \lambda(z)$ with $q(z) = 1-\lambda(z)$. We also consider $D(w,z) = w$. The uncertain parameter $\lambda(z) \in (0,1)$ determines the proportion of wealth that a single agents wants to invest, the quantity $1-\lambda(z)$ is the so-called saving propensity. 
Thus, the binary scheme for wealth exchanges reads
\begin{equation}
	\label{InterWealth}
	\begin{split}
		&v^{\prime}= (1-\epsilon\lambda(z))v +\epsilon\lambda(z)w+v \eta\\
		&w^{\prime}= (1-\epsilon \lambda(z)) w +\epsilon\lambda(z) v+w \eta. 
	\end{split}
\end{equation}
The corresponding mean field model reads
\begin{align*}
	\partial_t {\h}(t, w,z) +\lambda(z)\partial_w \left[(m_{\h}(z)-w) {\h}(t,w,z) \right]
	= \frac{{\sigma}^2}{2} \partial_w^2 (w^2 {\h}(t,w,z)),
\end{align*}
If we assume that there is no uncertainty in the initial conditions, following^^>\cite{PT2} the equilibrium state can be computed and reads
\begin{equation}
	\label{eq:steady_invgamma}
	{\h}_{\infty}(w,z)= \frac{(\mu(z)-1)^{\mu(z)}}{\Gamma(w)\ w^{1+\mu(z)}}\exp\left( -\frac{(\mu(z)-1)m_{\h}(z)}{w}\right) ,
\end{equation}
where $\Gamma(\cdot)$ denotes the Gamma function and $\mu(z)=1+{2{\lambda}(z)}/{{\sigma}^2}$. 
We refer to Section \ref{sec:socio} for applications of control variate strategies based on the mean-field equilibrium states \eqref{eq:steady_max} and \eqref{eq:steady_invgamma} or on the full mean-field system \eqref{FP}.

\subsection{Linear transport equations}
\label{sec:diff}
Another class of kinetic models that will be considered in the sequel are the linear transport equation  under diffusive scaling and with random parameters^^>\cite{LK,LPZ,JPH}. Let $f(t,x,v,z)$ be the probability density distribution of particles at time $t>0$, position $x\in\DH\subseteq \RR$, and with $v\in[-1,1]$ the cosine of the angle
between the particle velocity and its position. In particular, we consider in our discussion the case of random scattering coefficients $\sigma(x,z)$ with $z\in\Omega \subseteq \R^{d_z}$ a random vector. The time evolution of the distribution function $f$ is governed by the following linear transport equation under diffusive scaling 
\begin{equation}
	\label{pde_transport1d}
	\epsilon \partial_t  f + v \partial_x f = \frac{\sigma(x,z)}{\varepsilon}
	\left[\frac{1}{2}\int_{-1}^1 f(v')\, dv' -f\right], 
\end{equation}
where $\varepsilon$ is the Knudsen number. In order to shed light on the link between the kinetic equation and its diffusive limit, we first split \eqref{pde_transport1d} into two equations for $v>0$ 
\begin{equation}
	\label{pde_transport1d_sp}
	\begin{split}
		&\varepsilon \partial_t  f(v) + v \partial_x f(v) =
		\frac{\sigma(x,z)}{\varepsilon}\left[\frac{1}{2}\int_{-1}^1f(v')\,
		dv'-f(v)\right], \\[4pt]
		&\varepsilon \partial_t f(-v) - v \partial_x f(-v) =
		\frac{\sigma(x,z)}{\varepsilon}\left[\frac{1}{2}\int_{-1}^1f(v')\,
		dv-f(-v)\right]. 
	\end{split}
\end{equation}
where we omitted for brevity the dependence from $(t,x,z)$ in the kinetic density.
By using now the so-called even-odd decomposition^^>\cite{JPT2}, we introduce then the relative even and odd parities 
\begin{equation}
	\begin{split}
		r(t,x,v,z) &= \frac{1}{2}[f(t,x,v,z) + f(t,x,-v,z)], \\[4pt]
		j(t,x,v,z) &= \frac{1}{2\varepsilon}[f(t,x,v,z) - f(t,x,-v,z)].
	\end{split}
\end{equation}
Thanks to the above reformulation, the system \eqref{pde_transport1d_sp} can then be recast in the following form: 
\begin{equation}
	\label{pde_transport1d_rj}
	\left\{
	\begin{split}
		&\partial_t r + v \partial_x j = \frac{\sigma(x,z)}{\varepsilon^2}(\rho-r), \\[4pt]
		&\partial_t j + \frac{v}{\varepsilon^2} \partial_x r =
		-\frac{\sigma(x,z)}{\varepsilon^2}j, 
	\end{split}
	\right.
\end{equation}
where 
\begin{equation}
	\label{r_integral}
	\rho(t,x)=\int_0^1 rdv.
\end{equation}
Now the formal passage to the limit is easily obtained. In fact,
as $\varepsilon\to 0$, the system \eqref{pde_transport1d_rj} yields
\[r=\rho, \qquad j=-\frac{v}{\sigma(x,z)}\partial_x \rho.\]
Substituting back this limit it into the same system (\ref{pde_transport1d_rj}) and integrating over $v$, one gets the limiting diffusion equation^^>\cite{LK, BSS} 
\begin{equation}
	\label{transport1d_diff}
	\left\{
	\begin{split}
		&j=-\frac{v}{\sigma(x,z)}\partial_x \rho, \\[4pt]
		&\partial_t \rho =\partial_x \left[\frac{1}{3\sigma(x,z)} \partial_x\rho\right].
	\end{split}
	\right.
\end{equation} 
\label{sec:pre}
The neutron transport equation \eqref{pde_transport1d} and its low-fidelity diffusive limit \eqref{transport1d_diff} are employed in Section \ref{sec:diff} to design efficient bi-fidelity approximations, together with a different low-fidelity models given by the Goldstein-Taylor model^^>\cite{PT2,LPZ}.

\subsection{Kinetic models in epidemiology}
\label{sec:epidemic}
We consider the development of hyperbolic transport models for the propagation in space of an epidemic phenomenon described by a classical compartmental dynamics^^>\cite{BBDP,BDP,EpidemicSurvey}. The model is based on a kinetic description with discrete velocities of the spatial movement and interactions of a population of susceptible, infected and recovered individuals. The model is constructed in such a way that a diffusive behavior of the spread can be recovered in a suitable limit similar to the one described in Section \ref{sec:diff}. In the considered framework, the distributions of individuals at position $x\in\DH\subseteq \RR$ at time $t$ moving with velocity $v \in [-1,1]$ are then denoted by 
\[
f_S=f_S(x,v,t,{z}),\quad f_I=f_I(x,v,t,{z}),\quad f_R=f_R(x,v,t,{z}).
\]
These quantities represent, respectively, the fraction of susceptible individuals $S$ (who may be infected by the disease), the fraction of infected  individuals $I$ (who may transmit the disease) and removed individuals $R$ (healed or died due to the disease). We assume to have a population without an a-priori immunity and we neglect the vital dynamics represented by births and deaths due to the time scale considered. The densities for $S$, $I$ and $R$ are given by 
\begin{eqnarray}
	S(x,t,{z})&=&\int_{-1}^{1}  f_S(x,v,t,{z})\,dv\nonumber\\
	I(x,t,{z})&=&\int_{-1}^{1}  f_I(x,v,t,{z})\,dv \label{def.densitiesSIR}\\
	R(x,t,{z})&=&\int_{-1}^{1}  f_R(x,v,t,{z})\,dv.\nonumber
\end{eqnarray}
In this setting, the distribution functions satisfy the epidemic transport equations
\begin{eqnarray}
	\nonumber
	\frac{\partial f_S}{\partial t} + \frac{\partial (v_S f_S)}{\partial x} &=& -F(f_S, I) +\frac1{\tau_S}\left(\frac{S}{2}-f_S\right)\\
	\label{eq:kineticc}
	\frac{\partial f_I}{\partial t} + \frac{\partial (v_I f_I)}{\partial x} &=&  F(f_S, I)-\gamma f_I+\frac1{\tau_I}\left(\frac{I}{2}-f_I\right)\\
	\nonumber
	\frac{\partial f_R}{\partial t} + \frac{\partial (v_R f_R)}{\partial x} &=& \gamma f_I+\frac1{\tau_R}\left(\frac{R}{2}-f_R\right),
\end{eqnarray}
where 
$v_S=\vs(x) v$, $v_I=\vi(x) v$, $v_R=\vr(x) v$ with $\vs,\vi,\vr \geq 0$ take into account the heterogeneities of geographical areas, thus are chosen dependent on the spatial location. The quantity $\gamma=\gamma(x,{z})$ is the recovery rate of infected, and the transmission of infection is governed by an incidence function $F(\cdot,I)$ modeling the transmission of the disease^^>\cite{HWH00} 
\begin{equation}
	F(g,I)=\beta \frac{g I^p}{1+\kappa I},
	\label{eq:incf}
\end{equation}
with the classic bi-linear case corresponding to $p = 1$, $\kappa=0$. Finally, $\tau_S,\tau_I,\tau_R$ represent the relaxation frequencies playing the role of the Knudsen number $\varepsilon$ introduced in Section \ref{sec:diff} when discussing the linear transport equation in the diffusive scaling. In this model, the reproduction number for the system \eqref{eq:kineticc} is given by 
\begin{equation}
	R_0(t,{z})=\frac{\int_{\DH} F(S,I)\,dx}{\int_{\DH} \gamma(x,{z}) I(x,t,{z})\,dx} \geq 1.
	\label{eq:R0}
\end{equation}
This quantity describes the space averaged instantaneous variation of the number of infected individuals at time $t>0$. Now, with the scope of deriving a possible surrogate/low-fidelity model approaching the high fidelity model described by system \eqref{eq:kineticc}, we introduce the flux functions
\begin{equation}
	\begin{split}
	& J_S={\lambda_S} \int_{-1}^{1}  v f_S(x,v,t,{z})\,dv,\\ & J_I={\lambda_I}\int_{-1}^{1}  v f_I(x,v,t,{z})\,dv,\\ & J_R={\lambda_R}\int_{-1}^{1}  v f_R(x,v,t,{z})\,dv.
\end{split}
	\label{eq.fluxes}
\end{equation}
We integrate then the system \eqref{eq:kineticc} over $v$ and obtain the following equations for the macroscopic densities
\begin{eqnarray}
	\nonumber
	\frac{\partial S}{\partial t} + \frac{\partial J_S}{\partial x} &=& -F(S, I)\\
	\label{eq:density}
	\frac{\partial I}{\partial t} + \frac{\partial J_I}{\partial x} &=& F(S, I) -\gamma I\\
	\nonumber
	\frac{\partial R}{\partial t} + \frac{\partial J_R}{\partial x} &=& \gamma I\,.
\end{eqnarray}
The above system is not closed since the flux functions depend upon the distribution functions values. However, by introducing now the so-called diffusion coefficients that characterize the diffusive transport mechanism of susceptible, infected and removed
\begin{equation}
	D_S=\frac13\lambda_S^2\tau_S,\quad D_I=\frac13\lambda_I^2\tau_I,\quad D_R=\frac13\lambda_R^2\tau_R,
	\label{eq:diffcf}
\end{equation}
one can pass to the limit $\tau_J\to 0, \ J=S,I,R$ while keeping the diffusive coefficients finite. This permits to get the following diffusion system characterizing a possible low fidelity model for the spread of a disease
\begin{eqnarray}
	\nonumber
	\frac{\partial S}{\partial t} &=&  -F(S, I)+\frac{\partial}{\partial x} \left({D_S}\frac{\partial S}{\partial x}\right)\\
	\label{eq:diff}
	\frac{\partial I}{\partial t} &=&  F(S, I)-\gamma I+\frac{\partial}{\partial x} \left({D_I}\frac{\partial I}{\partial x}\right)\\
	\nonumber
	\frac{\partial R}{\partial t} &=&  \gamma I+\frac{\partial}{\partial x}\left({D_R}\frac{\partial R}{\partial x}\right).
\end{eqnarray}
In Section \ref{sec:epidem} we will discuss strategies for quantifying uncertainty for the above model while also introducing an alternative low-fidelity model based on a simpler two-velocity dynamic^^>\cite{Bert,BLPZ}.

\section{Multi-fidelity control variate methods}
This part introduces the multi-fidelity control variate methods for the Boltzmann equation of gas dynamic introduced in Section \ref{sec:bolt} and then discusses its extension to the kinetic models in the social sciences presented in Section \ref{model_socio}.

\subsection{Some notations and definitions}
First we introduce some notations and assumptions that will be used in the sequel of the section. If $z\in \Omega$ is distributed as $p(z)$ we denote the expected value of any \emph{quantity of interest} expressed as a functional $q[f](z,x, {\w}, t)$ by
\begin{equation}
\mathbb{E}[\g[f]] (x, {\w}, t)=\int_{\Omega} \g[f](z,x, {\w}, t) p(z)dz,
\end{equation}
and its variance by
\begin{equation}
\VV(\g[f]) (x, {\w}, t)=\int_{\Omega} (\g[f](z,x, {\w}, t)-\mathbb{E}[\g[f]](x,\w,t))^2 p(z)dz. 
\end{equation}
As quantity of interest, in addition to $q[f]=f$, another natural choice is represented by $q[f]=\langle \phi\,f\rangle$, the moments of the distribution function. In the latter case the dependence on $\w$ is dropped in the previous definitions.

We also introduce the following $L^p$-norm with polynomial weight^^>\cite{PP, RSS}
\be
\| f(z,\cdot,t)\|^p_{L^p_s(\DH\times\RR^{d_v})} = \int_{\DH\times\RR^{d_v}} |f(z,x,v,t)|^p (1+|v|)^s\,dv\,dx.
\ee
Next, for a random variable $Z$ taking values in ${L^p_s(\DH\times\RR^{d_v})}$, we define 
\be
\|Z\|_{{L^p_s(\DH\times\RR^{d_v};L^2(\Omega))}}=\|\EE[Z^2]^{1/2}\|_{{L^p_s(\DH\times\RR^{d_v})}}.
\label{eq:norm1}
\ee
The above norm, in general, differs from a more classical norm used in the context of uncertainty quantification for hyperbolic conservation laws. This latter often reads   
\be
\|Z\|_{L^2(\Omega;{L^p_s(\DH\times\RR^{d_v}))}}=\EE\left[\|Z\|^2_{L^p_s(\DH\times\RR^{d_v})}\right]^{1/2},
\label{eq:norm2}
\ee
which has been used for example in^^>\cite{MSS}. Note, in particular that by Jensen inequality^^>\cite{Rudin}, we have
\be
\|Z\|_{{L^p_s(\DH\times\RR^{d_v};L^2(\Omega))}} \leq \|Z\|_{L^2(\Omega;{L^p_s(\DH\times\RR^{d_v}))}}.
\ee
Let also observe that for $p=2$ the two norms \eqref{eq:norm1} and \eqref{eq:norm2} coincide. In the sequel, we will  consider in our analysis the norm \eqref{eq:norm1} with $p=1$. The extension of the results shown about the control variate approach to norm \eqref{eq:norm2} for $p=1$ typically requires $Z$ to be compactly supported (see^^>\cite{MSS, DPUQ1,MS2} for further details).

We assume that the model (\ref{eq:Boltzmann}) is solved in the phase-space by means of deterministic methods and that the following estimate is satisfied (see^^>\cite{RSS, DP15, Son, DPUQ1,MP})
\be
\|f(\cdot,t^n)-f_{\Delta x,\Delta {\w}}^n\|_{{\LL}} \leq C \left(\Delta x^{q_1}+\Delta {\w}^{q_2} \right),
\label{eq:det}
\ee
with $C$ a positive constant which depends on time and on the initial data, and $f_{\Delta x,\Delta {\w}}^n$ the computed approximation of the deterministic solution $f(x,v,t)$ at time $t^n$ on the mesh $\Delta x$ and $\Delta v$. The positive integers $q_1$ and $q_2$ characterize the accuracy of the discretizations in the phase-space and, for simplicity, we ignored the errors due to the time discretization and to the truncation of the velocity domain in the deterministic solver. We refer to^^>\cite{DP15} for details about the numerical discretization of a kinetic equation of the form \eqref{eq:Boltzmann}. In the sequel, if not otherwise stated, in the numerical examples we will make use of the fast spectral method^^>\cite{MP,FMP} combined with finite volumes WENO solvers in space^^>\cite{JS}. The time discretization is performed by suitable asymptotic-preserving techniques^^>\cite{DP15}. 

\subsection{Monte Carlo sampling method.}
\label{sec:sMC}
Before discussing the multi-fidelity approach, we first recall the standard Monte Carlo method for the estimation of the uncertainties when computing the solution of a kinetic equation of the type (\ref{eq:Boltzmann}) with random initial data $f(\theta,x,{\w},0)=f_0(\theta,x,{\w})$. To avoid unnecessary difficulties, we will mainly restrict to the case of a one-dimensional random input $d_{\theta}=1$ distributed as $p(\theta)$, the extension to multidimensional setting being straightforward. 

The simplest Monte Carlo (MC) sampling method for UQ is described in Algorithm \ref{al:MC}. 

\begin{algorithm2e}[htbp]
\label{al:MC}
	\begin{enumerate}
		\item {\bf Sampling}: Sample $M$ independent identically distributed (i.i.d.) initial data $f_0^k$, $k=1,\ldots,M$ from
		the random initial data $f_0$ and approximate these over the grid.
		\item {\bf Solving}: For each realization $f_0^k$, the underlying kinetic equation (\ref{eq:Boltzmann}) is solved numerically by a deterministic solver. We denote the solutions at time $t^n$ by $f^{k,n}_{\Delta x,\Delta {\w}}$, $k=1,\ldots,M$, where  $\Delta x$ and $\Delta {\w}$ characterizes the discretizations  in $x$ and ${\w}$. 
		\item {\bf Estimating}: Estimate the expected value of the random solution field with the sample mean of the approximate solution
		\be
		E_M[f^{n}_{\Delta x,\Delta {\w}}]=\frac1{M} \sum_{k=1}^M f^{k,n}_{\Delta x,\Delta {\w}}.
		\label{mcest}
		\ee
	\end{enumerate}
	\caption{Monte Carlo sampling method}
\end{algorithm2e}
Similarly to the case of the expectation, higher order statistical moments can be computed as well. The above algorithm present several advantages:
\smallskip
\begin{itemize}
	\item[i)] Straightforward to implement in any existing deterministic or stochastic solver for the particular kinetic equation.
	\smallskip
	\item[ii)]  It operates in a \textit{post-processing} setting, the only data interaction between different samples is in step $3$, when ensemble averages are computed.
	\smallskip
	\item[iii)] It is non-intrusive and easily parallelizable.
\end{itemize} 
\smallskip  
Concerning the error analysis, starting from the fundamental estimate^^>\cite{Caflisch, Lo77}
	\be
	\EE\left[(\EE[f]-E_M[f])^2\right] \leq C\, \var(f)M^{-1},
	\ee
	one can obtain the typical error bound which is summarized by the following proposition (see^^>\cite{MSS, MS2, DPUQ1} for more details).
	\begin{proposition}
		Consider a deterministic scheme satisfying \eqref{eq:det} for equation \eqref{eq:Boltzmann} with random sufficiently regular initial data $f(\theta,x,{\w},0)=f_0(\theta,x,{\w})$. Then, the Monte Carlo estimate \eqref{mcest} satisfies the error bound 
		\be
		\|\EE[f](\cdot,t^n)-E_M[f^{n}_{\Delta x,\Delta {\w}}]\|_{{\LLBi}} \leq C \left(\sigma_f M^{-1/2} + \Delta x^{q_1} + \Delta {\w}^{q_2}\right),
		\label{eq:MCest}
		\ee
		where $\sigma_f = \|\var(f)^{1/2}\|_{L^1_2(\mathcal{D}\times\RR^{d_v})}$, $\var(f)=\EE[(\EE[f]-f)^2]$ and
		the constant $C=C(T, f_0)>0$ depends on the final time $T$ and the initial data $f_0$.
	\end{proposition}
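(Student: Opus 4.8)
The plan is to decompose the total error into a statistical (Monte Carlo) part and a deterministic discretization part, and to bound each separately using the hypotheses already stated. First I would insert the intermediate quantity $E_M[f](\cdot,t^n) := \frac1M\sum_{k=1}^M f^{k,n}$, the sample mean of the \emph{exact} deterministic solutions (no phase-space discretization), and write
\be
\EE[f](\cdot,t^n)-E_M[f^n_{\Delta x,\Delta\w}]
= \big(\EE[f](\cdot,t^n)-E_M[f](\cdot,t^n)\big)
+ \big(E_M[f](\cdot,t^n)-E_M[f^n_{\Delta x,\Delta\w}]\big).
\ee
By the triangle inequality in the norm $\|\cdot\|_{\LLBi}$ it suffices to bound the two pieces.

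For the first (statistical) term, I would apply the fundamental Monte Carlo estimate $\EE[(\EE[f]-E_M[f])^2]\le C\,\var(f)M^{-1}$ pointwise in $(x,v)$, take square roots, and then the $L^1_2(\DH\times\RR^{d_v})$ norm, which by Fubini/Tonelli exchanges with the pointwise bound and yields $\sigma_f M^{-1/2}$ with $\sigma_f=\|\var(f)^{1/2}\|_{L^1_2(\DH\times\RR^{d_v})}$; here I need $f$ to have finite second moment in $z$ in the appropriate weighted space, which is guaranteed by the assumed regularity of $f_0$ and standard propagation-of-moments estimates for \eqref{eq:Boltzmann}. For the second (deterministic) term, I would use linearity of $E_M$ to write $E_M[f](\cdot,t^n)-E_M[f^n_{\Delta x,\Delta\w}]=\frac1M\sum_k (f^{k,n}-f^{k,n}_{\Delta x,\Delta\w})$, then move the $L^1_2$ norm inside the average and inside the expectation over $\Omega$ (again by Jensen/triangle inequality), and apply the per-sample deterministic estimate \eqref{eq:det} uniformly in $k$. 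Since the constant $C$ in \eqref{eq:det} depends only on time and the initial data — and the initial data are drawn from the same distribution with the assumed uniform regularity — this gives the bound $C(T,f_0)(\Delta x^{q_1}+\Delta\w^{q_2})$ after averaging.

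Combining the two bounds and absorbing constants produces \eqref{eq:MCest}. The main subtlety — not a deep obstacle, but the point requiring care — is the interplay between the stochastic norm \eqref{eq:norm1} (an $L^2(\Omega)$ norm taken \emph{inside} the $L^1_2$ spatial-velocity norm) and the two estimates being invoked, one of which is pointwise-in-$(x,v)$ expectational and the other pathwise-in-$z$: making the Fubini exchange and the Jensen inequality $\|Z\|_{L^1_2(\cdot;L^2(\Omega))}\le\|Z\|_{L^2(\Omega;L^1_2(\cdot))}$ rigorous requires the measurability and integrability of $f$ jointly in $(z,x,v)$, which follows from the regularity hypotheses but should be stated. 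A secondary point is that the deterministic estimate \eqref{eq:det} must hold with a constant independent of the random sample, so one implicitly needs the family $\{f_0^k\}$ to lie in a bounded set of the relevant regularity class almost surely; this is what "sufficiently regular initial data" encodes.
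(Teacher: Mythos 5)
Your proposal is correct and follows exactly the route the paper indicates: the paper does not write out a proof but states that the bound follows "starting from the fundamental estimate" $\EE[(\EE[f]-E_M[f])^2]\le C\,\var(f)M^{-1}$, which is precisely your statistical term, combined with the deterministic estimate \eqref{eq:det} for the discretization term via the triangle inequality. Your remarks on the Jensen inequality between the two norms and on the uniformity of the constant in \eqref{eq:det} over the random samples are the right points of care and are consistent with the paper's discussion of the norms \eqref{eq:norm1}--\eqref{eq:norm2}.
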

Once an error estimate is given, it is possible to equilibrate the discretization and the sampling errors in the a-priori estimate taking $M={\mathcal O}(\Delta x^{-2q_1})$ and $\Delta x={\mathcal O}(\Delta {\w}^{q_2/q_1})$. This means that in order to have comparable errors the number of samples should be extremely large, especially when dealing with high order deterministic discretizations. As a consequence, the Monte Carlo approach may result very expensive in practical applications. 
\begin{remark}
The previous Monte Carlo method can be improved using standard variance reduction techniques such as stratified sampling and importance sampling methods. We refer to^^>\cite{Caflisch,refId0,peherstorfersurvey} for more details.
\end{remark}

\subsection{Multi-Scale Control Variate (MSCV) method}
\label{sec:MSCV}
We survey here the MSCV approach recently introduced in^^>\cite{DPUQ1}. The main idea of the method is to reduce the variance of standard Monte Carlo estimators using as control variate different low-fidelity models at the various scales introduced by the Knudsen number. In the following we will refer to this type of approach that uses a high-fidelity model and a single low-fidelity model as the \emph{bi-fidelity} case.

\subsubsection{The space homogeneous case}
\label{sec:MSCV_hom}
For sake of clarity, we first illustrate the method when applied to the solution of a kinetic equation of the type (\ref{eq:Boltzmann}) with deterministic interaction operator $Q(f,f)$ and random initial data $f({z},x,{\w},0)=f_0({z},x,{\w})$ in a homogeneous setting
\be
\frac{\partial f}{\partial t} = Q(f,f),
\label{eq:BH}
\ee
where $f=f({z},\w,t)$. Let observe that without loss of generality, we have fixed here $\varepsilon=1$, since in the space homogeneous case the only temporal scale is the collisional one. Under suitable assumptions, {one can show that} $f({z},{\w},t)$ exponentially decays^^>\cite{Vill, Trist, ToVil, Tos1999} to the unique steady state $f^{\infty}({z},{\w})$ such that $Q(f^\infty,f^\infty)=0$ which satisfies
\be \langle \phi f_0 \rangle=\langle \phi f^\infty \rangle=(\rho,\rho u, E)^T.
\label{eq:mom}
\ee

The error of the standard Monte Carlo estimator reads now
\be
\|\EE[f] - E_M[f]\|_{{\LLBi}} = {\sigma}_f M^{-1/2}.
\ee
Thus, a first variance reduction strategy is obtained by splitting the expected value of the solution as
\be
\begin{split}
	{\mathbb E}[f]({\w},t)&=\int_{\Omega} f({z},{\w},t)p({z})d{z}\\
	&=\int_{\Omega} f^\infty({z},{\w})p({z})d{z}+
	\int_{\Omega} (f({z},{\w},t)-f^\infty({z},{\w}))p({z})d{z}\\
	&=\EE[f^\infty](v)+\EE[f-f^\infty](v,t),
\end{split}
\label{eq:mme}
\ee
and exploiting the fact that $\EE[f^\infty]$ can be evaluated with arbitrary accuracy at a negligible cost (for example using a very fine grid of samples) since it does not depend on the solution computed at each time step. Now, if we use \eqref{eq:mme} and estimate 
\be
\EE[f]\approx \EE[f^\infty]+E_M[f-f^\infty]
\label{eq:28}
\ee 
we obtain an error of the type
\[ 
\| {\EE}[f](\cdot,t)-\EE[f^\infty](\cdot)-E_M[f-f^\infty](\cdot,t)\|_{{\LHBi}} = \sigma_{f-f^\infty} M^{-1/2}.
\] 
Since the non equilibrium part $f-f^\infty$ goes to zero in time exponentially fast, then also its variance goes to zero, which means that for long times estimate \eqref{eq:28} becomes exact and depends only on the accuracy of the evaluation of $\EE[f^\infty]$. 

The above argument can be generalized by considering a time dependent approximation of the solution $\tilde{f}({z},{\w},t)$, whose evaluation is significantly cheaper than computing $f({z},{\w},t)$, such that 
$\langle(\phi\tilde{f})\rangle=\langle\phi f\rangle$ for some moments and that
$\tilde{f}({z},{\w},t)\to f^{\infty}({z},{\w})$ as $t\to\infty$.  
For example, one can consider to use the BGK approximation \eqref{eq:pbgk}
which in this simple case can be exactly solved giving the following expression
\be
\tilde{f}({z},{\w},t) = e^{-\nu t} f_0({z},{\w})+(1-e^{-\nu t}) f^{\infty}({z},{\w}).
\label{eq:BGKexa}
\ee
Then we have
\be
\EE[\tilde{f}](v,t)=e^{-\nu t} {\bf f}_0({\w})+(1-e^{-\nu t}) {\bf f}^{\infty}({\w}),
\label{eq:BGKexas}
\ee
where ${\bf f}_0={\mathbb E}[f_0(\cdot,{\w})]$ and ${\bf f}^{\infty}={\mathbb E}[f^{\infty}](\w)$ or accurate approximations of the same quantities.

Using the same estimator \eqref{eq:28}
\be
\EE[f]\approx \EE[\tilde f]+E_M[f-\tilde f]
\label{eq:29}
\ee 
gives the following estimate for the error
\[ 
\| {\EE}[f](\cdot,t)-\EE[\tilde f](\cdot)-E_M[f-\tilde f](\cdot,t)\|_{{\LHBi}} = \sigma_{f-\tilde f} M^{-1/2},
\] 
where even in this case, $\sigma_{f-\tilde f} \to 0$ as $t\to \infty$.

Let now observe that a general variance reduction technique can be obtained by introducing the following control variate estimator^^>\cite{DPUQ1}
\be
\tilde{E}^{\lambda}_M[f](v,t)=\frac1{M} \sum_{k=1}^M f^{k}(v,t) - \lambda\left(\frac1{M} \sum_{k=1}^M \tilde{f}^{k}(v,t)-\tilde{\bf f}({\w},t)\right).
\label{eq:nest2}
\ee
In particular, for $\lambda=0$ we recover the standard MC estimator $\tilde{E}^{0}_M[f]=E_M[f]$, whereas for $\lambda=1$ we have the  estimator $\tilde{E}^{1}_M[f]=\EE[\tilde f]+E_M[f-\tilde f]$ corresponding to \eqref{eq:29}. The following result holds true
\begin{lemma}
	The control variate estimator \eqref{eq:nest2} is unbiased and consistent for any $\lambda\in\RR$. 
	\label{le:1}
\end{lemma}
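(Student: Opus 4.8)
The statement splits into two independent checks, and the plan is to handle each by a direct computation relying only on the linearity of the sample-mean operator $E_M[\cdot]=\frac1M\sum_{k=1}^M(\cdot)$ and on the fact that $\tilde{\mathbf f}(\w,t)=\EE[\tilde f](\w,t)$ is (by construction) the exact expectation of the low-fidelity surrogate, evaluated independently of the Monte Carlo samples.

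\emph{Unbiasedness.} First I would take the expectation of \eqref{eq:nest2} over the $M$ i.i.d.\ realizations. Since the samples $f^k$ are drawn from the same law as $f$ and each $\tilde f^k$ is the corresponding surrogate evaluation, linearity of expectation gives $\EE[E_M[f]]=\EE[f]$ and $\EE[E_M[\tilde f]]=\EE[\tilde f]=\tilde{\mathbf f}$. Substituting into \eqref{eq:nest2},
\[
\EE\big[\tilde{E}^{\lambda}_M[f]\big] = \EE[f] - \lambda\big(\EE[\tilde f]-\tilde{\mathbf f}\big) = \EE[f],
\]
for every $\lambda\in\RR$, because the parenthesis vanishes identically. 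This is the whole argument; the only thing to state carefully is that $f,\tilde f$ are integrable in the relevant weighted space $\LH$ (resp.\ $\LL$), which is guaranteed under the regularity assumptions on the initial data already in force.

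\emph{Consistency.} Next I would let $M\to\infty$ and invoke the (strong) law of large numbers, applied componentwise in the Banach space under consideration: $E_M[f]\to\EE[f]$ and $E_M[\tilde f]\to\EE[\tilde f]=\tilde{\mathbf f}$ almost surely (equivalently, in the $L^2(\Omega)$-sense giving the $\LHBi$ norm used throughout). Hence the control-variate correction $\lambda\big(E_M[\tilde f]-\tilde{\mathbf f}\big)\to 0$, and therefore $\tilde{E}^{\lambda}_M[f]\to\EE[f]$ for any fixed $\lambda\in\RR$.

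\emph{Main point of care.} There is no real obstacle here: the lemma is essentially a restatement of the classical fact that adding a mean-zero control variate changes neither the bias nor the limit of a Monte Carlo estimator. The only subtlety worth flagging is that in practice $\tilde{\mathbf f}$ is replaced by a highly accurate surrogate approximation rather than the exact $\EE[\tilde f]$; in that case one only needs to note that this introduces a deterministic error independent of $M$ (controllable, e.g., by a fine sample grid for the cheap model), so unbiasedness and consistency hold up to that controllable term. I would keep the statement and proof for the idealized case $\tilde{\mathbf f}=\EE[\tilde f]$ and remark on the approximation afterwards.
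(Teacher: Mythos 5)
Your proof is correct and follows essentially the same route as the paper: unbiasedness by linearity of expectation (the control-variate term has zero mean for every $\lambda$), and consistency by the law of large numbers applied to both sample means, so that the correction term vanishes as $M\to\infty$. The only cosmetic difference is that the paper states consistency via convergence in probability while you invoke the strong law; your closing remark about replacing $\EE[\tilde f]$ by an accurate surrogate matches the paper's subsequent discussion.
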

\begin{proof}
	The expected value of the control variate estimator $\tilde E^{\lambda}_M[f]$ in \eqref{eq:nest2} yields the unbiasedness for any choice of $\lambda\in\RR$ 
	\[
	\EE[\tilde E^{\lambda}_M[f]]=\frac1{M} \sum_{k=1}^M \EE[f^{k}]-\lambda\left(\frac1{M} \sum_{k=1}^M \EE[\tilde f^{k}]-\EE[{\bf \tilde f}]\right)=\EE[f],
	\]
	since $\EE[f^{k}]=\EE[f]$ and $\EE[\tilde f^{,k}]=\EE[\tilde f]$ for $k=1,\ldots,M$.
	Moreover, since $f^k$ and $\tilde f^{k}$ are i.i.d. random variables
	\[
	\displaystyle\lim_{M\to \infty} \frac1{M} \sum_{k=1}^M f^{k} - \lambda\left(\frac1{M} \sum_{k=1}^M \tilde f^{k}-\EE[\tilde f]\right) \overset{p}{=}  \EE[f],
	\]
	from the consistency of the standard MC estimator and where the last identity has to be understood in probability sense^^>\cite{HH,Lo77}. 
\end{proof}

If we now consider a new the random variable depending on 
$\lambda$
\[
f^{\lambda}(z,v,t)=f(v,z,t)-\lambda(\tilde f(z,v,t)-\tilde{\bf f}(v,t)),
\]
we have that the expectation for this new random variable is such that ${\mathbb E}[f^\lambda]={\mathbb E}[f]$, $E_M[f^\lambda]=\tilde E_M^{\lambda}[f]$, i.e. it shares the same expectation of the distribution function $f$ in terms of the random variable. Moreover, we can quantify its variance as 
\be
\var(f^\lambda)=\var(f)+\lambda^2 \var(\tilde f)-2\lambda\cov(f,\tilde f)
\label{eq:var1}
\ee
and we can prove the following result^^>\cite{DPUQ1}
\begin{theorem}
	The quantity   
	\be
	\lambda^* = \frac{\cov(f,\tilde f)}{\var(\tilde f)}
	\label{eq:lambdas}
	\ee
	minimizes the variance of $f^\lambda$ at the point $(v,t)$ and gives
	\be
	\var(f^{\lambda^*}) = (1-\rho_{f,\tilde f}^2)\var(f), 
	\label{eq:var2}
	\ee
	where $\rho_{f,\tilde f} \in [-1,1]$ is the correlation coefficient between $f$ and $\tilde f$. In addition, we have  
	\be
	\lim_{t\to\infty} \lambda^*(v,t) =1,\qquad \lim_{t\to\infty} \var(f^{\lambda^*})(v,t)=0\qquad \forall\, v \in \RR^{d_v}.
	\label{eq:lambdasa}
	\ee
	\label{th:1}
\end{theorem}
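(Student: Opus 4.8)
The plan is to treat $\var(f^\lambda)$ at a fixed point $(v,t)$ as a quadratic function of the scalar $\lambda$ via the decomposition \eqref{eq:var1}, carry out an elementary one-variable optimization to get \eqref{eq:lambdas}--\eqref{eq:var2}, and then analyze the long-time behaviour in \eqref{eq:lambdasa} using the exponential relaxation of both $f$ and $\tilde f$ to the common steady state $f^\infty$ recalled just before \eqref{eq:mom}.

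First I would observe that, by \eqref{eq:var1}, the map $\lambda \mapsto \var(f^\lambda)$ is a polynomial of degree two in $\lambda$ with positive leading coefficient $\var(\tilde f)>0$ (assuming the low-fidelity model is genuinely random at $(v,t)$, so that $\lambda^*$ is well defined), hence strictly convex with a unique minimizer. Differentiating in $\lambda$ and setting the derivative to zero gives $2\lambda\,\var(\tilde f)-2\cov(f,\tilde f)=0$, i.e. $\lambda^*=\cov(f,\tilde f)/\var(\tilde f)$, which is \eqref{eq:lambdas}. Substituting this value back into \eqref{eq:var1} and using the definition of the correlation coefficient $\rho_{f,\tilde f}=\cov(f,\tilde f)/\sqrt{\var(f)\var(\tilde f)}$ yields
\[
\var(f^{\lambda^*}) = \var(f) - \frac{\cov(f,\tilde f)^2}{\var(\tilde f)} = \left(1-\rho_{f,\tilde f}^2\right)\var(f),
\]
which is \eqref{eq:var2}; the bound $\rho_{f,\tilde f}\in[-1,1]$ is just Cauchy--Schwarz.

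For the asymptotic statements \eqref{eq:lambdasa} I would write $f=f^\infty+(f-f^\infty)$ and $\tilde f=f^\infty+(\tilde f-f^\infty)$, use bilinearity of the covariance, and control the cross terms with $|\cov(X,Y)|\le\var(X)^{1/2}\var(Y)^{1/2}$. Since $f-f^\infty$ and $\tilde f-f^\infty$ (hence also $f-\tilde f$) decay to zero exponentially in $t$ — the first by the convergence-to-equilibrium estimates, the second, e.g., explicitly for the BGK surrogate \eqref{eq:BGKexa} — their variances vanish exponentially, so that $\var(f)\to\var(f^\infty)$, $\var(\tilde f)\to\var(f^\infty)$ and $\cov(f,\tilde f)\to\var(f^\infty)$. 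Plugging these into \eqref{eq:var2} gives $\var(f^{\lambda^*})\to 0$ immediately. For $\lambda^*$ I would use the identity $\lambda^*-1=\cov(f-\tilde f,\tilde f)/\var(\tilde f)$ and bound the numerator by $\var(f-\tilde f)^{1/2}\var(\tilde f)^{1/2}$, so that $|\lambda^*-1|\le\var(f-\tilde f)^{1/2}/\var(\tilde f)^{1/2}\to 0$.

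The main obstacle is the non-degeneracy required in this last step: the conclusion $\lambda^*\to 1$ needs $\var(f^\infty)(v)>0$, i.e. that the randomness genuinely persists in the local equilibrium at $v$; at points where $\var(f^\infty)(v)=0$ the ratios defining $\lambda^*$ and $\rho_{f,\tilde f}$ degenerate to $0/0$ and the statement must be read in a limiting (or vacuous) sense, since there $\var(f),\var(\tilde f)\to 0$ as well and the control variate is not needed. One should also make precise that all these limits are pointwise in $(v,t)$, and that transferring them to the integrated norms used elsewhere requires — as remarked after \eqref{eq:norm2} — additional integrability (e.g. compact support in $v$) plus a dominated-convergence argument; I would flag this but keep the theorem's statement pointwise as written.
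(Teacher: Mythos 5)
Your proposal is correct and takes essentially the same approach as the paper: differentiate the quadratic \eqref{eq:var1} in $\lambda$, identify the unique stationary point, confirm it is a minimum from the positive leading coefficient, and substitute back to obtain \eqref{eq:var2}. The only difference is that you justify the $t\to\infty$ limits via the decomposition around $f^\infty$ together with Cauchy--Schwarz and explicitly flag the non-degeneracy requirement $\var(\tilde f)>0$, whereas the paper simply asserts these limits from $f\to f^\infty$; your added care here is sound but does not constitute a different method.
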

\begin{proof}
	Equation \eqref{eq:lambdas} is readily found by direct differentiation of \eqref{eq:var1} with respect to $\lambda$ and then observing that $\lambda^*$ is the unique stationary point. The fact that $\lambda^*$ is a minimum follows from the positivity of the second derivative $2\var(f^\infty)>0$. Then, by substitution in \eqref{eq:var1} of the optimal value $\lambda^*$ one finds \eqref{eq:var2} where
	\[
	\rho_{f,f^\infty} = \frac{\cov(f,f^\infty)}{\sqrt{\var(f)\var(f^\infty)}}.
	\] 
	In addition, since as $t\to\infty$ we have $f\to f^\infty$, asymptotically $\lambda^*\to 1$ and $\var(f^{\lambda^*})\to 0$ independently of $v$.
\end{proof}
The above result permits in combination with a deterministic solver satisfying \eqref{eq:det} to get the following error estimate^^>\cite{DPUQ1,MSS}
\bea
\|\EE[f](\cdot,t)-\tilde{E}^{\lambda^*}_M[f]\|_{{\LHBi}} \leq  C\left\{
\sigma_{f^{\lambda^*}}
M^{-1/2}+\Delta v^{q_1}\right\} 
\label{eq:errHMMC}
\eea
where $\sigma_{f^{\lambda^*}}=\|(1-\rho_{f,\tilde f}^2)^{1/2}\var(f)^{1/2}\|_{\LH}$, and $C>0$ depends on the final time and on the initial data. Let observe that in the above estimate, we ignored the statistical errors due to the approximation of the control variate expectation and the error in the estimate of $\lambda^*$. 
Note finally that, since $\rho^2_{f,\tilde{f}}\to 1$ as $t\to\infty$ the statistical error will vanish for large times. 

\begin{remark}
	In practice, $\cov(f,\tilde f)$ appearing in $\lambda^*$ is not known and has to be estimated. Starting from the $M$ samples we have the following unbiased estimators
	\bea
	\label{eq:varm}
	{\var}_M(\tilde f) &=& \frac1{M-1}\sum_{k=1}^M (\tilde f^{k}-E_M[\tilde f])^2,\\
	\label{eq:covm}
	{\cov}_M(f,\tilde f) &=& \frac1{M-1}\sum_{k=1}^M (f^{k}-E_M[f])(\tilde f^{k}-E_M[\tilde f]),
	\eea
	which allow to estimate
	\be
	{\lambda}_M^*= \frac{{\cov}_M(f,\tilde f)}{{\var}_M(\tilde f)}.
	\ee 
	It can be verified easily that ${\lambda}_M^*\to 1$ as $f\to f^\infty$.  
\end{remark}
The resulting multi-scale control variate method based on a low-fidelity model is summarized in Algorithm \ref{al:mscv2}.

\begin{algorithm2e}[!htbp]
	\begin{enumerate}
		\item {\bf Sampling}: Sample $M$ i.i.d. initial data $f_0^k$, $k=1,\ldots,M$ from
		the random initial data $f_0$ and approximate these over the grid.  
		\item {\bf Solving:} For each realization $f_0^k$, $k=1,\ldots,M$
		\begin{enumerate}
			\item Compute the control variate $\tilde{f}_{\Delta {\w}}^{k,n}$, $k=1,\ldots,M$ at time $t^n$ using \eqref{eq:BGKexa} and denote by $\tilde{\bf f}_{\Delta {\w}}^{n}$ an accurate estimate of $\EE[\tilde{f}_{\Delta {\w}}^{n}]$ obtained from $\tilde{\bf f}_{\Delta {\w}}^{0}$ and $\tilde{\bf f}_{\Delta {\w}}^{\infty}$ using \eqref{eq:BGKexas}. 
			
			\item Solve numerically
			the underlying kinetic equation (\ref{eq:BH})  by the corresponding deterministic solvers. We denote the solution at time $t^n$ by $f^{k,n}_{\Delta {\w}}$, $k=1,\ldots,M$. 
		\end{enumerate} 
		\item {\bf Estimating}: 
		\begin{enumerate}
			\item
			Estimate the optimal value of $\lambda^*$ as
			\[
			{\lambda}_M^{*,n}= \frac{\sum_{k=1}^M (f_{\Delta \w}^{k,n}-E_M[f_{\Delta \w}^n])(\tilde f_{\Delta \w}^{k,n}-E_M[\tilde f_{\Delta \w}^n])}{\sum_{k=1}^M (\tilde f_{\Delta \w}^{k,n}-E_M[\tilde f_{\Delta \w}^n])^2}.
			\]
			\item
			Compute the expectation of the random solution with the control variate estimator
			\be
			\tilde{E}^{\lambda^*}_M[f^n_{\Delta v}]=\frac1{M} \sum_{k=1}^M f^{k,n}_{\Delta {\w}} - \lambda_M^{*,n}\left(\frac1{M} \sum_{k=1}^M \tilde{f}_{\Delta {\w}}^{k,n}-\tilde{\bf f}_{\Delta {\w}}^{n}\right).
			\label{mcest2}
			\ee
		\end{enumerate}
	\end{enumerate}
	\caption{Bi-fidelity MSCV method}
	\label{al:mscv2}
\end{algorithm2e}

In Figure \ref{Figure1}, we show some results of the MSCV method in the space homogeneous case in comparison with standard MC approaches. {In this test, we solve the homogeneous Boltzmann equation with Maxwellian kernel, i.e. equation \eqref{VHS} with $\alpha=0$, for $v\in [-V,V]^2$ through the fast spectral method^^>\cite{FastSpectral} using $V=16$ and $N_v=64$ modes in each direction}. The initial condition is a two bumps problem with uncertainty
\be
f_0(z,v)=\frac{\rho_0}{2\pi} \left(\exp\left(-\frac{|v-(2+sz)|^2}{\sigma}\right)+\exp\left({-\frac{|v+(1+sz)|^2}{\sigma}}\right)\right)
\label{eq:twobumps}
\ee  
with $s=0.2$, $\rho_0=0.125$, $\sigma=0.5$ and $z$ uniform in $[0,1]$. The figure shows the expectation of the distribution function together with the $L_2$ error of the expected value of the solution. 

\begin{figure}[htb]
	\begin{center}
		\includegraphics[width=0.41\textwidth]{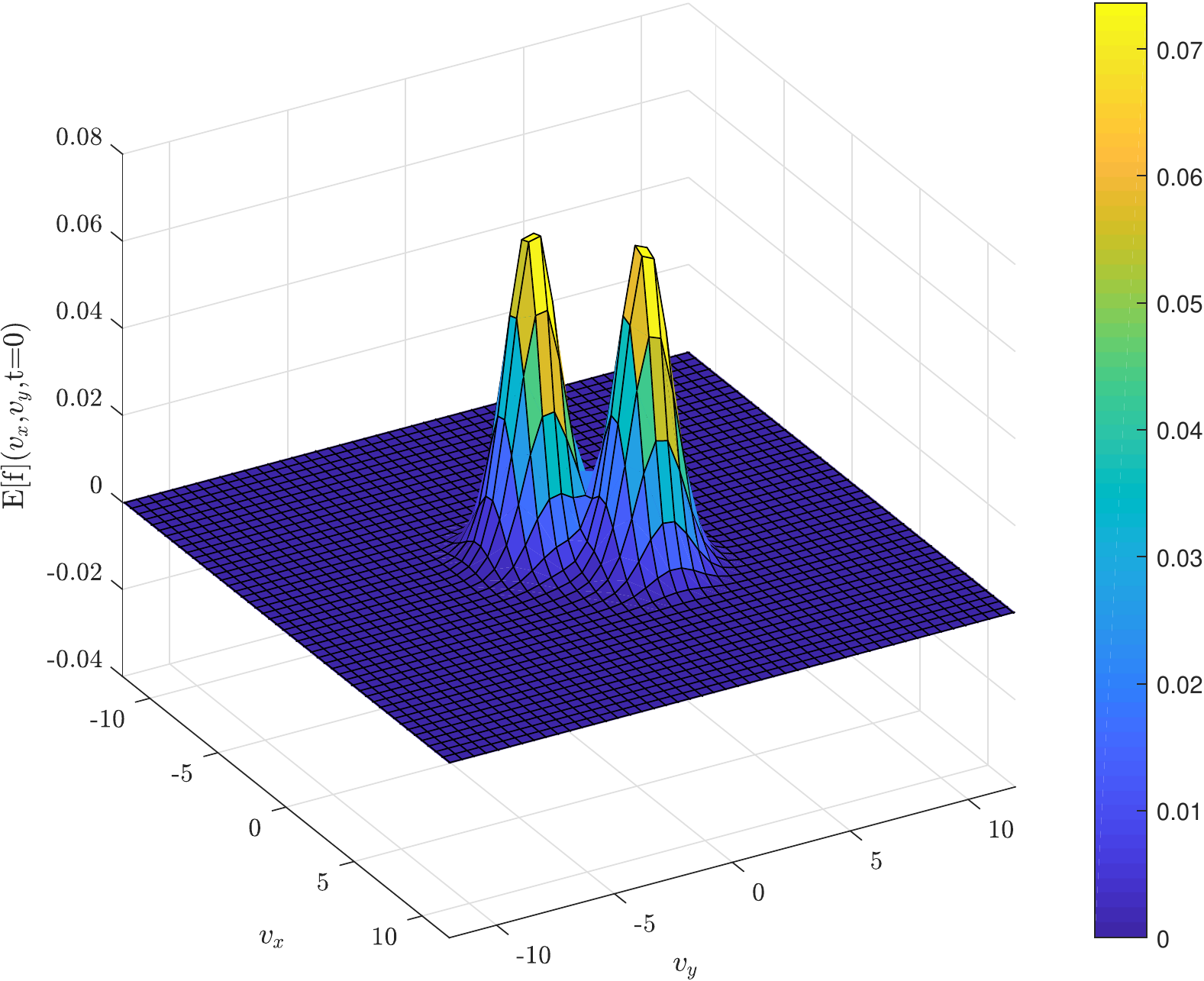}\hspace{0.5cm}%
		\includegraphics[width=0.41\textwidth]{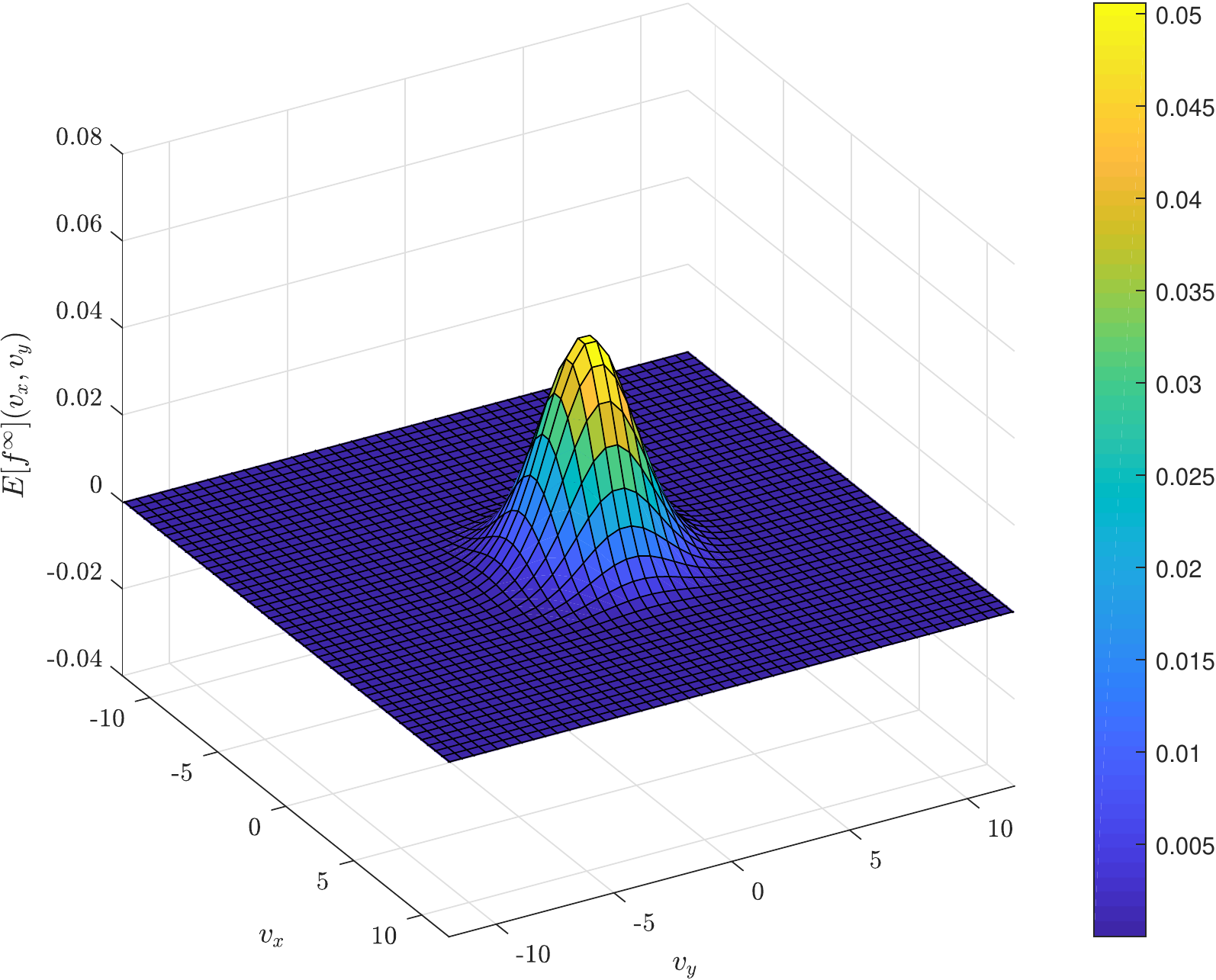}\\ 

			\includegraphics[width=.41\textwidth]{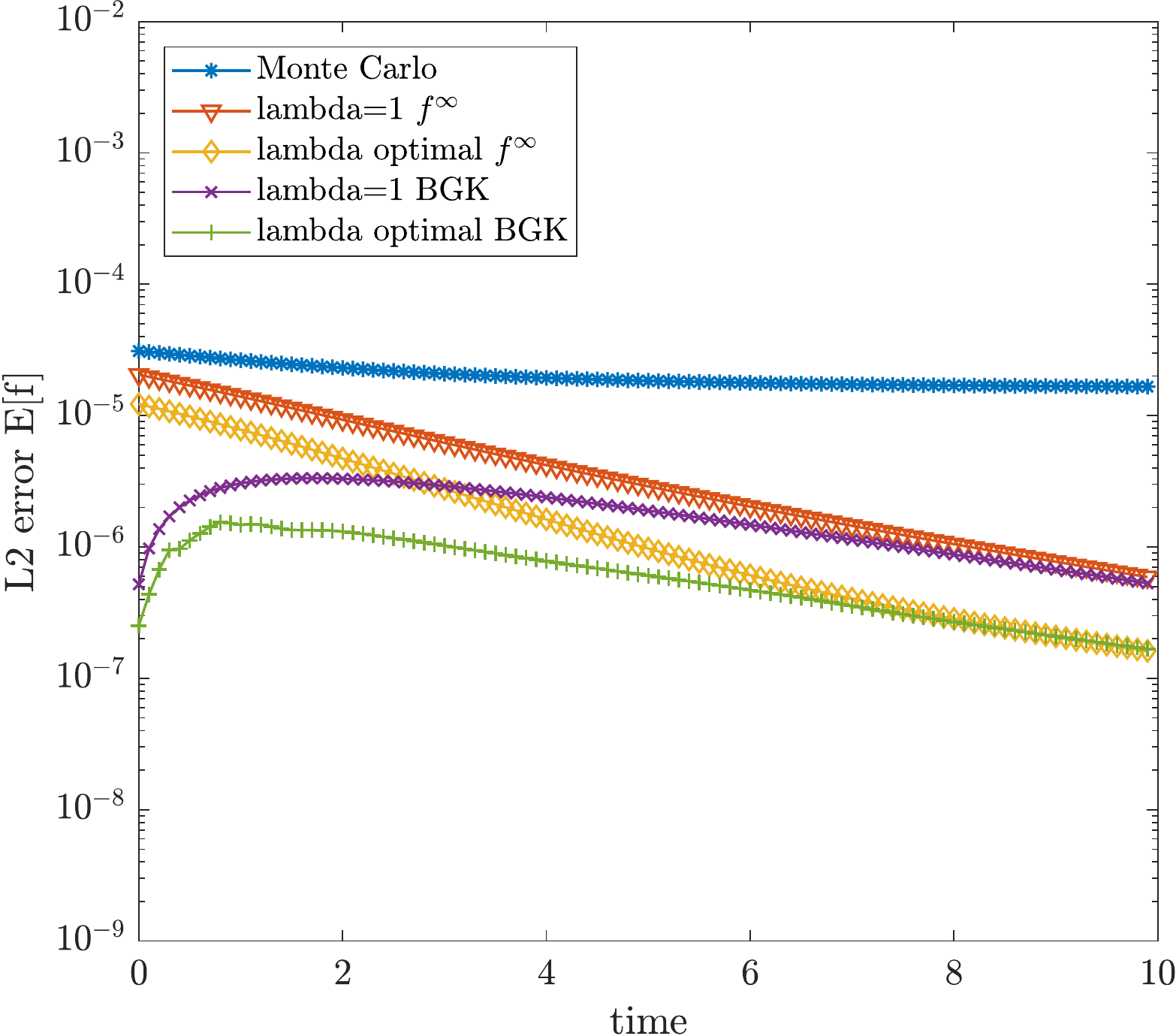}\hspace{0.5cm}
		\includegraphics[width=.41\textwidth]{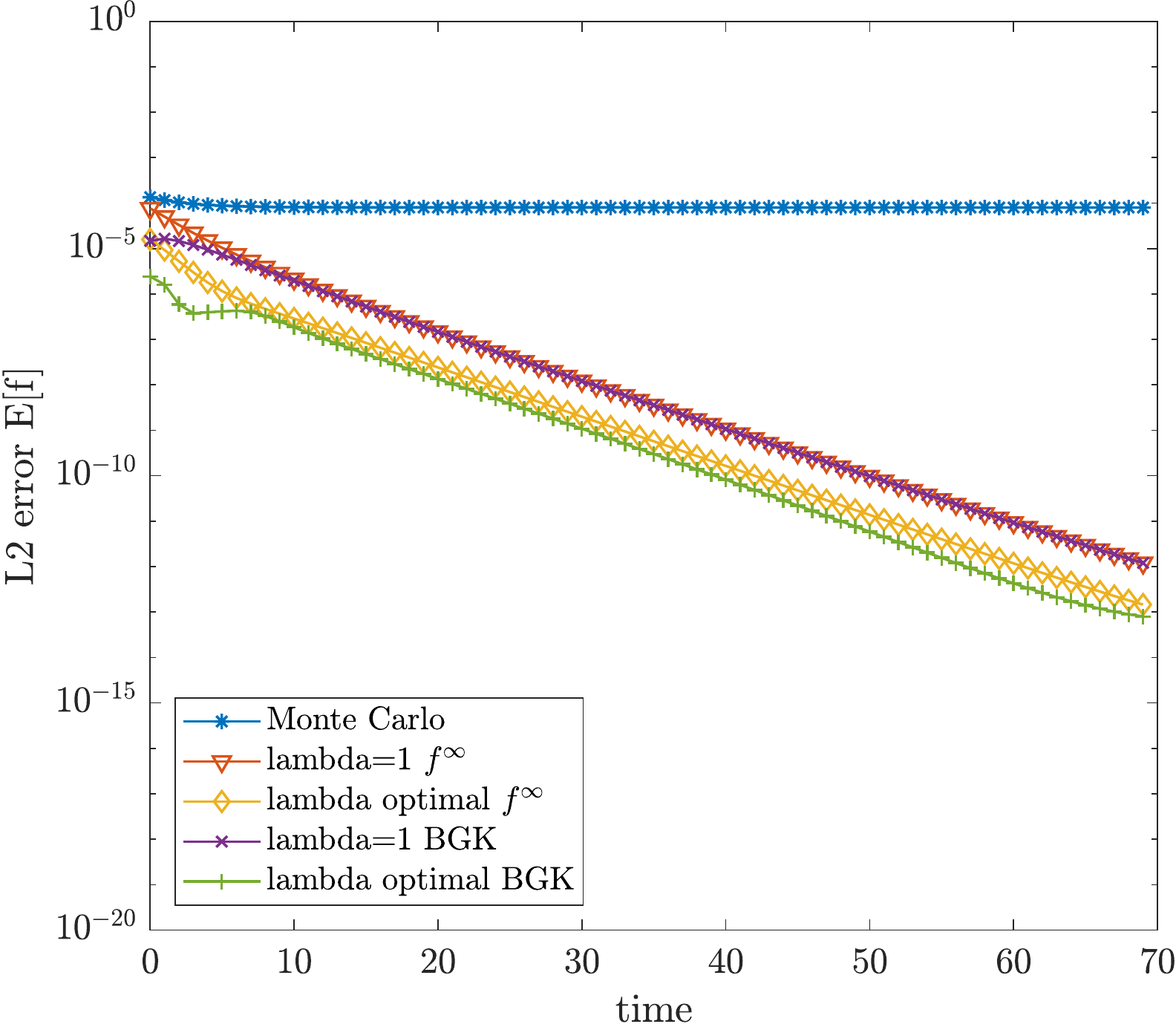}
		\caption{{MSCV method - space homogeneous case.} Expected value of the distribution function $\EE[f]$ (top) and $L_2$ norm of the error for $\EE[f]$ (bottom) using the Monte Carlo and the MSCV methods with various control variates strategies with $M=100$ samples. }
		\label{Figure1}
	\end{center}
\end{figure}

The gain in accuracy obtained with the MSCV methods is of several orders of magnitudes larger with respect to standard Monte Carlo.

\subsubsection{The space non homogeneous case}
\label{sec:MSCVnh}
We focus now on the full space non homogeneous problem \eqref{eq:Boltzmann}. 
Generalizing the space homogeneous method based on the local equilibrium $f^\infty$ as control variate, we consider here the Euler closure.
If we denote by $U_F=(\rho_F,u_F,T_F)^T$ the solution of the fluid model \eqref{eq:Euler}, 
for the same initial data, the corresponding equilibrium state $f_F^\infty$ can be used as low fidelity model. In this case, the control variate estimate based on $M$ i.i.d. samples reads 
\be
\begin{split}
& E^{\lambda}_M[f](x,v,t)=\\ &\frac1{M} \sum_{k=1}^M f^{k}(x,v,t) - \lambda\left(\frac1{M} \sum_{k=1}^M f_F^{\infty,k}(x,v,t)-{\bf f}_F^{\infty}(x,{\w},t)\right),
\end{split}
\label{eq:nesth1}
\ee
where ${\bf f}_F^{\infty}(x,{\w},t)$ is an accurate approximation of ${\mathbb E}[f_F^{\infty}(\cdot,x,{\w},t)]$. Consistency and unbiasedness of \eqref{eq:nesth1} for any $\lambda\in\RR$ follows again from Lemma \ref{le:1}. The fundamental difference is that now the variance of
\[
f^{\lambda}(z,x,v,t)=f(z,x,v,t)-\lambda({f_F}^{\infty}(z,x,{\w},t)-\EE[{f}_F^{\infty}](\cdot,x,{\w},t))
\]
will not vanish asymptotically in time since $f^\infty\neq f_F^\infty$, unless the kinetic equation is close to the fluid regime, namely for small values of the Knudsen number. 
We can state the following
\begin{theorem}
	If $\var(f_F^\infty)\neq 0$ the quantity   
	\be
	\lambda^* = \frac{\cov(f,f_F^\infty)}{\var(f_F^\infty)}
	\label{eq:lambdash}
	\ee
	minimizes the variance of $f^\lambda$ at the point $(x,v,t)$ and gives
	\be
	\var(f^{\lambda^*}) = (1-\rho_{f,f_F^\infty}^2)\var(f), 
	\label{eq:var2h}
	\ee
	where $\rho_{f,f_F^\infty} \in [-1,1]$ is the correlation coefficient between $f$ and $f_F^\infty$. In addition, we have  
	\be
	\lim_{\varepsilon\to 0} \lambda^*(x,v,t) =1,\qquad \lim_{\varepsilon\to 0} \var(f^{\lambda^*})(x,v,t)=0\qquad \forall\, (x,v) \in \mathcal{D}\times\RR^{d_v}.
	\label{eq:alambda}
	\ee
	\label{pr:2}
\end{theorem}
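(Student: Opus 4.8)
The plan is to reuse the two-step template from the proof of Theorem~\ref{th:1}, now with the Euler closure $f_F^\infty$ playing the role of the control variate and the Knudsen number $\varepsilon$ playing the role of the time variable. First I would start from the variance decomposition, which is identical in form to \eqref{eq:var1}, namely $\var(f^\lambda)=\var(f)+\lambda^2\var(f_F^\infty)-2\lambda\,\cov(f,f_F^\infty)$; differentiating with respect to $\lambda$ produces \eqref{eq:lambdash} as the unique stationary point, and since the second derivative equals $2\var(f_F^\infty)>0$ under the hypothesis $\var(f_F^\infty)\neq 0$, this point is the global minimizer. Substituting $\lambda^*$ back into the decomposition and inserting the definition $\rho_{f,f_F^\infty}=\cov(f,f_F^\infty)/\sqrt{\var(f)\var(f_F^\infty)}$ immediately gives \eqref{eq:var2h}. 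This step is a verbatim repetition of the space-homogeneous computation and presents no difficulty.

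For the asymptotic statement \eqref{eq:alambda}, the idea is to invoke the compressible hydrodynamic limit of the Boltzmann equation recalled earlier: as $\varepsilon\to 0$, for each fixed realization $z$ the kinetic solution $f=f_\varepsilon(z,x,v,t)$ converges to the local Maxwellian whose moments solve the Euler system \eqref{eq:Euler}, and since $f_F^\infty$ is exactly this Maxwellian --- constructed from $U_F=(\rho_F,u_F,T_F)^T$ for the same initial data --- we get $f_\varepsilon-f_F^\infty\to 0$. I would then pass this convergence under the $z$-average to obtain $\var(f)\to\var(f_F^\infty)$ and $\cov(f,f_F^\infty)\to\var(f_F^\infty)$, from which $\lambda^*\to 1$ and $\rho_{f,f_F^\infty}^2\to 1$; plugging these into \eqref{eq:var2h} forces $\var(f^{\lambda^*})\to 0$, uniformly in $(x,v)$.

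The main obstacle is precisely this last step: a fully rigorous version of $f_\varepsilon\to f_F^\infty$, and of the interchange of the $\varepsilon\to 0$ limit with the expectation and the variance, is a deep result requiring a uniform-in-$\varepsilon$ bound on $f_\varepsilon$ in the weighted $L^1$ norm in addition to the regularity assumptions already in force. Consistently with the formal derivation of \eqref{eq:Euler} above and with the survey nature of the text, I would use the hydrodynamic limit at the formal level and record explicitly the uniform-integrability hypothesis that makes the limit exchange legitimate; the remaining ingredients --- the differentiation in $\lambda$, the second-derivative sign check, and the algebra yielding \eqref{eq:var2h} --- are entirely elementary.
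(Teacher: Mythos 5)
Your proposal is correct and follows essentially the same route as the paper: the paper proves the optimality and variance formula by the identical quadratic minimization argument from Theorem~\ref{th:1}, and establishes \eqref{eq:alambda} by the same formal observation that $\varepsilon\to 0$ forces $Q(f,f)=0$, hence $f=f^\infty=f_F^\infty$. Your additional remarks on the rigor of interchanging the hydrodynamic limit with the $z$-expectation go slightly beyond the paper's stated (formal) argument but do not change the approach.
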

The proof follows the same lines of Theorem \ref{th:1}. Note that, since as $\varepsilon\to 0$
	we formally have $Q(f,f)=0$ which implies $f=f^\infty$ and $f_F^\infty = f^\infty$, from \eqref{eq:lambdash} and \eqref{eq:var2h} we obtain \eqref{eq:alambda}.

Similarly to the homogeneous case, the generalization to an improved control variate based on a suitable approximation of the kinetic solution by a low fidelity model can be done with the aid of a more accurate fluid approximation, like the compressible Navier-Stokes system, or a simplified kinetic model. In the latter case, we can solve a BGK model \eqref{eq:pbgk}
for the same initial data and use its solution as control variate.
More precisely, given $M$ i.i.d. samples of the solution $f^k(x,v,t)$ and of the control variate $\tilde f^k(x,v,t)$ we define the new estimator
\be
\tilde{E}^{\lambda}_M[f](x,v,t)=\frac1{M} \sum_{k=1}^M f^{k}(x,v,t) - \lambda\left(\frac1{M} \sum_{k=1}^M \tilde{f}^{k}(x,v,t)-\tilde{\bf f}(x,{\w},t)\right),
\label{eq:nesth2}
\ee 
where $\tilde{\bf f}(x,{\w},t)$ is an accurate approximation of ${\mathbb E}[\tilde{f}(\cdot,x,{\w},t)]$. As for the case of the compressible Euler equations in the space non homogeneous case the variance of
\[
f^{\lambda}(z,x,v,t)=f(z,x,v,t)-\lambda(\tilde{f}(z,x,{\w},t)-\tilde {\bf f}(z,x,{\w},t))
\]
will not vanish asymptotically in time since $f^\infty\neq \tilde{f}$, unless the two solutions are very close together, such as in the fluid regime. Thus, while the first part of Theorem \ref{th:1} is still valid, the optimal value  
\be
\lambda^* = \frac{\cov(f,\tilde{f})}{\var(\tilde{f})}
\label{eq:lambdash2}
\ee
and the variance
\be
\var(f^{\lambda*})=(1-\rho^2_{f,\tilde f})\var(f)
\label{eq:varl2}
\ee
now satisfy a different condition relating the low fidelity and the high fidelity models. In fact, one can prove  that    
\be
\lim_{\varepsilon\to 0} \lambda^*(x,v,t) =1,\quad \lim_{\varepsilon\to 0} \var(f^{\lambda^*})(x,v,t)=0\quad \forall\, (x,v) \in \RR^{d_x}\times\RR^{d_v}.
\label{eq:alambda2}
\ee
In fact, since as $\varepsilon\to 0$ from \eqref{eq:Boltzmann}
we formally have $Q(f,f)=0$ which implies $f=f^\infty$ and $\tilde{f} = f^\infty$, from \eqref{eq:lambdash2} and \eqref{eq:varl2} we obtain \eqref{eq:alambda2}.

Let notice now that even if simulating the control variate system is cheaper than the full model, however its computational cost is no more negligible and thus we cannot ignore it. In this respect, we assume then that the control variate model is computed over a fine grid of $M_E \gg M$ samples. At the same time, we replace the exact computation of the expectation of the low fidelity BGK model with the approximation
\[
\tilde{\bf f}(x,{\w},t)=E_{M_E}[\tilde f](x,v,t),
\]  
in the estimator \eqref{eq:nesth2} which we will denote now by $\tilde{E}^{\lambda}_{M,M_E}[f]$. This replacement has an impact on the optimal value of $\lambda$. In fact, using the independence of $E_M[\cdot]$ and $E_{M_E}[\cdot]$ by the central limit theorem^^>\cite{Lo77,HH} we have \[\var(E_M[f])=M^{-1}\var(f), \ \var(E_{M_E}[\tilde f])=M_E^{-1}\var(\tilde f).\] Thus, minimizing the variance now leads to the optimal value
\be
\tilde\lambda^* = \frac{M_E}{M+M_E}\lambda^*,
\ee
with $\lambda^*$ given by \eqref{eq:lambdash2}. As can be easily seen from the above formula, this correction may be relevant only in the cases when $M$ and $M_E$ do not differ too much. In many practical cases and in the simulations shown after, however, $M_E \gg M$ so that $\frac{M_E}{M+M_E}\approx 1$ and we consequently assume $\tilde \lambda^* \approx \lambda^*$. 

Using the optimal value \eqref{eq:lambdash2} for the control variate variable and a deterministic solver which satisfies \eqref{eq:det}, we obtain the error estimate^^>\cite{DPUQ1, MSS}
\bea
\nonumber
&& \|E[f](\cdot,t)-\tilde{E}^{\lambda^*}_{M,M_E}[f]\|_{\LLBi}\\[-.2cm]
\label{eq:errHMMC2}
\\
\nonumber
&& \hskip 4cm \leq {C}\left\{\sigma_{f^{\lambda^*}} M^{-1/2}+\tau_{f^{\lambda^*}} M_E^{-1/2}+\Delta x^{q_1}+\Delta v^{q_2}\right\} 
\eea
with $\sigma_{f^{\lambda^*}}=\|(1-\rho_{f,\tilde f}^2)^{1/2}\var(f)^{1/2}\|_{\LL}$, $\tau_{f^{\lambda^*}}=\|\rho_{f,\tilde f} \var(f)^{1/2}\|_{\LL}$ and with a constant $C>0$ depending on the final time and on the initial data. 

\begin{remark}
	For space non homogeneous simulations, one is typically interested in the evolution of the moments $\langle \phi\, f\rangle$ instead that on the evolution of the shape of the distribution function. More in general, one can compute the optimal value of $\lambda$ with respect to any quantity of interest $q[f]$ as 
	\be
	\lambda_{q}^* = \frac{\cov(q[f],q[\tilde f]))}{\var(q[\tilde f]))}.
	\label{eq:momcv}
	\ee
In particular, in the case $q[f]=\langle \phi\, f\rangle$, we have $\lambda^*_{q}=\lambda^*_{q}(x,t)$. Note that, using \eqref{eq:momcv} all estimates in this section for $\EE[f]$ translates into estimates for $\EE[q[f]]$. This is particularly relevant when compressible Euler or Navier-Stokes equations are used as the low-fidelity model.
\end{remark}

\begin{figure}[tbp]
	\begin{center}
		\includegraphics[width=0.41\textwidth]{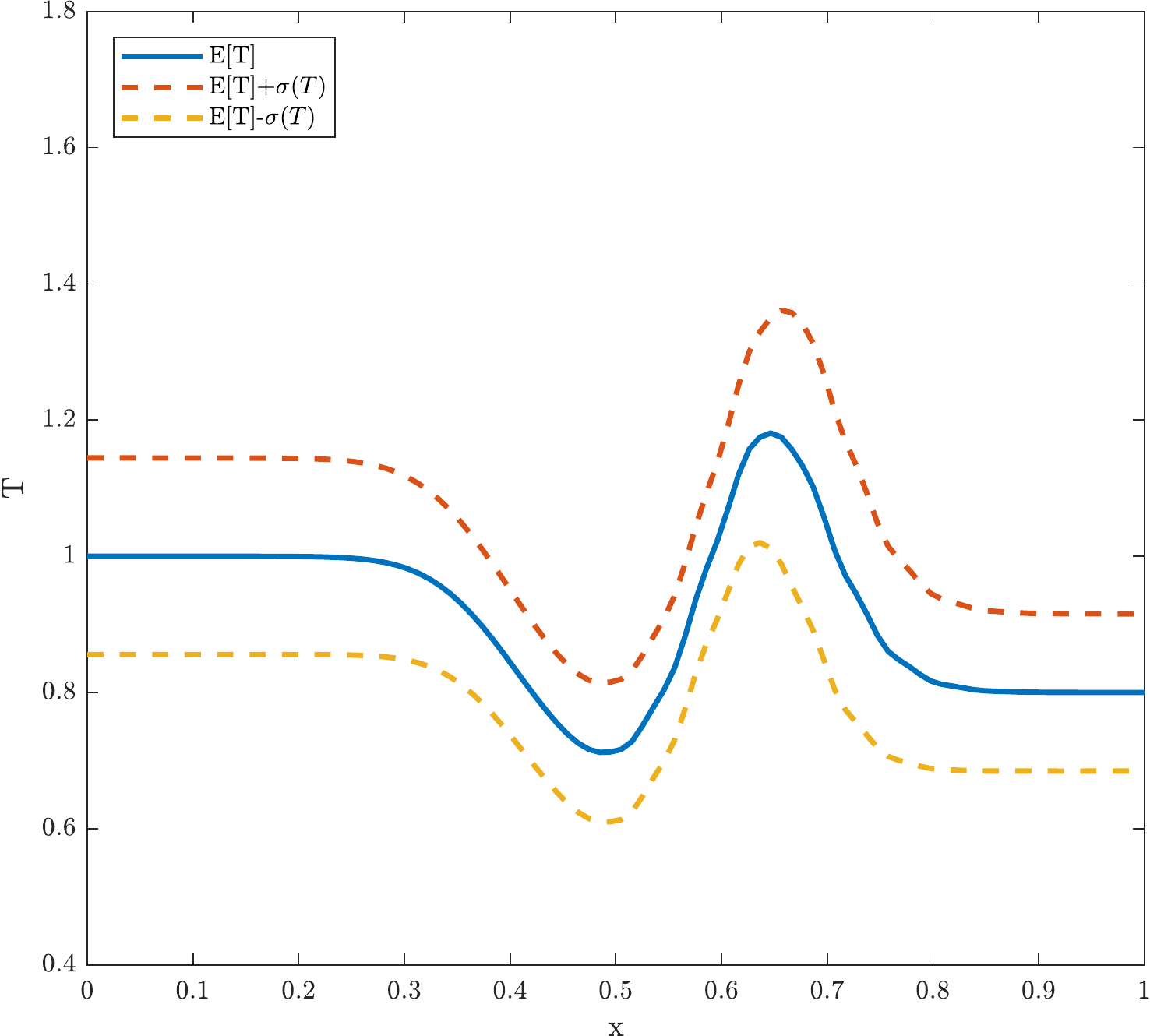}\hskip .5cm
		\includegraphics[width=0.41\textwidth]{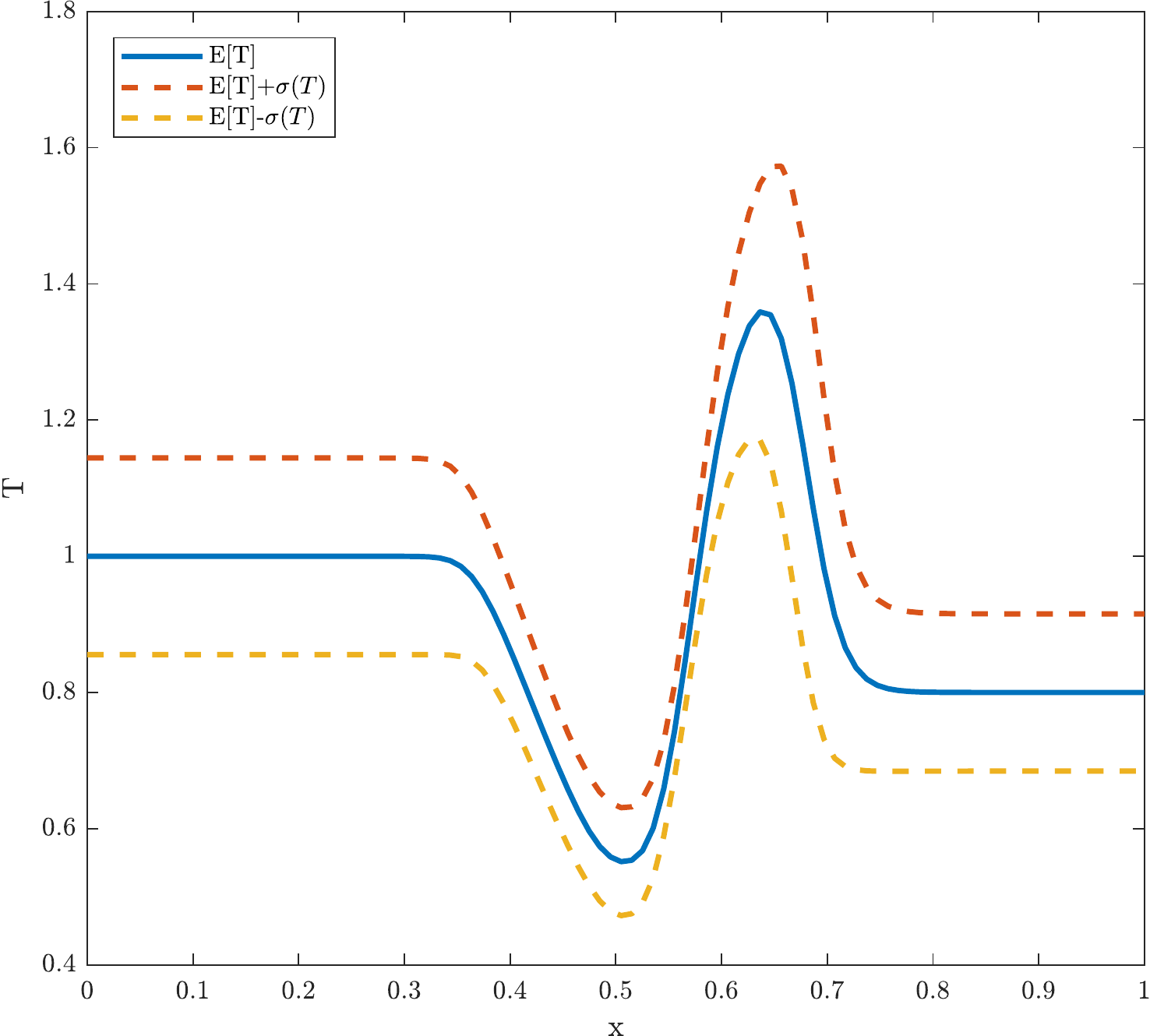}\\
		\includegraphics[width=.41\textwidth]{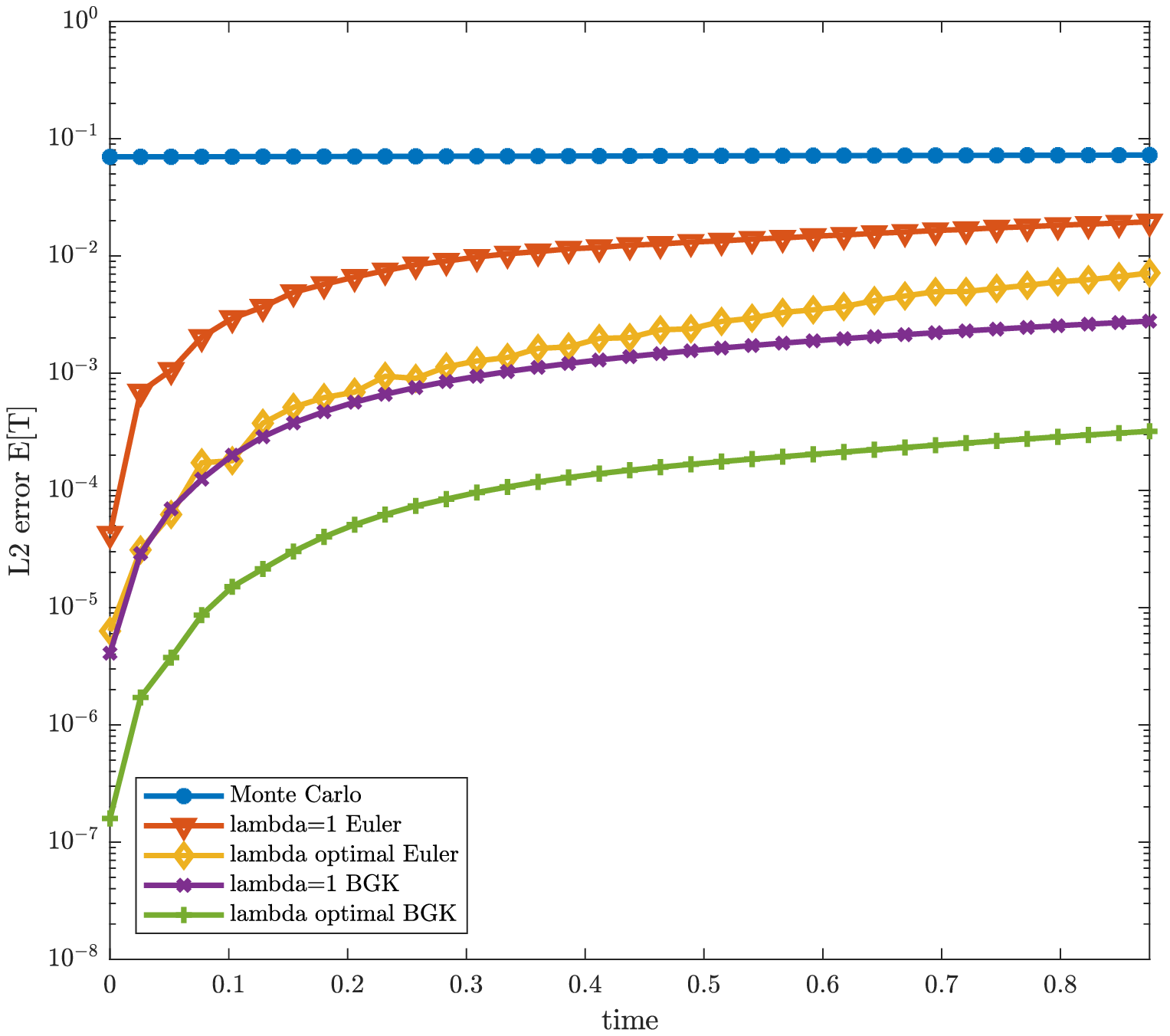}\hskip .5cm
		\includegraphics[width=.41\textwidth]{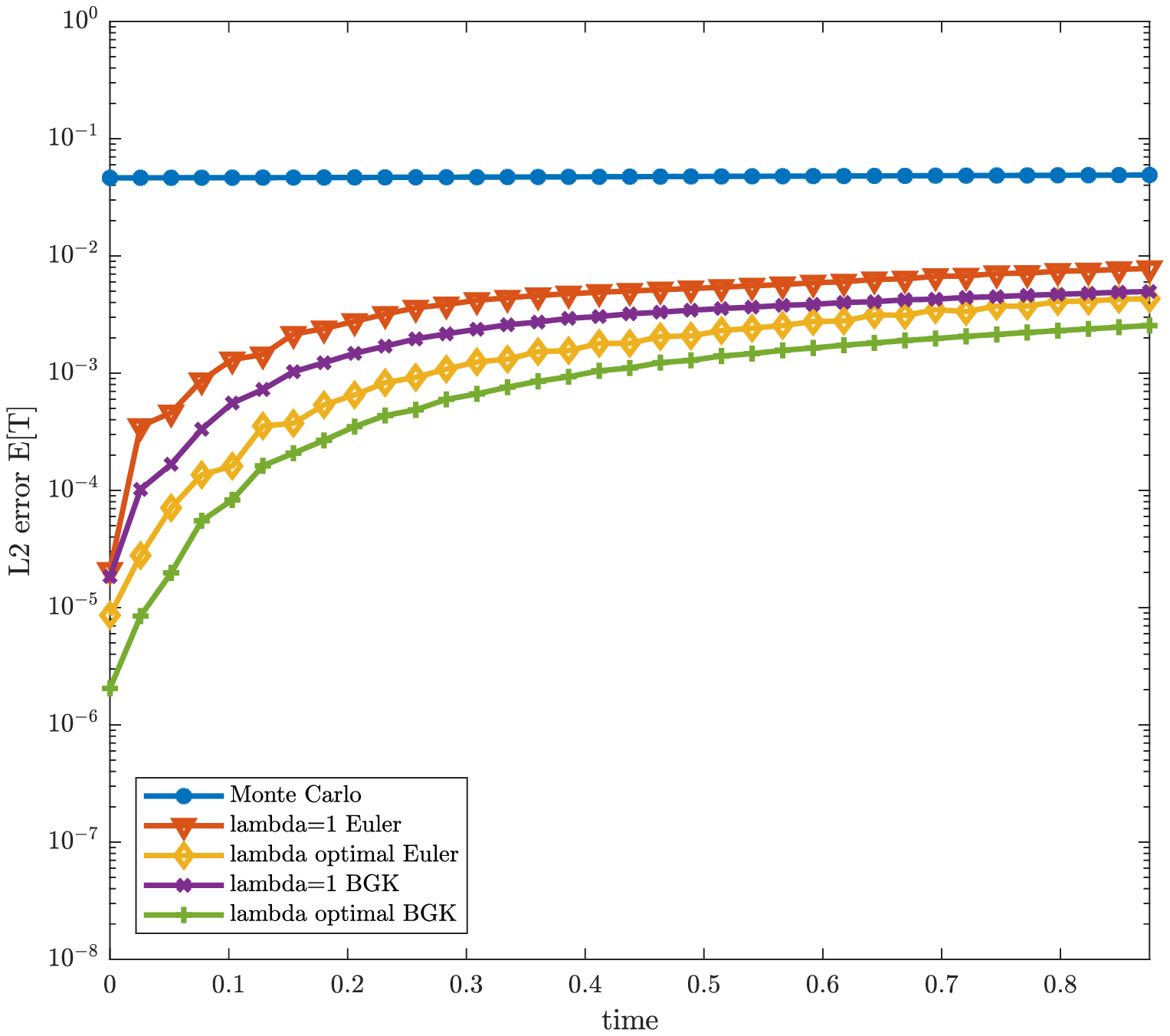}\\
		\caption{MSCV method - Sod test with uncertainty in the initial data and different low fidelity strategies.
		Expectation for the temperature $\EE[T]$ and confidence bands at final time (top) and $L_2$ norm of the error for $\EE[T]$ (bottom) with $M=10$ and $M_E=10^4$. Left: $\varepsilon=10^{-2}$. Right: $\varepsilon=10^{-3}$.}
		\label{Figure2}
	\end{center}
\end{figure}

In Figure \ref{Figure2}, we show the results of the MSCV method where the BGK or the compressible Euler equations are used as low fidelity models. {The Boltzmann equation is solved for $v\in [-V,V]^2$, $x\in [0,L]$, and $\alpha=0$ in \eqref{VHS}. The fast spectral method^^>\cite{MP} for $V=8$ with $N_v=32$ modes in each direction is used combined with a fifth order WENO method^^>\cite{Shu_2020} in space for $L=1$ with $N_x=100$ grid points.}. The initial conditions are given by Sod problem with uncertain temperature
\bea
\nonumber
&\rho_0(x)=1, \ \ T_0(z,x)=1+sz \qquad &\textnormal{if} \ \ 0<x<L/2 \\[-.25cm]
\\[-.25cm]
\label{eq:sodtest}
\nonumber
&\rho_0(x)=0.125,\ T_0(z,x)=0.8+sz  \qquad &\textnormal{if} \ \ L/2<x<L
\eea
with $s=0.25$, $z$ uniformly distributed in $[0,1]$ and equilibrium initial distribution given by
\[
f_0(z,x,\w)=\frac{\rho_0(x)}{2\pi } \exp\left({-\frac{|v|^2}{2T_0(z,x)}}\right).
\] 
We report the results for two different regimes, namely $ \varepsilon=10^{-2}$ and $ \varepsilon=10^{-3}$.  
The images show on the top the expectation of the temperature at the final time together with the confidence bands $\EE[T]-\sigma_T, \EE[T]+\sigma_T$ with $\sigma_T$ the standard deviation.  
On the bottom, we report the various errors for the expected value of the temperature as a function of time. The number of samples used to compute the expected value of the solution is $M=10$ while the number of samples used to compute the control variate is $M_E=10^4$. The optimal values of $\lambda^*(x,t)$ have been computed with respect to the temperature. 

In Figure \ref{Figure2NS} we show the results of the same MSCV approach where the Navier-Stokes equations \eqref{eq:NavierStokes} are used as low fidelity model. The setting is similar to the one shown in Figure \ref{Figure2} where now, however, uncertainty is also present in the initial density. We then have
\bea
\nonumber
&\rho_0(x)=1+sz, \ \ T_0(z,x)=1+sz \qquad &\textnormal{if} \ \ 0<x<L/2 \\[-.25cm]
\\[-.25cm]
\label{eq:sodtest2}
\nonumber
&\rho_0(x)=0.125(1+sz),\ T_0(z,x)=0.8+sz  \qquad &\textnormal{if} \ \ L/2<x<1,
\eea
the other discretization parameters remaining unchanged. In this case, the optimal values of $\lambda^*(x,t)$ have been computed both with respect to the density (left images) as well as with respect to the temperature (right images). We stress that the computation of the optimal values is done through an offline procedure. This employs, irrespectively of the quantity of interest, the same results of the deterministic simulations for the low and high fidelity models. These results are then combined to improve the estimation of the expectation of respectively the density and the temperature. For all situations reported the Navier-Stokes control variate approach improves the result of the compressible Euler case, especially when far from the thermodynamical equilibrium.

\begin{figure}[!htbp]
	\begin{center}
		\includegraphics[width=0.41\textwidth]{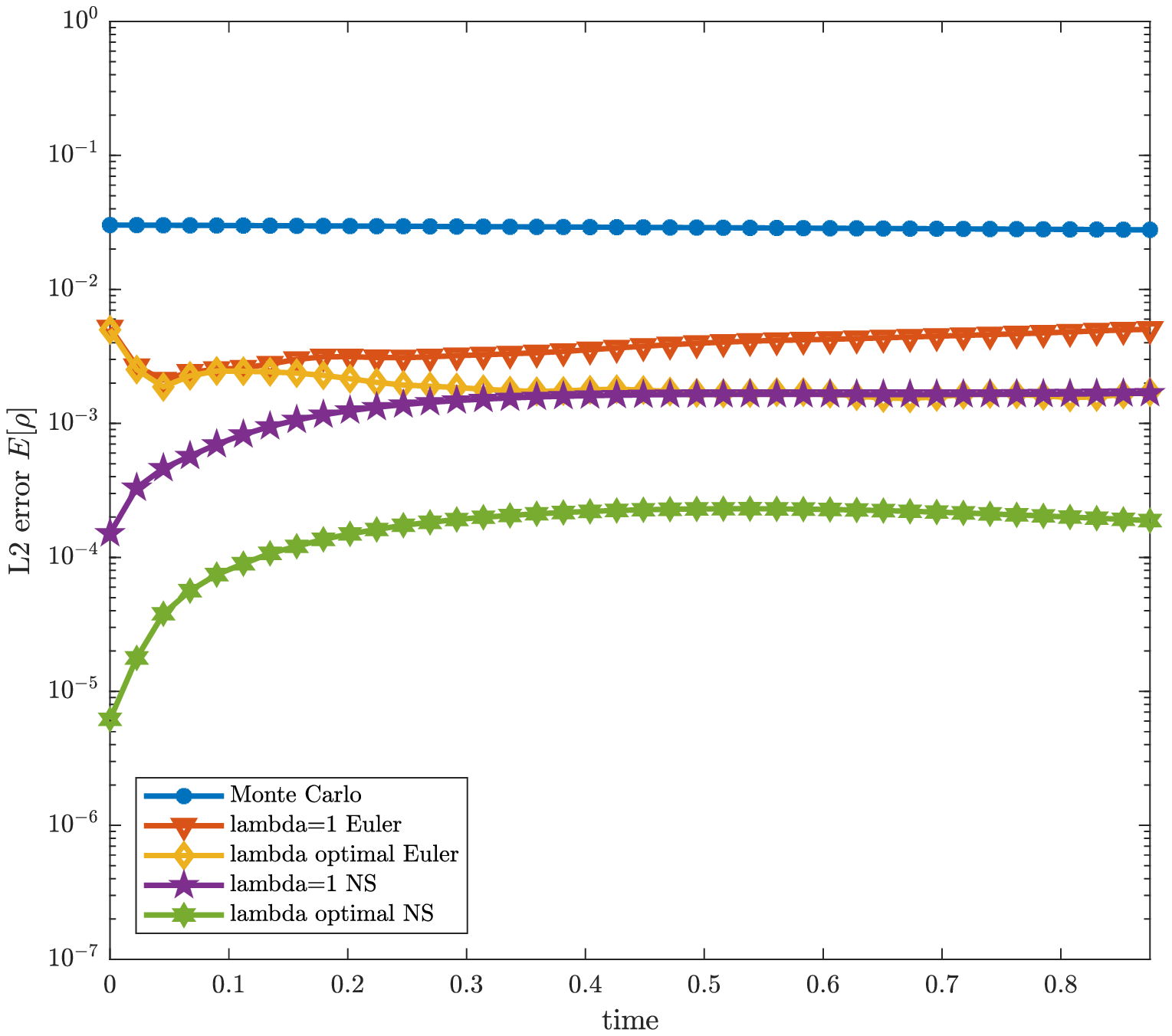}\hskip .5cm
		\includegraphics[width=0.41\textwidth]{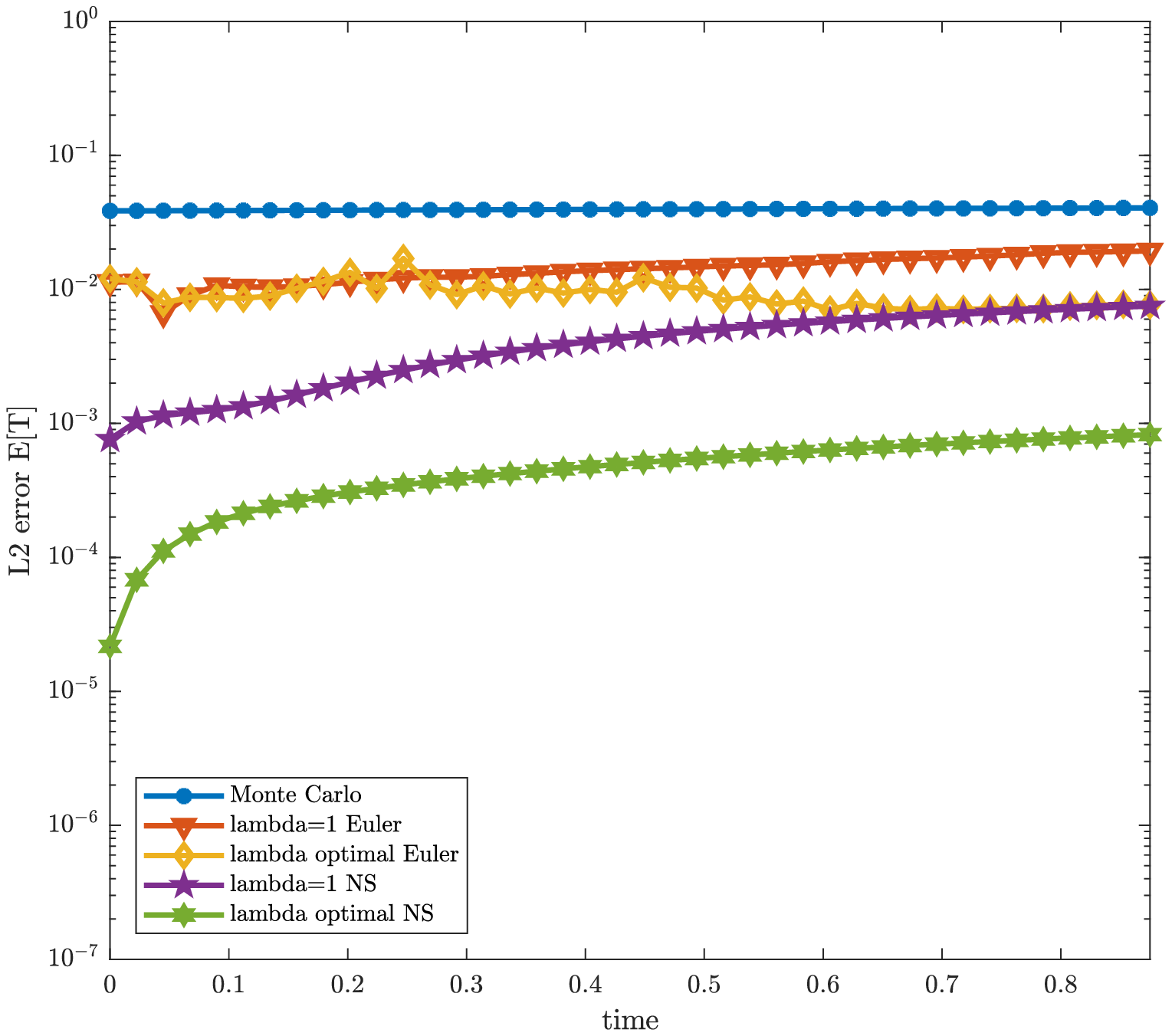}\\
		\includegraphics[width=.41\textwidth]{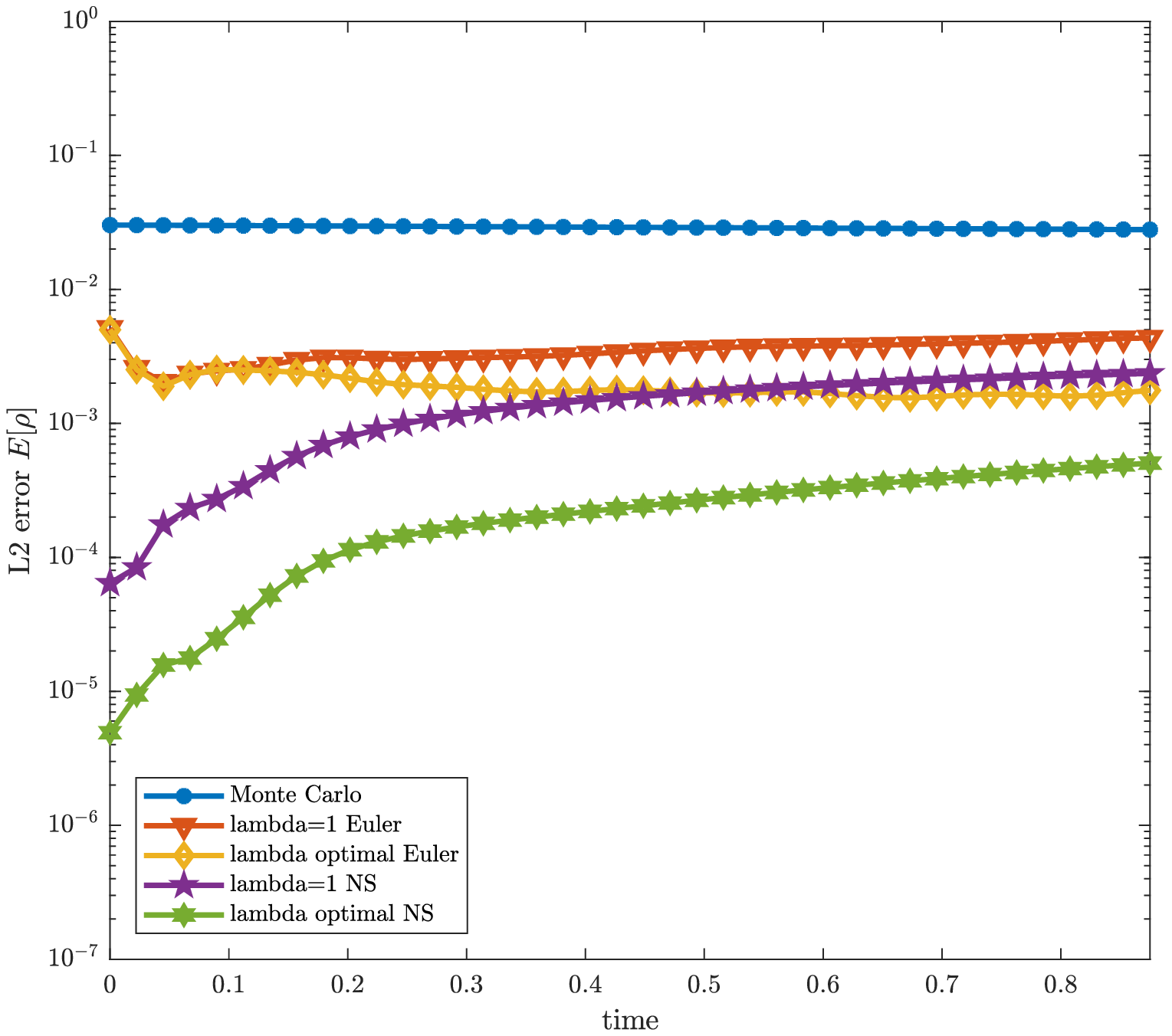}\hskip .5cm
		\includegraphics[width=.41\textwidth]{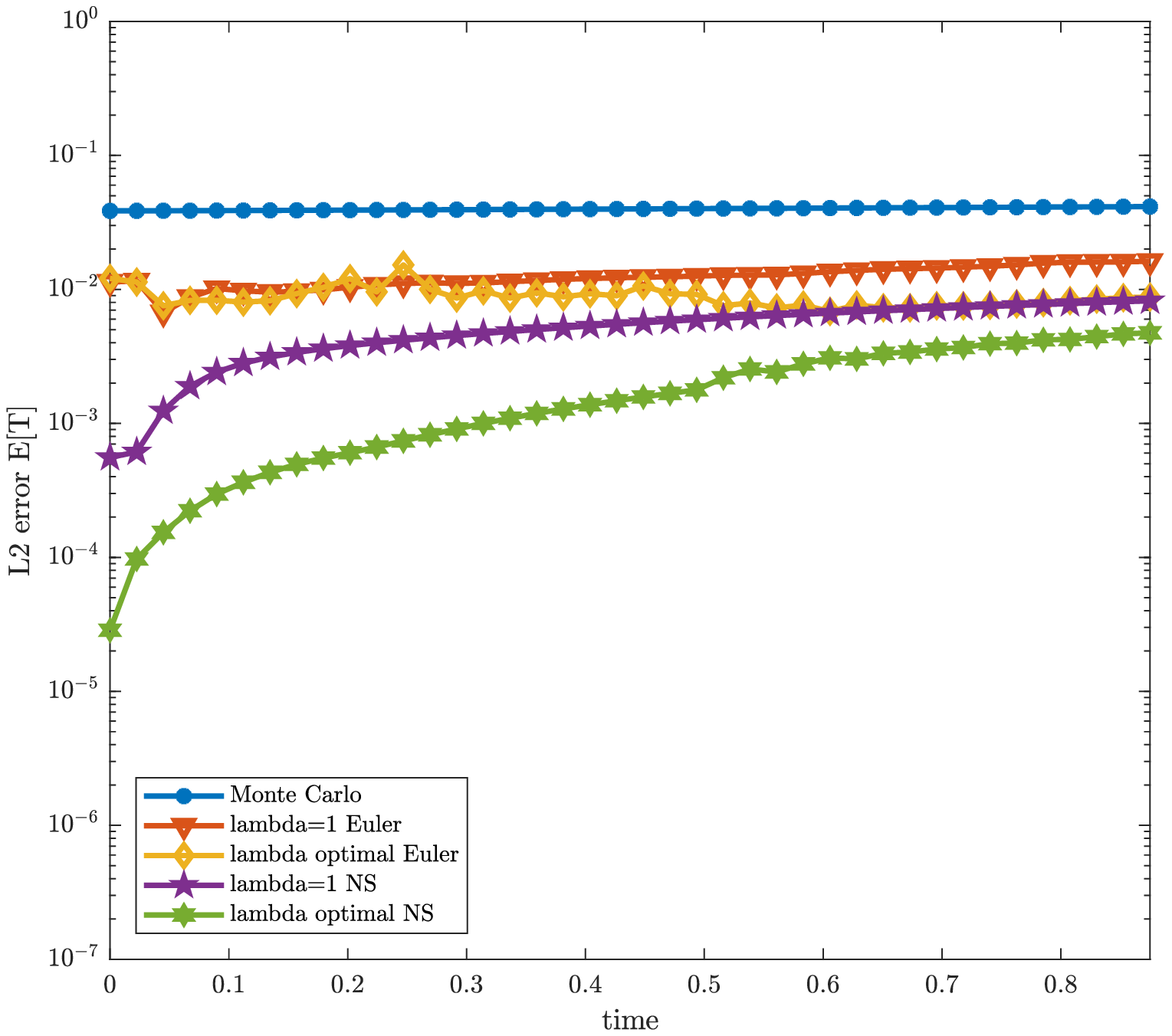}\\
		\includegraphics[width=.41\textwidth]{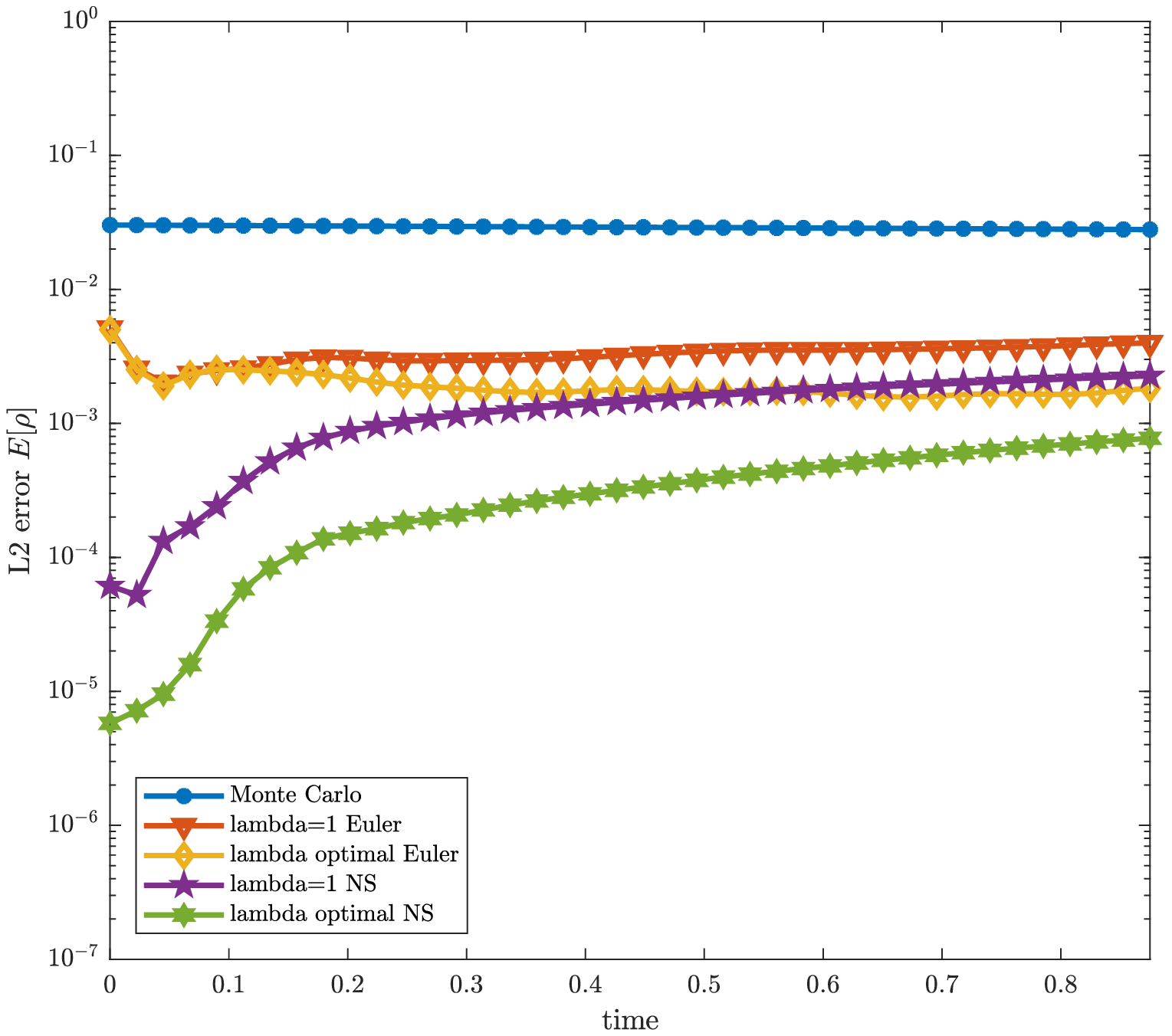}\hskip .5cm
		\includegraphics[width=.41\textwidth]{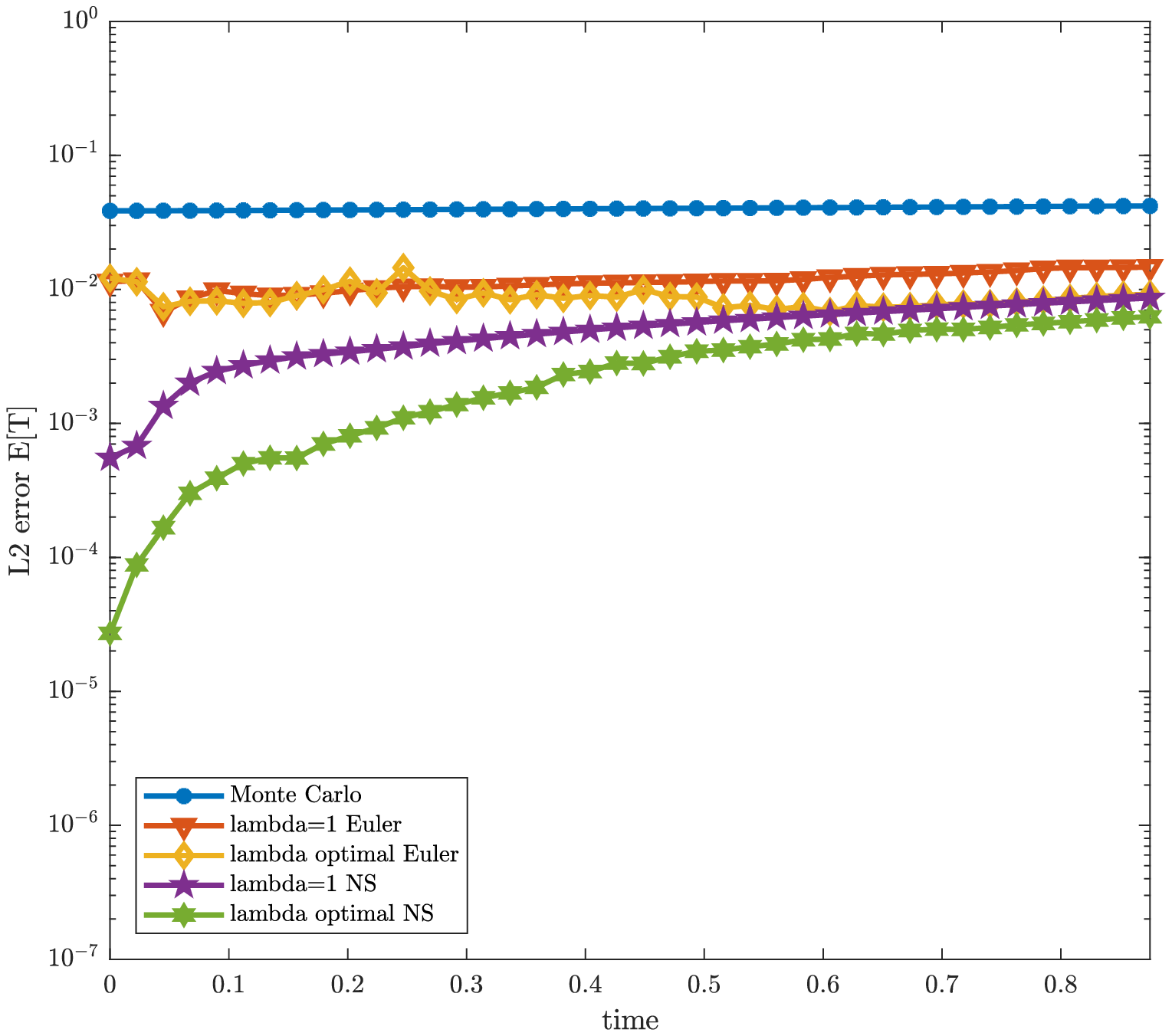}\\
		\caption{MSCV method - Sod test with uncertainty in the initial data. Comparison between the compressible Euler and the Navier-Stokes control variate models. $L_2$ norm of the error for $\EE[\rho]$ (left) and $\EE[T]$ (right) with $M=10$. Top: $\varepsilon=10^{-2}$. Middle: $\varepsilon=10^{-3}$. Bottom: $\varepsilon=5 \times 10^{-4}$.}
		\label{Figure2NS}
	\end{center}
\end{figure}
\subsection{Multi-fidelity MSCV methods}
\subsubsection{The space homogeneous case.}
In this section we extend the control variate strategy to the case in which several low-fidelity models are employed to accelerate the statistical convergence. We start again from the space homogeneous equation \eqref{eq:BH} for sake of clarity. Let us consider ${f}_{1}(z,v,t),\ldots,{f}_{L}(z,v,t)$ approximations of $f(z,v,t)$ furnished by a set of low fidelity models whose properties will be discussed later. We can then define a new random variable as
\be
f^{\lambda_1,\ldots,\lambda_L}(z,v,t)=f(z,v,t)-\sum_{h=1}^L \lambda_h ({f}_{h}(z,v,t)-\EE[{f}_{h}](v,t)).
\label{eq:mcv}
\ee  
Clearly \eqref{eq:mcv} is such that $\EE[f^{\lambda_1,\ldots,\lambda_L}]=\EE[f]$ while the variance of this new variable is given by
\begin{eqnarray}
\var(f^{\lambda_1,\ldots,\lambda_L})&=&\var(f)+\sum_{h=1}^L \lambda_h^2 \var({f}_{h})+\nonumber\\
&&+2\sum_{h=1}^L\lambda_h\left(\sum_{k=1 \atop k\neq h}^L \lambda_k
\cov({f}_{h},{f}_{k}) - \cov(f,{f}_{h})\right).
\end{eqnarray}
The above quantity can be rewritten in a more compact form by introducing the following notations
\[
{\llambda}=(\lambda_1,\ldots,\lambda_L)^T,\quad
{b}=(\cov(f,{f}_{1}),\ldots,\cov(f,{f}_{L}))^T
\]
giving then
\be
\var(f^{\llambda})=\var(f)+\llambda^T  C \llambda - 2\llambda^T b
\label{eq:varmcv}
\ee
where $ C=( c_{ij})$, ${ c_{ij}}=\cov({f}_{i},{f}_{j})$ is the symmetric $L\times L$ covariance matrix. Following the same path of the bi-fidelity case, one can now try to find the set $\Lambda^*$ minimizing the variance of the new variable $f^\Lambda$. This is obtained thanks to the following Theorem (see^^>\cite{DPUQ2} for a proof):
\begin{theorem}
	Assuming the covariance matrix is not singular, the vector
	\be
	\llambda^* = {C}^{-1} b,
	\label{eq:lambdamcv}
	\ee
	minimizes the variance of $f^\llambda$ at the point $(v,t)$ and gives
	\be
	\var(f^{\llambda^*})= \left(1-\frac{b^T(C^{-1})^T b}{\var(f)}\right)\var(f).
	\label{eq:varmvcs}
	\ee
	\label{th:2c}
\end{theorem}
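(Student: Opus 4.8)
The plan is to treat $\var(f^{\llambda})$ as given by \eqref{eq:varmcv} as a quadratic function of the vector $\llambda\in\RR^L$ and to minimize it by elementary multivariable calculus, exactly mirroring the one-dimensional computation in the proof of Theorem \ref{th:1}. Writing
\[
g(\llambda)=\var(f)+\llambda^T C \llambda - 2\llambda^T b,
\]
I would first compute the gradient $\nabla_{\llambda} g(\llambda) = 2C\llambda - 2b$, using that $C$ is symmetric. Setting the gradient to zero gives the linear system $C\llambda = b$, which, under the stated nonsingularity assumption on the covariance matrix $C$, has the unique solution $\llambda^* = C^{-1}b$, establishing \eqref{eq:lambdamcv}. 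This is the only stationary point.

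Next I would argue that this stationary point is indeed the global minimum. The Hessian of $g$ is $2C$, and $C$ is a covariance matrix, hence symmetric positive semidefinite; combined with the nonsingularity hypothesis it is in fact positive definite, so $g$ is strictly convex and $\llambda^*$ is the unique global minimizer. (Alternatively one can complete the square: $g(\llambda) = (\llambda-\llambda^*)^T C (\llambda-\llambda^*) + \var(f) - b^T C^{-1} b$, which exhibits the minimum directly.) Substituting $\llambda^* = C^{-1}b$ back into $g$ and using $\llambda^{*T} C \llambda^* = b^T (C^{-1})^T C C^{-1} b = b^T (C^{-1})^T b$ and $\llambda^{*T} b = b^T (C^{-1})^T b$ (again by symmetry of $C$, so $(C^{-1})^T = C^{-1}$), one obtains
\[
\var(f^{\llambda^*}) = \var(f) - b^T (C^{-1})^T b = \left(1 - \frac{b^T (C^{-1})^T b}{\var(f)}\right)\var(f),
\]
which is \eqref{eq:varmvcs}.

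There is no real obstacle here: the argument is a routine quadratic optimization, and the only subtlety worth flagging is the distinction between positive semidefiniteness of $C$ (automatic, since it is a genuine covariance matrix of the low-fidelity models $f_1,\dots,f_L$) and positive definiteness (which requires the nonsingularity assumption and rules out the degenerate situation where the low-fidelity models are affinely dependent as random variables). One should also keep in mind that, as in Theorems \ref{th:1} and \ref{pr:2}, all quantities $C$, $b$, $\var(f)$ and hence $\llambda^*$ are evaluated pointwise at $(v,t)$, so the minimization is carried out independently at each phase-space point; this is implicit in the statement and needs no separate argument.
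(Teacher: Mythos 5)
Your proof is correct and is the natural route: the paper does not reproduce a proof of this theorem (it defers to the cited reference), but your quadratic-minimization argument is exactly the multivariate version of the differentiation argument the paper gives for the scalar case in Theorem \ref{th:1}, and the substitution yielding $\var(f^{\llambda^*})=\var(f)-b^T C^{-1}b$ matches \eqref{eq:varmvcs} since $C$ is symmetric. No gaps.
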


Having the above result in mind, one can then introduce the control variate estimator based on \eqref{eq:mcv} which takes the form
\be
E^{\llambda}_M[f](v,t) = E_M[f](v,t)-\sum_{h=1}^L \lambda_h \left(E_{M}[f_h](v,t)-{\bf f}_{h}(v,t)\right),
\label{eq:mscvg}
\ee
where ${\bf f}_{h}(v,t)$ is an accurate approximation of $\EE[f_h](v,t)$. In the above formula, we also assumed to have at disposal $M$ i.i.d. samples from the solution $f(z,v,t)$ and from the control variate functions $f_h(z,v,t)$ for $h=1,\ldots,L$. 
Moreover, in practice, as done for the bi-fidelity case, to estimate the value of the vector $\llambda^*$  we can use directly the Monte Carlo samples as in the bi-fidelity case. 
The resulting multi-fidelity MSCV method is summarized in Algorithm \ref{alg:2}.

\begin{algorithm2e}[!htbp]
	\begin{enumerate}
		\item {\bf Sampling}: 
		Sample $M$ i.i.d. initial data from
		the random initial data $f_0$ and approximate these over the grid $\Delta v$. Denote these samples by $f_{\Delta v}^{k,0}$, $k=1,\ldots,M$.
		\item {\bf Solving}: 
		\begin{enumerate}
			\item 
			For each control variate and for each realization of the random
			input data $f_{\Delta v}^{k,0}$, $k=1,\ldots,M$, the resulting control variate model is solved with mesh width $\Delta v$. We denote the resulting ensemble of deterministic solutions for $h=1,\ldots,L$ at time $t^n$ by
			\[
			f_{h,\Delta v}^{k,n},\quad k=1,\ldots,M.
			\]
			\item For each realization $f_{\Delta v}^{k,0}$, $k=1,\ldots,M$ 
			the underlying kinetic equation (\ref{eq:BH}) is solved  with mesh width $\Delta v$. We denote the solution at time $t^n$ by $f^{k,n}_{\Delta {\w}}$, $k=1,\ldots,M$. 
		\end{enumerate}
		\item {\bf Estimating}: 
		\begin{enumerate}
			\item
			Estimate the optimal vector of values $\llambda^*$ solving
			\be
			C^n_M \llambda^{*,n} = b^n_M,
			\label{eq:ill}
			\ee
			where $(C^n_M)_{ij} = \cov_M(f_{i,\Delta v}^{n},f_{j,\Delta v}^{n})$ and $(b^n_M)_i=\cov_M(f_{\Delta v}^{n},f_{i,\Delta v}^{n})$.
			\item
			Compute the expectation of the random solution with the control variate estimator
			\be
			E^{\llambda^*}_M[f^n_{\Delta v}] = \frac1{M} \sum_{k=1}^M f^{k,n}_{\Delta {\w}}-\sum_{h=1}^L \lambda^{*,n}_{h} \left(\frac1{M} \sum_{k=1}^M {f}_{h,\Delta {\w}}^{k,n}-{\bf f}_{h,\Delta v}^n\right).
			\label{mcest2b}
			\ee
		\end{enumerate}
	\end{enumerate}
	\caption{Multi-fidelity MSCV method}
	\label{alg:2}
\end{algorithm2e}

An interesting result is obtained introducing the vector $F=(F_1,\ldots,F_L)^T$, such that $F_h=f_h-\EE[f_h]$. This gives $\EE[F_h]=0$, $h=1,\ldots,L$ and equation \eqref{eq:mcv} reads 
\be
f^{\lambda_1,\ldots,\lambda_L}(z,v,t)=f(z,v,t)-\sum_{h=1}^L \lambda_h F_h(z,v,t).
\label{eq:mcv2}
\ee 
This permits to conclude that the variance of $f^{\llambda^*}$ is reduced to zero if $f$ is in the span of the set of functions  $F_1,\ldots,F_L$. Using now Gram--Schmidt orthogonalization 
and observing that
\[
\langle F_h,F_k \rangle=\cov(f_h,f_k)=\cov(F_h,F_k), \quad h,k=1,\ldots, L,
\]
we can also construct the vector $G=(G_1,\ldots,G_L)^T$, with orthogonal components, $\langle G_h,G_k \rangle = 0$ for $h\neq k$, as follows^^>\cite{GV}
\be
g_h = f_h - \sum_{j=1}^{h-1} \frac{\cov(g_j,f_h)}{\var(g_j)} g_j,\qquad h=1,\ldots,L,
\label{eq:gs}
\ee
and define $G_h=g_h-\EE[g_h]$, such that $\EE[G_h]=0$, $h=1,\ldots,L$.
Then, we may try to minimize the variance of the random variable 
\be
f^{\gamma_1,\ldots,\gamma_L}(z,v,t)=f(z,v,t)-\sum_{h=1}^L \gamma_h G_h(z,v,t),
\label{eq:mcv3}
\ee
which now using the orthogonality property reads 
\[
\var(f^{\gamma_1,\ldots,\gamma_L})=\var(f)+\sum_{h=1}^L \gamma_h^2 \var(g_h)-2\sum_{h=1}^L \gamma_h\cov(f,g_h).
\] 
Given the above arguments, one can prove the following result^^>\cite{DPUQ2}:
\begin{theorem}
	If the control variate vector $G=(G_1,\ldots,G_L)^T$ in \eqref{eq:mcv3} has orthogonal components, $\langle G_h,G_k \rangle = 0$ for $h\neq k$, then if $\langle G_h,G_h \rangle \neq 0$ the vector $\Gamma^*$ with components
	\be
	\gamma^*_h = \frac{\cov(f,g_h)}{\var(g_h)},\qquad h=1,\ldots,L,
	\label{eq:gammah}
	\ee
	minimizes the variance of $f^\Gamma$ at the point $(v,t)$ and gives
	\be
	\var(f^{\Gamma_*})=\left(1-\sum_{h=1}^L \rho^2_{f,g_h}\right)\var(f)
	\label{eq:var3}
	\ee
	where $\rho_{f,g_h}\in [-1,1]$ is the correlation coefficient between $f$ and $g_h$. 
\end{theorem}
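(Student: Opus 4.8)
The plan is to reduce the $L$-dimensional minimization to $L$ independent scalar minimizations by exploiting orthogonality, in complete analogy with the bi-fidelity argument of Theorem~\ref{th:1}. The starting point is the variance expression already recorded just before the statement,
\[
\var(f^{\gamma_1,\ldots,\gamma_L})=\var(f)+\sum_{h=1}^L \gamma_h^2\,\var(g_h)-2\sum_{h=1}^L \gamma_h\,\cov(f,g_h).
\]
The crucial remark is that the hypothesis $\langle G_h,G_k\rangle=0$ for $h\neq k$, together with $\cov(g_h,g_k)=\cov(G_h,G_k)=\langle G_h,G_k\rangle$, means that no cross term $\gamma_h\gamma_k$ with $h\neq k$ survives: the right-hand side is a sum of $L$ one-variable quadratics $\psi_h(\gamma_h)=\gamma_h^2\,\var(g_h)-2\gamma_h\,\cov(f,g_h)$, so minimizing $\var(f^\Gamma)$ over $\Gamma\in\RR^L$ amounts to minimizing each $\psi_h$ separately. (Equivalently, Gram--Schmidt has diagonalized the covariance matrix $C$ of Theorem~\ref{th:2c}, turning $\llambda^*=C^{-1}b$ into the explicit componentwise formula.)

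Next I would minimize $\psi_h$ by elementary calculus. Since $\langle G_h,G_h\rangle\neq 0$, i.e. $\var(g_h)>0$, the equation $\psi_h'(\gamma_h)=2\gamma_h\,\var(g_h)-2\,\cov(f,g_h)=0$ has the unique solution $\gamma_h^*=\cov(f,g_h)/\var(g_h)$, and $\psi_h''=2\,\var(g_h)>0$ identifies it as the unique global minimum. Equivalently, the Hessian of $\var(f^\Gamma)$ is the diagonal, positive definite matrix $2\,\mathrm{diag}(\var(g_1),\ldots,\var(g_L))$, so the vector $\Gamma^*$ with the stated components is the unique minimizer at the point $(v,t)$.

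Finally I would substitute $\gamma_h^*$ back: the $h$-th term contributes $\cov(f,g_h)^2/\var(g_h)-2\,\cov(f,g_h)^2/\var(g_h)=-\cov(f,g_h)^2/\var(g_h)$, hence
\[
\var(f^{\Gamma_*})=\var(f)-\sum_{h=1}^L\frac{\cov(f,g_h)^2}{\var(g_h)}=\Bigl(1-\sum_{h=1}^L\rho_{f,g_h}^2\Bigr)\var(f),
\]
using $\rho_{f,g_h}^2=\cov(f,g_h)^2/(\var(f)\,\var(g_h))$ and $\cov(f,g_h)=\cov(f,G_h)$. This is nothing but the Pythagorean identity for the $L^2(\Omega)$-orthogonal projection of $f-\EE[f]$ onto $\mathrm{span}\{G_1,\ldots,G_L\}$, and, since the left-hand side is a variance hence nonnegative, it automatically yields Bessel's inequality $\sum_{h}\rho_{f,g_h}^2\le 1$, consistent with $\rho_{f,g_h}\in[-1,1]$. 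There is essentially no obstacle here: the only points needing a word of care are that orthogonality genuinely kills all coupling terms, so that term-by-term minimization is legitimate, and that $\var(g_h)\neq 0$ makes each $\gamma_h^*$ well defined — both immediate from the hypotheses.
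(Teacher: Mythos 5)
Your proposal is correct and follows exactly the route the paper intends: it starts from the decoupled variance expression the paper derives just before the statement, exploits orthogonality to reduce the problem to $L$ independent scalar quadratics, and minimizes each by differentiation in direct analogy with Theorem \ref{th:1}, with the back-substitution yielding \eqref{eq:var3}. The paper itself only cites the reference for the proof, but the argument you give is the standard one and all the steps (decoupling, well-posedness from $\var(g_h)\neq 0$, and the Pythagorean/Bessel interpretation) are sound.
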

Finally, estimating the orthogonal set of control variates using $M$ samples by
\be
E^{\Gamma}_M[f](v,t) = E_M[f](v,t)-\sum_{h=1}^L \gamma_h \left(E_{M}[g_h](v,t)-{\bf g}_h(v,t)\right),
\label{eq:mscvg2}
\ee
where ${\bf g}_h(v,t)=\EE[g_h](v,t)$ or its accurate approximation, 
in combination with a deterministic solver satisfying \eqref{eq:det}, one obtains the following result^^>\cite{DPUQ1,MSS}:
\begin{proposition}
	Let consider \eqref{eq:BH} with random and sufficiently regular initial data $f({z},{\w},0)=f_0({z},{\w})$. The multi-fidelity MSCV method \eqref{eq:mscvg2} with the optimal values \eqref{eq:gammah} satisfies the error bound 
	\bea
	\|\EE[f](\cdot,t^n)-{E}^{\Gamma^*}_M[f^n_{\Delta v}]\|_{{\LHBi}} \leq  C\left\{\sigma_{f^{\Gamma^*}} M^{-1/2}+\Delta v^{q_2}\right\} 
	\label{eq:errHMMC2b}
	\eea
	where $\sigma_{f^{\Gamma^*}}=\left\|\left(1-\sum_{h=1}^L \rho^2_{f,g_h}\right)^{1/2}\var(f)^{1/2}\right\|_{\LH}$,
	and $C>0$ depends on the final time and on the initial data. 
\end{proposition}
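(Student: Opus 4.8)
The plan is to follow the pattern of the bi-fidelity estimates \eqref{eq:errHMMC}--\eqref{eq:errHMMC2} and split the total error into a \emph{deterministic} part coming from the velocity discretization and a \emph{statistical} part coming from the Monte Carlo sampling. Concretely, I would insert the quantity $\EE[f^n_{\Delta v}]$, which carries no randomness in $z$, and use the triangle inequality:
\begin{align*}
\|\EE[f](\cdot,t^n)-E^{\Gamma^*}_M[f^n_{\Delta v}]\|_{\LHBi}
&\le \|\EE[f](\cdot,t^n)-\EE[f^n_{\Delta v}]\|_{\LHBi}\\
&\quad{}+\|\EE[f^n_{\Delta v}]-E^{\Gamma^*}_M[f^n_{\Delta v}]\|_{\LHBi},
\end{align*}
and then bound the two summands separately.

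For the first summand I would exploit that $\EE[f](\cdot,t^n)-\EE[f^n_{\Delta v}]$ is deterministic, so the $\LHBi$-norm collapses onto the $\LH$-norm, and then apply Minkowski's integral inequality (equivalently, Jensen for the convex $L^1_2$-norm) to dominate it by $\EE[\|f(\cdot,t^n)-f^n_{\Delta v}\|_{\LH}]$; the space-homogeneous instance of the deterministic estimate \eqref{eq:det} (no $\Delta x$, hence only the $\Delta v^{q_2}$ contribution) then gives $C\,\Delta v^{q_2}$, with $C$ uniform in $z$ thanks to the regularity assumption on $f_0$. For the second, statistical summand I would first record the unbiasedness of the estimator \eqref{eq:mscvg2}, exactly as in Lemma~\ref{le:1}: setting $Y^k:=f^{k,n}_{\Delta v}-\sum_{h=1}^L\gamma^*_h(g^{k,n}_{h,\Delta v}-\mathbf{g}_h)$ with $\mathbf{g}_h=\EE[g^n_{h,\Delta v}]$ the (accurately approximated) mean of the $h$-th discretized control variate, the $Y^k$ are i.i.d., their sample mean is $E^{\Gamma^*}_M[f^n_{\Delta v}]$, and $\EE[Y]=\EE[f^n_{\Delta v}]$. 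The fundamental Monte Carlo estimate then identifies this summand with $M^{-1/2}\|\var((f^n_{\Delta v})^{\Gamma^*})^{1/2}\|_{\LH}$, where $(f^n_{\Delta v})^{\Gamma^*}$ is the discretized control-variate variable (subtracting the deterministic constants $\mathbf{g}_h$ leaves its variance unchanged). Invoking \eqref{eq:var3} — the preceding theorem on orthogonal control variates — applied to the Gram--Schmidt family \eqref{eq:gs} built from the discretized low-fidelity solutions, I would write $\var((f^n_{\Delta v})^{\Gamma^*})=(1-\sum_{h=1}^L\rho^2_{f,g_h})\var(f)$ up to an $O(\Delta v^{q_2})$ correction, and finally use $M^{-1/2}\le 1$ to absorb that correction into the $\Delta v^{q_2}$ term; this yields the asserted bound with $\sigma_{f^{\Gamma^*}}=\|(1-\sum_h\rho^2_{f,g_h})^{1/2}\var(f)^{1/2}\|_{\LH}$.

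The hard part will be the last step of the statistical estimate: quantifying how the velocity discretization perturbs the variance-reduction factor, i.e.\ showing that the discretized data $\gamma^*_h$, $g^n_{h,\Delta v}$ and $\var(f^n_{\Delta v})$ entering $\var((f^n_{\Delta v})^{\Gamma^*})^{1/2}$ differ from their continuous counterparts $\gamma^*_h$, $g_h$, $\var(f)$ by $O(\Delta v^{q_2})$ in the relevant norm. This reduces to a stability estimate for the covariance functionals and for the Gram--Schmidt orthogonalization under $\LH$-perturbations controlled by \eqref{eq:det}, and requires the boundedness of $f$ and of the $g_h$ guaranteed by the regularity hypothesis. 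As in \eqref{eq:errHMMC}--\eqref{eq:errHMMC2}, I would state explicitly that the additional statistical errors coming from estimating the optimal weights $\Gamma^*$ from the $M$ samples and from approximating the expectations $\mathbf{g}_h$ are neglected, since including them only contributes lower-order terms that vanish as $M\to\infty$.
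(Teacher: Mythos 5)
Your decomposition into a deterministic discretization error (bounded via \eqref{eq:det} with only the $\Delta v^{q_2}$ term in the space-homogeneous case) plus a statistical error (unbiasedness as in Lemma~\ref{le:1}, the fundamental Monte Carlo estimate, and the variance-reduction formula \eqref{eq:var3}) is exactly the argument the paper relies on; the paper itself states this proposition without proof, citing the references where the analogous bounds \eqref{eq:MCest} and \eqref{eq:errHMMC} are established by precisely this splitting. You also correctly identify, and handle in the same way the paper does, the fact that the errors from estimating $\Gamma^*$ and the control-variate expectations are neglected, so the proposal is sound and consistent with the intended proof.
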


\subsubsection{A leading example: two control variates}
To better exemplify the multi-fidelity approach, here we give the details of the method in the case $L=2$, where $f_1 = f_0$, the initial data, and $f_2 = f^\infty$, the stationary state. In this case we know that $f$ is in the span of the control variates at $t=0$ and as $t\to\infty$. A straightforward computation shows that the optimal values $\lambda^*_1$ and $\lambda_2^*$ are given by
\begin{eqnarray}
	\nonumber
	\lambda_1^* &=& \frac{\var(f^\infty)\cov(f,f_0)-\cov(f_0,f^\infty)\cov(f,f^\infty)}{\Delta},\\[-.2cm]
	\label{eq:l12}
	\\[-.2cm]
	\nonumber
	\lambda_2^* &=& \frac{\var(f_0)\cov(f,f^\infty)-\cov(f_0,f^\infty)\cov(f,f_0)}{\Delta},
\end{eqnarray}
where $\Delta = \var(f_0)\var(f^\infty)-\cov(f_0,f^\infty)^2\neq 0$.
Using $M$ samples for both control variates, the optimal estimator reads
\begin{eqnarray}
E^{\lambda^*_1,\lambda^*_2}_M[f](v,t) &=&E_M[f](v,t)-\lambda^*_1 \left(E_M[f_0](v)-{\bf f}_0(v)\right)+\nonumber
\\[-.2cm]
\label{eq:nest2b}
	\\[-.3cm]
	\nonumber
&&-\lambda^*_2 \left(E_M[f^\infty](v)-{\bf f}^\infty(v)\right).
\end{eqnarray}
Now, at $t=0$ since $f(z,v,0)=f_0(z,v)$ we clearly have $\lambda_1^*=1$ and $\lambda_2^*=0$ so that the estimator \eqref{eq:nest2b} is exact \[E^{1,0}_M[f](v,0)={\bf f}_0(v).\] Moreover, 
by the same arguments as in Theorem \ref{th:1}, for large times since $f(z,v,t)\to f^\infty(z,v)$ from \eqref{eq:l12} we get 
\[
\lim_{t\to\infty} \lambda_1^* = 0, \qquad \lim_{t\to\infty} \lambda_2^* = 1,
\]
and thus, the variance of the estimator vanishes asymptotically in time
\[
\lim_{t\to\infty} E^{\lambda^*_1,\lambda^*_2}_M[f](v,t) = E^{0,1}_M[f](v)={\bf f}^\infty(v).
\]
We want now to emphasize the relation between this last example and the bi-fidelity case based on the BGK model discussed in \eqref{eq:nest2}. This latter can be rewritten in the form \eqref{eq:nest2b}
\[
E^{\lambda^*}_M[f](v,t) =E_M[f](v,t)-\tilde\lambda^*_1 \left(E_M[f_0](v)-{\bf f}_0(v)\right)-\tilde\lambda^*_2 \left(E_M[f^\infty](v)-{\bf f}^\infty(v)\right)
\]
where
\[
\tilde\lambda_1^* = e^{-t} \lambda^*,\qquad \tilde\lambda_2^* = (1-e^{-t}) \lambda^*.
\]
Therefore, the single control variate based on the BGK model can be understood as a suboptimal solution to the minimization problem for the control variates $f_0$ and $f^\infty$. 
In particular, if the solution $f$ has the form \eqref{eq:BGKexa}, namely the full model is the BGK model, then it is in the span generated by $f_0$ and $f^\infty$ and we obtain $\lambda_1^*=\tilde\lambda_1^*$ and  $\lambda_2^*=\tilde\lambda_2^*$. 

We now compare the case of the single control variate given by the BGK model discussed in \ref{sec:MSCV} with the case of the two control variate approach discussed here. The number of samples used to compute the expected solution for the Boltzmann equation is $M=100$ while the expected values of the control variates can be evaluated offline as in the BGK case. The initial condition is the two bumps problem with uncertainty \eqref{eq:twobumps} and {the same discretization parameters as in Figure \ref{Figure1} have been used.}
In Figure \ref{Figure3}, we report the $L_2$ error with respect to the random variable in the computation of the expected value for the distribution function $\EE[f](v,t)$ for the different methods. We observe that the multi-fidelity method with two control variates permits to gain one order of accuracy with respect to the standard bi-fidelity approach.

\begin{figure}
	\begin{center}
		\includegraphics[width=.6\textwidth]{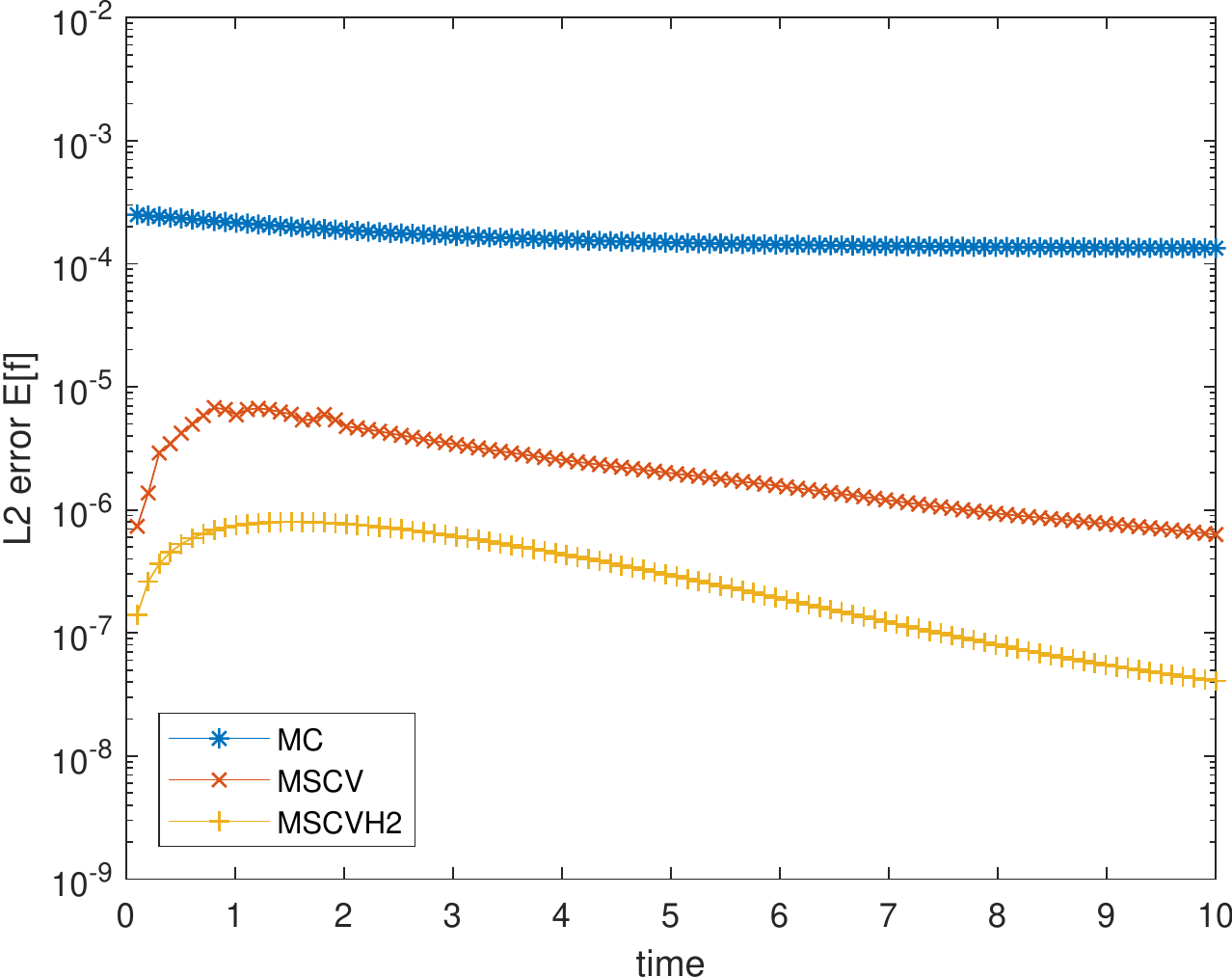}
		\caption{{Multi-fidelity MSCV method - space homogeneous case}. $L_2$ norm of the error in time for the expectation of the distribution function for the MC method, the bi-fidelity MSCV method based on the BGK solution and the multi-fidelity MSCV2 method based on the two control variates $f_0$ and $f^\infty$.}
		\label{Figure3}
	\end{center}
\end{figure}

\subsubsection{The space non homogeneous case.}
For non homogeneous problems, as in the bi-fidelity case summarized in Section \ref{sec:MSCVnh}, we cannot assume to know offline the expectation of the control variate. Therefore, the control variates have a non-negligible computational cost. Each control variate, in fact, acts at a certain scale and requires the numerical solution of a suitable time dependent model in the phase space. The multi-fidelity MSCV estimator \eqref{eq:mscvg}, based on the multi fidelity control variates $f_1(z,x,v,t)$, $\ldots$, $f_L(z,x,v,t)$ for the solution $f(z,x,v,t)$ to the space non homogeneous problem \eqref{eq:Boltzmann}, reads 
\begin{eqnarray}
	E^{\Lambda}_{M,M_E}[f](x,v,t) &=& E_M[f](x,v,t)+\nonumber\\[-.1cm]
	\label{eq:mscvgh}
	\\[-.4cm]
	\nonumber
	&&-\sum_{h=1}^L \lambda_h \left(E_{M}[f_h](x,v,t)-E_{M_{E}}[f_h](x,v,t)\right),
\end{eqnarray}
where $M_E \gg M$ samples have been used to estimate the expectations of the control variates $\EE[{f}_{h}(z,v,t)]$. As in the bi-fidelity case, minimization of the variance of \eqref{eq:mscvgh},
leads to the optimal values
\be
\tilde\Lambda^* = \frac{M_E}{M+M_E}\Lambda^*,
\label{eq:lambdamcv2}
\ee
with $\Lambda^*$ given by \eqref{eq:lambdamcv}. In the sequel we assume 
$M_E \gg M$ so that $\tilde\Lambda^*\approx \Lambda^*$.

We can apply the Gram--Schmidt orthogonalization \eqref{eq:gs} and estimate 
\begin{eqnarray}
	E^{\Gamma}_{M,M_E}[f](v,t) &=& E_M[f](x,v,t)+\nonumber
	\\[-.1cm]
	\label{eq:mscvg2ih}
	\\[-.4cm]
	\nonumber
&&-\sum_{h=1}^L \gamma_h \left(E_{M}[g_h](x,v,t)-E_{M_E}[g_h](x,v,t)\right),
\end{eqnarray}
with the optimal vector of values $\Gamma^*$ defined by \eqref{eq:gammah}.
For the estimator \eqref{eq:mscvg2ih} we have the following generalization of the error estimate \eqref{eq:errHMMC2b}.
\begin{proposition}
	Consider a deterministic scheme which satisfies \eqref{eq:det} for problem \eqref{eq:Boltzmann} with random and sufficiently regular initial data $f(z,x,{\w},0)=f_0(z,x,{\w})$.
	Then, the multi-fidelity MSCV estimate defined in \eqref{eq:mscvg2ih} with the optimal values given by \eqref{eq:gammah} satisfies the error bound 
	\bea
	\nonumber
	&&\|\EE[f](\cdot,t^n)-{E}^{\Gamma^*}_{M,M_E}[f^n_{\Delta x,\Delta v}]\|_{\LLBi} \\[-.2cm]
	\label{eq:errHMMC2bb}
	\\
	\nonumber
	&& \hskip 2cm
	\leq  C\left\{\sigma_{f^{\Gamma^*}} M^{-1/2}+\tau_{f^{\Gamma^*}} M_{E}^{-1/2}+\Delta {x}^p+\Delta v^q\right\} 
	\eea
	with 
	\begin{eqnarray*}
	\sigma^2_{f^{\Gamma^*}}&=&\left\|\left(1-\sum_{h=1}^L \rho^2_{f,g_h}\right)\var(f)\right\|_{\LL},\\ 
	\tau^2_{f^{\Gamma^*}}&=&\left\|\sum_{h=1}^L \rho^2_{f,g_h}\var(f)\right\|_{\LL},
	\end{eqnarray*}
	and $C>0$ depends on the final time and on the initial data. 
\end{proposition}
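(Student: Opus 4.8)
The plan is to follow the template of the bi-fidelity estimate \eqref{eq:errHMMC2} and of its space homogeneous multi-fidelity analogue \eqref{eq:errHMMC2b}, splitting the error at a fixed time level $t^n$ into a deterministic phase-space discretization contribution and a statistical Monte Carlo contribution. First I would write
\[
\EE[f](\cdot,t^n)-E^{\Gamma^*}_{M,M_E}[f^n_{\Delta x,\Delta v}]=\bigl(\EE[f](\cdot,t^n)-\EE[f^n_{\Delta x,\Delta v}]\bigr)+\bigl(\EE[f^n_{\Delta x,\Delta v}]-E^{\Gamma^*}_{M,M_E}[f^n_{\Delta x,\Delta v}]\bigr),
\]
take the $\LLBi$ norm and apply the triangle inequality. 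The first bracket does not depend on $z$, so its $\LLBi$ norm coincides with its $\LL$ norm, and by Jensen's inequality it is bounded by $\EE\bigl[\|f(\cdot,t^n)-f^n_{\Delta x,\Delta v}\|_{\LL}\bigr]$, which the deterministic estimate \eqref{eq:det} controls by $C(\Delta x^{q_1}+\Delta v^{q_2})$, i.e. the $\Delta x^p,\Delta v^q$ terms of the statement.

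For the second bracket I would first note that, exactly as in Lemma \ref{le:1}, the estimator \eqref{eq:mscvg2ih} is unbiased for $\EE[f^n_{\Delta x,\Delta v}]$, since $E_M[g_h]$ and $E_{M_E}[g_h]$ share the same mean and the control-variate corrections vanish in expectation; in particular the phase-space discretization of the low-fidelity models is bias-free and does not enter this term, affecting only the variance. Hence its $\LLBi$ norm equals $\bigl\|\,\var\!\bigl(E^{\Gamma^*}_{M,M_E}[f^n_{\Delta x,\Delta v}]\bigr)^{1/2}\bigr\|_{\LL}$, where $\var$ is taken over the sample randomness. Writing the estimator as the sum of the $M$-sample block $E_M[f]-\sum_{h=1}^L\gamma^*_h E_M[g_h]$ and the \emph{independent} $M_E$-sample block $\sum_{h=1}^L\gamma^*_h E_{M_E}[g_h]$, independence of the two ensembles makes the variance split additively:
\[
\var\!\bigl(E^{\Gamma^*}_{M,M_E}[f]\bigr)=\frac1M\,\var\!\Bigl(f-\sum_{h=1}^L\gamma^*_h g_h\Bigr)+\frac1{M_E}\,\var\!\Bigl(\sum_{h=1}^L\gamma^*_h g_h\Bigr).
\]
Using the orthogonality $\cov(g_h,g_k)=0$ for $h\neq k$ produced by the Gram--Schmidt step \eqref{eq:gs} together with $\gamma^*_h=\cov(f,g_h)/\var(g_h)$ from \eqref{eq:gammah}, the first variance equals $\bigl(1-\sum_h\rho^2_{f,g_h}\bigr)\var(f)$ — this is precisely \eqref{eq:var3} — while the second equals $\sum_h(\gamma^*_h)^2\var(g_h)=\sum_h\rho^2_{f,g_h}\var(f)$.

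To conclude I would substitute these two expressions, take the pointwise square root, use $\sqrt{a+b}\le\sqrt a+\sqrt b$ and the triangle inequality for $\|\cdot\|_{\LL}$ to separate the $M^{-1/2}$ and $M_E^{-1/2}$ scalings; this yields precisely the two statistical terms $\sigma_{f^{\Gamma^*}}M^{-1/2}$ and $\tau_{f^{\Gamma^*}}M_E^{-1/2}$ with $\sigma_{f^{\Gamma^*}}$, $\tau_{f^{\Gamma^*}}$ as in the statement, and adding the deterministic contribution and absorbing constants gives \eqref{eq:errHMMC2bb}. The argument is mostly bookkeeping once Theorem \ref{th:2c}, its orthogonalized version \eqref{eq:var3} and the unbiasedness of \eqref{eq:mscvg2ih} are available; the two points that really need care are the independence of the $M$- and $M_E$-sample ensembles — without it the variance would not split cleanly into the two contributions — and the fact that, as in the bi-fidelity case \eqref{eq:errHMMC2}, one tacitly identifies $\var(f^n_{\Delta x,\Delta v})$ with $\var(f)$ and neglects the lower-order error coming from replacing the exact $\Gamma^*$ by its empirical estimate $\Gamma^*_M$; tracking that additional term would be the only genuine obstacle to a fully rigorous statement.
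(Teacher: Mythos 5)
Your argument is correct and follows essentially the same route the paper takes: the paper states this proposition without a detailed proof, presenting it as the direct generalization of \eqref{eq:errHMMC2} and \eqref{eq:errHMMC2b}, whose derivations rest on exactly the ingredients you use — the deterministic/statistical splitting with \eqref{eq:det}, unbiasedness, independence of the $M$- and $M_E$-sample ensembles so that the variance splits additively, and the orthogonality from \eqref{eq:gs} combined with \eqref{eq:gammah} to identify $\sigma_{f^{\Gamma^*}}$ and $\tau_{f^{\Gamma^*}}$. Your closing caveats (neglecting the empirical estimation of $\Gamma^*$ and the $M_E/(M+M_E)$ correction under $M_E\gg M$) are precisely the simplifications the paper itself acknowledges.
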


\newpage
\subsection{Hierarchical multi-fidelity MSCV methods}\label{sec:multil}
\subsubsection{The space homogeneous case}
We formulate in this part a recursive construction of the multiple control variate estimator \eqref{eq:mscvg} based on the use of several low-fidelity models. 

To this aim, let us assume that the control variates $f_1,\ldots,f_L$ represent kinetic models with an increasing level of fidelity. Under this assumption the control variate $f_1$ represents the less accurate model whereas the control variate $f_L$ is the closer model to the high fidelity model $f$.

To start with, we estimate $\EE[f]$ with $M_L$ samples using $f_L$ as control variate
\[
\EE[f]\approx E_{M_L}[f]-{\hat \lambda}_L\left(E_{M_L}[f_L]-\EE[f_L]\right).
\]
Next, to estimate of $\EE[f_L]$ we use $M_{L-1}\gg M_L$ samples and consider $f_{L-1}$ as control variate 
\[
\EE[f_L]\approx E_{M_{L-1}}[f_L]-{\hat \lambda}_{L-1}\left(E_{M_{L-1}}[f_{L-1}]-\EE[f_{L-1}]\right).
\]
Similarly, in a recursive way we can construct estimators for the remaining expectations of the control variates $\EE[f_{L-2}],\EE[f_{L-3}],\ldots,\EE[f_2]$ using respectively $M_{L-3}\ll M_{L-4}\ll\ldots \ll M_1$ samples
until 
\[
\EE[f_2]\approx E_{M_{1}}[f_2]-{\hat \lambda}_{1}\left(E_{M_{1}}[f_{1}]-\EE[f_{1}]\right),
\]
and we stop with the final estimate
\[
\EE[f_1]\approx E_{M_0}[f_1],
\]
with $M_0 \gg M_1$. By combining the estimators of each stage together we obtain the hierarchical MSCV estimator
\begin{eqnarray}
	\nonumber
	E_L^{{r,\hat \llambda}}[f] &=& E_{M_L}[f] -{\hat \lambda}_L\left(E_{M_L}[f_L]-E_{M_{L-1}}[f_L]\right.\\
	\nonumber
	&+&{\hat \lambda}_{L-1}\left(E_{M_{L-1}}[f_{L-1}]-E_{M_{L-2}}[f_{L-1}]\right.\\[-.3cm]
	\label{eq:lambdar}
	\\[-.2cm]
	\nonumber
	&\ldots&\\
	\nonumber
	&+&{\hat \lambda}_{1}\left.\left.\left(E_{M_{1}}[f_{1}]-E_{M_{0}}[f_{1}]\right) \ldots\right)\right).
\end{eqnarray}
Now, if we compute the optimal values ${\hat \lambda}_h^*$ independently for each level by ignoring the errors due to the approximations of the various expectations, if $\var(f_{h})\neq 0$, we obtain
\be
{\hat \lambda}_h^* = \frac{\cov(f_{h+1},f_{h})}{\var(f_{h})},\quad h=1,\ldots,L
\label{eq:lambdasr}
\ee
where we used the notation $f_{L+1}=f$. We refer to this set of values as quasi-optimal since they are obtained in the hypothesis in which the expectations are computed without any approximations. Note also that, since the control variates $f_{h+1}$ and $f_h$ are known on the same set of samples $M_h$ the values ${\hat \lambda}^*_h$ can be estimated using \eqref{eq:varm}-\eqref{eq:covm}. 

We now analyze the estimator \eqref{eq:lambdar} in more details. We can recast it in the form 
\be
\begin{split}
E_L^{{\hat \llambda}}[f] &=  E_{M_{L}}[f_{L+1}]-\sum_{h=1}^{L}\lambda_h(E_{M_h}[f_h]-E_{M_{h-1}}[f_h])\\
&=\lambda_1 E_{M_0}[f_1] + \sum_{h=1}^{L} (\lambda_{h+1} E_{M_h}[f_{h+1}]-\lambda_{h} E_{M_{h}}[f_{h}]),
\end{split}
\label{eq:mscvgr}
\ee
where we defined 
\be
\lambda_h=\prod_{j=h}^L \hat\lambda_j,\quad h=1,\ldots,L,\qquad \lambda_{L+1}=1.
\label{eq:lambdah}
\ee
Since by the central limit theorem^^>\cite{Lo77,HH} we have $\var(E_M[f])=M^{-1}\var(f)$, using the independence of the estimators $E_{M_h}[\cdot]$, $h=0,\ldots,L$, the total variance of the estimator \eqref{eq:mscvgr} is 
\[
\begin{split}
\var(E_L^{{\hat \llambda}}[f])&=\lambda_1^2 M_0^{-1} \var(f_1)\\
&+ \sum_{h=1}^{L} M_h^{-1}\left\{\lambda_{h+1}^2\var(f_{h+1})+\lambda_{h}^2\var(f_{h})-2\lambda_{h+1}\lambda_{h}\cov(f_{h+1},f_h)\right\}.
\end{split}
\]
Now, the first order optimality conditions 
\[
\frac{\partial\,\var(E_L^{{\hat \llambda}}[f])}{\partial \lambda_h} = 0,\qquad h=1,\ldots,L
\]
lead to the tridiagonal system for $h=1,\ldots,L$
\be
\begin{split}
\lambda_h \var(f_h) &- \lambda_{h-1} \frac{M_{h}}{M_{h-1}+M_{h}} \cov(f_{h},f_{h-1})\\
& - \lambda_{h+1}\frac{M_{h-1}}{M_{h-1}+M_{h}} \cov(f_{h+1},f_{h}) =0
\label{eq:sys2}
\end{split}
\ee
where we assumed $\lambda_{0}=0$ and $\lambda_{L+1}=1$. 
The system \eqref{eq:sys2} can be rewritten as
\[
\hat\lambda_h \var(f_h)  -
\cov(f_{h+1},f_{h}) =\frac{M_{h}\left(\hat\lambda_{h}\hat\lambda_{h-1}  \cov(f_{h},f_{h-1}) +  \cov(f_{h+1},f_{h})\right)}{M_{h-1}+M_{h}}.
\]
The above expression permits to conclude that the quasi-optimal values computed in \eqref{eq:lambdasr} solves the above system up to $O({M_{h}}/({M_{h-1}+M_{h}}))$ error and thus they are a sufficiently good approximation in the case in which $M_{h-1}\ll M_{h}$. The following Theorem holds true^^>\cite{DPUQ2}
\begin{theorem}
	The vector $\Lambda^*=(\lambda_1^*,\ldots,\lambda_L^*)^T$ solution of \eqref{eq:sys2} minimizes the variance of the estimator \eqref{eq:mscvgr}. In particular the vector $\hat \Lambda^*$ \eqref{eq:lambdasr} of quasi-optimal solutions is such that 
	\be
	\prod_{j=h}^L \hat\lambda^*_j=\lambda^*_h + O\left(\bar{\mu}_h\right),\quad h=1,\ldots,L
	\label{th:ll}
	\ee
	where $\bar{\mu}_h = \displaystyle\max_{h \leq k \leq L}\left\{\displaystyle\frac{M_{k}}{M_{k-1}+M_{k}}\right\}$.
\end{theorem}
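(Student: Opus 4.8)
The claim splits into the optimality of $\Lambda^{*}$ and the approximation bound for $\hat\Lambda^{*}$, and I would prove them in that order. For the first part, use the telescoped form \eqref{eq:mscvgr} together with the mutual independence of the estimators $E_{M_{0}}[f_{1}],\dots,E_{M_{L}}[f]$ (each built on its own sample set) and $\var(E_{M_{h}}[\,\cdot\,])=M_{h}^{-1}\var(\,\cdot\,)$: writing the variance as a function of the free variables $(\lambda_{1},\dots,\lambda_{L})$, with $\lambda_{L+1}=1$ and $f_{L+1}=f$,
\[
V(\lambda_{1},\dots,\lambda_{L})=\frac{\lambda_{1}^{2}}{M_{0}}\var(f_{1})+\sum_{h=1}^{L}\frac{1}{M_{h}}\var\bigl(\lambda_{h+1}f_{h+1}-\lambda_{h}f_{h}\bigr),
\]
a quadratic polynomial whose stationarity conditions $\partial V/\partial\lambda_{h}=0$, after clearing the positive factor $M_{h-1}M_{h}/(M_{h-1}+M_{h})$ (with $M_{0}$ in this role for $h=1$ and $\lambda_{0}:=0$), are exactly the tridiagonal system \eqref{eq:sys2}. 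On a direction $v$ extended by $v_{L+1}=0$ the Hessian gives $v^{T}Hv=2\bigl(v_{1}^{2}M_{0}^{-1}\var(f_{1})+\sum_{h}M_{h}^{-1}\var(v_{h+1}f_{h+1}-v_{h}f_{h})\bigr)\ge0$, so $V$ is convex and any solution of \eqref{eq:sys2} is a global minimizer; if $\var(f_{h})\neq0$ for all $h$ (as is already needed for \eqref{eq:lambdasr}), a short induction — $v_{1}=0$ from the first term, then $v_{h+1}=0$ from $\var(v_{h+1}f_{h+1}-v_{h}f_{h})=0$ — shows $H$ is positive definite, so $\Lambda^{*}$ is unique.

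For the second part put $\tilde\lambda_{h}:=\prod_{j=h}^{L}\hat\lambda_{j}^{*}$, so that $\tilde\lambda_{h}=\hat\lambda_{h}^{*}\tilde\lambda_{h+1}$ with $\tilde\lambda_{L+1}=1$, and note that by \eqref{eq:lambdasr} the identity $\tilde\lambda_{h}\var(f_{h})=\tilde\lambda_{h+1}\cov(f_{h+1},f_{h})$ holds exactly. Inserting $\tilde\Lambda$ into the $h$-th equation of \eqref{eq:sys2} and using this identity leaves, with $\mu_{h}:=M_{h}/(M_{h-1}+M_{h})$, the residual $\mu_{h}\bigl(\tilde\lambda_{h+1}\cov(f_{h+1},f_{h})-\tilde\lambda_{h-1}\cov(f_{h},f_{h-1})\bigr)$ — this is exactly the rearrangement carried out just before the statement — whence (the covariances and the $\hat\lambda_{j}^{*}$ being bounded, see below) $|(A\tilde\Lambda-b)_{h}|\le C\mu_{h}$, where $A\Lambda^{*}=b$ is \eqref{eq:sys2}. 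Subtracting the exact identity above from the $h$-th row of \eqref{eq:sys2} gives
\[
(\lambda_{h}^{*}-\tilde\lambda_{h})\var(f_{h})=(\lambda_{h+1}^{*}-\tilde\lambda_{h+1})\cov(f_{h+1},f_{h})+\mu_{h}\bigl(\lambda_{h-1}^{*}\cov(f_{h},f_{h-1})-\lambda_{h+1}^{*}\cov(f_{h+1},f_{h})\bigr),
\]
hence $|\lambda_{h}^{*}-\tilde\lambda_{h}|\le C\,|\lambda_{h+1}^{*}-\tilde\lambda_{h+1}|+C\mu_{h}$; since $|\lambda_{L}^{*}-\tilde\lambda_{L}|\le C\mu_{L}$ (the $h=L$ case, where $\lambda_{L+1}^{*}=\tilde\lambda_{L+1}=1$), a backward induction on $h$ yields $|\lambda_{h}^{*}-\tilde\lambda_{h}|\le C\bar\mu_{h}$ with $\bar\mu_{h}=\max_{h\le k\le L}\mu_{k}$, which is \eqref{th:ll}.

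What makes $\bar\mu_{h}$ a maximum only over $k\ge h$, rather than over all rows, is visible in the last display: although \eqref{eq:sys2} is genuinely two-point and couples $\lambda_{h}^{*}$ to $\lambda_{h-1}^{*}$, that coupling enters solely through the bounded value $\lambda_{h-1}^{*}$ and with the prefactor $\mu_{h}$, so discrepancies between $\lambda^{*}$ and $\tilde\lambda$ propagate only toward higher indices, never toward lower ones. The remaining work — and the only genuinely delicate point — is to justify the uniform boundedness I used repeatedly: an a priori bound on $\|\Lambda^{*}\|$ and on the $\hat\lambda_{h}^{*}$ that does not deteriorate as the sample sizes vary. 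The former follows from the positive definiteness of $H$ proved in the first part (the matrix of \eqref{eq:sys2} being $H$ left-multiplied by a positive diagonal matrix, hence similar to a symmetric positive definite matrix), but it must be made quantitative in order to control the constant $C$ in the backward induction, in particular its dependence on the depth $L$ through the $L-h$ iterations.
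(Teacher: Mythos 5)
Your proof is correct and follows essentially the same route as the paper, which computes the variance of the telescoped estimator via independence of the $E_{M_h}[\cdot]$, derives the tridiagonal first-order conditions \eqref{eq:sys2}, and rewrites them to exhibit an $O(\mu_h)$ residual for the quasi-optimal values (the paper defers the remaining details to the cited reference, and your convexity argument plus backward induction is the natural completion, correctly explaining why the maximum in $\bar\mu_h$ runs only over $k\geq h$). The boundedness issue you flag at the end is not a real obstruction for the statement as given: for fixed $L$ and fixed second-moment structure the system matrix of \eqref{eq:sys2} tends, as the $\mu_k\to 0$, to an upper bidiagonal matrix with nonzero diagonal entries $\var(f_h)$, so $\Lambda^*$ stays bounded by continuity and only the (unclaimed) uniformity in $L$ would require the quantitative refinement you describe.
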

%

We summarized in Algorithm \ref{alg:3} the details of the hierarchical MSCV method applied to the space homogeneous problem \eqref{eq:BH} in combination with a deterministic solver.

\begin{algorithm2e}[!htbp]
	\begin{enumerate}
		\item {\bf Sampling}: 
		For each control variate $f_h$, we draw a number
		$M_h$ of i.i.d. samples from the random initial data $f_0$ and approximate these over the mesh $\Delta {\w}$. 
		Denote these control variate dependent number of samples for $h=1,\ldots,L$ by
		$$f_{h,\Delta v}^{k,0},\quad k=1,\ldots,M_h$$ and set $f_{\Delta v}^{k,0}=f_{L,\Delta v}^{k,0},\quad k=1,\ldots,M_L$.
		\item {\bf Solving}: 
		\begin{enumerate}
			\item 
			For each control variate and for each realization of the random
			input data $f_{h,\Delta v}^{k,0}$, $k=1,\ldots,M_h$, the resulting model is solved with mesh widths $\Delta v$. We denote the resulting ensemble of deterministic solutions for $h=1,\ldots,L$ at time $t^n$ by
			\[
			f_{h,\Delta v}^{k,n},\quad k=1,\ldots,M_h.
			\]
			\item For each realization $f_{\Delta v}^{k,0}$, $k=1,\ldots,M_L$ 
			the underlying kinetic equation (\ref{eq:Boltzmann}) is solved with mesh widths $\Delta v$. We denote the solution at time $t^n$ by $f^{k,n}_{\Delta {\w}}$, $k=1,\ldots,M_L$. 
		\end{enumerate}
		\item {\bf Estimating}: 
		\begin{enumerate}
			\item
			Estimate the quasi-optimal vector of values $\hat\llambda^*$ as
			\be
			\hat\lambda^{*,n}_{h}=\frac{\cov_{M_h}(f_{h+1,\Delta v}^{n},f_{h,\Delta v}^{n})}{\var_{M_h}(f_{h,\Delta v}^{n})},\quad h=1,\ldots,L
			\label{eq:hlambdam}
			\ee
			where we used the notation $f_{L+1,\Delta v}^{k,n}=f_{\Delta v}^{k,n}$, $k=1,\ldots,M_L$.
			\item
			Compute the expectation of the random solution with the control variate estimator
			\be
			E^{\hat\llambda^*}_L[f^n_{\Delta v}] = \frac1{M_L} \sum_{k=1}^{M_L} f^{k,n}_{\Delta {\w}}-\sum_{h=1}^L \lambda^{*,n}_{h} \left(\frac1{M_h} \sum_{k=1}^{M_h} {f}_{h,\Delta {\w}}^{k,n}-{\bf f}_{h,\Delta v}^n\right),
			\label{mcestr2}
			\ee
			where
			\[
			{\bf f}_{h,\Delta v}^n=\frac1{M_{h-1}} \sum_{k=1}^{M_{h-1}} {f}_{h,\Delta {\w}}^{k,n},\qquad \lambda^{*,n}_h=\prod_{j=h}^L \hat\lambda^{*,n}_j,\quad h=1,\ldots,L. 
			\]
		\end{enumerate}
	\end{enumerate}
	\caption{Hierarchical MSCV method}
	\label{alg:3}
\end{algorithm2e}
Regarding the error bound that we obtain using \eqref{eq:mscvgr} with the values given by \eqref{eq:lambdasr} le us observe that if, at each stage, we denote
\[
{E}^{\hat\lambda_h}_{M_h}[f_h] = E_{M_h}[f_h]-{\hat\lambda_h}\left(E_{M_h}[f_{h-1}]-E_{M_{h-1}}[f^n_{h-1}]\right)
\]
then by the error bound \eqref{eq:errHMMC2} we have
\[
\|\EE[f_h](\cdot,t)-{E}^{\hat\lambda_h^*}_{M_h}[f_{h}](\cdot,t)\|_{{\LHBi}} \leq  C_h\left\{\sigma_h
M_h^{-1/2}+\tau_h M^{-1/2}_{h-1}\right\} 
\]
where $C_h>0$ is a suitable constant and we defined
\be
\sigma_h=\left\|\left(1-\rho^2_{f_h,f_{h-1}}\right)^{1/2}\var(f_h)^{1/2}\right\|_{\LH},\ee
\be \tau_h=\|\rho_{f_h,f_{h-1}}\var(f_h)^{1/2}\|_{\LH}.
\ee
Thus one can prove the following result 
\begin{proposition}
	Consider a deterministic scheme which satisfies \eqref{eq:det} for the solution of \eqref{eq:BH} with random sufficiently regular initial data $f({z},{\w},0)=f_0({z},{\w})$.	
	Then, the hierarchical MSCV estimate defined in \eqref{eq:lambdar} satisfies the error bound 
	\bea
	\nonumber
	&&\|\EE[f](\cdot,t^n)-{E}^{\hat\Lambda_h^*}_{L}[f^n_{\Delta v}]\|_{{\LHBi}} \\[-.2cm]
	\label{eq:errREC2}
	\\[-.2cm]
	\nonumber
	&& \hskip 4cm \leq C \left(\sum_{h=1}^L \xi_h \sigma_h M_h^{-1/2}+\xi_0 M_0^{-1/2}+\Delta v^{q_2} \right)
	\eea
	where $\xi_h=\prod_{j=h+1}^L \tau_j$, and
	$C>0$ depends on the final time and on the initial data. 
\end{proposition}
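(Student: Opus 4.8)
The plan is to exploit the telescoped form \eqref{eq:mscvgr} of the estimator together with the single-level bi-fidelity estimate \eqref{eq:errHMMC2}, treating the phase-space error separately through \eqref{eq:det}. Recall that, with $f_{L+1}=f$, $\lambda_{h}=\prod_{j=h}^{L}\hat\lambda_j$ and $\lambda_{L+1}=1$,
\[
E_L^{\hat\llambda}[f] = \lambda_1 E_{M_0}[f_1] + \sum_{h=1}^{L}\bigl(\lambda_{h+1} E_{M_h}[f_{h+1}]-\lambda_{h} E_{M_{h}}[f_{h}]\bigr),
\]
while the identical telescoping without Monte Carlo averages gives $\EE[f]=\lambda_1\EE[f_1]+\sum_{h=1}^{L}(\lambda_{h+1}\EE[f_{h+1}]-\lambda_h\EE[f_h])$. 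Subtracting, the statistical error decomposes into $L+1$ independent blocks: for $h\ge1$ the $h$-th block is $\frac1{M_h}\sum_{k=1}^{M_h}Y_h^{k}$ with $Y_h:=\lambda_{h+1}(f_{h+1}-\EE[f_{h+1}])-\lambda_h(f_h-\EE[f_h])$ a zero-mean variable ($f_{h+1}$ and $f_h$ being sampled jointly on the same $M_h$-set), and the remaining block is $\frac1{M_0}\sum_{k=1}^{M_0}Y_0^k$ with $Y_0:=\lambda_1(f_1-\EE[f_1])$.

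Since the sample sets $M_0,\dots,M_L$ are drawn independently, the $L^2(\Omega)$-errors of the blocks add in quadrature; applying the fundamental Monte Carlo estimate $\EE[(\EE[g]-E_M[g])^2]\le C\var(g)M^{-1}$ to each block and then taking the $\LH$-norm, the total statistical error is controlled by $C\sum_{h=0}^{L}\|\var(Y_h)^{1/2}\|_{\LH}M_h^{-1/2}$. The key simplification is that the quasi-optimal choice $\hat\lambda_h^*=\cov(f_{h+1},f_h)/\var(f_h)$ of \eqref{eq:lambdasr} collapses $\var(Y_h)$: substituting $\lambda_h=\hat\lambda_h\lambda_{h+1}$ in $\var(Y_h)=\lambda_{h+1}^2\var(f_{h+1})+\lambda_h^2\var(f_h)-2\lambda_{h+1}\lambda_h\cov(f_{h+1},f_h)$ reduces the bracket to $\lambda_{h+1}^2(1-\rho_{f_{h+1},f_h}^2)\var(f_{h+1})$, exactly the computation behind \eqref{eq:var2}. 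Telescoping the variance ratios hidden in $\lambda_{h+1}=\prod_{j>h}\rho_{f_{j+1},f_j}\bigl(\var(f_{j+1})/\var(f_j)\bigr)^{1/2}$ then turns $\|\var(Y_h)^{1/2}\|_{\LH}M_h^{-1/2}$ into a constant times $\xi_h\sigma_h M_h^{-1/2}$, with $\xi_h=\prod_{j=h+1}^{L}\tau_j$, once the residual $(v,t)$-dependent factors are absorbed into $C$; the bottom block yields the $\xi_0 M_0^{-1/2}$ term. Equivalently — and this is the route hinted at by the discussion preceding the statement — one proceeds by induction on $L$: \eqref{eq:errHMMC2} at the top level (with $M=M_L$, $M_E=M_{L-1}$) bounds the error of the bi-fidelity estimator built with the \emph{exact} $\EE[f_L]$, while replacing $\EE[f_L]$ by the level-$(L-1)$ hierarchical estimate adds $|\hat\lambda_L^*|$ times the error of that nested estimate, i.e. inserts an extra factor of the type $\tau_L$; iterating down the hierarchy generates the products $\xi_h$, and the deepest plain Monte Carlo estimate $E_{M_0}[f_1]$ produces the $\xi_0 M_0^{-1/2}$ term.

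The phase-space contribution is the easy part: $E_L^{\hat\Lambda^*}[f^n_{\Delta v}]-E_L^{\hat\Lambda^*}[f]$ is the very same linear combination of empirical means of the differences $f_{h,\Delta v}^{k,n}-f_h^{k}$, each $O(\Delta v^{q_2})$ in $\LL$ by \eqref{eq:det}, and the coefficients $|\lambda_h|=|\prod_{j\ge h}\hat\lambda_j^*|$ are bounded, so this term is $\le C\Delta v^{q_2}$; a triangle inequality then assembles \eqref{eq:errREC2}. I expect the main obstacle to be precisely the control of the accumulated products $\prod_{j>h}\hat\lambda_j^*$ of regression coefficients: these depend on $(v,t)$ and are only \emph{quasi}-optimal — they solve the optimality system \eqref{eq:sys2} only up to $O(\bar\mu_h)$ — so one must either establish that each $\hat\lambda_j^*$ stays bounded (for instance because $\var(f_j)$ is bounded away from zero on the region of interest and $|\rho_{f_{j+1},f_j}|\le 1$) or, as is done throughout the survey, absorb the $O(\bar\mu_h)$ corrections and the sampling error in estimating $\hat\lambda_j^*$ into the constant $C$, which is justified precisely in the regime $M_{h-1}\ll M_h$ in which $\bar\mu_h$ is small.
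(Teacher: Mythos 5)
Your proposal is correct, and your second ("inductive") route is precisely the paper's argument: the paper's entire proof consists of the displayed stage-wise bound preceding the proposition, namely applying the two-level estimate \eqref{eq:errHMMC2} to each nested estimator ${E}^{\hat\lambda_h}_{M_h}[f_h]$ (with $f_{h-1}$ as control variate and $M_{h-1}$ samples for its expectation) and composing down the hierarchy, so that each replacement of an exact expectation by its nested estimate contributes a factor $\tau_j$ and the products $\xi_h=\prod_{j=h+1}^L\tau_j$ accumulate. Your primary route — telescoping \eqref{eq:mscvgr} into $L+1$ independent blocks $Y_h$, showing that the quasi-optimal $\hat\lambda_h^*$ collapses $\var(Y_h)$ to $\lambda_{h+1}^2(1-\rho^2_{f_{h+1},f_h})\var(f_{h+1})$, and telescoping the variance ratios inside $\lambda_{h+1}$ — is a genuinely different and more explicit derivation; it buys a transparent identification of where each $M_h^{-1/2}$ term comes from and makes visible that the "correct" coefficient is a norm of a product $\bigl\|(1-\rho^2_{f_{h+1},f_h})^{1/2}\var(f)^{1/2}\prod_{j>h}|\rho_{f_{j+1},f_j}|\bigr\|_{\LH}$ rather than the product of norms $\xi_h\sigma_h$ appearing in the statement, a discrepancy the paper itself absorbs into $C$ (the same looseness is already present in the two-level bound). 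Your closing caveats — boundedness of the accumulated $\prod_{j>h}\hat\lambda^*_j$, the $O(\bar\mu_h)$ quasi-optimality, and the sampling error in estimating the $\hat\lambda_j^*$ — are exactly the issues the paper declares it ignores, so nothing is missing relative to the paper's own level of rigor.
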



\subsubsection{The space non homogeneous case.}
In a space non homogeneous setting the hierarchical multi-fidelity MSCV estimator \eqref{eq:mscvgr}, based on the control variates $f_1(z,x,v,t)$, $\ldots$, $f_L(z,x,v,t)$ with increasing level of fidelity for the solution $f(z,x,v,t)$ to \eqref{eq:Boltzmann}, is
\be
\begin{split}
E^{\Lambda}_L[f](x,v,t) &= E_{M_L}[f](x,v,t)\\
&-\sum_{h=1}^L \lambda_h \left(E_{M_h}[f_h](x,v,t)-E_{M_{h-1}}[f_h](x,v,t)\right),
\label{eq:mscvgrh}
\end{split}
\ee
with $M_{h-1} \gg M_h$ and where now the optimal values of $\Lambda^* = (\lambda_1^*,\ldots,\lambda_L^*)^T$ are obtained from the quasi-optimal solution \eqref{eq:lambdasr} using \eqref{eq:lambdah} or by the correction introduced by the solution of the tridiagonal system \eqref{eq:sys2} if relevant. In this case, the extension of algorithms \ref{alg:3} and estimate \eqref{eq:errREC2} to the non homogeneous case follows straightforwardly simply replacing $f^{n,k}_{\Delta v}$ and $f^{n,k}_{h,\Delta v}$ with $f^{n,k}_{\Delta x,\Delta v}$ and $f^{n,k}_{h,\Delta x,\Delta v}$, and is omitted for brevity. 


For sake of clarity and due to its importance in practical applications, we describe the details of the hierarchical  method in the case $L=2$. We consider $f_1(z,x,v,t)$ as the equilibrium state $f_F^\infty(z,x,v,t)$ associated to the system of Euler equations \eqref{eq:Euler}
with $U_F=(\rho_F,u_F,T_F)^T$, and corresponding to the limit case $\varepsilon\to 0$ in \eqref{eq:Boltzmann}. As a second control variate we consider $f_2(z,x,v,t)$ as  the solution of the BGK model \eqref{eq:pbgk}.
Both models are solved for the same initial data $f_0(z,x,v)$. Now, the Euler equations are used as control variate to improve the computation of the expectation in the BGK model, that in turn is used as control variate to improve the computation of the expectation in the full Boltzmann model. 

\begin{figure}
	\begin{center}
		\includegraphics[width=.43\textwidth]{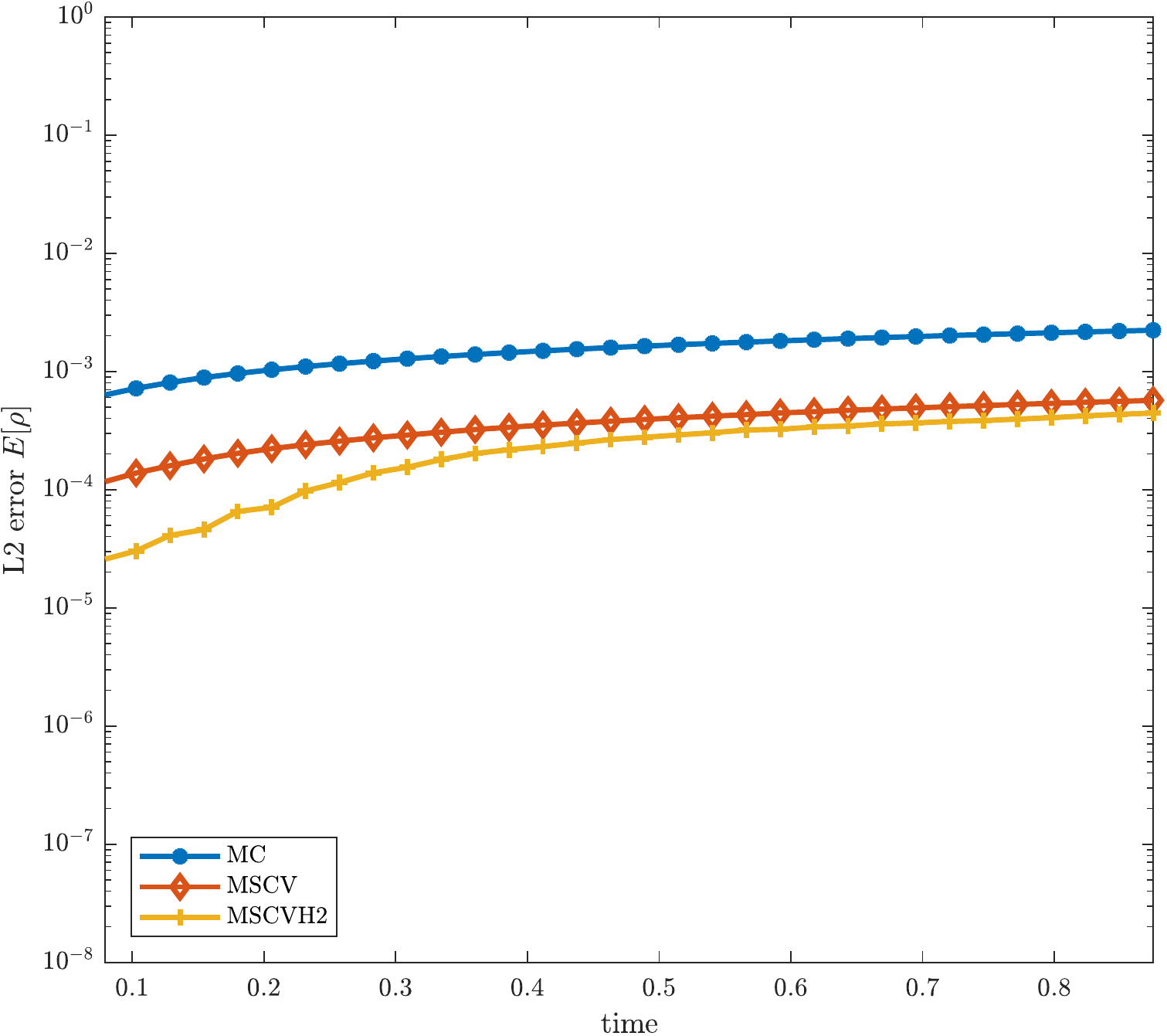}
		\includegraphics[width=.43\textwidth]{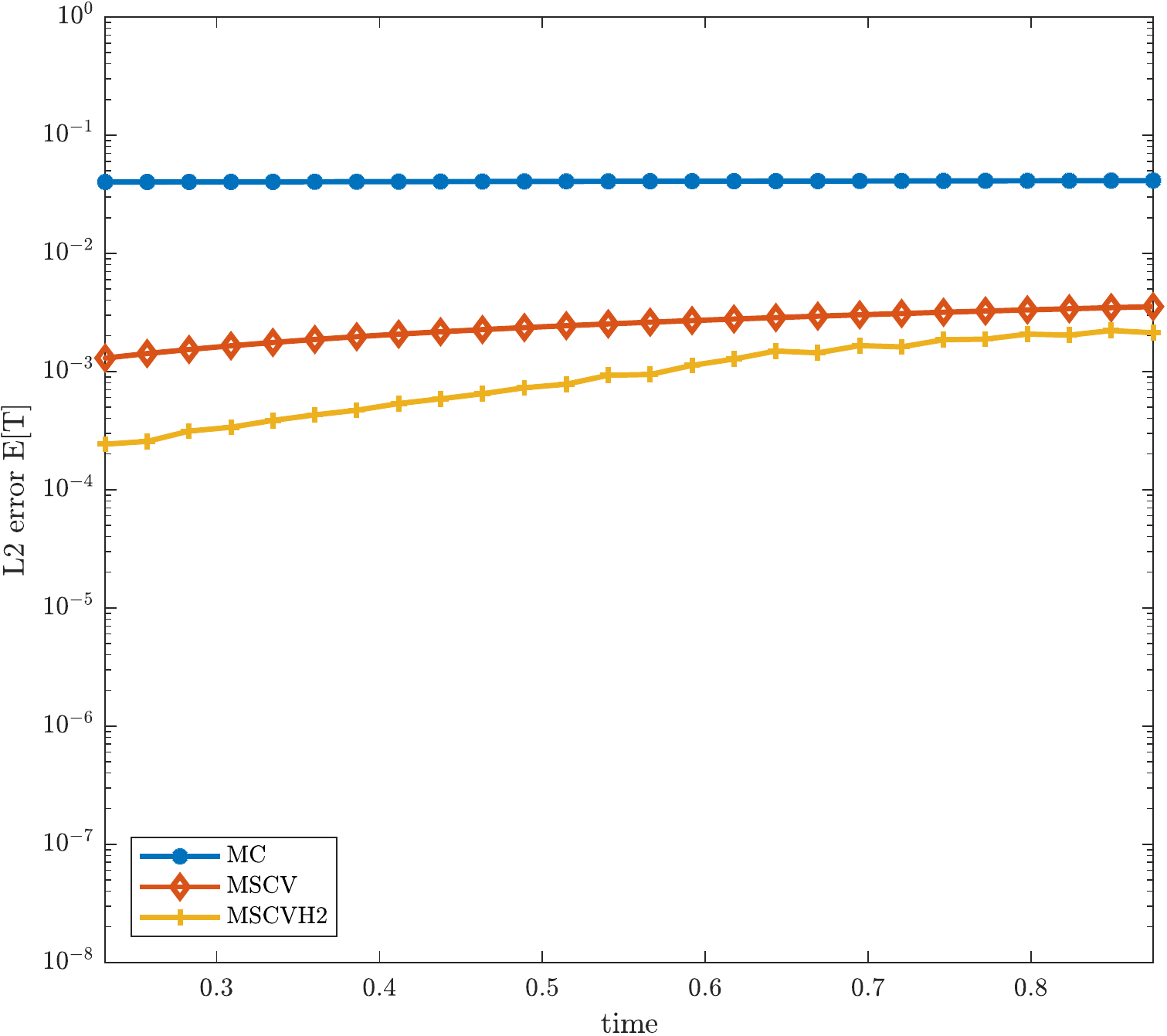}\\
		\includegraphics[width=.43\textwidth]{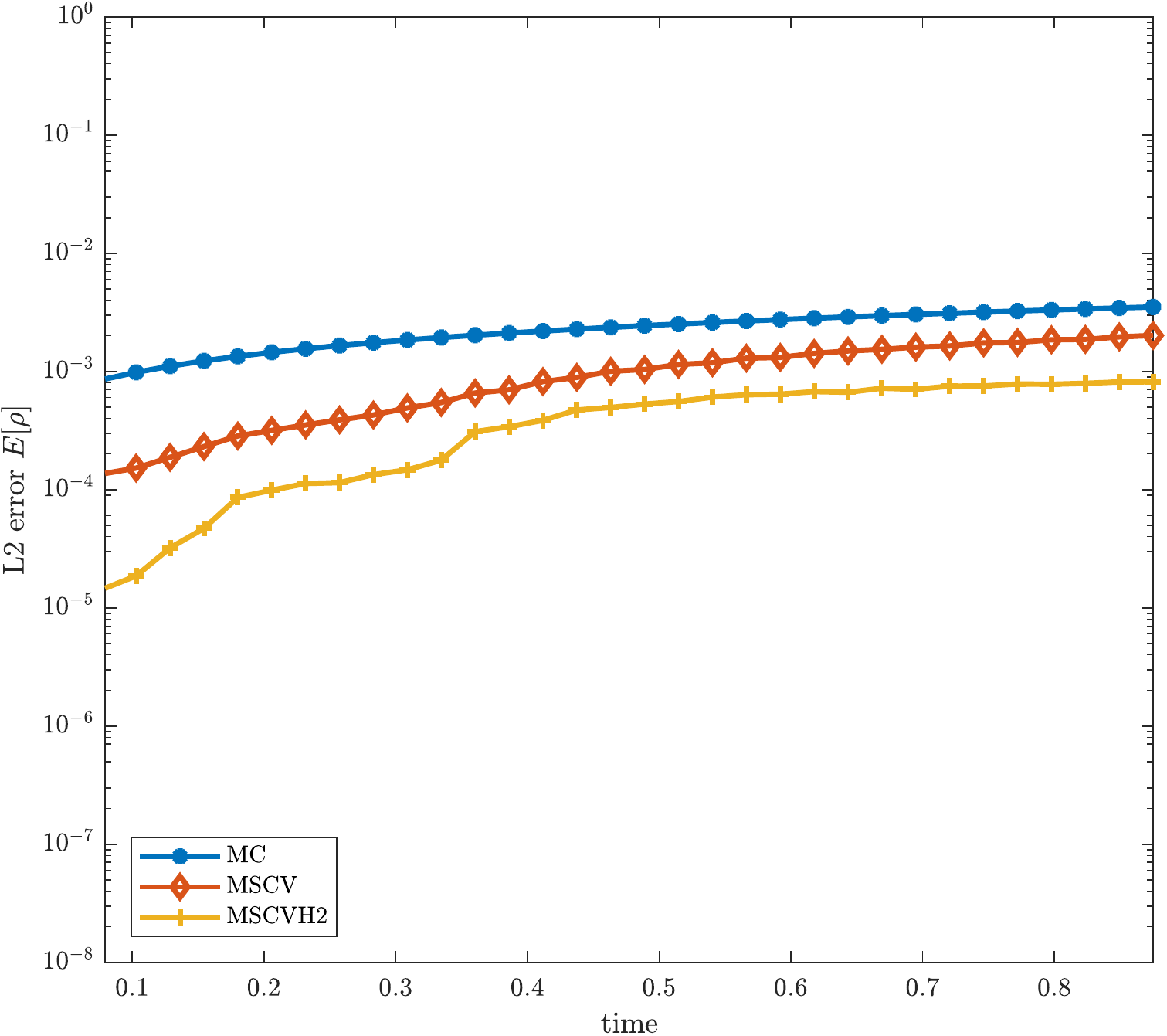}
		\includegraphics[width=.43\textwidth]{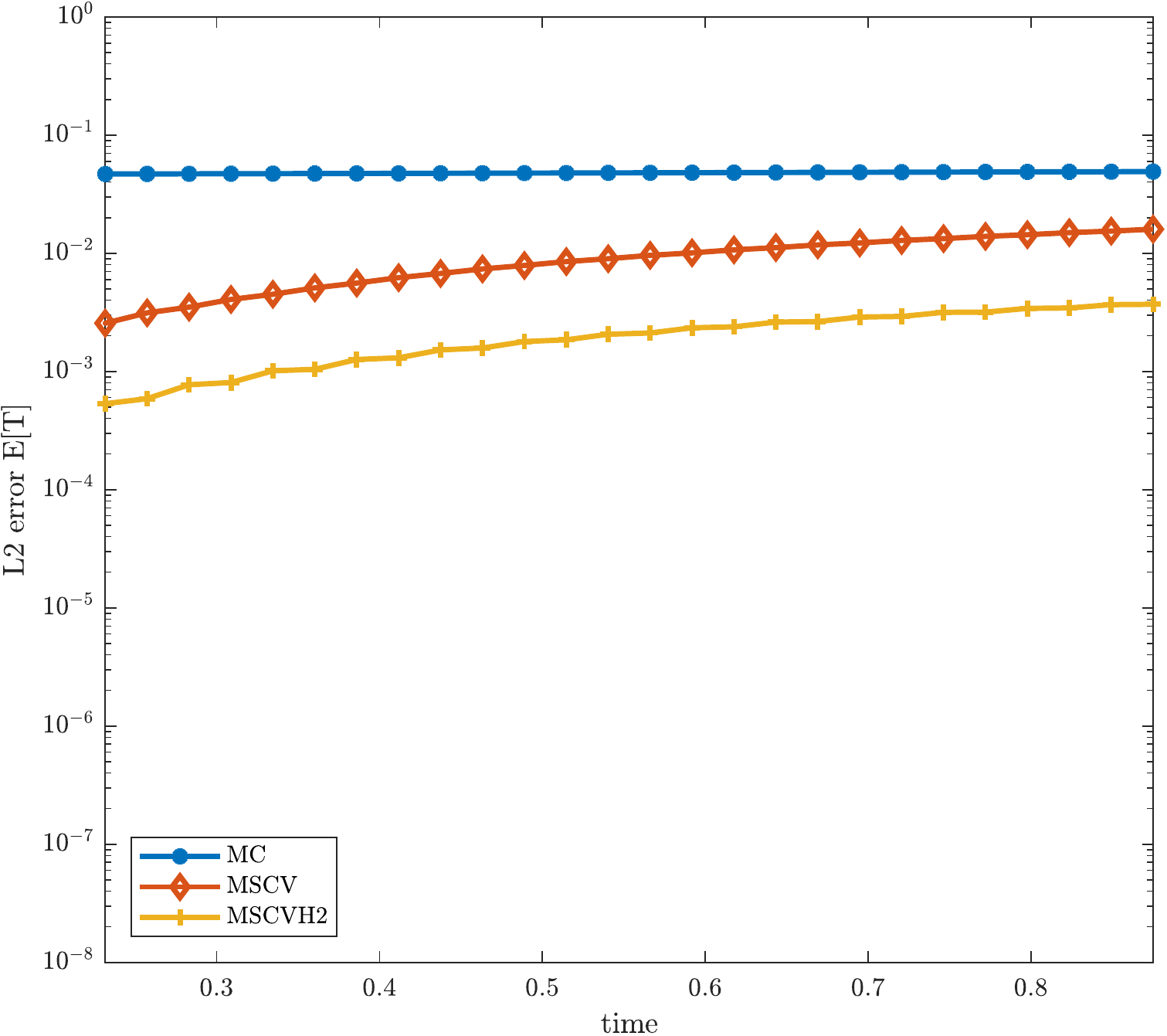}\\
		\includegraphics[width=.43\textwidth]{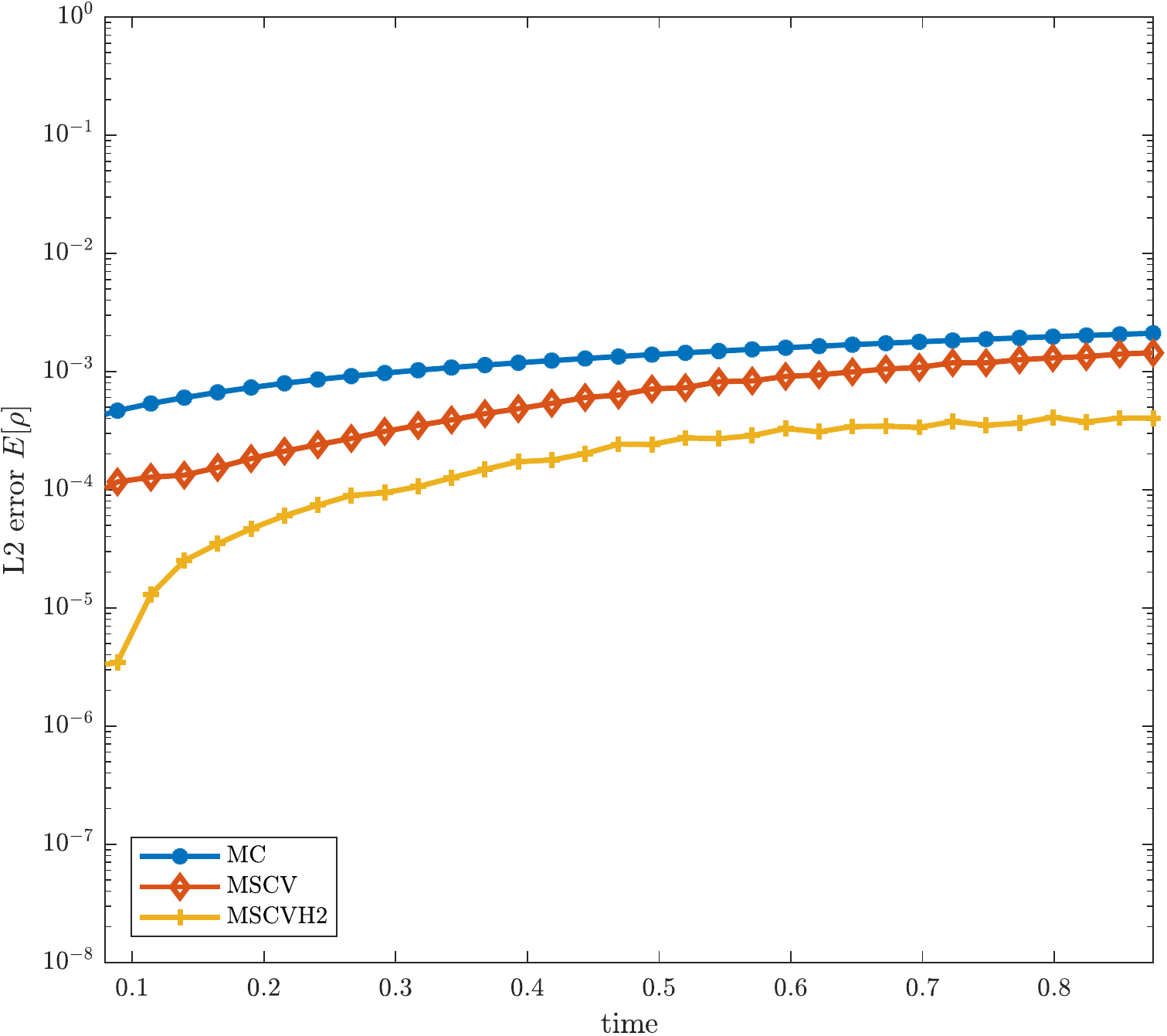}
		\includegraphics[width=.43\textwidth]{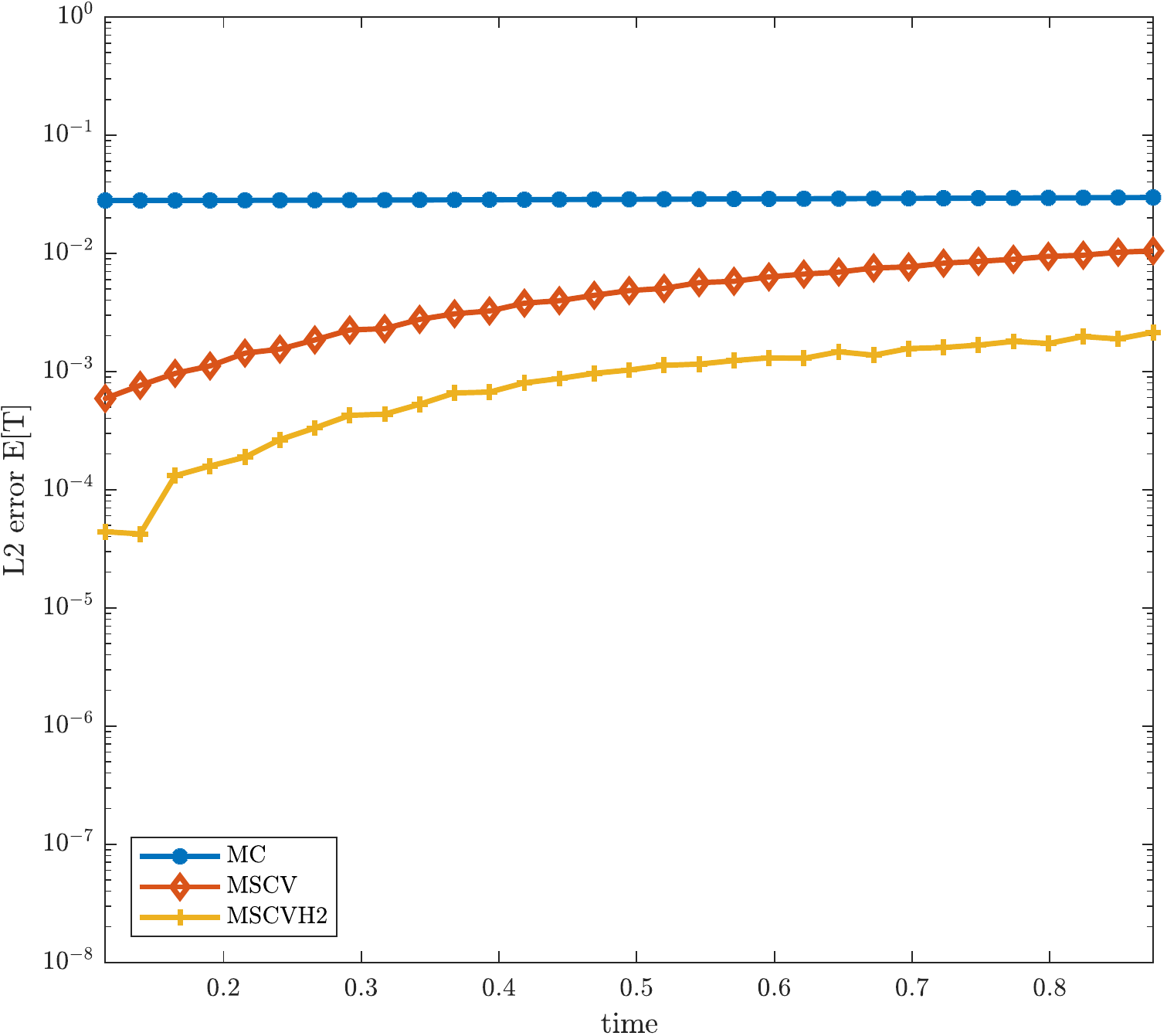}
		\caption{Hierarchical MSCV method - Sod test with uncertainty in the initial data. $L_2$ norm of the error for the MC method, the MSCV method and the MSCVH2 method for $\EE[\rho]$ (left) and $\EE[T]$ (right). The number of samples is $M=10$, for the BGK model $M_{E_1}=100$ and for the Euler system $M_{E_2}=10^5$. From top to bottom: $\varepsilon=10^{-2}, 10^{-3}, 2 \times \ 10^{-4}$. }\label{Figure4}
	\end{center}
\end{figure}

The hierarchical two-level estimator reads
\be
\begin{split}
	E_2^{{\hat\lambda_1,\hat\lambda_2}}[f] &= E_{M_2}[f]\\ 
	&-{\hat \lambda}_2\left(E_{M_2}[f_2]-E_{M_{1}}[f_2]
	+{\hat \lambda}_{1}\left(E_{M_{1}}[f_{1}]-E_{M_{0}}[f_{1}]\right)\right),
\end{split}
\ee
where $M_0 \gg M_1 \gg M_2$. If we define $\lambda_2=\hat\lambda_2$ and $\lambda_1=\hat\lambda_1\hat\lambda_2$  their optimal values are computed as solutions of system \eqref{eq:sys2} for $L=2$
\bea
\nonumber
\lambda_1 \var(f_1)  -
\lambda_{2}(1-\mu_1) \cov(f_{2},f_{1}) &=&0\\
\nonumber
\lambda_2 \var(f_2) - \lambda_{1} \mu_2 \cov(f_{2},f_{1}) &=&
(1-\mu_2) \cov(f,f_{2}).
\eea  
with $\mu_h = M_h/(M_{h-1}+M_h)$, which gives
\beas
\lambda_1^* &=& \frac{(1-\mu_1)(1-\mu_2)\cov(f_{2},f_{1}) \cov(f,f_{2})}{\displaystyle\var(f_1)\var(f_2)-(1-\mu_1)\mu_2 \cov(f_{2},f_{1})^2}\\
\lambda_2^* &=& \frac{ (1-\mu_2) \var(f_{1}) \cov(f,f_{2})}{\var(f_1)\var(f_2)-(1-\mu_1)\mu_2\cov(f_{2},f_{1})^2}.  
\eeas
The quasi-optimal values are instead obtained assuming $\mu_1,\mu_2\approx 0$ and are characterized by 
\be
\hat\lambda_1^* = \frac{\cov(f_2,f_1)}{\var(f_1)},\qquad \hat\lambda_2^* = \frac{\cov(f,f_2)}{\var(f_2)}. 
\ee

In Figure \ref{Figure4} we compare the results of the MSCV approach with two hierarchical levels (MSCHVH2) against the standard MC and the bi-fidelity MSCV method based on the BGK model discussed in Section \ref{sec:MSCVnh}. The initial conditions are given by Sod problem with uncertainty \eqref{eq:sodtest}.
{The Boltzmann equation has been solved with the same discretization parameters as in Figure \ref{Figure2}.} For this situation, we perform three different computations corresponding to $ \varepsilon=10^{-2}$, $ \varepsilon=10^{-3}$ and $ \varepsilon=2 \times 10^{-4}$.
The $L_2$ norms of the errors for the standard MC method, the MSCV approach and the MSCVH2 method for the expected value of the temperature and the density as a function of time are shown in the figure. The number of samples used for the BGK model is $M_{E_1}=100$ while for the compressible Euler system is $M_{E_2}=10^5$. In all regimes the gain of the hierarchical two-level approach is remarkable and improves for smaller values of the Knudsen number.

\subsection{Mean-field control variate methods}
\label{sec:socio} 
In this section we discuss two important issues that have remained open since the previous presentation of the MSCV method. Namely, how to couple the MSCV strategy with particle-based solvers in physical space, and how to extend the range of applicability of the methods to other kinetic equations where an equilibrium state is not known and thus the classical closure of rarefied gas dynamics cannot be adopted.

As a prototype example to develop our arguments, we will consider the Boltzmann model for socio-economic interactions with uncertainty introduced in Section \ref{model_socio}. 


\subsubsection{The DSMC method for kinetic equations.}
Let us first recall the classical Direct Simulation Monte Carlo (DSMC) method for the solution of the Boltzmann equation^^>\cite{bird1970direct,nanbu1980direct} in case without uncertainty. We are in particular interested in the evolution of the density $f=f(w,t)$ solution of \eqref{ScaledBol} with initial condition $f(0,w)= f_0(w)$. The DSMC method in the form originally proposed by Nanbu^^>\cite{nanbu1980direct} is reported in Algorithm \ref{al:dsmc1}. We refer to^^>\cite{pareschi2001introduction,rjasanow,PT2} for an introduction to Monte Carlo methods for kinetic equations. 

\begin{algorithm2e}[!htbp]
\begin{enumerate}
\item Let us consider a time interval $[0,T]$, and let us discretize it in $n_t$ intervals of size $\Delta t$. 
\item Compute the initial sample particles $ \{w_i^0, i=1,\ldots,N\} $,\\ 
      by sampling them from the initial density $f_0(w)$ 
\item   \begin{tabbing}
\= {\fP for} \= $n=0$  {\fP to} $n_t-1$ \\
          \>          \> given $\{w_i^n,i=1,\ldots,N\}$\\
          \>       \>   \ind \= \cir \= set $N_c = \IR(N\Delta t/2)$ (stochastic rounding) \\
          \>       \>        \> \cir \> select $N_c$ pairs $(i,j)$ uniformly among all possible pairs, \\
          \>       \>        \>      \> - \= perform the collision between $i$ and $j$, and compute \\
          \>       \>        \>      \>      \> $w_i'$ and $w_j'$ according to the  collision law \eqref{micro}\\
          \>       \>        \>      \> - set $w_i^{n+1} = w_i'$, $w_j^{n+1}=w_j'$ \\
          \>       \>        \> \cir \> set $w_i^{n+1}=w_i^n$ for all the particles not selected\\
         \> {\fP end for}
\end{tabbing}
\end{enumerate}
\caption{DSMC method}
\label{al:dsmc1}
\end{algorithm2e}

The kinetic distribution as well as its moments are then recovered from the empirical density function
\begin{equation}
f_N(t,w)=\frac1{N}\sum_{i=1}^N \delta(w-w_i(t)),
\label{eq:emp}
\end{equation}
where $\delta(\cdot)$ is the the Dirac delta and $\{w_i(t), i = 1, \dots,N\}$ are the samples of particles at time $t\ge 0$. 
For any test function $\varphi$, if we now denote by 
\[
\langle\varphi, f\rangle(t) = \int_V \varphi(w)f(t,w)dw, 
\]
we have
\be
\langle\varphi,f_N\rangle (t) = \dfrac{1}{N} \sum_{i = 1}^N \varphi(w_i(t)). 
\label{eq:momp}
\ee
Hence, by assuming that $\int_V f(t,w)dw= 1$ we have that $\langle\varphi,f\rangle=\mathbb E_V[\varphi]$, where $\mathbb E_V[\cdot]$ is the expectation of the observable quantity $\varphi$ with respect to the density $f$ to be distinguished from $\mathbb E[\cdot]$ used to denote the expectation in the random space of uncertainties. 
Thanks to the central limit theorem we have^^>\cite{Caflisch}
\begin{lemma}\label{lem:1}
The root mean square error is such that for each $t\ge 0$
\begin{equation}
\label{eq:th_e1}
\mathbb E_{V}\left[\left( \langle\varphi,f\rangle - \langle\varphi,f_N\rangle \right)^2\right]^{1/2}= \dfrac{\sigma_\varphi}{N^{1/2}}, 
\end{equation}
where $\sigma^2_\varphi = \VV_V[\varphi]$ with
\begin{equation}
\label{eq:sigma_varphi}
\VV_V[\varphi](t) = \int_V( \varphi(w)- \langle\varphi,f\rangle(t))^2 f(t,w)dw.
\end{equation}
\end{lemma}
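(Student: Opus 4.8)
The plan is to recognize the right-hand side of \eqref{eq:th_e1} as the familiar variance of a sample mean, so that once the probabilistic structure of the particles is made explicit the statement reduces to a one-line computation. First I would fix $t\ge 0$ and set $X_i:=\varphi(w_i(t))$, $i=1,\dots,N$, regarding them as independent real random variables with common law the push-forward of $f(t,\cdot)$ by $\varphi$; this is precisely the viewpoint underlying \eqref{eq:momp}, the particles being sampled from $f_0$ and, at the level of the one-particle marginal, carrying the distribution $f(t,\cdot)$. With this identification one has $\mathbb E_V[X_i]=\int_V\varphi(w)f(t,w)\,dw=\langle\varphi,f\rangle(t)$ and, by \eqref{eq:sigma_varphi}, $\VV_V[X_i]=\int_V(\varphi(w)-\langle\varphi,f\rangle(t))^2 f(t,w)\,dw=\sigma_\varphi^2(t)$.

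Second, I would observe that $\langle\varphi,f_N\rangle(t)=\frac1N\sum_{i=1}^N X_i$ is an unbiased estimator of $\langle\varphi,f\rangle(t)$, so the mean square error on the left of \eqref{eq:th_e1} equals the variance of the sample mean: $\mathbb E_V[(\langle\varphi,f\rangle-\langle\varphi,f_N\rangle)^2]=\VV_V\!\left[\frac1N\sum_{i=1}^N X_i\right]$. Expanding the square and using that $\CC_V(X_i,X_j)=0$ for $i\neq j$ by independence, only the $N$ diagonal terms survive, which gives $\frac1{N^2}\sum_{i=1}^N\VV_V[X_i]=\frac{\sigma_\varphi^2}{N}$; taking square roots yields \eqref{eq:th_e1}. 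This is exactly the central limit theorem / law of large numbers estimate quoted from \cite{Caflisch}.

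The only genuinely delicate point — and the one I would flag explicitly rather than grind through — is the independence of $\{w_i(t)\}$ for $t>0$: under the DSMC dynamics of Algorithm \ref{al:dsmc1} the collision steps correlate the particles, so strictly speaking they are only asymptotically independent (propagation of chaos), the cross-covariances being $O(1/N)$ rather than exactly zero. Accordingly, \eqref{eq:th_e1} should be read either under the idealization that the $w_i(t)$ are i.i.d.\ samples of $f(t,\cdot)$ — consistent with the way the central limit theorem is invoked here — or, in the interacting setting, up to lower-order corrections that do not affect the $N^{-1/2}$ rate. I would state this caveat in a sentence and not develop the propagation-of-chaos argument, as it is classical and orthogonal to the multi-fidelity constructions that follow.
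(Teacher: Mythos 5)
Your argument is correct and is essentially the same as the paper's, which states the lemma as a direct consequence of the standard Monte Carlo sample-mean variance computation (citing the central limit theorem and \cite{Caflisch} without writing out the calculation). Your explicit caveat about the particles being only asymptotically independent under the DSMC collision dynamics is a fair and appropriately flagged refinement of a point the paper leaves implicit.
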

While the moments of the distribution can be easily computed via \eqref{eq:momp} if one is interested in the shape of the distribution function one must do an appropriate reconstruction. For example, one can operate as follows: set up a uniform grid in $V\subseteq \mathbb R$ where each cell has width $\Delta w>0$ and subsequently define a smoothing function $S_{\Delta w}\ge 0$ such that 
\[
\Delta w \int_{V} S_{\Delta w}(w)dw = 1.
\]
Then, the approximation of the empirical density \eqref{eq:emp} is obtained by
\begin{equation}
\label{eq:f_RN}
f_{N,\Delta w}(t,w) = \dfrac{1}{N} \sum_{i = 1}^N S_{\Delta w}(w-w_i(t)). 
\end{equation}
In the simplest case, $S_{\Delta w}(w)=\chi(|w|\leq \Delta w/2)/\Delta w$, where $\chi(\cdot)$ is the indicator function, \eqref{eq:f_RN} corresponds to the standard histogram reconstruction. Then, the numerical error of the reconstructed DSMC solution \eqref{eq:f_RN}, can be estimated 
from   
\[
	\|g\|_{L^p(V,L^2(V))}=\|\mathbb E_{V}\left[g^2\right]^{1/2}\|_{L^p(V)},
\]
as^^>\cite{PTZ}
\begin{theorem} The error introduced by the reconstruction function \eqref{eq:f_RN} satisfies
	\begin{equation}
		\left\| f(t,\cdot)-f_{N,\Delta w}(t,\cdot)\right\|_{L^p(V,L^2(V))} \leq \frac{\|\sigma_S\|_{L^p(V)}}{N^{1/2}} + C_f (\Delta w)^q,
	\end{equation}
	where $C_f$ depends on the $q$ derivative in velocity of $f$ and $\sigma_{S}^2$ is given by
	\begin{equation}
	\sigma^2_S(w,t)=\VV_V[S_{\Delta w}(w-\cdot)](t).
	\label{eq:sigmas}
	\end{equation} 
	\label{th:1b}
\end{theorem}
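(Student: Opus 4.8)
The plan is to split the reconstruction error into a purely statistical Monte Carlo contribution and a deterministic smoothing (bias) contribution, following the classical bias--variance decomposition. First I would introduce the deterministic mollified density
\[
\bar f_{\Delta w}(t,w) := \mathbb E_V\big[f_{N,\Delta w}(t,w)\big] = \int_V S_{\Delta w}(w-w')\,f(t,w')\,dw',
\]
which is legitimate because the particles $\{w_i(t)\}_{i=1}^N$ may be treated (by propagation of chaos, as implicitly used already in Lemma \ref{lem:1}) as i.i.d.\ with law $f(t,\cdot)$, so that $\mathbb E_V[S_{\Delta w}(w-w_i(t))]=\bar f_{\Delta w}(t,w)$ for each $i$. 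Since $f_{N,\Delta w}(t,w)$ is then an empirical average of i.i.d.\ random variables with mean $\bar f_{\Delta w}(t,w)$, the cross term vanishes and, using that the variance of the empirical mean of $N$ such terms equals $\frac1N\VV_V[S_{\Delta w}(w-\cdot)](t)=\frac1N\sigma_S^2(w,t)$, one obtains for every fixed $(t,w)$
\[
\mathbb E_V\big[(f(t,w)-f_{N,\Delta w}(t,w))^2\big] = \frac{\sigma_S^2(w,t)}{N} + \big(\bar f_{\Delta w}(t,w)-f(t,w)\big)^2 .
\]

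Next I would take square roots, use $\sqrt{a+b}\le\sqrt a+\sqrt b$ for $a,b\ge 0$, and then apply the triangle inequality in $L^p(V)$ with respect to $w$, which yields
\[
\| f(t,\cdot)-f_{N,\Delta w}(t,\cdot)\|_{L^p(V,L^2(V))} \le \frac{\|\sigma_S(\cdot,t)\|_{L^p(V)}}{N^{1/2}} + \big\|\bar f_{\Delta w}(t,\cdot)-f(t,\cdot)\big\|_{L^p(V)} .
\]
The first term is already of the claimed form, so it remains to control the deterministic smoothing error $\|\bar f_{\Delta w}(t,\cdot)-f(t,\cdot)\|_{L^p(V)}$ by $C_f(\Delta w)^q$.

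For this last step I would use a standard mollifier argument: writing, via the normalisation of $S_{\Delta w}$,
\[
\bar f_{\Delta w}(t,w)-f(t,w) = \int_V S_{\Delta w}(w-w')\,\big[f(t,w')-f(t,w)\big]\,dw',
\]
I would Taylor-expand $w'\mapsto f(t,w')$ about $w$ up to order $q$; the contributions of the lower-order terms are annihilated by the vanishing-moment and symmetry properties of the kernel $S_{\Delta w}$ (for the box kernel $S_{\Delta w}(w)=\chi(|w|\le\Delta w/2)/\Delta w$ the odd moments vanish, giving $q=2$; kernels with more moment conditions give larger $q$), while the remainder is bounded by the supremum of $|\partial_w^q f(t,\cdot)|$ over the $\Delta w$-neighbourhood of $w$ times $(\Delta w)^q$. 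Taking the $L^p(V)$ norm yields $\|\bar f_{\Delta w}(t,\cdot)-f(t,\cdot)\|_{L^p(V)}\le C_f(\Delta w)^q$ with $C_f$ depending on the $q$-th velocity derivative of $f(t,\cdot)$, and combining with the bound above completes the proof.

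The main obstacle is precisely this smoothing-error estimate: it requires making explicit the order and moment conditions satisfied by the reconstruction kernel $S_{\Delta w}$, the regularity assumed on $f$ in the velocity variable, and a careful treatment of boundary effects near $\partial V$ when $V$ is bounded (e.g.\ $V=[-1,1]$). The Monte Carlo part, by contrast, is immediate from the bias--variance identity and the triangle inequality, once one accepts the standard DSMC hypothesis --- already underlying Lemma \ref{lem:1} --- that the $w_i(t)$ may be regarded as i.i.d.\ samples of $f(t,\cdot)$.
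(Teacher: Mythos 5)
Your proposal is correct and follows the intended argument: the paper itself states this theorem without proof, deferring to the cited reference, and the standard bias--variance decomposition you use --- splitting $f-f_{N,\Delta w}$ into the Monte Carlo fluctuation about the mollified density $\bar f_{\Delta w}=\mathbb{E}_V[f_{N,\Delta w}]$ (giving the $\|\sigma_S\|_{L^p(V)}N^{-1/2}$ term via the i.i.d.\ variance identity) plus the deterministic smoothing bias $\bar f_{\Delta w}-f$ (giving $C_f(\Delta w)^q$ via Taylor expansion and the moment conditions on $S_{\Delta w}$) --- is exactly the mechanism behind the stated bound. The caveats you flag (kernel moment conditions determining $q$, and boundary effects where the normalisation $\int_V S_{\Delta w}(w-w')\,dw'=1$ fails) are the right places where detail would need to be supplied, but they do not indicate a gap in the approach.
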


\subsubsection{The combined MC-DSMC method for uncertainty quantification.}\label{sect:QoI}
%


Let assume $f(t,w,z)$, $w \in V$, solution of a PDE with uncertainties only in the initial distribution $f_0(w,z)$, $z \in \Omega \subseteq \mathbb R^{d_z}$. The MC sampling method for the uncertainty quantification has been formulated in Section \ref{sec:sMC}. The only difference with respect to \eqref{mcest} is the way in which the deterministic kinetic equation is solved since here a DSMC method is used. 

The empirical kinetic distribution in presence of uncertainty is given by 
\[
f_N(t,w,z) = \dfrac{1}{N} \sum_{i=1}^N \delta(w-w_i(t,z)), 
\]
being $\{w_i(t,z), i = 1,\dots,N\}$ the samples of the particles at time $t\ge 0$ such that $w_i\in L^2(\Omega)$. 

While the algorithm is similar to the deterministic case, the error analysis is different and the following result holds true^^>\cite{PTZ}
\begin{lemma}
The root mean square error of the MC-DSMC method satisfies 
\[
\mathbb E\left[\mathbb E_{V}[ \left( \mathbb E[\langle\varphi,f\rangle]- E_M[\langle\varphi,f_N\rangle] \right)^2 ]\right]^{1/2}\le \frac{\nu_{\langle\varphi,f\rangle}} {M^{1/2}} + \frac{\sigma_{\varphi,M}} {N^{1/2}}, 
\]
where $\nu^2_{\langle\varphi,f\rangle} = \VV[\langle\varphi,f\rangle]$
and $\sigma^2_{\varphi,M} = E_M[\sigma^2_{\varphi}]$ with $\sigma_\varphi^2=\VV_V[\varphi]$.
\label{lem:2}
\end{lemma}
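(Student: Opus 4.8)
The plan is to isolate the two independent sources of randomness — the $M$ i.i.d. collocation samples $z^1,\dots,z^M\in\Omega$ underlying $E_M$ and, for each of them, the independent particle noise underlying the DSMC reconstruction $f_N$ — and to control each contribution by a result already at hand. Writing $\mathbb E_V$ for the expectation with respect to the particle randomness at a fixed parameter value, I would first decompose
\[
\mathbb E[\langle\varphi,f\rangle] - E_M[\langle\varphi,f_N\rangle] = A + B,\qquad A := \mathbb E[\langle\varphi,f\rangle] - E_M[\langle\varphi,f\rangle],
\]
\[
B := \frac1M\sum_{k=1}^M\big(\langle\varphi,f\rangle(t,z^k) - \langle\varphi,f_N\rangle(t,z^k)\big),
\]
so that $A$ carries only the stochastic collocation error on the exact moments, while $B$ collects along the sampled nodes the per-sample DSMC errors analysed in Lemma \ref{lem:1}. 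With the $L^2$ norm $\|X\| := \big(\mathbb E\,\mathbb E_V[X^2]\big)^{1/2}$ on the product probability space, the left-hand side of the statement is $\|A+B\|$, and Minkowski's inequality gives $\|A+B\|\le\|A\|+\|B\|$; this is exactly what turns the estimate into the sum of a separate $O(M^{-1/2})$ term and a separate $O(N^{-1/2})$ term rather than a single combined rate.

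For $\|A\|$: the variable $A$ is independent of the particle noise, so $\mathbb E_V[A^2]=A^2$, and since the $z^k$ are i.i.d. and $\mathbb E[A]=0$, the elementary Monte Carlo variance identity yields $\|A\|^2=\VV\big(E_M[\langle\varphi,f\rangle]\big)=M^{-1}\VV[\langle\varphi,f\rangle]=\nu_{\langle\varphi,f\rangle}^2/M$. For $\|B\|$ I would condition on $z^1,\dots,z^M$ and use convexity: with $\delta_k:=\langle\varphi,f\rangle(t,z^k)-\langle\varphi,f_N\rangle(t,z^k)$ one has $B^2\le\frac1M\sum_k\delta_k^2$, and Lemma \ref{lem:1} applied at the fixed value $z=z^k$ gives $\mathbb E_V[\delta_k^2\mid z^k]=\sigma_\varphi^2(t,z^k)/N$ with $\sigma_\varphi^2=\VV_V[\varphi]$; hence
\[
\mathbb E_V\big[B^2\mid z^1,\dots,z^M\big]\ \le\ \frac1M\sum_{k=1}^M\frac{\sigma_\varphi^2(t,z^k)}{N}\ =\ \frac{E_M[\sigma_\varphi^2]}{N}\ =\ \frac{\sigma_{\varphi,M}^2}{N}.
\]
Taking square roots and averaging over the samples (which keeps the bound in the sample quantity $\sigma_{\varphi,M}$ exactly as stated, or, after one further integration over $z$, replaces it by $\mathbb E[\sigma_\varphi^2]^{1/2}$ through the unbiasedness $\mathbb E[E_M[\sigma_\varphi^2]]=\mathbb E[\sigma_\varphi^2]$) gives $\|B\|\le\sigma_{\varphi,M}/N^{1/2}$, and adding the two contributions proves the lemma.

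The main obstacle, and really the only delicate point, is the precise bookkeeping of the two nested layers of randomness: one must make rigorous that, conditionally on the collocation nodes, the DSMC runs at the distinct $z^k$ are mutually independent copies of the single-sample DSMC process, so that Lemma \ref{lem:1} may legitimately be invoked term by term inside the conditional expectation defining $\mathbb E_V[B^2\mid z^1,\dots,z^M]$; and one needs enough measurability and integrability of $z\mapsto\sigma_\varphi^2(t,z)=\VV_V[\varphi](t,z)$ for $E_M[\sigma_\varphi^2]$ (and $\mathbb E[\sigma_\varphi^2]$) to be finite — the counterpart here of the ``sufficiently regular'' assumption used for the plain Monte Carlo bounds in Section \ref{sec:sMC}. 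Apart from this, the argument is the routine Minkowski / $\sqrt{a+b}\le\sqrt a+\sqrt b$ manipulation combined with the elementary i.i.d. variance identity.
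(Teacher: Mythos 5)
Your argument is correct. Note that the paper does not actually prove this lemma: it is stated with a pointer to \cite{PTZ}, so there is no in-text proof to compare against, but the route you take --- splitting the error into the pure sampling part $A$ (collocation error on the exact moments) and the accumulated per-node DSMC part $B$, applying Minkowski's inequality in $L^2$ of the product probability space, bounding $\|A\|$ by the elementary i.i.d.\ variance identity and $\|B\|$ by convexity together with Lemma~\ref{lem:1} applied at each node --- is precisely the argument the structure of the bound dictates, and it is consistent with how the same two error sources are assembled in Theorem~\ref{th:2}. Two small remarks. First, the point you single out as delicate (conditional independence of the DSMC runs at distinct $z^k$) is in fact not needed for your proof: once you pass through the convexity bound $B^2\le M^{-1}\sum_k\delta_k^2$, only linearity of the conditional expectation and the per-sample identity $\mathbb E_V[\delta_k^2\mid z^k]=\sigma_\varphi^2(t,z^k)/N$ are used; independence (together with conditional unbiasedness of $\langle\varphi,f_N\rangle$) would only be required to obtain the sharper bound $\mathbb E_V[B^2\mid z^1,\dots,z^M]\le \sigma_{\varphi,M}^2/(MN)$ from the vanishing cross terms, which the stated lemma does not claim. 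Second, you are right to flag that $\sigma_{\varphi,M}^2=E_M[\sigma_\varphi^2]$ is a sample-dependent random quantity while the left-hand side carries the full outer expectation; the statement is to be read with that (common) abuse, or with $\sigma_{\varphi,M}$ replaced by $\mathbb E[\sigma_\varphi^2]^{1/2}$ after integrating over the samples, exactly as you indicate. The integrability of $z\mapsto\VV_V[\varphi](t,z)$ that you require is the counterpart of the regularity assumed for the plain Monte Carlo estimate in Section~\ref{sec:sMC}.
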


Let us now consider the reconstructed distribution with uncertainty
\begin{equation}
\label{eq:f_RNz}
f_{N,\Delta w}(t,w,z) = \dfrac{1}{N} \sum_{i = 1}^N S_{\Delta w}(w-w_i(t,z)), 
\end{equation}
and let us focus on the accuracy of the expectation of the solution $\mathbb E[f]$. One can give, 
using 
\[
\|g \|_{L^p(V,L^2(\Omega,L^2(V)))} = \| \mathbb E[\mathbb E_V[g^2]]^{1/2} \|_{L^p(V)},
\] 
the following estimate^^>\cite{PTZ}
\begin{theorem}
	The error introduced by the reconstruction function \eqref{eq:f_RNz} in the MC-DSMC method satisfies
	\begin{equation}
		\label{eq:estim_2}
		\begin{split}
			\left\| \mathbb E[f](t,\cdot)\right.&\left.-E_M[f_{N,\Delta w}](t,\cdot)\right\|_{L^p(V,L^2(\Omega,L^2(V)))}\\
			&\leq \frac{\|\nu_{\langle S,f\rangle}\|_{L^p(V)}}{M^{1/2}}+\frac{\|\sigma_{S,M}\|_{L^p(V)}}{N^{1/2}} + C_{\mathbb E[f]} (\Delta w)^q
		\end{split}
	\end{equation}
	where $\nu_{\langle S,f\rangle}^2$ is defined as
	\begin{equation}
		\nu^2_{(S,f)} = \VV[(S_{\Delta w}(w-\cdot),f)].
		\label{eq:nus}
	\end{equation} and $\sigma^2_{S,M} = E_M[\sigma^2_{S}]$ with  $\sigma_S^2$ defined in \eqref{eq:sigmas}.
	\label{th:2}
\end{theorem}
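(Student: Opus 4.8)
The plan is to split the total error into a purely deterministic reconstruction (mollification) contribution and a statistical contribution, the latter being a direct application of Lemma~\ref{lem:2}. I would first introduce the mollified exact solution
\[
\bar f_{\Delta w}(t,w,z):=\langle S_{\Delta w}(w-\cdot),f(t,\cdot,z)\rangle=\int_V S_{\Delta w}(w-w')\,f(t,w',z)\,dw',
\]
and note, by Fubini, that $\EE[\bar f_{\Delta w}](t,w)=\langle S_{\Delta w}(w-\cdot),\EE[f](t,\cdot)\rangle$, i.e. $\EE[\bar f_{\Delta w}]$ is exactly the mollification of $\EE[f]$. Writing
\[
\EE[f]-E_M[f_{N,\Delta w}]=\bigl(\EE[f]-\EE[\bar f_{\Delta w}]\bigr)+\bigl(\EE[\bar f_{\Delta w}]-E_M[f_{N,\Delta w}]\bigr),
\]
I would then use Minkowski's inequality, first in the $L^2$-norm over $z$ and over the particle sampling and then in $L^p(V)$, to see that the mixed norm $\|\cdot\|_{L^p(V,L^2(\Omega,L^2(V)))}$ obeys the triangle inequality; hence it suffices to bound the two summands separately.

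For the first summand the integrand is deterministic, independent of $z$ and of the particle positions, so $\EE[\EE_V[(\EE[f]-\EE[\bar f_{\Delta w}])^2]]^{1/2}$ collapses to $|\EE[f]-\langle S_{\Delta w}(w-\cdot),\EE[f]\rangle|$ and its mixed norm reduces to $\|\EE[f]-\langle S_{\Delta w}(w-\cdot),\EE[f]\rangle\|_{L^p(V)}$. This is the classical smoothing estimate: expanding $\EE[f](t,w')$ by Taylor's formula around $w'=w$ and using the localisation and vanishing–moment properties of $S_{\Delta w}$ yields a pointwise bound of order $(\Delta w)^q$ controlled by $\partial_w^q\EE[f]$, whence $\|\EE[f]-\langle S_{\Delta w}(w-\cdot),\EE[f]\rangle\|_{L^p(V)}\le C_{\EE[f]}(\Delta w)^q$ with $C_{\EE[f]}$ depending on the $q$-th velocity derivative of $\EE[f]$. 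This is precisely the deterministic part of Theorem~\ref{th:1b} applied to $\EE[f]$ in place of $f$.

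For the second summand I would freeze $w$ and apply Lemma~\ref{lem:2} with the test function $\varphi(\cdot)=S_{\Delta w}(w-\cdot)$. With this choice $\langle\varphi,f\rangle=\bar f_{\Delta w}(t,w,\cdot)$ and $\langle\varphi,f_N\rangle=\frac1N\sum_i S_{\Delta w}(w-w_i)=f_{N,\Delta w}(t,w,\cdot)$, while the constants of Lemma~\ref{lem:2} become $\nu^2_{\langle\varphi,f\rangle}=\VV[\bar f_{\Delta w}(t,w,\cdot)]=\nu^2_{(S,f)}(w)$ as in \eqref{eq:nus} and $\sigma^2_{\varphi,M}=E_M[\VV_V[S_{\Delta w}(w-\cdot)]]=\sigma^2_{S,M}(w)$ with $\sigma_S^2$ as in \eqref{eq:sigmas}. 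Lemma~\ref{lem:2} then gives, pointwise in $w$,
\[
\EE\Bigl[\EE_V\bigl[(\EE[\bar f_{\Delta w}](t,w)-E_M[f_{N,\Delta w}](t,w))^2\bigr]\Bigr]^{1/2}\le \frac{\nu_{(S,f)}(w)}{M^{1/2}}+\frac{\sigma_{S,M}(w)}{N^{1/2}},
\]
and taking the $L^p(V)$ norm in $w$, again by Minkowski's inequality, gives $\|\EE[\bar f_{\Delta w}]-E_M[f_{N,\Delta w}]\|_{L^p(V,L^2(\Omega,L^2(V)))}\le \|\nu_{(S,f)}\|_{L^p(V)}M^{-1/2}+\|\sigma_{S,M}\|_{L^p(V)}N^{-1/2}$. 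Adding the two bounds produces exactly \eqref{eq:estim_2}.

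I expect the only step that is not pure bookkeeping to be the Taylor/mollification estimate of the first summand, and that is where the standing hypotheses really enter: one needs $f$ (hence $\EE[f]$) to have $q$ bounded velocity derivatives and $S_{\Delta w}$ to satisfy the matching vanishing–moment conditions, both already encoded in the constant $C_{\EE[f]}$ of Theorem~\ref{th:1b}. Beyond that, the care points are merely (i) verifying that $\|\cdot\|_{L^p(V,L^2(\Omega,L^2(V)))}$ is subadditive (Minkowski, valid for $p\ge1$), and (ii) applying Lemma~\ref{lem:2} uniformly in the parameter $w$ and then commuting the $L^p(V)$ norm past the resulting pointwise estimate, which is once more Minkowski.
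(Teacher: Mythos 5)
Your proposal is correct and follows essentially the same route the paper uses: the survey defers the proof of this theorem to the cited reference, but the decomposition it displays just before Theorem \ref{th:4} (splitting off the deterministic mollification error in $L^p(V)$ and bounding the remaining statistical term by the sampling lemma applied with $\varphi(\cdot)=S_{\Delta w}(w-\cdot)$) is exactly your argument, with Lemma \ref{lem:2} playing the role that Lemma \ref{lem:3} plays there. Your identification of $\nu_{(S,f)}$ and $\sigma_{S,M}$ and the use of Minkowski to commute the $L^p(V)$ norm are the intended bookkeeping steps.
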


\subsubsection{Mean Field Control Variate DSMC methods}\label{MFCV}
In order to improve the accuracy of standard MC sampling methods, we introduce a class of mean field control variate methods playing the role of the low-fidelity model. The key idea is to take advantage of the reduced cost of the mean field model which approximates the asymptotic behavior of the original Boltzmann model. More precisely we consider two different control variates strategies obtained by the mean field approximation: the direct numerical solution of the mean field model and its corresponding steady state. In the sequel most of the analysis is reported for a general quantity of interest $q[f]$. 

Let us denote by $\h= \h(t, w, z)$ the solution of the mean field model \eqref{FP}
complemented by the same initial distribution $\h(0, w, z)= f_0(w,z)$ of the high fidelity model.  
As discussed in Section \ref{model_socio} for small values of the scaling parameter $\epsilon$ we have 
$$
\lim\limits_{\epsilon\to 0} f_\epsilon(t,w,z) = \h(t,w,z),
$$
being $f_\epsilon$ solution of the high-fidelity Boltzmann model \eqref{ScaledBol}. Therefore, also the equilibrium distribution is such that
$$
\lim\limits_{t\to\infty} \lim\limits_{\epsilon\to 0} f_\epsilon(t,w,z) = \h^{\infty}(w,z). 
$$
In this setting, the parameter dependent control variate method with $\lambda\in\R$ can be formulated introducing the quantity 
 \begin{align}
  \g^{\lambda}[f_\epsilon]= \g[f_\epsilon]- \lambda (\g[\h]-\mathbb{E}[\g[\h]]).\label{CV}
\end{align}     
By the same arguments as in Section \ref{sec:MSCV} we can state also the following
\begin{theorem}
The optimal value $\lambda^*$ which minimizes the variance of \eqref{CV} is given by
\begin{align}
\lambda^*:= \frac{\CC[\g[f_\epsilon],\g[\h]]}{\VV[\g[{\h}]]},
\label{eq:olambda}
\end{align}
where $\CC[\cdot,\cdot]$ denotes the covariance.
The corresponding variance of $q^{\lambda^*}[{f_\epsilon}]$ is then 
\begin{equation}\label{VarClev}
\VV[q^{\lambda^*}[f_\epsilon]]= \left(1-\rho^2_{\g[f_\epsilon], \g[\h]}\right)\ \VV[\g[f_\epsilon]],
\end{equation}
where
$$
\rho_{\g[f_\epsilon], \g[\h]}:=\frac{\CC[\g[f_\epsilon],\g[\h]]}{\sqrt{\VV[\g[f_\epsilon]]\  \VV[\g[\h]]}} \in (-1,1), 
$$
is the correlation coefficient between $\g[f_\epsilon]$ and $\g[\h]$. 
In particular, we have 
$$
\lim\limits_{\epsilon \to 0}\frac{\CC[\g[f_\epsilon],\g[\h]]}{\VV[\g[\h]]}= 1,\quad \lim\limits_{\epsilon \to 0}\VV[\g^{\lambda^*}[f_\epsilon]] = 0.
$$
\label{th:lambda}
\end{theorem}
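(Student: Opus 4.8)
The plan is to mirror the proof of Theorem \ref{th:1}, treating $\VV[\g^{\lambda}[f_\epsilon]]$ as a scalar quadratic function of $\lambda$ at each fixed point in phase space and time, and then to pass to the mean-field limit using the convergence $f_\epsilon\to\h$ recalled above.

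First I would observe that, since $\mathbb{E}[\g[\h]]$ is deterministic with respect to $z$, subtracting it does not alter the variance, so expanding \eqref{CV} gives
\be
\VV[\g^{\lambda}[f_\epsilon]]=\VV[\g[f_\epsilon]]+\lambda^2\,\VV[\g[\h]]-2\lambda\,\CC[\g[f_\epsilon],\g[\h]].
\ee
This is a quadratic in $\lambda$ whose leading coefficient $\VV[\g[\h]]$ is strictly positive (we assume $\VV[\g[\h]]\neq 0$, otherwise $\h$ carries no information on the randomness and the statement is vacuous). Hence it is strictly convex and its unique minimizer is the stationary point, obtained by differentiating in $\lambda$ and setting the derivative to zero; this yields \eqref{eq:olambda}. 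Substituting $\lambda^*$ back and using the definition of $\rho_{\g[f_\epsilon],\g[\h]}$ gives
\be
\VV[\g^{\lambda^*}[f_\epsilon]]=\VV[\g[f_\epsilon]]-\frac{\CC[\g[f_\epsilon],\g[\h]]^2}{\VV[\g[\h]]}=\bigl(1-\rho^2_{\g[f_\epsilon],\g[\h]}\bigr)\VV[\g[f_\epsilon]],
\ee
which is \eqref{VarClev}; the bound $\rho_{\g[f_\epsilon],\g[\h]}\in(-1,1)$ follows from the Cauchy--Schwarz inequality applied to the covariance, with strict inequality as long as $\g[f_\epsilon]$ and $\g[\h]$ are not almost surely affinely dependent.

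For the asymptotic identities I would use the mean-field limit $f_\epsilon(t,w,z)\to\h(t,w,z)$ recalled before the theorem. For a quantity of interest $\g$ depending continuously on the distribution (in particular linearly, as for the moments $\langle\phi\,f\rangle$), this yields $\g[f_\epsilon]\to\g[\h]$ pointwise in $z$ as $\epsilon\to0$; combined with a dominated convergence / uniform integrability argument in the random variable $z$ one gets $\CC[\g[f_\epsilon],\g[\h]]\to\VV[\g[\h]]$, hence $\lambda^*\to1$. For the variance it is cleanest to bound $\VV[\g^{\lambda^*}[f_\epsilon]]\le\VV[\g^{1}[f_\epsilon]]=\VV[\g[f_\epsilon]-\g[\h]]\to0$ by the same convergence, which in turn forces $\rho^2_{\g[f_\epsilon],\g[\h]}\to1$.

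The main obstacle is precisely this last passage to the limit: the convergence $f_\epsilon\to\h$ established in Section \ref{model_socio} is only obtained up to subsequences and in a weak sense, so care is needed to upgrade it to convergence of the covariance and variance of $\g[f_\epsilon]$. This can be handled either by restricting to linear functionals $\g[f]=\langle\phi\,f\rangle$ together with enough integrability of $f_\epsilon$, uniform in $\epsilon$ and $z$, so that dominated convergence applies and the limit is unique (thus removing the subsequence), or simply by reading the asymptotic identities along the converging subsequence. The algebraic part — the manipulations of the quadratic in $\lambda$ — is routine and identical to that of Theorem \ref{th:1}.
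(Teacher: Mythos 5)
Your proposal is correct and follows essentially the same route as the paper, which proves this result "by the same arguments as in Theorem \ref{th:1}": minimize the quadratic in $\lambda$ by differentiation, substitute back to get the variance reduction formula, and invoke the convergence $f_\epsilon\to\h$ as $\epsilon\to 0$ for the asymptotic identities. Your additional caution about upgrading the weak, up-to-subsequences convergence of $f_\epsilon$ to convergence of the covariance is a point the paper passes over silently (it treats the limit purely formally), so it is a welcome refinement rather than a deviation.
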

Of course, the control variate formulation in \eqref{CV} can be modified using the steady state $\h_{\infty}(w,z)$ of the mean-field model \eqref{FP} 
 \begin{equation}
  \g^{\lambda}[f_\epsilon]= \g[f_\epsilon]- \lambda (\g[\h_{\infty}]-\mathbb{E}[\g[\h_{\infty}]]).\label{CVSteady}
\end{equation}  
Then, for the mean field control variate steady state \eqref{CVSteady}, a similar results holds in the large time limit, as the ones shown in Theorem \ref{th:lambda}. 

Let us now give the details of the Mean Field Control Variate (MFCV) algorithm. To that aim, we recall that using $M$ realizations of our random variable $z$ to define the Mean Field Control Variate (MFCV) estimator, we have
\begin{equation*}
 \mathbb{E}[\g^{\lambda^*}[f_\epsilon]]\approx E_M[\g^{\lambda^*}[f_\epsilon]]= E_M[ \g[f_\epsilon]]- \frac{C_M[\g[f_\epsilon],\g[\h]]}{V_M[\g[\h]]} (E_M[\g[\h]]-\mathbb E[\g[\h]]),
\end{equation*}
where $\mathbb E[\g[\h]]$ denotes the exact value of the expectation of the quantity of interest or its numerical approximation with negligible error. Furthermore, we have the following notations
\begin{align*}
& E_M[ \g[f_\epsilon]]:= \frac{1}{M}\sum_{k=1}^M \g[f_\epsilon^k],\qquad E_M[ \g[\h]]:= \frac{1}{M}\sum_{k=1}^M \g[\h^k] \\
&V_M[\g[\h]]:= \frac{1}{M-1}\sum\limits_{k=1}^M (\g[\h^k]- E_M[\g[\h]])^2,\\
& C_M[\g[f_\epsilon], \g[\h]]:= \frac{1}{M-1}\sum\limits_{k=1}^M (\g[f^k_{\epsilon}]- E_M[\g[f_\epsilon]])\ (\g[\h^k]- E_M[\g[\h]]), 
\end{align*}
being $f_\epsilon^k$ and $\h^k$ the solutions of the Boltzmann-type and the mean-field models, respectively, relative to $k$th realization of the random variable $z$. 
The corresponding MFCV algorithm based on a DSMC method is reported in Algorithm \ref{al:mfcv}.

\begin{algorithm2e}[!htbp]
\label{al:mfcv}
\begin{enumerate}
 \item \textbf{Sampling:}  Sample $M$ independent identically distributed (i.i.d.) samples of the initial distribution ${f}^{k,0} = f_0(w,z^k)$, $k=1,...,M$ from the random initial data ${f}_0(w,z)$.
 \item \textbf{Solving:} For each realization ${f}^{0,k}$, $k=1,...,M$
 \begin{enumerate}
 \item Compute the control variate $\h^{k,n}$, $k=1,...,M$ at time $t^n$ solving with a suitable deterministic method the mean field model \eqref{FP} (or using the steady state $\h_{\infty}$ ) and compute $\mathbb E[\g[\h^n]]$ (or $\mathbb E[\g[\h_\infty]]$) with negligible error. 
 \item Solve the kinetic equation \eqref{ScaledBol} by a MC solver with sample size $N$. We denote the solution at time $t^n$ by ${{f}}^{k,n}_{\epsilon,N},\ k=1,...,M$. 
 \end{enumerate}
 \item \textbf{Estimating:}
  \begin{enumerate}
 \item Estimate the optimal value of $\lambda^*$ at time $t^n$ by
 $$
 \lambda^{*,n}_M =\frac{ C_M[\g[f^{n}_{\epsilon,N}],\g[\h^{n}]]}{V_M[\g[\h^{n}]]},
 $$
 which in the mean field steady state control variate case becomes
  $$
 \lambda^{*,n}_M =\frac{ C_M[\g[f^{n}_{\epsilon,N}],\g[\h_{\infty}]]}{V_M[\g[\h_{\infty}]]}.
 $$
 \item Compute the the expectation of any quantity of interest $\g[f_{\epsilon,N}]$ of the random solution field with the mean-field control estimator
 $$
 E^{\lambda_*}_M[g[f_{\epsilon,N}]]= E_M[ \g[f^{n}_{\epsilon,N}]]-  \lambda^{*,n}_M (E_M[\g[\h^{n}]]-\mathbb E[\g[\h^n]]).
 $$
 \end{enumerate}
\end{enumerate}
\caption{Bi-fidelity MFCV-DSMC method}
\end{algorithm2e}

 Concerning the evaluation of moments, by ignoring the error term due to the approximation of $\lambda_*$, we have the following^^>\cite{PTZ}
 \begin{lemma}
The root mean square error of the MFCV-DSMC method satisfies 
\be\begin{split}
&\mathbb E\left[\mathbb E_{V}\left[ \left( \mathbb E[\langle\varphi,f_\epsilon\rangle]- E^{\lambda_*}_M[\langle\varphi,f_{\epsilon,N}\rangle] \right)^2 \right]\right]^{1/2}\\
&\hskip3cm\le \left(1-\rho^2_{\langle\varphi,f_\epsilon\rangle, \langle\varphi,\h\rangle}\right)^{1/2}\frac{\nu_{\langle\phi,f_\epsilon\rangle}} {M^{1/2}} + \frac{\sigma_{\varphi,M}} {N^{1/2}}, 
\end{split}
\ee
where $\nu^2_{\langle\varphi,f_\epsilon\rangle} = \VV[\langle\varphi,f_\epsilon\rangle]$
and $\sigma^2_{\varphi,M} = E_M[\sigma^2_{\varphi}]$ with $\sigma_\varphi^2=\VV_V[\varphi]$.
\label{lem:3}
\end{lemma}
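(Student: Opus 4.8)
The plan is to split the error of the MFCV--DSMC estimator into two contributions that decouple: the statistical error produced by the Monte Carlo sampling of the random input $z$, and the particle error produced by the DSMC resolution of the Boltzmann-type model on each realization; these are then recombined by the triangle inequality in the norm $g\mapsto \mathbb E\big[\mathbb E_V[g^2]\big]^{1/2}$. The structural fact that makes this work is that the control variate is the mean-field solution $\h$, which is computed by a deterministic method and whose expectation $\mathbb E[\langle\varphi,\h\rangle]$ is known up to negligible error; hence the correction term $\lambda^*\big(E_M[\langle\varphi,\h\rangle]-\mathbb E[\langle\varphi,\h\rangle]\big)$ carries no particle noise and will cancel identically when one compares the DSMC estimator with its velocity-deterministic counterpart.

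Write $X=\langle\varphi,f_\epsilon\rangle$, $X_N=\langle\varphi,f_{\epsilon,N}\rangle$, $Y=\langle\varphi,\h\rangle$ and $X^{\lambda^*}=X-\lambda^*(Y-\mathbb E[Y])$. Then
\[
\mathbb E[X]-E^{\lambda^*}_M[X_N]=\Big(\mathbb E[X^{\lambda^*}]-\tfrac1M\textstyle\sum_{k=1}^M X^{\lambda^*,k}\Big)+\tfrac1M\textstyle\sum_{k=1}^M\big(X^k-X_N^k\big),
\]
since the $Y$-dependent pieces of $E^{\lambda^*}_M[X_N]$ and of $E^{\lambda^*}_M[X]$ coincide. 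The first bracket depends only on the drawn samples $z^1,\dots,z^M$ and not on the DSMC randomness, while the second is the genuine particle error. Because the DSMC reconstruction is unbiased for $\langle\varphi,f_\epsilon\rangle$ (as reflected in Lemma \ref{lem:1}), one has $\mathbb E_V[X_N^k]=X^k$, so the conditional mean $\mathbb E_V$ of the second sum vanishes; hence the cross term in $\mathbb E\big[\mathbb E_V[(\mathbb E[X]-E^{\lambda^*}_M[X_N])^2]\big]$ is zero, and this quantity equals the sum of the two individual mean-square errors, which is at most the square of the sum of their square roots.

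For the first contribution, Theorem \ref{th:lambda} applied with the quantity of interest $q[f]=\langle\varphi,f\rangle$ gives that $X^{\lambda^*}$ has the same expectation as $X$ and variance $(1-\rho^2_{\langle\varphi,f_\epsilon\rangle,\langle\varphi,\h\rangle})\,\VV[\langle\varphi,f_\epsilon\rangle]$; since the $M$ realizations of $z$ are i.i.d.\ and no DSMC randomness enters this term, its root mean square error is $(1-\rho^2_{\langle\varphi,f_\epsilon\rangle,\langle\varphi,\h\rangle})^{1/2}\,\nu_{\langle\varphi,f_\epsilon\rangle}\,M^{-1/2}$ with $\nu^2_{\langle\varphi,f_\epsilon\rangle}=\VV[\langle\varphi,f_\epsilon\rangle]$. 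For the second contribution, conditioning on $z^1,\dots,z^M$ and invoking Lemma \ref{lem:1} realization by realization — each term having $\mathbb E_V$-variance $\sigma^2_\varphi(z^k)/N$ — produces exactly the particle-error term $\sigma_{\varphi,M}\,N^{-1/2}$, $\sigma^2_{\varphi,M}=E_M[\sigma^2_\varphi]$, already obtained in Lemma \ref{lem:2} (indeed, the mean-field correction having dropped out, the DSMC part of the MFCV estimator coincides with that of the plain MC--DSMC estimator). Summing the two bounds gives the claimed inequality.

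The point that needs care is the interaction of the two nested randomizations — the outer sampling of $z$ and the inner DSMC sampling — together with the order of the expectations $\mathbb E\circ\mathbb E_V$: one must check that these layers genuinely separate on each of the two terms, in particular that the cross term vanishes (which is precisely where unbiasedness of the DSMC moment estimator is used), so that passing to the triangle inequality loses nothing essential. Finally, as announced in the statement, the argument discards the higher-order error incurred by replacing the exact optimal coefficient $\lambda^*$ by its empirical counterpart $\lambda^*_M$ computed from the same $M$ samples; controlling that term would require a separate perturbation estimate.
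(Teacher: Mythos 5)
Your proof is correct and follows essentially the route the paper intends: split the error into the $z$-sampling part, whose variance is reduced by the factor $\left(1-\rho^2_{\langle\varphi,f_\epsilon\rangle,\langle\varphi,\h\rangle}\right)$ via Theorem \ref{th:lambda}, plus the DSMC particle part inherited from Lemmas \ref{lem:1}--\ref{lem:2}, using that the deterministic mean-field control variate carries no particle noise and that the DSMC moment estimator is conditionally unbiased so the cross term vanishes. (Your conditional computation actually yields the sharper particle term $\sigma_{\varphi,M}(MN)^{-1/2}$, which of course implies the stated bound.)
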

In the case of the reconstruction function \eqref{eq:f_RNz}, we have
\[
\begin{split}
\left\| \mathbb E[f_\epsilon](t,\cdot)-E^{\lambda_*}_M[f_{\epsilon,N,\Delta w}](t,\cdot)\right\|_{L^p(V,L^2(\Omega,L^2(V)))} &  \\
&\hskip -3.3cm \leq \left\| \mathbb E[f_\epsilon](t,\cdot)-\mathbb E[f_{\epsilon,\Delta w}](t,\cdot)\right\|_{L^p(V)}\\
&\hskip -3.3cm +\left\| \mathbb E[f_{\epsilon,\Delta w}](t,\cdot)-E^{\lambda_*}_M[f_{\epsilon,N,\Delta w}](t,\cdot)\right\|_{L^p(V,L^2(\Omega,L^2(V)))},
\end{split}
\]
where the first term is bounded as in Theorem \ref{th:2} and the second term can be bounded using Lemma \ref{lem:3} with $\phi(\cdot)=S_{\Delta w}(w-\cdot)$. Thus we have the following result.
 \begin{theorem}
The error introduced by the reconstruction function \eqref{eq:f_RNz} in the MFCV-DSMC method satisfies
\begin{equation}
\label{eq:estim_3}
\begin{split}
&\left\| \mathbb E[f_\epsilon](t,\cdot)-E^{\lambda_*}_M[f_{\epsilon,N,\Delta w}](t,\cdot)\right\|_{L^p(V,L^2(\Omega,L^2(V)))}\\
 &\leq 
 \frac{\left\|\left(1-\rho^2_{\langle S,f_\epsilon\rangle, \langle S,\h\rangle}\right)^{1/2}\nu_{\langle S,f_\epsilon\rangle}\right\|_{L^p(V)}} {M^{1/2}} + \frac{\|\sigma_{S,M}\|_{L^p(V)}} {N^{1/2}} + C_{\mathbb E[f]} (\Delta w)^q
\end{split}
\end{equation}
where and $\nu_{\langle S,f_\epsilon\rangle}^2$ is defined as in \eqref{eq:nus} and $\sigma^2_{S,M}$ is defined in Theorem \ref{th:2}.
\label{th:4}
\end{theorem}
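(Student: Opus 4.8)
The plan is to follow the same two-step pattern already used for Theorem~\ref{th:2}, replacing the plain MC--DSMC moment estimate (Lemma~\ref{lem:2}) by its control-variate refinement (Lemma~\ref{lem:3}). First I would introduce the intermediate quantity $f_{\epsilon,\Delta w}(t,w,z)=\int_V S_{\Delta w}(w-w')\,f_\epsilon(t,w',z)\,dw'$, the exact mollified kinetic density, which coincides with the particle-average $\mathbb E_V[f_{\epsilon,N,\Delta w}]$ since the $N$ DSMC samples are i.i.d.\ with law $f_\epsilon(t,\cdot,z)$. The triangle inequality in the mixed norm $\|\cdot\|_{L^p(V,L^2(\Omega,L^2(V)))}$ then splits the error into the deterministic mollification part $\mathbb E[f_\epsilon]-\mathbb E[f_{\epsilon,\Delta w}]$ and the statistical part $\mathbb E[f_{\epsilon,\Delta w}]-E^{\lambda_*}_M[f_{\epsilon,N,\Delta w}]$, exactly as already written just before the statement.

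For the first term, expectation over $z$ commutes with the velocity mollification, so $\mathbb E[f_{\epsilon,\Delta w}](t,w)=\int_V S_{\Delta w}(w-w')\,\mathbb E[f_\epsilon](t,w')\,dw'$; since this quantity carries no randomness either in $z$ or in the particles, the outer $L^2(\Omega,L^2(V))$ collapses to an absolute value and the contribution reduces to $\|\mathbb E[f_\epsilon](t,\cdot)-\mathbb E[f_{\epsilon,\Delta w}](t,\cdot)\|_{L^p(V)}$. This is precisely the smoothing-kernel consistency bound already used in Theorem~\ref{th:1b}, applied now to $\mathbb E[f_\epsilon]$, which is at least as regular in $v$ as $f_\epsilon$; hence it is bounded by $C_{\mathbb E[f]}(\Delta w)^q$, with $C_{\mathbb E[f]}$ depending on the $q$-th velocity derivative of $\mathbb E[f_\epsilon]$ and on the moments of $S_{\Delta w}$.

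For the second term I would apply Lemma~\ref{lem:3} pointwise in the reconstruction variable $w$, with test function $\varphi=S_{\Delta w}(w-\cdot)$: since $f_{\epsilon,\Delta w}=\langle\varphi,f_\epsilon\rangle$ and $f_{\epsilon,N,\Delta w}=\langle\varphi,f_{\epsilon,N}\rangle$, the lemma gives, for every fixed $w$ and $t$, $\mathbb E[\mathbb E_V[(\mathbb E[\langle\varphi,f_\epsilon\rangle]-E^{\lambda_*}_M[\langle\varphi,f_{\epsilon,N}\rangle])^2]]^{1/2}\le(1-\rho^2_{\langle S,f_\epsilon\rangle,\langle S,\h\rangle})^{1/2}\nu_{\langle S,f_\epsilon\rangle}M^{-1/2}+\sigma_{S,M}N^{-1/2}$, with $\nu^2_{\langle S,f_\epsilon\rangle}(w)=\VV[\langle S_{\Delta w}(w-\cdot),f_\epsilon\rangle]$ as in \eqref{eq:nus} and $\sigma^2_{S,M}(w)=E_M[\VV_V[S_{\Delta w}(w-\cdot)]]$ as in Theorem~\ref{th:2}. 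Raising to the $p$-th power, integrating over $w\in V$, taking the $p$-th root and using Minkowski's inequality in $L^p(V)$ separates the right-hand side into $\|(1-\rho^2_{\langle S,f_\epsilon\rangle,\langle S,\h\rangle})^{1/2}\nu_{\langle S,f_\epsilon\rangle}\|_{L^p(V)}M^{-1/2}$ and $\|\sigma_{S,M}\|_{L^p(V)}N^{-1/2}$. Adding this to the mollification bound of the previous paragraph yields the claimed estimate.

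The genuinely routine verifications are the commutation of $\mathbb E$ with the mollification and the Minkowski step. The one point that deserves care --- and is the main, if modest, obstacle --- is that the mixed norm is \emph{pointwise in $w$}, i.e.\ $\|g\|_{L^p(V,L^2(\Omega,L^2(V)))}=\|\mathbb E[\mathbb E_V[g^2]]^{1/2}\|_{L^p(V)}$ first computes the root-mean-square error at each $w$ and only afterwards integrates in $w$; this is exactly what allows Lemma~\ref{lem:3} to be invoked separately at every velocity bin without having to control correlations of the reconstruction error across distinct values of $w$. Finally, as in Lemma~\ref{lem:3}, the estimate is derived ignoring the error committed in replacing the optimal $\lambda^*$ by its sample version $\lambda^{*,n}_M$; reinstating it would only add a higher-order term absorbed into the $M^{-1/2}$ contribution.
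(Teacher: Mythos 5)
Your proposal is correct and follows essentially the same route as the paper: a triangle-inequality split into the deterministic mollification error, bounded by $C_{\mathbb E[f]}(\Delta w)^q$ as in Theorem \ref{th:2}, and the statistical error, bounded by applying Lemma \ref{lem:3} with $\varphi(\cdot)=S_{\Delta w}(w-\cdot)$ pointwise in $w$ before taking the $L^p(V)$ norm. The additional remarks on the commutation of $\mathbb E$ with the mollification and on the pointwise-in-$w$ structure of the mixed norm are accurate elaborations of steps the paper leaves implicit.
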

As a consequence when the solution of the high-fidelity model is close to the solution of the control variate the statistical error due to the uncertainty vanishes. This justifies the use of a large number of samples in the velocity space in agreement with the reconstruction used in order to balance the last two error terms in \eqref{eq:estim_3}. 

\subsubsection{Application to socio-economic sciences}
We present two numerical examples concerning kinetic models in socio-economic sciences. We will denote by MFCV-S the case where the steady state $\h_\infty(w,z)$ of the mean-field model is analytically known and is used off line as control variate and by MFCV the case where the Fokker-Planck equation \eqref{FP} is numerically solved and used as a time dependent control variate. The mean field model is solved by the second order structure preserving method for nonlocal Fokker-Planck equations developed in^^>\cite{PZ1}. The solutions are averaged over $50$ runs to reduce statistical fluctuations. 
In order to compute with negligible error $\mathbb E[\h_\infty]$ we adopt a stochastic collocation approach with $20$ collocation nodes.

We first consider the kinetic model for opinion formation  \eqref{InterOpinion} with uncertainties present on the initial distribution or on the interaction strength. We assume that $z\sim \mathcal{U}([0, 1])$ and the initial distribution $f_0(w,z)$ is given by
\begin{equation}
f_0(w,z) = 
\begin{cases}
1 & w \in \left[\dfrac14 (z-2) , \dfrac14 (z+2)\right] \\
0 & \textrm{otherwise}. 
\end{cases}
\label{eq:t1}
\end{equation} 
We also assume 
\begin{equation}
\label{eqn:A}
p(|v-w|,z) = 1,\qquad D(v)= \sqrt{1-v^2},
\tag{A}
\end{equation}
and thus we obtain a steady state of the Fokker-Planck equation of the form $\h_\infty(w,z)$ given by \eqref{eq:steady_beta}.  
The Boltzmann equation is solved with $N=2\times 10^4$ particles. The number of grid points of the mean field model is set to $N_{MF}=20$ and the number of samples we have chosen is $M_{MF}=10^4$ which correspond to a computational cost comparable to $M=10$ for the Boltzmann solver.
In Figure \ref{ErrorSamples} (left) we report the $L^2$ error of the expected density obtained by the standard MC and MFCV-S methods for increasing number of samples at time $t = 5$. We obtain an improvement in accuracy for the MFCV-S between one and two orders of magnitude using the same number of samples. 

\begin{figure}
\begin{center}
\includegraphics[scale = 0.26]{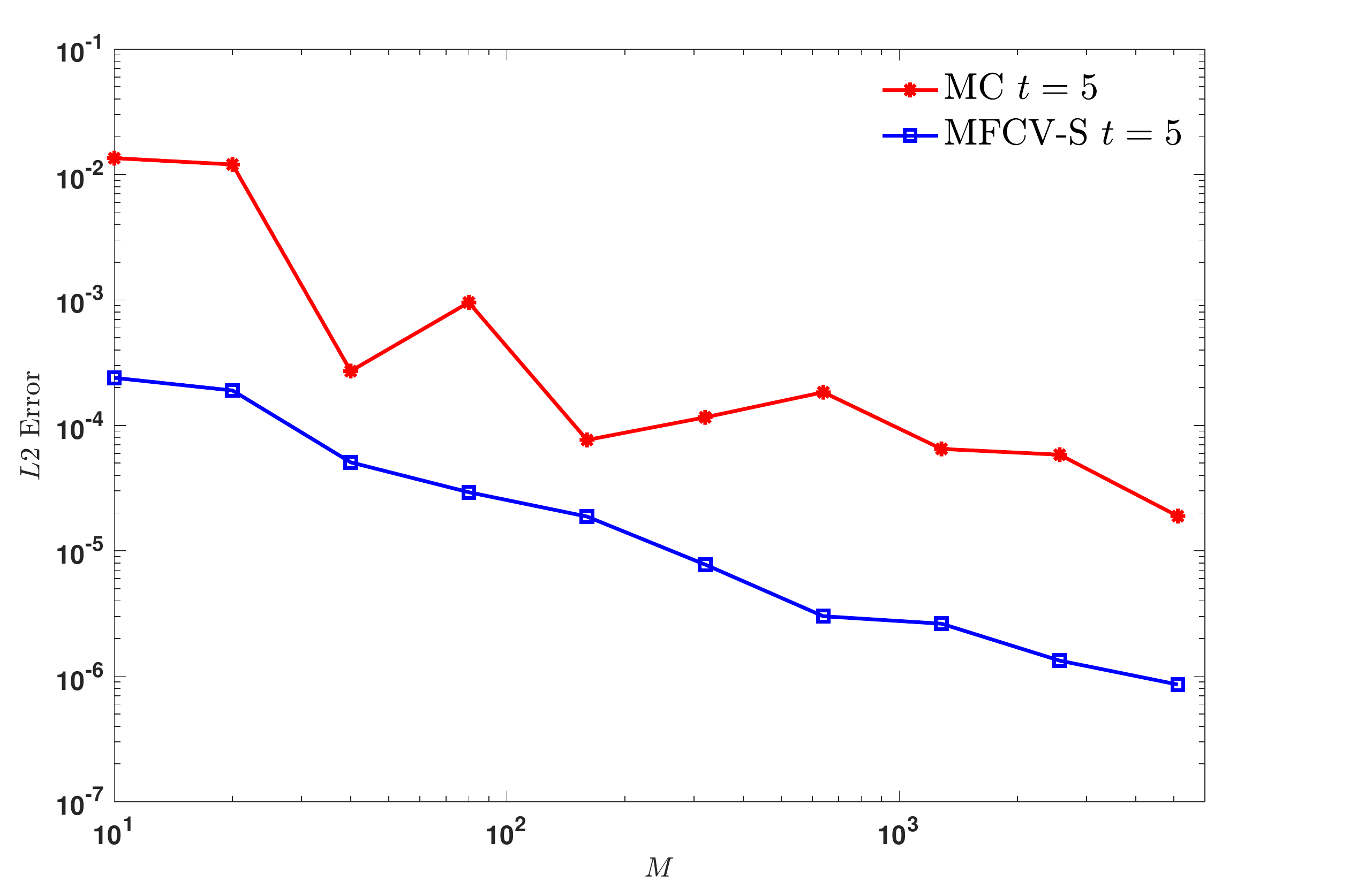}\hskip -.5cm
\includegraphics[scale = 0.26]{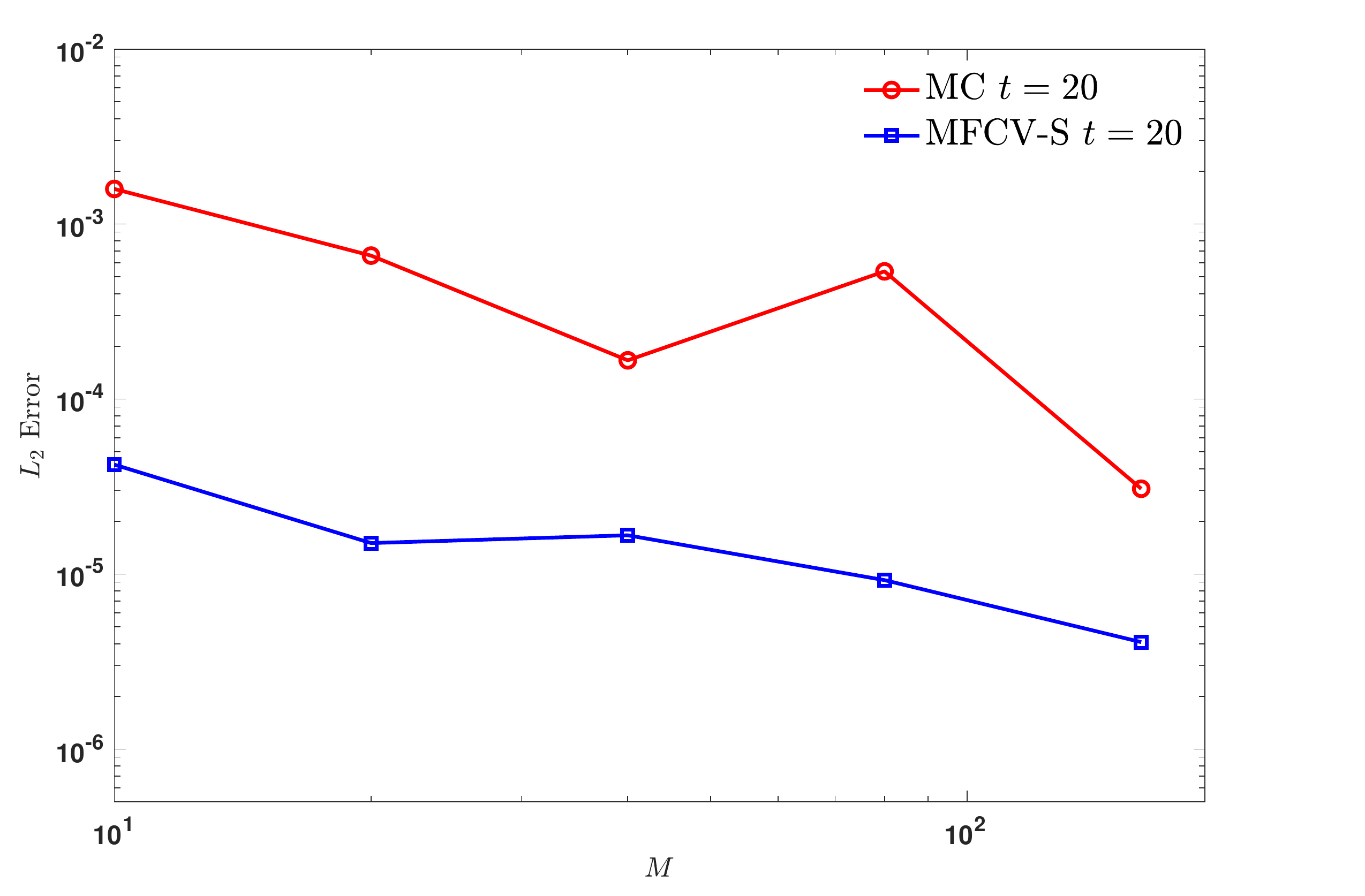}
\caption{{MFCV method for opinion dynamics.} Error of the MFCV-S estimate and classical MC method for increasing number of samples $M$. We considered $N=2\times 10^4$ in the DSMC solver. (Left) Solution at $t=5$ of case \eqref{eqn:A} with uncertain initial data; (Right) Solution at $t=20$ of case \eqref{eqn:B} with uncertain interaction parameters.}\label{ErrorSamples}
\end{center}
\end{figure}
As a further case for opinion dynamics we consider the kinetic model \eqref{InterOpinion} with 
\begin{equation}
\label{eqn:B}
p(|w-v|,z)= \dfrac{3}{4}+\dfrac{z}{4},\qquad z\sim\mathcal{U}([-1,1]),\qquad D(w) = {1-w^2},
\tag{B}
\end{equation}
so that the resulting steady state of the Fokker-Planck model is the Maxwellian-like distribution \eqref{eq:steady_max}. The initial data in this case is \eqref{eq:t1} in the deterministic setting $z=0$. In Figure \ref{ErrorSamples} (right), we report the $L^2$ error of expected probability distribution function computed by the MC and MFCV-S method at the final time for different number of samples. As in case \eqref{eqn:A} we obtain an improvement between one and two orders of accuracy for the MFCV-S method compared to the classical MC method.

%



We study now two test cases related to the wealth exchange CPT model defined by \eqref{InterWealth}. First, we consider uncertainty in the initial condition and secondly in the saving propensity. The computational domain is the interval $[0,10]$. Let us first consider $z \in \mathcal U([0,1])$ and the initial  distribution $f_0(w,z)$ defined by 
\begin{equation}
f_0(w,z) = 
\begin{cases}
\dfrac{1}{2} & w \in \left[\dfrac{z}{5},2+\dfrac{z}{5} \right] \\
0 & \textrm{otherwise}.
\end{cases}
\end{equation}  
Furthermore, we consider 
\begin{equation}
\lambda(z) = 1,\qquad D(w) = w,
\tag{C}
\label{eqn:A2}
\end{equation}
so that the large time behavior of the Fokker-Planck model $\h_\infty(w,z)$ is given by \eqref{eq:steady_invgamma} with $m(z) = 1+ \dfrac{z}{5}$.
As a second case, we consider uncertainty in the interaction 
\begin{equation}
\lambda(z) = \frac{1}{2}+ \frac{z}{4},\qquad z\sim\mathcal{U}([-1,1]),\qquad D(w)=w.
\tag{D}
\label{eqn:B2}
\end{equation}
The initial condition is uniformly distributed on $[0,2]$, so that the large time behavior of the Fokker-Planck model is given by \eqref{eq:steady_invgamma} with $m_{\h} \equiv 1$.
The DSMC solver for the Boltzmann model uses $N=5\times 10^4$ particles. For the mean field scheme instead we consider $N_{MF}=100$ and we choose $M_{MF}= 5\times 10^3$ which gives a comparable cost of the full Boltzmann solver for $M=10$.
\begin{figure}
\begin{center}
\includegraphics[width=0.51 \textwidth]{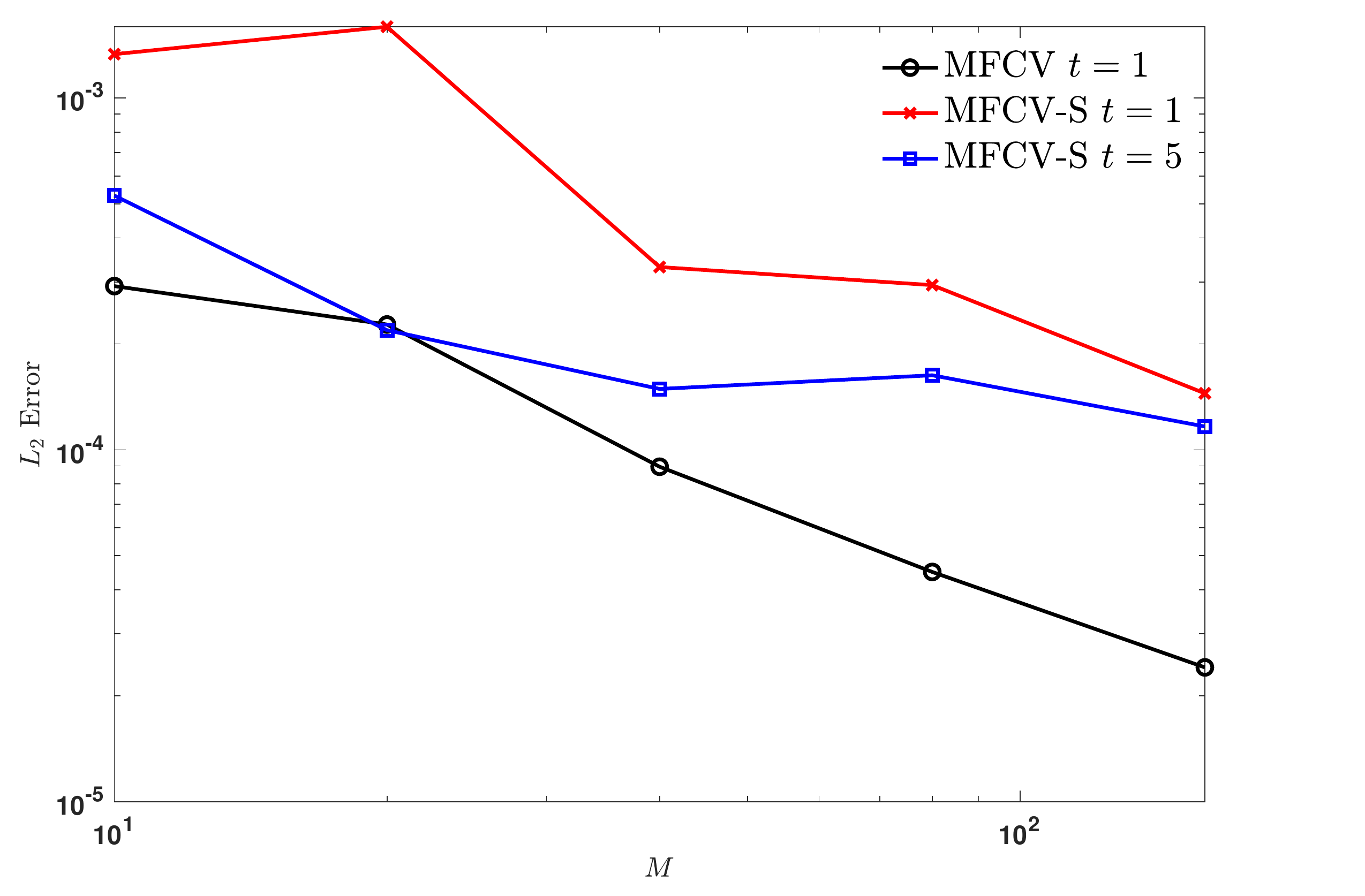}\hskip -.5cm
\includegraphics[width=0.51 \textwidth]{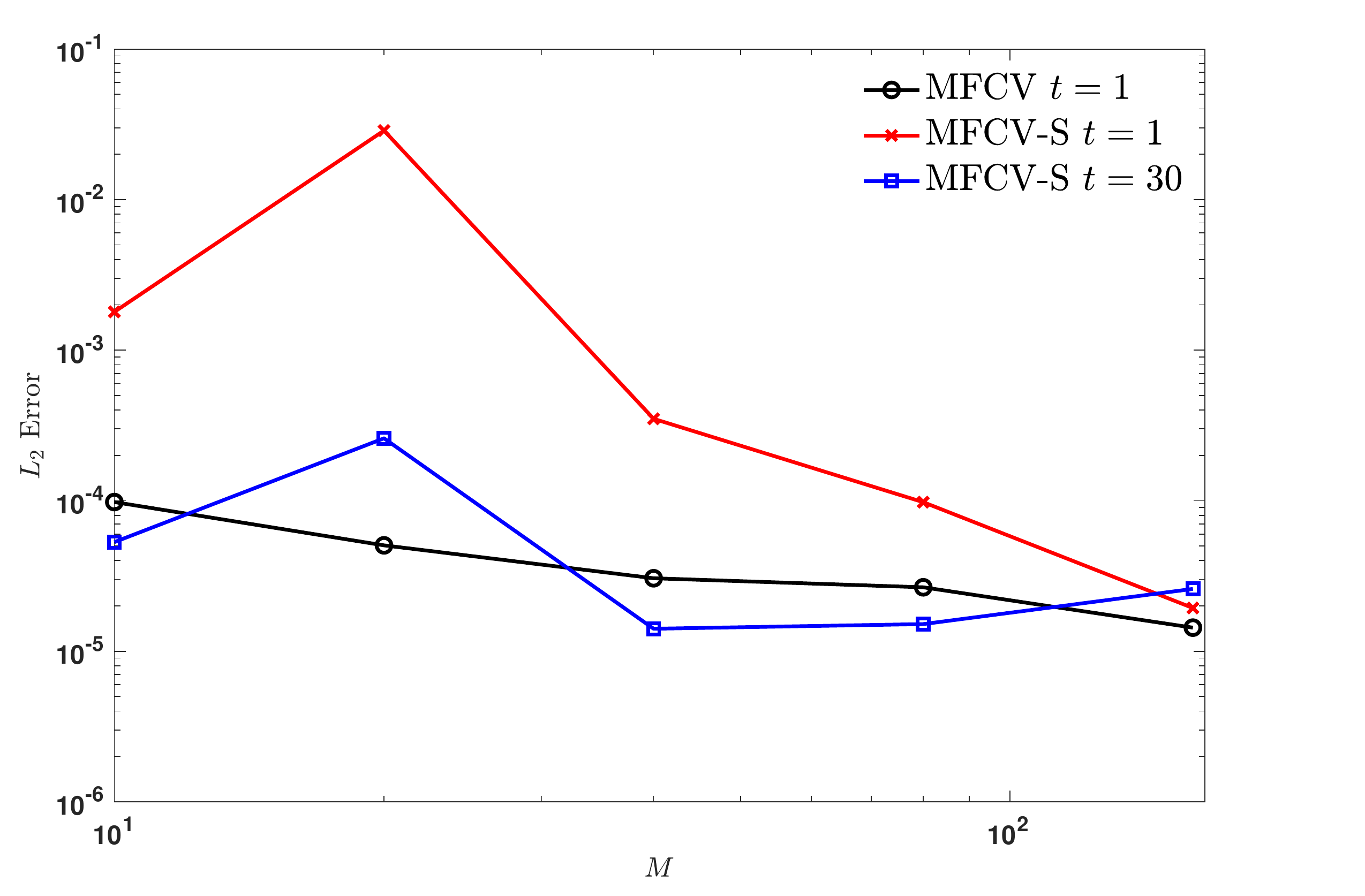}
\caption{{MFCV method for a wealth exchange model}. The $L^2$ error of $\mathbb E[f]$ computed by the MFCV-S and the MFCV methods for different number of samples $M$ at $t=1$ and $t=5$. 
The left hand side corresponds to the setting of case \eqref{eqn:A2} and the right to case \eqref{eqn:B2}.}\label{NumTest4}
\end{center}
\end{figure}
In Figure \ref{NumTest4}, we compare the  $L^2$ error of $\mathbb E[f_\epsilon]$ computed by the MFCV or MFCV-S method at fixed times but for different number of samples $M$. We obtain that the $L^2$ error of the expected probability distribution function of the MFCV-S method is considerably smaller at larger times. In comparison to the MFCV-S method,  MFCV method is able to be more accurate even at early times.

\section{Bi-fidelity stochastic collocation methods}\label{sec:bi}
In the examples of multi-fidelity models that were discussed in the previous section, high-fidelity samples are not selected based on any criteria other than the fact that they are small in number compared to low-fidelity samples.
Here, following^^>\cite{LZ,GJL,LPZ,BLPZ} we explore a different direction and we focus on stochastic collocation methods based on bi-fidelity algorithms. The main idea is that within the bi-fidelity approximation, one employs the cheap low-fidelity model to explore the random parameter space and to select the most important parameter points in this space. After that, by applying exactly the same approximation rule learnt from the low-fidelity model, one solves the high-fidelity model. 
Following the above idea, we first review the BFSC methods in its generality. After we focus on its application to  efficient uncertainty quantification  for the Boltzmann equation of Section \ref{sec:bolt}, the linear transport equation of Section \ref{sec:diff} and finally the epidemic transport models of Section \ref{sec:epidemic}. 

\subsection{A Bi-fidelity stochastic collocation (BFSC) algorithm}
To set the stage for the discussion, we first introduce basic notions in the following. We let $u(x,t;z)$ be the solution of a complex
system subject to uncertainty, where $x\in \DH$ and $t$ are the  spatial and temporal variables. $z\in \Omega \subset {\RR}^{d_z}$ is a
$d_z$-dimensional random variable. Here $\Omega$ is the support of $z$,
where the probability distribution $p(z)$ is defined. For the sake of simplicity,  we denote  $u(x,t;z)$ by  $u(z)$ when from the context the dependence on the other variables will be clear. Let us assume now the high-fidelity solutions $u^H(z)$ and low-fidelity solutions $u^L(z)$ are available. Let also 
$N$ be the number of affordable low-fidelity simulation runs, which, in principle, is  very large due to the reduced complexity of the model. On the other hand, $M$ denotes the number of high-fidelity simulation runs that can be afforded and it is typically very small, i.e. the setting is such that $N\gg M$. {Note that in this section, to simplify notations, we will use $N$ to denote the number of samples used by the low-fidelity model instead of $M_E$ as in Section \ref{sec:MSCV}}. Let finally $\gamma_k=\{z_1, \cdots, z_k\}$, $k\geq 1$ be a set of sample points in $\Omega$. 

We denote by $u^L(\gamma_k) = [u^L(z_1), \cdots, u^L(z_k)]$ the low-fidelity snapshot matrix corresponding to the solution of the low fidelity model for the sample point $z_k$. To this matrix we can associate a corresponding low-fidelity approximation space, i.e. the space spanned by the set of sample points $\gamma_k$, 
$$ U^L(\gamma_k) = \text{span}\{u^L(\gamma_k)\} = \text{span}\{ u^L(z_1), \cdots, u^L(z_k) \}. $$  
Similarly, the high-fidelity snapshot matrix, i.e. the matrix obtained from the sample set $\gamma_k$, and the corresponding high-fidelity approximation space, i.e. the space spanned by the solutions computed at nodes $z_k$, are defined as follows:
$$ u^H(\gamma_k) = [u^H(z_1), \cdots, u^H(z_k)], \qquad U^H(\gamma_k)=\text{span}\{u^H(\gamma_k)\}. $$

The main idea of the BFSC method is to construct an inexpensive surrogate $u^B(z; \gamma_M)$ of the high-fidelity solution in the following non-intrusive  manner
\begin{equation}
\label{bi-fi approx}
u^H(z) \approx u^B(z; \gamma_M)  = \sum_{k=1}^M c_k(z)u^H(z_{i_k})
\end{equation}
where $M$ is expected to be the number of high-fidelity samples and where correspondingly $z_{i_k}\in \gamma_M$ is a subset of size $M$ of the sample space of size $M_E$. In other words, we approximated  the solution of the high fidelity model in the space spanned by $u^H(z_{i_k}), \ k=1,..,M$. When constructing such algorithm one seeks for $M$ to be as small as possible, since large $M$ means more high-fidelity simulations and consequently prohibitive computational efforts. Thus, the central idea of the BFSC algorithm is to use cheap low-fidelity models to learn the coefficients $c_i(z)$  in \eqref{bi-fi approx}, and then apply the same approximation rule to a limited number, but selected, of high-fidelity samples to construct the bi-fidelity approximations of high-fidelity samples.   

The BFSC algorithm for approximating the high-fidelity solution consists  of offline and online stages. In the offline stage, we employ the cheap low-fidelity model to explore the parameter space to find the most important parameter points, i.e. a small number of samples permitting to give a suitable approximate solution. During the online stage, we learn the approximation rule from the low-fidelity model for any given $z$, and apply it to construct the bi-fidelity approximation. Thus, to construct the BFSC approximation in 
\eqref{bi-fi approx}, one essentially needs to answer the following two questions:
\smallskip
\begin{itemize}
\item How to choose a good collocation nodal set that results in a good approximation of  the high fidelity solution in the random space?
\smallskip
\item How to recover the coefficients $\{c_i\}$ in an efficient manner? In other words, what is the efficient reconstruction algorithms that can be realized without resorting to an intensive use of the high-fidelity solver?
\end{itemize} 
\smallskip
These two questions, if properly addressed, constitute the key to the efficiency and accuracy of the algorithm. We outline the key ideas in Algorithm \ref{BiFi-pod}.

\begin{algorithm2e}[htbp]
\label{BiFi-pod}
\begin{enumerate}
	\item 
{\bf Offline stage}
\begin{enumerate}
	\item 
Select a sample set $\Gamma_N = \{z_1, z_2, \hdots, z_N\}\subset \Omega $.
\item 
Run the low-fidelity model $u_l(z_j)$ for each $z_j \in \Gamma_N$.
\item 
Select $M$ ``important" points from $\Gamma_N$ and denote it by $\gamma_M=\{z_{i_1}, \cdots z_{i_M} \} \subset\Gamma_N$. Construct the low-fidelity approximation space $U^L(\gamma_M)$.

\end{enumerate}

\item 
{\bf Online}
\begin{enumerate}
	\item 
	Run high-fidelity simulations at each sample point of the selected sample set $\gamma_M$. Construct the high-fidelity approximation space $U^H(\gamma_M)$. 
\item 
For any given $z$, run the low-fidelity model to get the corresponding low-fidelity solution $u^L(z)$ and  compute the low-fidelity coefficients by projection:
$$u^L(z) \approx \mathcal{P}_{U^L(\gamma_M)}u^L = \sum_{k=1}^M c_k(z)u^L(z_{i_k}).$$
\item
Construct the bi-fidelity approximation by applying the sample approximation rule learned from the low-fidelity model:
$$u^B(z)  = \sum_{k=1}^M c_k(z)u^H(z_{i_k}).$$
\end{enumerate}
\end{enumerate}
\caption{BFSC method}
\end{algorithm2e}
We address in the following the two main questions permitting to construct an efficient and accurate BFSC method: the proper point selection in the random space and {the efficient construction of the bi-fidelity approximation}.

\subsubsection{Point selection.}
To select the subset $\gamma_M$, we search the parameter space by the greedy algorithm proposed in^^>\cite{NGX14, ZNX14}: we gradually select the important point set from a candidate set $\Gamma_N\in \Omega$.
The method works as follows.
\smallskip
\begin{itemize}
	\item Start with a trivial subspace $\gamma_0=\varnothing$, and assume that the first $k-1$ important points $\gamma_{k-1}=\{ z_{i_1}, \cdots, z_{i_{k-1}}\} \subset \Gamma_N$ have been selected.
	\smallskip
	\item  We choose the next point  $z_{i_k} \in \Gamma$ as the point that maximizes the 
	distance between its corresponding low-fidelity solution and the approximation space $U^L(\gamma_{k-1})$, spanned by the low-fidelity solutions on the existing point set $\gamma_{k-1}$, i.e., 
	\begin{equation}\label{Point} z_{i_k} = \arg\max_{z\in\Gamma}\text{dist}(u^L(z), U^L(\gamma_{k-1})), 
		\qquad \gamma_k = \gamma_{k-1} \cup z_{i_k}, \end{equation}
	where $\text{dist}(v,W)$ is the distance function between $v\in u^L(\Gamma)$ 
	and subspace $W\subset U^L(\Gamma_N)$. 
\end{itemize}
\smallskip
Thus, the greedy procedure essentially serves the purpose of searching the linear independent basis set in the parameterized low-fidelity solution space until all $M$ important points are selected.
Let us remark that the whole algorithm allows an efficient implementation by standard linear algebra operations, if $\text{dist}(v,W)$ is chosen as squared Euclidean distance. Let $\mb{G}$ be the Gramian matrix of the low-fidelity solution
$u^L(\Gamma_N)$, i.e.,
\begin{equation} \label{Gramian}
\mathbf{G} =   \left\langle u^L(z_i),
u^L(z_j)\right\rangle^L, \quad {1\leq i,j\leq N}.
\end{equation}
where $\left\langle\,\cdot\,\right\rangle^L$ be an inner product space corresponding to the high-fidelity solution. Similar defintion holds for $\left\langle\,\cdot\,\right\rangle^H$.
We then apply the pivoted Cholesky decomposition to the matrix $\mathbf{G}$,
\begin{equation} \label{Cholesky}
\mb{G} = \mb{P}^T\mb{LL}^T\mb{P},
\end{equation}
where $\mb{L}$ is lower-triangular and $\mb{P}$ is a permutation matrix
due to pivoting.
This will produce an ordered permutation vector $P=(i_1,\dots,i_N)$,
from which we choose the first $M$ points to define
$\gamma_M=\{z_{i_1},\dots,z_{i_M}\}$. 
This procedure guaranteed that can $u^L(\gamma)$ can form a linearly independent collection^^>\cite{NGX14}. More details and properties of the algorithm can be found in
\cite{NGX14, ZNX14}. We remark that other point selections strategies can be used in this setting^^>\cite{perry2019allocation}. 

\subsubsection{Bi-fidelity approximation.} 
Once the important point set $\gamma_M$ is selected, we can construct the low- and high-fidelity approximation space, $U^L(\gamma_M)$  and $U^H(\gamma_M)$, respectively. For any 
given new sample point $z\in \Omega$, we project the corresponding low-fidelity solution $u^L(z)$ onto the low-fidelity approximation space
$U^L(\gamma_M)$: $$u^L(z) \approx \mathcal P_{U^L(\gamma_M)}[u^L(z)] = \sum_{k=1}^M c_k(z) u^L(z_{i_k}), $$ 
where $\mathcal P_{V}$  is the projection operator onto a Hilbert space $V$ 
and the corresponding projection coefficients $\{ c_k\}$ are computed by the following projection:
\begin{equation}\label{Gc}
 {\bf G}^L {\bf c} = {\bf f}^L , \qquad {\bf f}^L  = (f^L _k)_{1\leq k\leq M}, \qquad f^L _k = \langle u^L(z), u^L(z_{i_k})\rangle^L, 
 \end{equation}
where ${\bf G}^L$ is the Gramian matrix of $u^L(\gamma_M)$, defined by 
\begin{equation}\label{GM}
 ({\bf G}^L)_{ij} = \langle u^L(z_{i_k}), u^L(z_{j_k}) \rangle^L, \qquad 1 \leq k\leq M, \quad i_k, j_k \in \gamma_M. 
\end{equation}

These low-fidelity coefficients $\{c_k\}$ serve as the surrogate of the corresponding high-fidelity coefficients of $u^H(z)$. Therefore,
the sought bi-fidelity approximation of $u^H(z)$ can be constructed via \eqref{bi-fi approx}. We emphasize that if the low-fidelity model can mimic the variations of the high-fidelity model in the parameter space,  the low-fidelity coefficients can be  a good approximation of the corresponding high-fidelity coefficients for a given sample $z$. The whole procedure of the bi-fidelity algorithm for a given $z$ is summarized in {Algorithm ~\ref{BiFi-pod}}.
It is worth noting that since the number of low-fidelity basis is typically small, the cost of computing the low-fidelity projection coefficients by solving the linear system \eqref{Gc} is negligible. Therefore, the dominant cost of the online step is one low-fidelity simulation run. If the low-fidelity solver is much cheaper than the high-fidelity solver, the speedup during the online stage can be significant as we will demonstrate with some examples in the rest of the survey.

\subsubsection{An empirical error bound estimation.}
An error bounds analysis for the bi-fidelity method was derived in^^>\cite{NGX14,zhu2017multi} in a rather general setting. Nonetheless, the results presented are largely theoretical. For practical applications of the BFSC approach, it is important to answer the following two questions: \smallskip
\begin{itemize}
\item Given a low-fidelity model, is this model good enough to build a reasonably accurate BFSC approximation for practical purposes? 
\smallskip
\item How to efficiently estimate a reasonably good error bound on the entire parameter space?
\end{itemize}
\smallskip
In other words, an inexpensive but effective quality indicator (EQI) of the approximation is of practical importance. In realistic applications, \emph{a priori} assessment of the model quality and prediction errors is very important. In this direction, a previous study^^>\cite{GZJ20} introduced a novel empirical error bound estimation approach with ease of implementation to evaluate the performance of the bi-fidelity surrogates a priori. We will briefly describe the methodology here. In^^>\cite{GZJ20}, two important quantities have been identified which are useful to build the approximation quality indicator for the BFSC approximation:
\smallskip
\begin{itemize}
\item  $R_s$, model similarity which characterizes the similarity between the low/high-fidelity models:
\begin{equation}
R_{s}(z) = \frac{d^H(u^H(z),U^H(\gamma_{M}))}{\|u^H(z)\|}/\frac{d^L(u^L(z),U^L(\gamma_{M}))}{\|u^L(z)\|}, 
\end{equation}
If $R_{s} (z)\approx 1$, the low-fidelity model is informative for the point selection.
\smallskip
\item $R_{e}$, the ratio between the projection error and the distance error,
\begin{equation}
    R_{e} (z) = 
  \frac{\| \mathcal{P}_{U^H({\gamma_M})}u^H(z) -  u^B(z; \gamma_M)\|}{d^H(u^H(z),U^H(\gamma_{M}))}.
\end{equation}
when  $R_{e}(z)$ is too large, it indicates the  greedy algorithm is less effective for point selection of high-fidelity samples. This suggests that 
one should stop collecting new high fidelity samples since the improvement in the result is likely marginal. 
\end{itemize}
\smallskip
Empirically, one can than fix thresholds for $R_{s} \approx 1$ and $R_{e} < 10$ in such a way that the BFSC approximation can usually deliver results which are reasonably accurate, i.e. better than the low-fidelity solutions. 

Using $R_s$ and the observation in^^>\cite[Theorem 1]{GZJ20}, for any given new point $z_{*}$, one has
\begin{equation}
\begin{split}
\label{eq:eb_a}
\frac{\|u^H(z_*) - u^B(z_*)\|}{\|u^H(z_*)\|}
&\leq \frac{d^L(u^L(z_*),U^L(\gamma_M))}{\|u^L(z_*)\|} R_s(z_*)\\
&\times\Big(1+\frac{\| \mathcal{P}_{U^H({\gamma_M})}u^H(z_*) -  u^B(z_*)\|}{ d^H(u^H(z_*),U^H(\gamma_M))}\Big).
\end{split}
\end{equation}

To remove the dependency of the new HF sample  $u^H(z_*)$ on the above right-hand side, one uses $z_{M+1} \in \gamma_{M+1}$ as the testing points served as an error surrogate for the BFSC approximation in the entire parameter space. If the LF and HF models are similar (i.e., $R_s\approx 1$), one can choose some proper constants $c_1$ and $c_2$, such that for the first $M+1$ pre-selected important points $\gamma_{M+1}$, 
\begin{equation}
\label{eq:eb1}
\begin{split}
    \frac{\|u^H(z_*) - u^B(z_*)\|}{\|u^H(z_*)\|}
&\le
\frac{d^L(u^L(z_*),U^L(\gamma_M))}{\|u^L(z_*)\|}\\
&\times \Big[c_1+c_2\frac{\| P_{U^H({\gamma_M})}u^H(z_{M+1}) -  u^B(z_{M+1})\|}{ d^H(u^H(z_{M+1}),U^H(\gamma_M))}\Big]\\
& = \frac{d^L(u^L(z_*),U^L(\gamma_M))}{\|u^L(z_*)\|}
(c_1+c_2R_{e}(z_{M+1})).
\end{split}
\end{equation}
Notably,  in the right-hand side of the inequality, the first term only depends on the inexpensive low-fidelity data and the second term needs the high-fidelity solution at $z_{M+1}$ -- the $(M+1)$-th point of the pre-selected important points $\gamma_{M+1}$ without the need of the high-fidelity sample $u^H(z)$ for a new given $z$. Essentially, $z_{M+1}$ can be regarded as a test point to serve as an error/quality indicator of the BFSC approximation in the entire parameter space. This can be advantageous if the high-fidelity solver is very expensive.

We acknowledge that the above error bound estimation, though not  rigorous, is a useful quantity to access the quality of the BFSC approximation in practical applications. In the following numerical experiments, our empirical results suggest that this error bound estimation is effective if the constants $c_1$ and $c_2$ are set to be 1^^>\cite{GZJ20,LPZ}.
\subsection{A BFSC method for the Boltzmann equation}
In this section, we consider the application of the bi-fidelity algorithm to the study of the the Boltzmann equation under the hydrodynamic scaling and with multi-dimensional random parameters presented in Section \ref{sec:bolt} and already analyzed in Section 3 using MSCV method. 

To examine the performance of the bi-fidelity method, numerical errors are computed in the following way: we choose a fixed set of points $\{ \hat z_k \}_{k=1}^M \subset \Omega$ that is independent of the point sets $\Gamma$, then evaluate the following error between the bi-fidelity and high fidelity solutions at a final time $t$: 
 \begin{equation}\label{Error} \|u^H(t) - u^B(t)\|_{L^2(D \times \Omega)} 
 \approx\frac{1}{n}\sum_{k=1}^n \|u^H(\hat z_k, t) - u^B(\hat z_k, t)\|_{L^2(D)}, 
 \end{equation}
 where $\|\cdot\|_{L^2(D)}$ is the $L^2$ norm in the physical domain $D=\Omega\times\mathbb R^2$.
In the following two examples, the spatial domain is chosen to be  $[0,1]$ with $N_x$ grid points. 
The velocity domain is chosen as $[-L_v, L_v]^2$ with $L_v=8.4$. We denote $N_v^l$ and $N_v^h$ 
as the number of grid points used in velocity discretization of the low-fidelity and high-fidelity solver respectively. The $d_z$-dimensional random variable ${z}$ 
is assumed to follow the uniform distribution on $[-1,1]^{d_z}$. 
Let also the size of training set $\Gamma$ be $N=1000$. We examine the error of bi-fidelity approximation with respect to the number of high-fidelity runs by computing the norm defined in (\ref{Error}).

First, we consider a benchmark Sod shock tube test. Assume the random cross section 
$$ b(z^b) = 1 + 0.5 \sum_{k=1}^{{d_1}+1}\frac{{z_k^b}}{2k}, $$
and the uncertain initial distribution given by
$$ f^0(x,{v},z) = \frac{\rho^0}{2\pi T^0}\, e^{-\frac{|{v}-u^0|^2}{2T^0}}, $$
where the initial data for $\rho^0$, $u^0$ and $T^0$ is
\begin{align*}
\begin{cases}
&\displaystyle\rho_l=1, \quad u_l=(0,0),  \quad T_l(z^T)=1+0.4\sum_{k=1}^{d_1} \frac{{z_k^T}}{2k}, \quad x\leq 0.5, \\[4pt]
&\displaystyle\rho_r=\frac{1}{8}, \quad u_r=(0,0),  \quad T_r(z^T)=\frac{1}{8}(1 + 0.4\sum_{k=1}^{d_1}\frac{{z_k^T}}{2k}),  \quad x>0.5. 
\end{cases}
\end{align*}
Here $z^b$ and $z^T$ represent the random variables in the collision kernel and initial temperature, with {$\{z_i^b\}_{i=1}^{d_1+1}$ and $\{z_i^T\}_{i=1}^{d_1+1}$} following the uniform distribution on $[-1,1]$. 
We then set $d_1=7$, and the total dimension $d_z$ of the random space to be $15$. We solve the Boltzmann (high-fidelity) equation by using the asymptotic-preserving scheme developed in^^>\cite{FJ}, with the second-order MUSCL scheme^^>\cite{LeVeque} for the spatial discretization, and fast spectral method^^>\cite{FastSpectral} for the collision operator implementation. Here we set $\Delta x=0.01$, $N_v^h=24$, and the final time $t=0.15$. We employ the Euler equation as the low-fidelity model, and solve it with the same spatial and temporal resolution as that for the Boltzmann, while using $N_v^l=12$. In this test, the fluid regime, i.e. $\e=10^{-4}$ is chosen. The Figure \ref{Fig6-conv} suggests a fast convergence of $L^2$ errors between the high-fidelity and bi-fidelity solutions. With 10 high-fidelity runs, the bi-fidelity approximation reaches an accuracy level of $\mathcal{O}(10^{-3})$, which is remarkable for a 15-dimensional random variable problem. 
To further illustrate the performance of our bi-fidelity method, we compare the high-fidelity, low-fidelity and the bi-fidelity solutions for a randomly selected sample point $z$ by using $M=10$. One observes that the high-fidelity and bi-fidelity solutions match really well, whereas the low-fidelity solutions are quite off at some spatial points. 

Regarding the moments quantities in the random space, Figure \ref{Fig6-mv} shows that the mean and standard deviation of bi-fidelity approximations for $\rho$, $u$ and $T$ match well with the high-fidelity solutions with $10$ high-fidelity runs. 
These results indicate that even though the Euler model may not be accurate in the physical space, it still can capture well the solution behavior of the Boltzmann equation in the random space. Besides, a significant speedup and memory savings are quite noticeable in this case. 
\begin{figure}[!ht]
\includegraphics[width=0.51\linewidth]{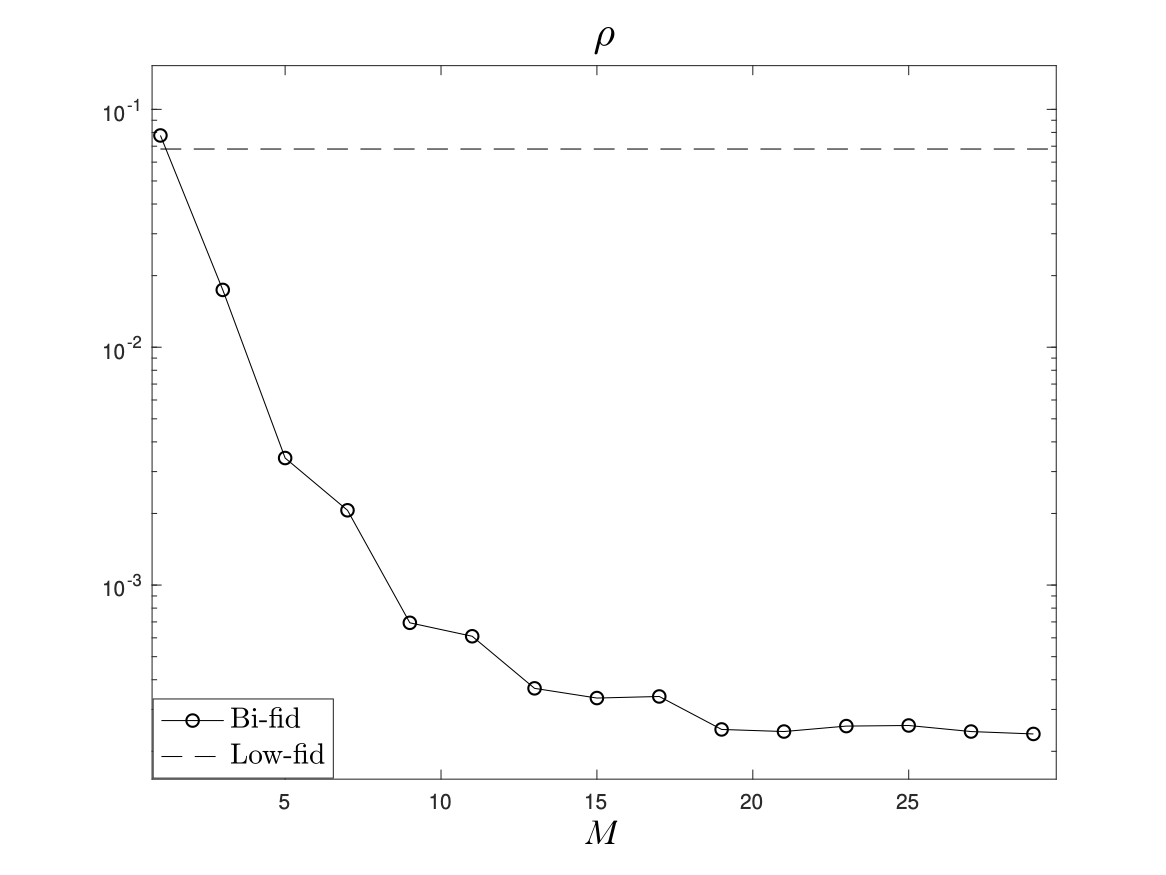}\hskip -.5cm
\includegraphics[width=0.51\linewidth]{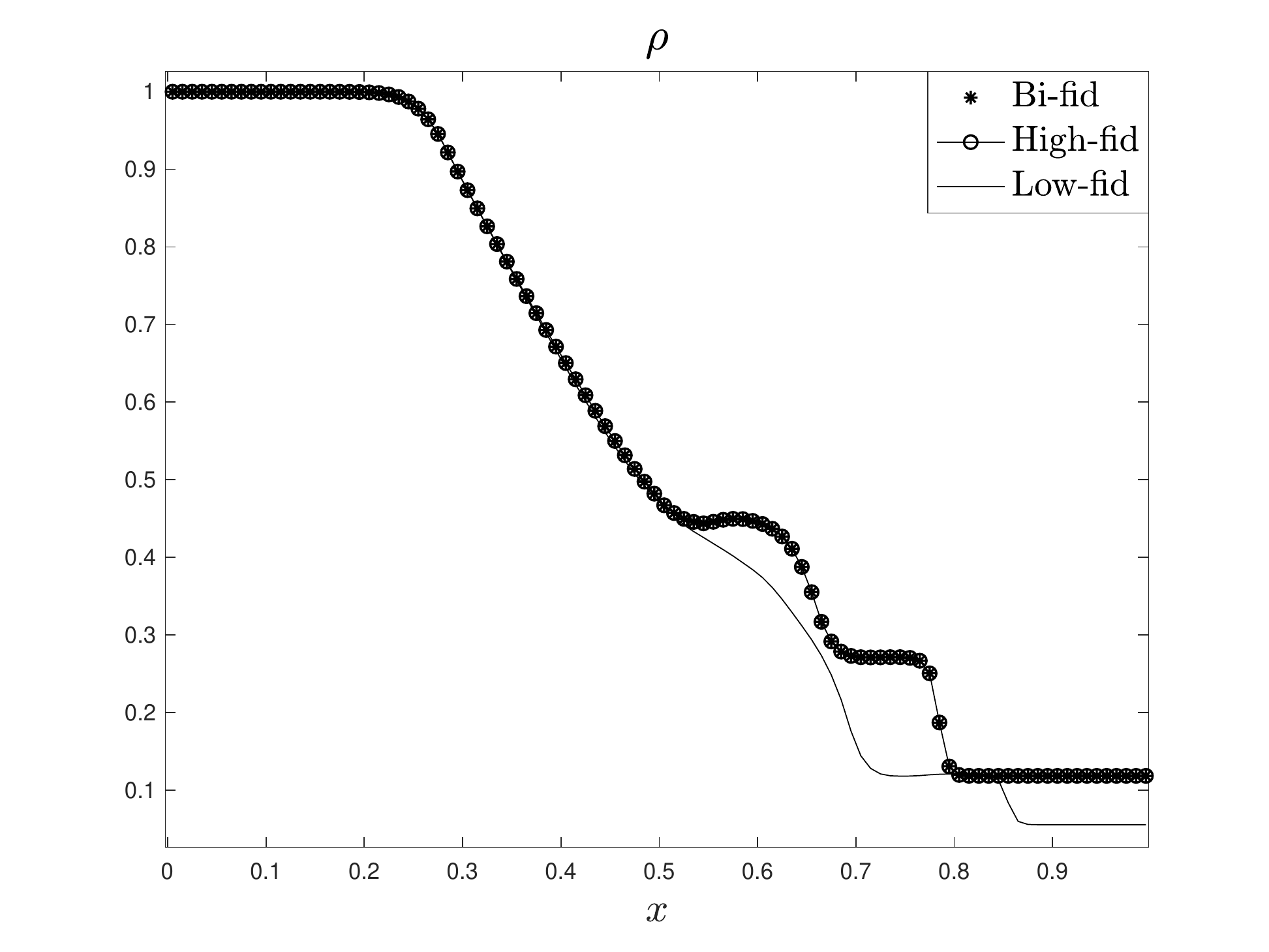}
\includegraphics[width=0.51\linewidth]{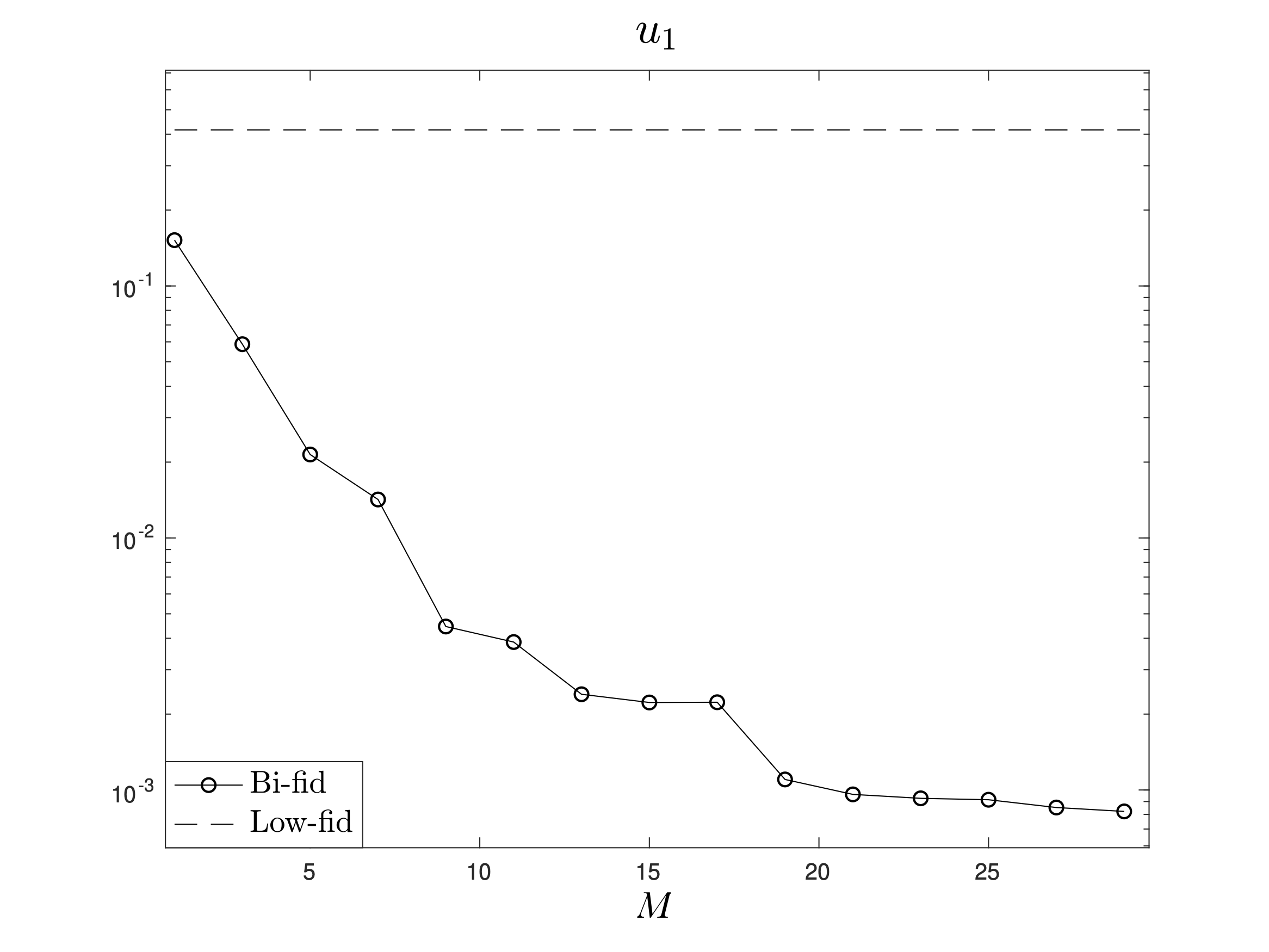}\hskip -.5cm
\includegraphics[width=0.51\linewidth]{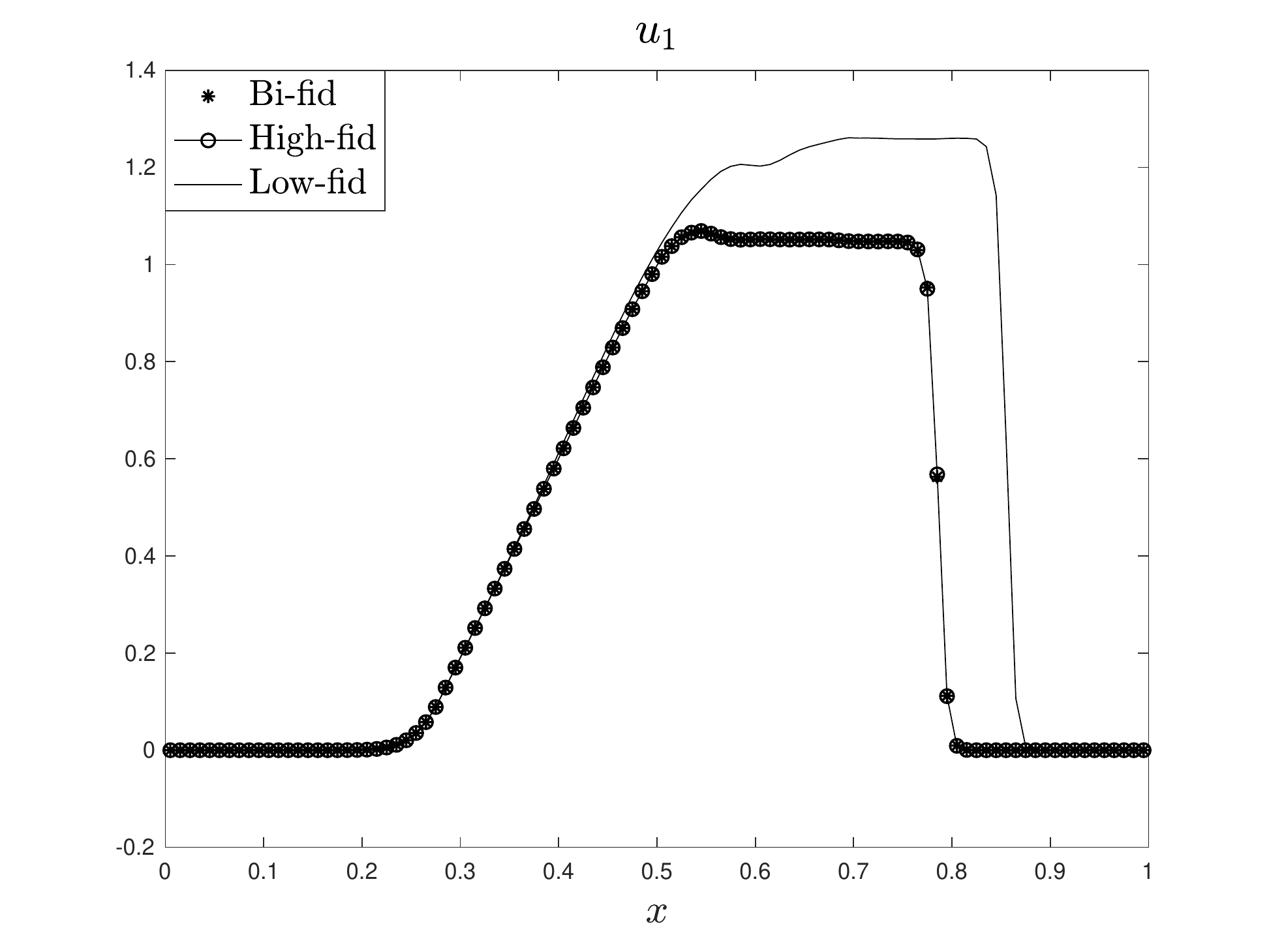}
\includegraphics[width=0.51\linewidth]{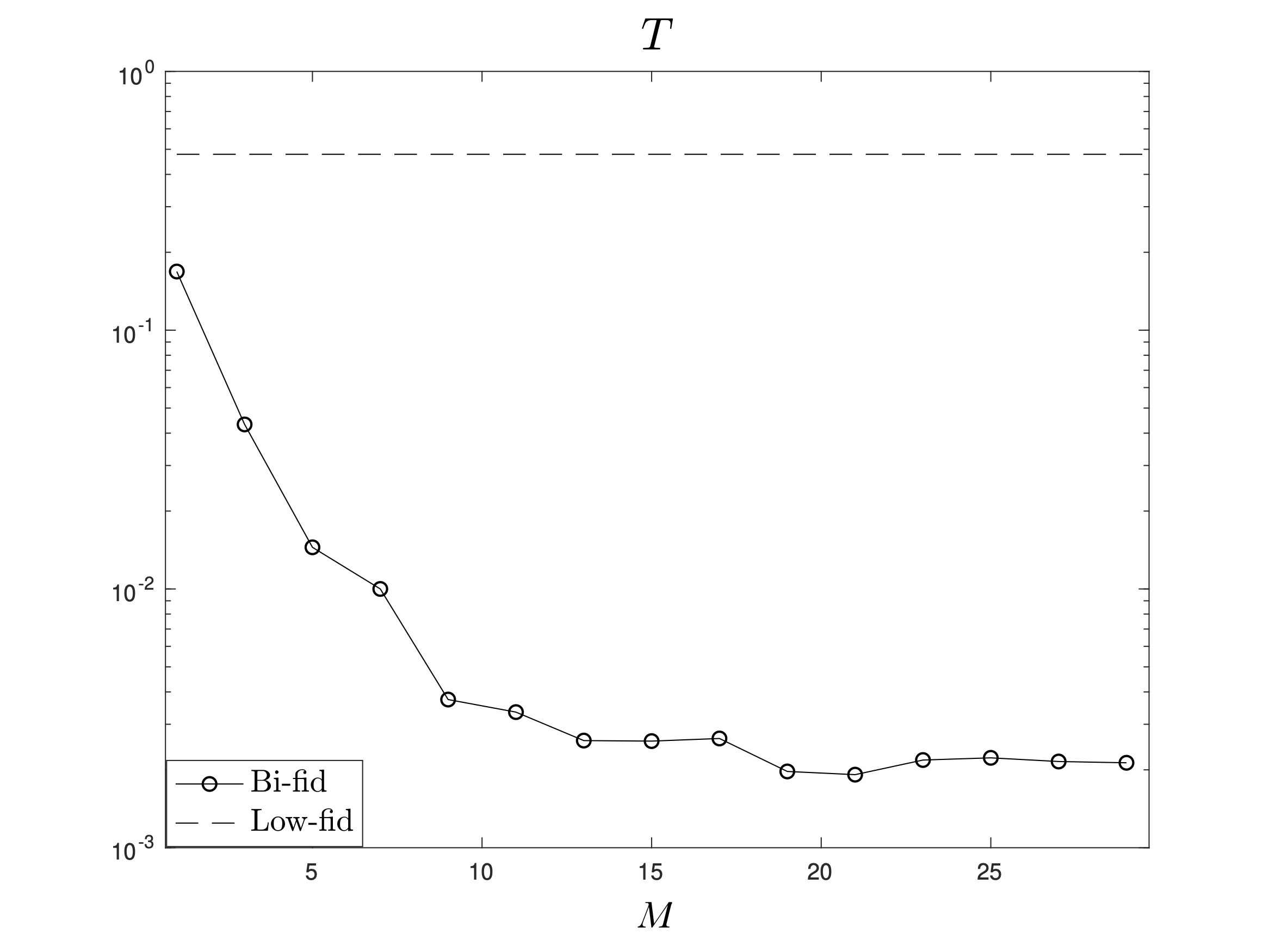}\hskip -.5cm
\includegraphics[width=0.51\linewidth]{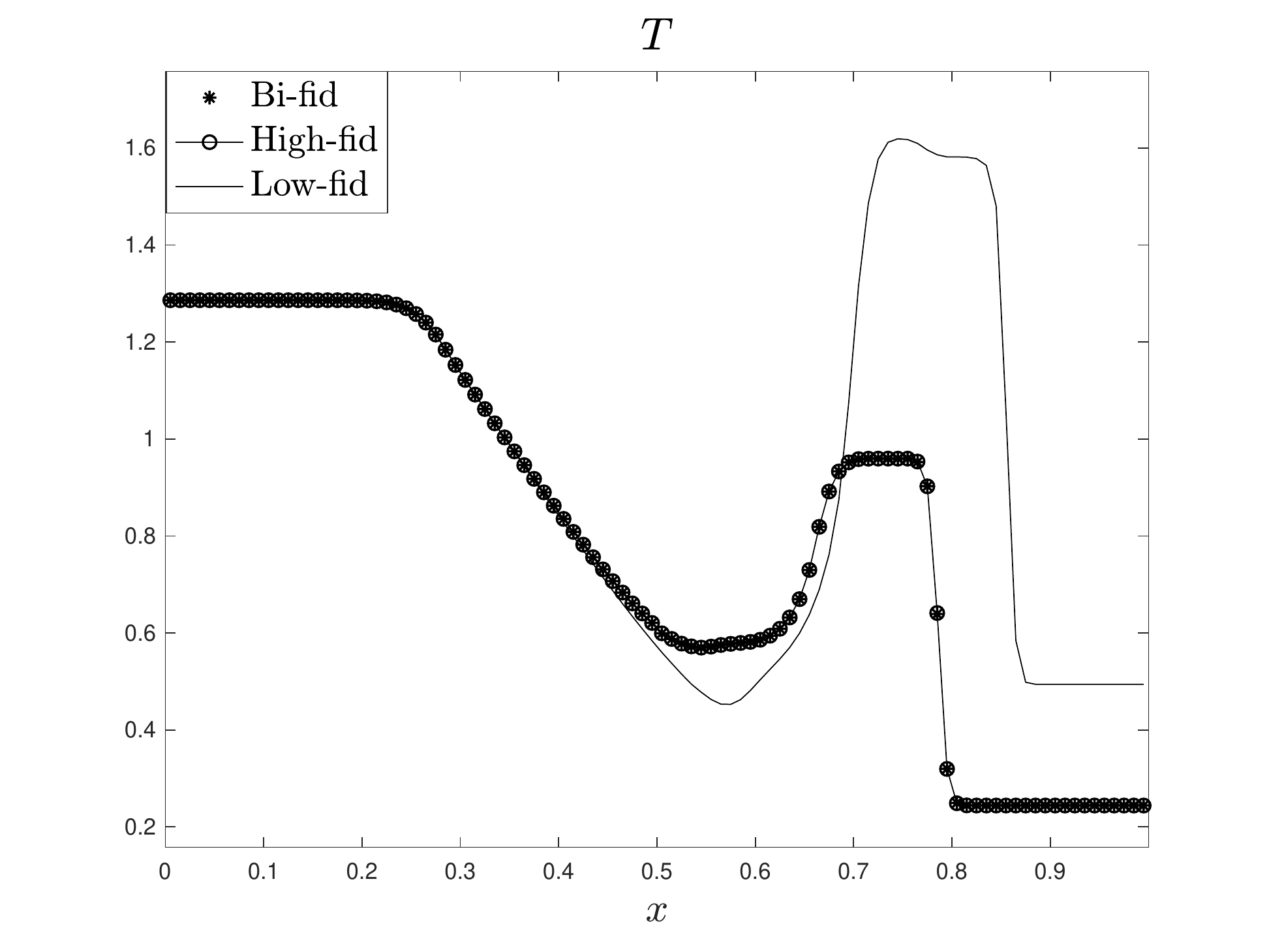}
\caption{{BFSC method for the Boltzmann equation.} Results of the Sod shock tube test. (Left) The mean $L^2$ errors between high-fidelity and 
low-fidelity or bi-fidelity solutions with respect to the number of high-fidelity runs; (Right) Comparison of the low-fidelity solution ($N_v^l=12$), high-fidelity solutions ($N_v^l=24$), and the corresponding bi-fidelity approximations $M=10$ for a fixed $z$. }
\label{Fig6-conv}
\end{figure}

\begin{figure}[!ht]
\centering
\includegraphics[width=0.51\linewidth]{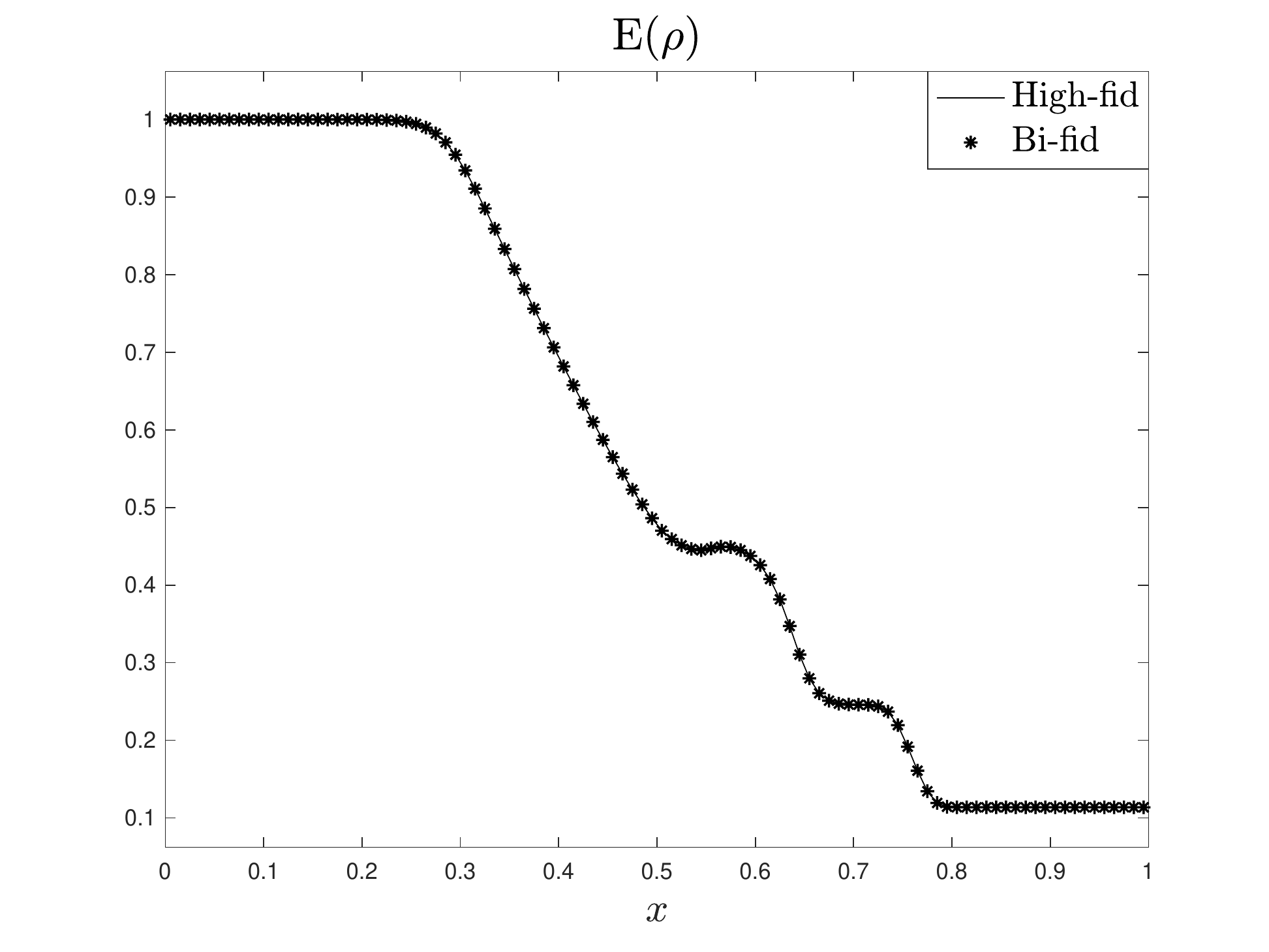}\hskip -.5cm  
\includegraphics[width=0.51\linewidth]{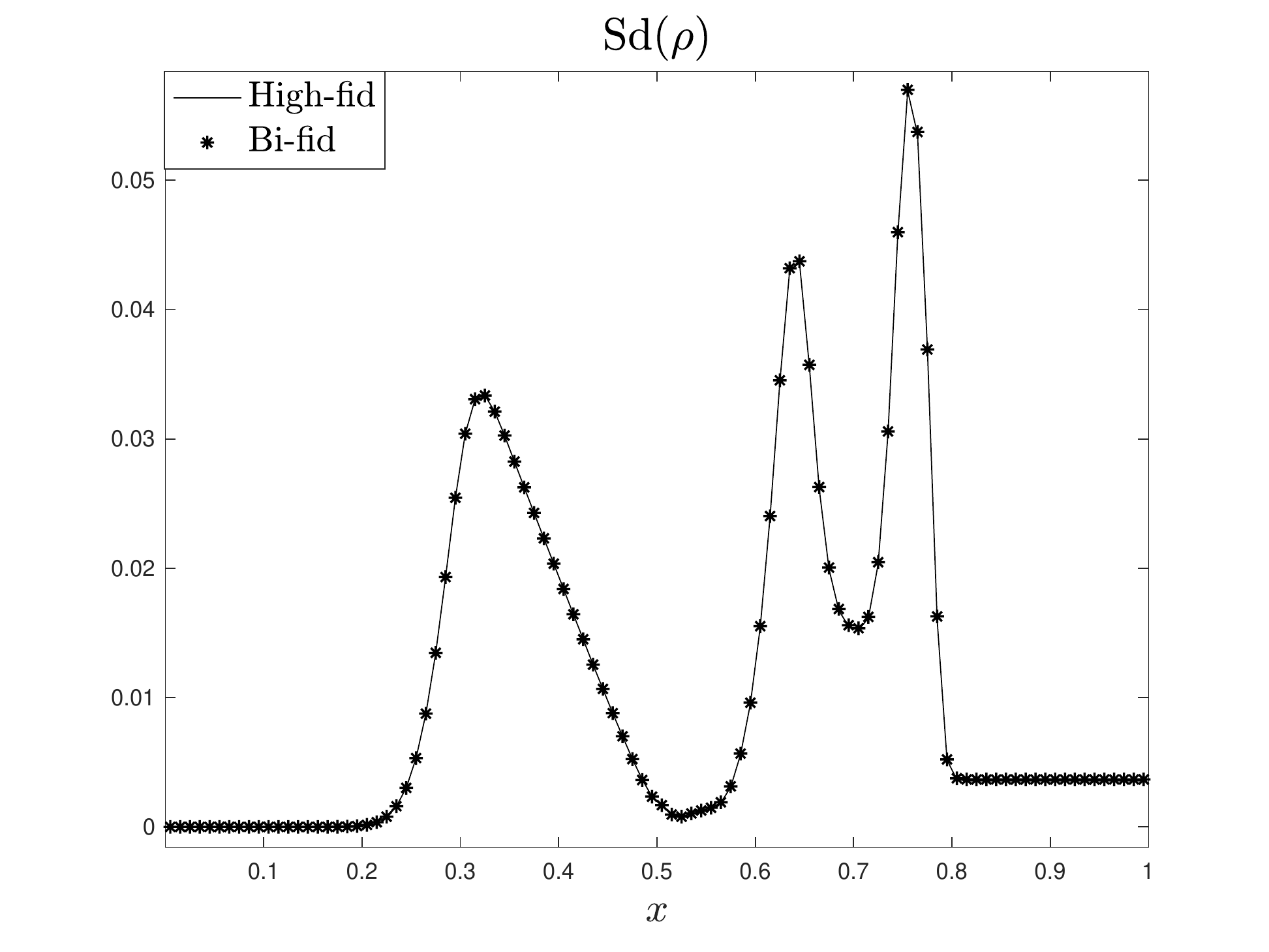}
\includegraphics[width=0.51\linewidth]{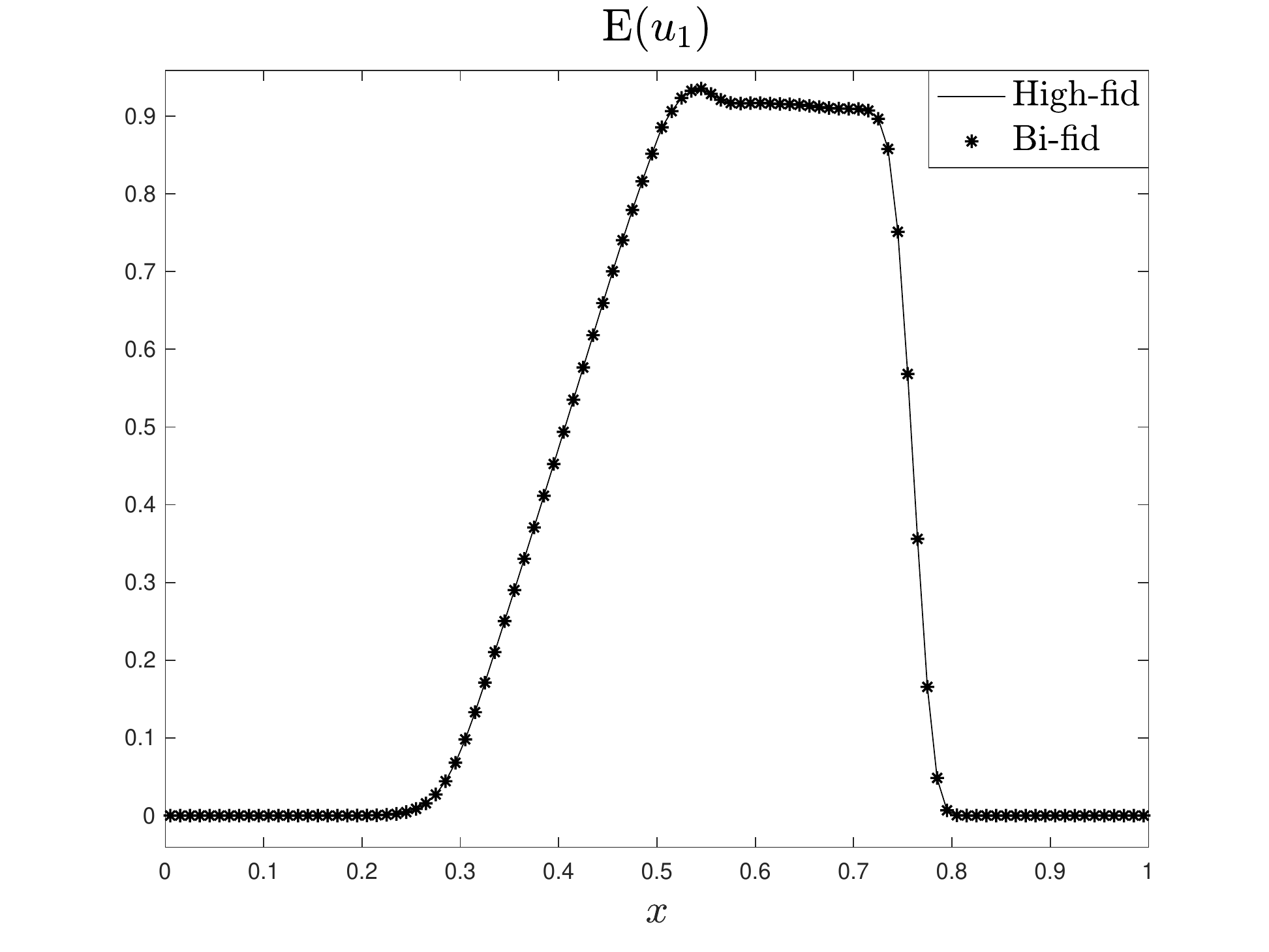}\hskip -.5cm
\includegraphics[width=0.51\linewidth]{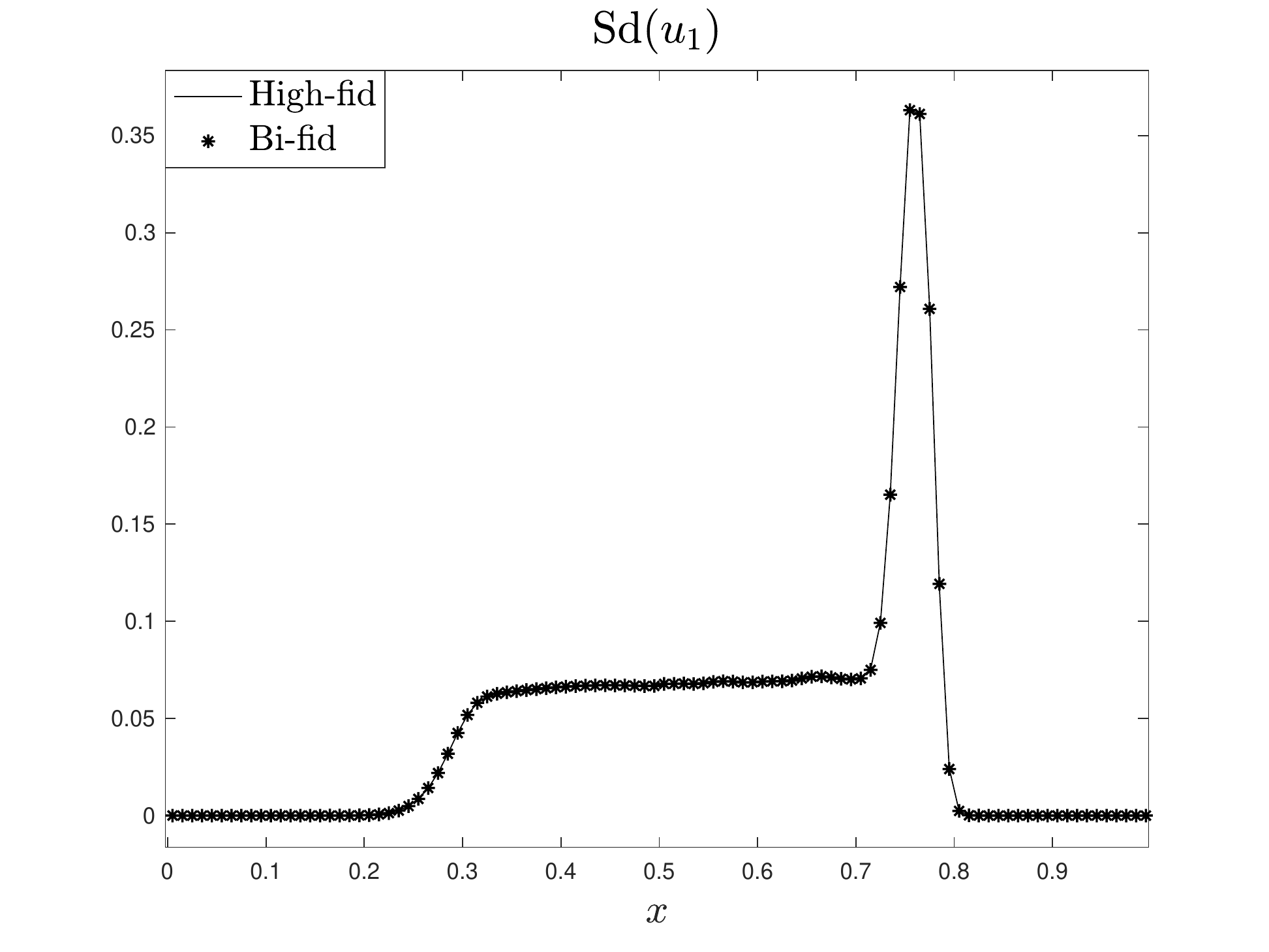}
\includegraphics[width=0.51\linewidth]{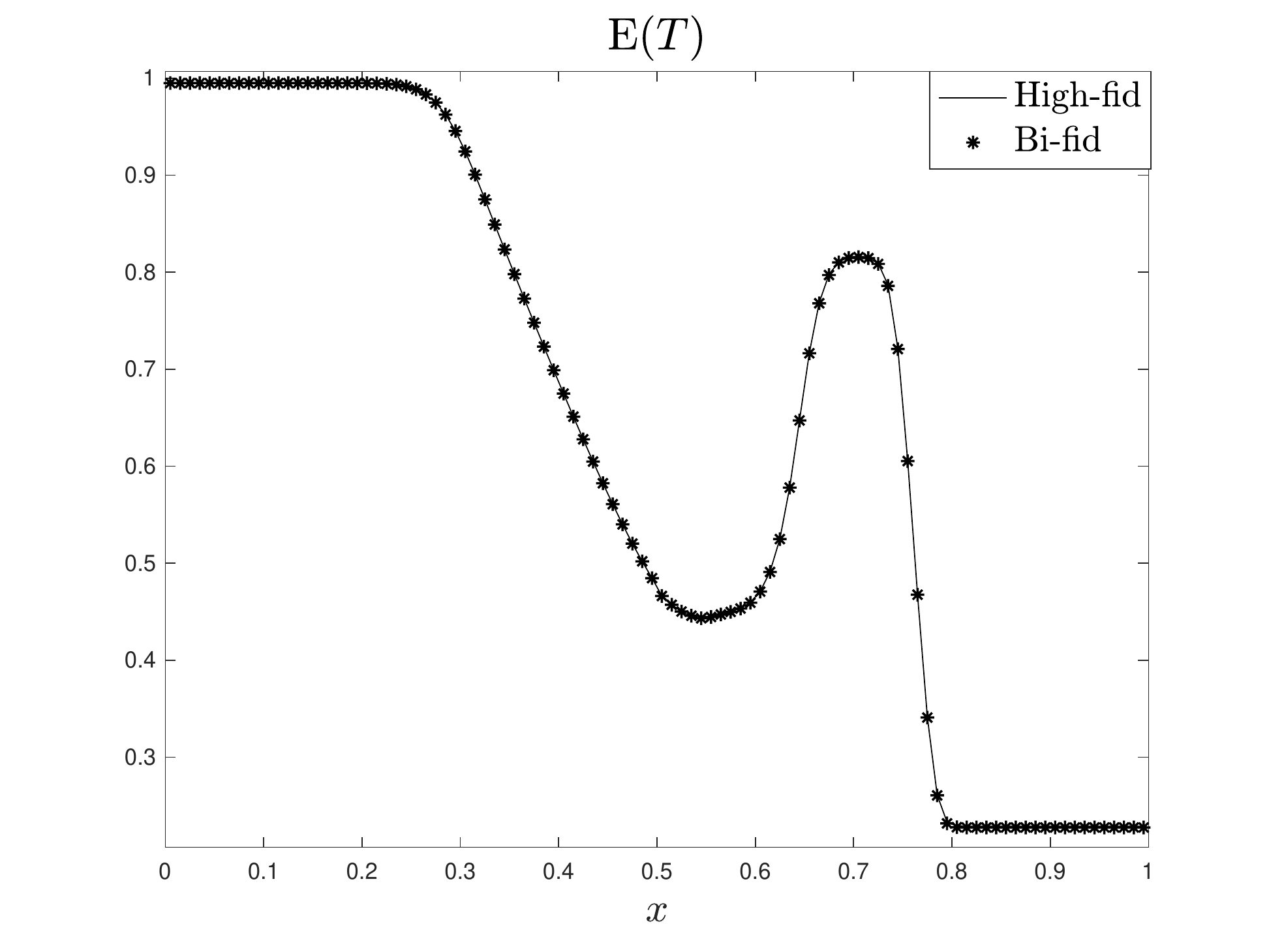}\hskip -.5cm
\includegraphics[width=0.51\linewidth]{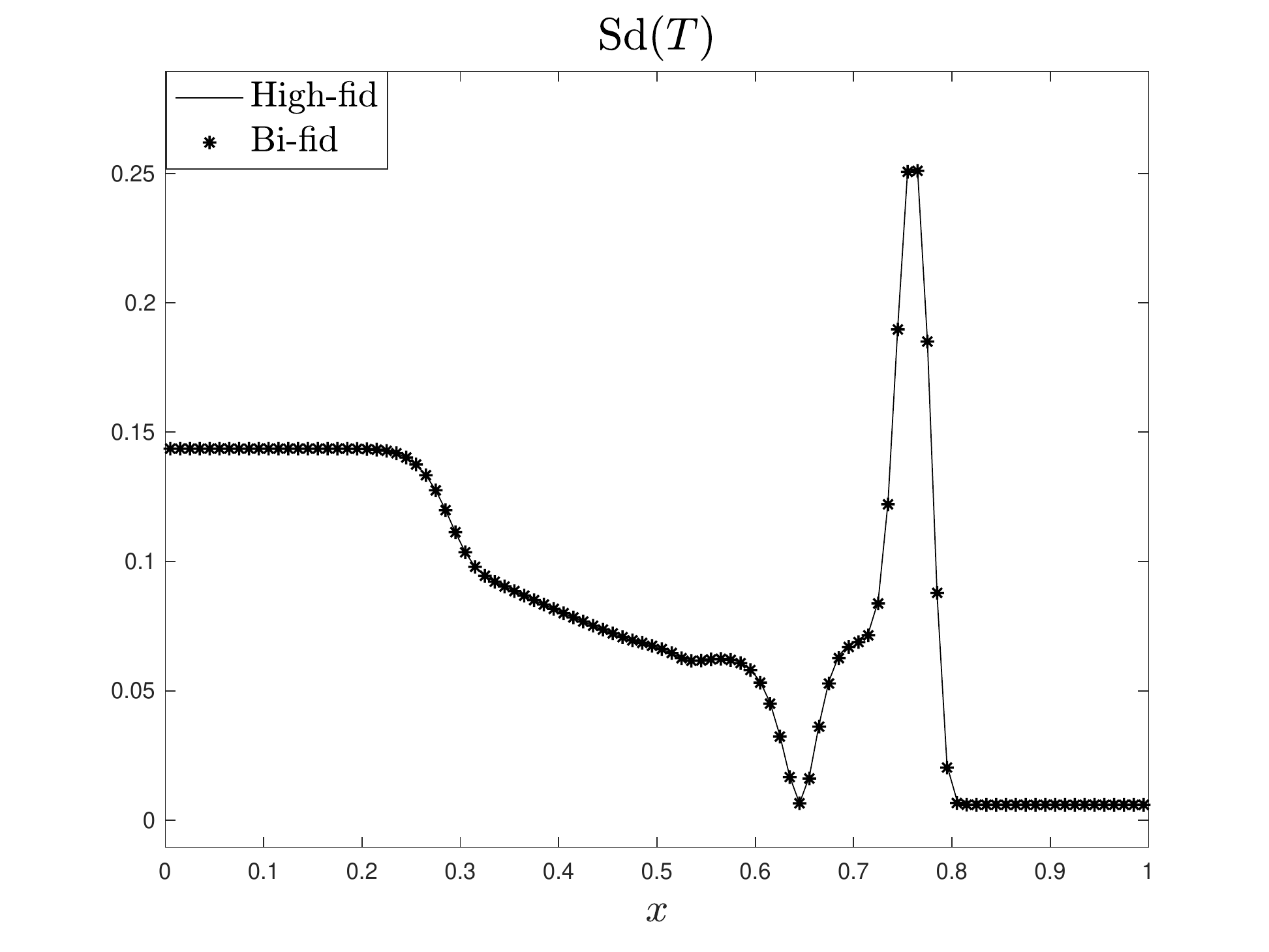}
\caption{{BFSC method for the Boltzmann equation.} Results of the Sod shock tube test. The mean and standard deviation of $\rho$, $u_1$, $T$ of high-fidelity and bi-fidelity solutions with $M=10$. } 
\label{Fig6-mv}
\end{figure}

We now consider a mixed regime problem with the Knudsen number 
$\e$ varying in space, see Figure~\ref{figmixe}, defined by
\begin{equation}
\label{mixe}
\e(x) = 10^{-3} + \frac{1}{2}\left[\tanh \left(1-\frac{11}{2}(x-0.5) \right) + \tanh \left(1+\frac{11}{2}(x-0.5) \right)\right].
\end{equation}
Let initial data mimic the Karhunen-Loeve expansion of the random field: 
\begin{align}
\label{IC}
\left\{
\begin{array}{l}
\displaystyle \rho_0(x, {z}^{\rho})= \frac{1}{3}\left( 2 + \sin(2\pi x) + 0.2\sum_{k=1}^{d_1} \sin[2\pi(k+1)x]\, \frac{z_k^{\rho}}{2 k}\right), \\[10pt]
\displaystyle {u}_0 = (0.2, 0), \\[10pt]
\displaystyle T_0(x, {z}^T) = \frac{1}{4}\left( 3 + \cos(2\pi x) + 0.2\sum_{k=1}^{d_1} \cos[2\pi(k+1)x]\, \frac{z_k^T}{2 k}\right), \\[10pt]
\displaystyle f_0(x, {v}, {z}) = \frac{\rho_0}{4\pi T_0}\left( \exp(-\frac{|{v} - {u}_0|^2}{2 T_0}) + \exp(-\frac{|{v} + {u}_0|^2}{2 T_0})\right), 
\end{array}\right.
\end{align}
where $d_1=7$. The collisional cross section is also assumed random, 
\begin{equation}
\label{R-K} 
b(z)=1+0.5z_1^b. 
\end{equation}

\begin{figure}[!ht]
\centering
\includegraphics[width=0.6\linewidth]{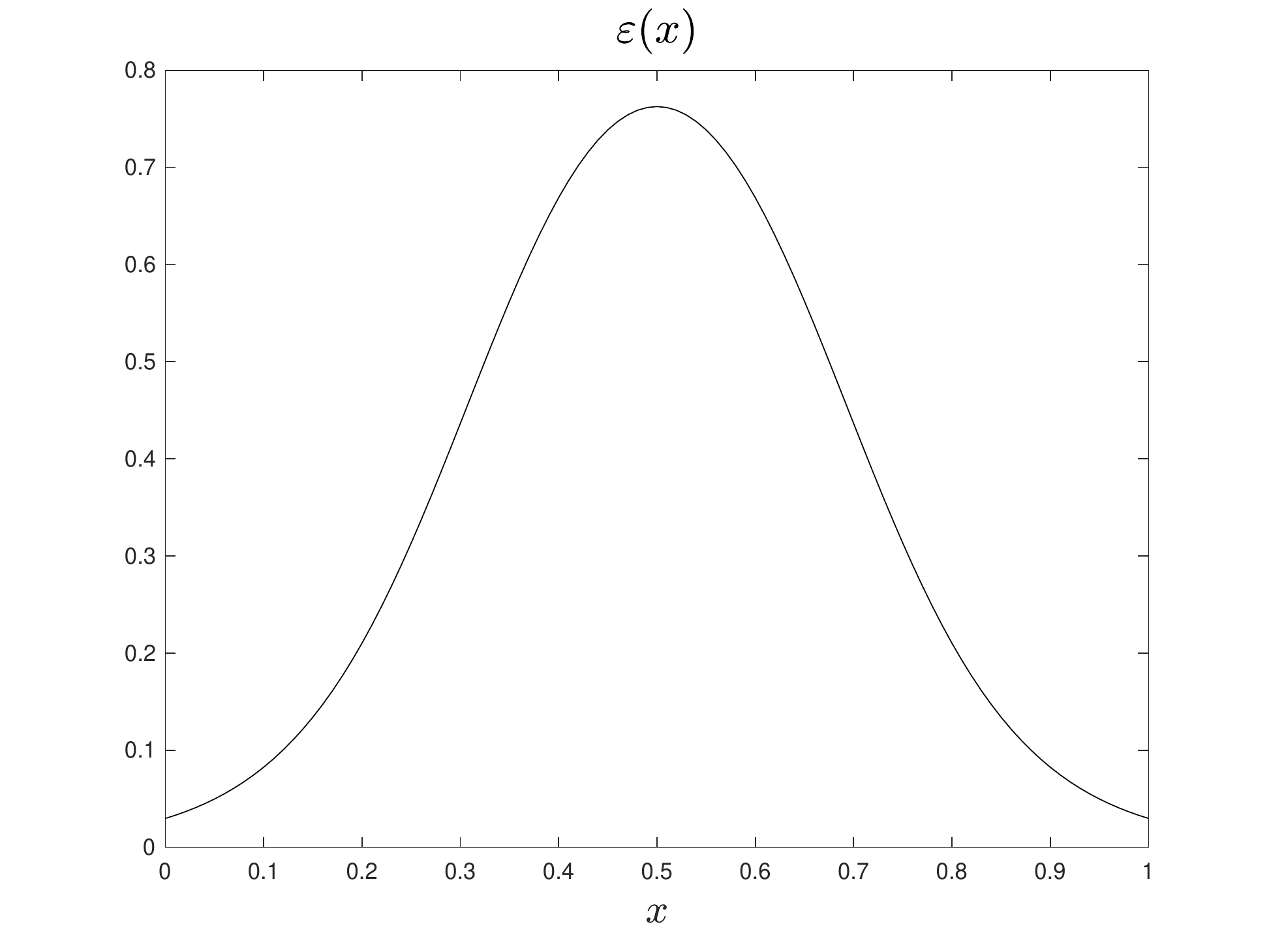}
\caption{{BFSC method for the Boltzmann equation.}  The distribution of $\e(x)$ in \eqref{mixe} for the mixed regime problem.}
\label{figmixe}
\end{figure}

We let $N_v^h=24$ and $N_v^l=8$ in the high-fidelity and low-fidelity solvers. From Figure \ref{Fig5-conv}, we observe a fast convergence of $L^2$ errors between the high-fidelity and bi-fidelity solutions, which saturate quickly when $M$ reaches around $25$. The dotted lines which represent the errors between the high-fidelity and low-fidelity solutions are much larger than that between the high-fidelity and bi-fidelity solutions.
This indicates that even though the low-fidelity solutions alone are relatively inaccurate in the spatial domain, it seems to still behave similarly in the random space. As a result the bi-fidelity approximation based on a small number of high-fidelity runs can reach a reasonable level of accuracy. 

The right column of Figure \ref{Fig5-conv} presents the high-fidelity, low-fidelity and bi-fidelity solutions at a randomly chosen sample point $z$. One can see that the high-fidelity and bi-fidelity solutions match really well, whereas the low-fidelity solutions are not accurate. With $N=1000$ low-fidelity runs of the Euler model, by only $25$ implementation runs of the {AP} solver for the Boltzmann equation, the bi-fidelity approximation is able to capture the Boltzmann solution behaviors well in the random space, up to an accuracy of $10^{-3}$; on the other hand, using the low-fidelity model (Euler equation) alone can not achieve this result, especially under the multiple scaling where $\e$ ranges from $10^{-3}$ to 1. This observation highlights the advantages of the bi-fidelity method.
Figure \ref{Fig5-mv} shows the mean and standard deviation of $\rho$, $u$, $T$ by using $25$ high-fidelity runs. The high-fidelity and bi-fidelity solutions match well, indicating that the bi-fidelity solutions have captured well the characteristics of the macroscopic quantities in the random space. 
Once we construct the bi-fidelity model, the online computational cost can be significantly reduced. In this example, the high-fidelity model (Boltzmann) with $N_v^h=16$ takes about $50$ times computation time of the low-fidelity model on a sequential machine. Since the dominant cost of the bi-fidelity reconstruction lies in the corresponding low-fidelity run, a significant amount of computational cost is reduced in this case. 

\begin{figure}[!ht]
\centering
\includegraphics[scale=0.08]{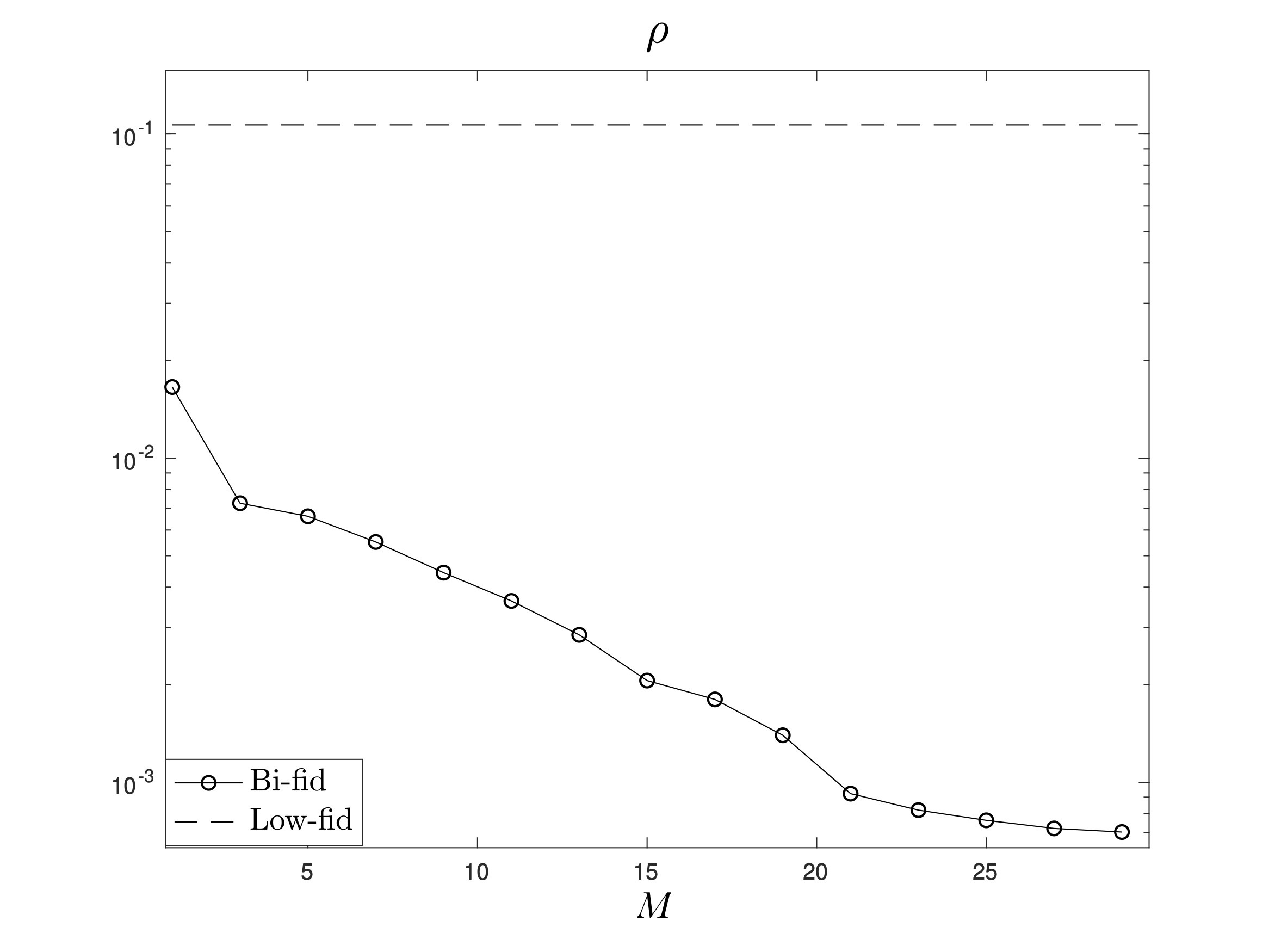}\hskip -.5cm
\includegraphics[scale=0.33]{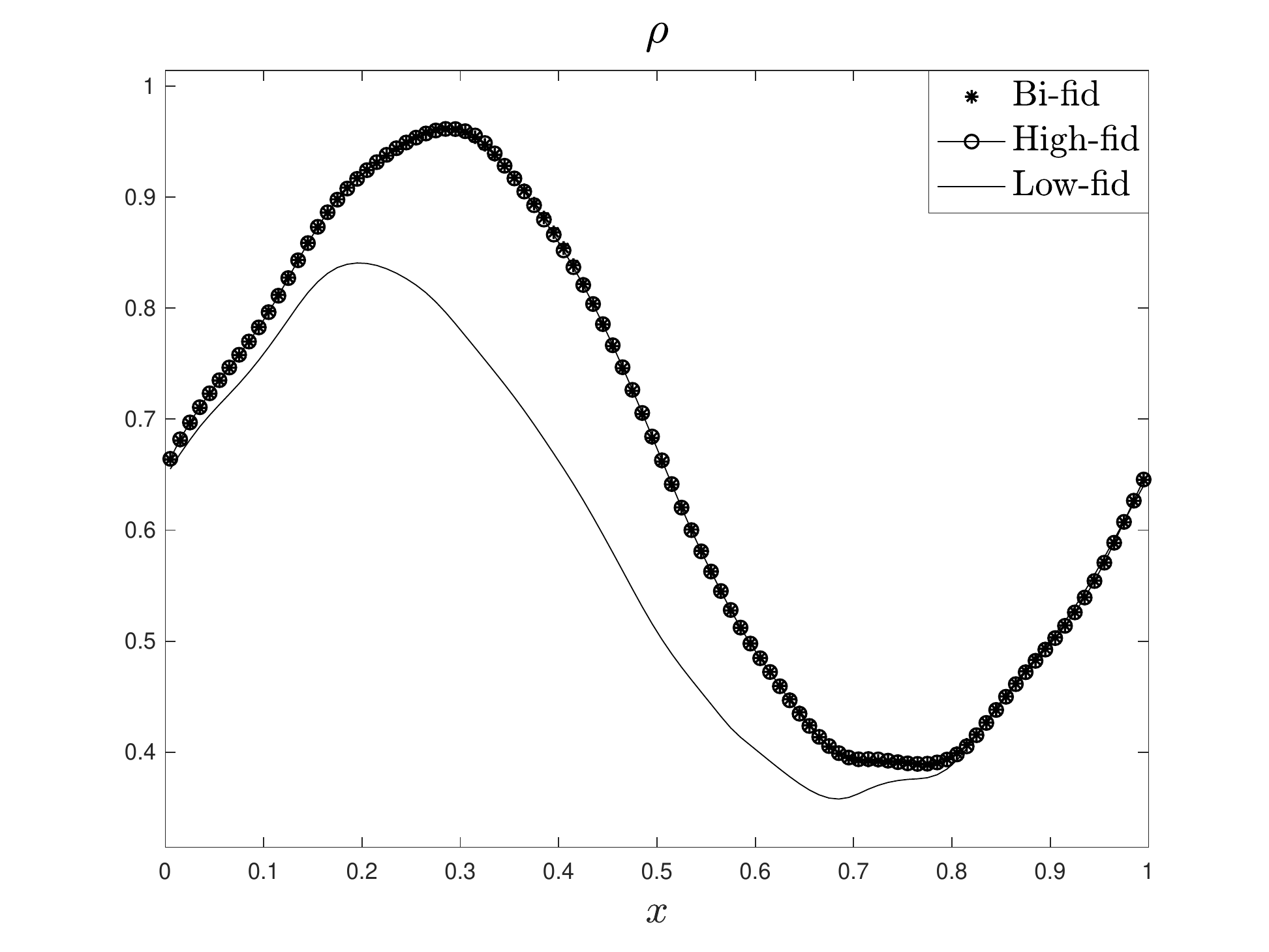}
\includegraphics[scale=0.08]{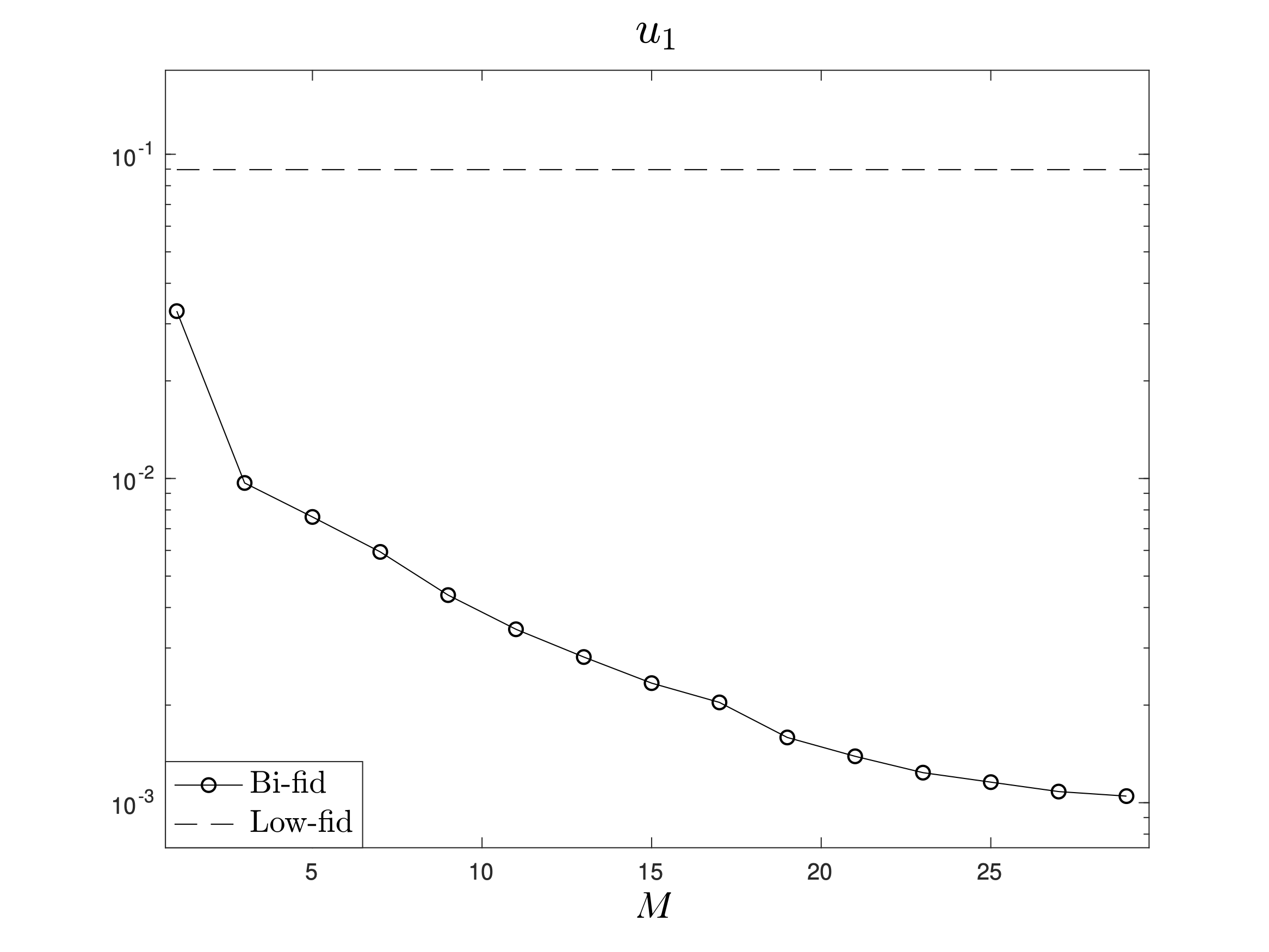}\hskip -.5cm
\includegraphics[scale=0.33]{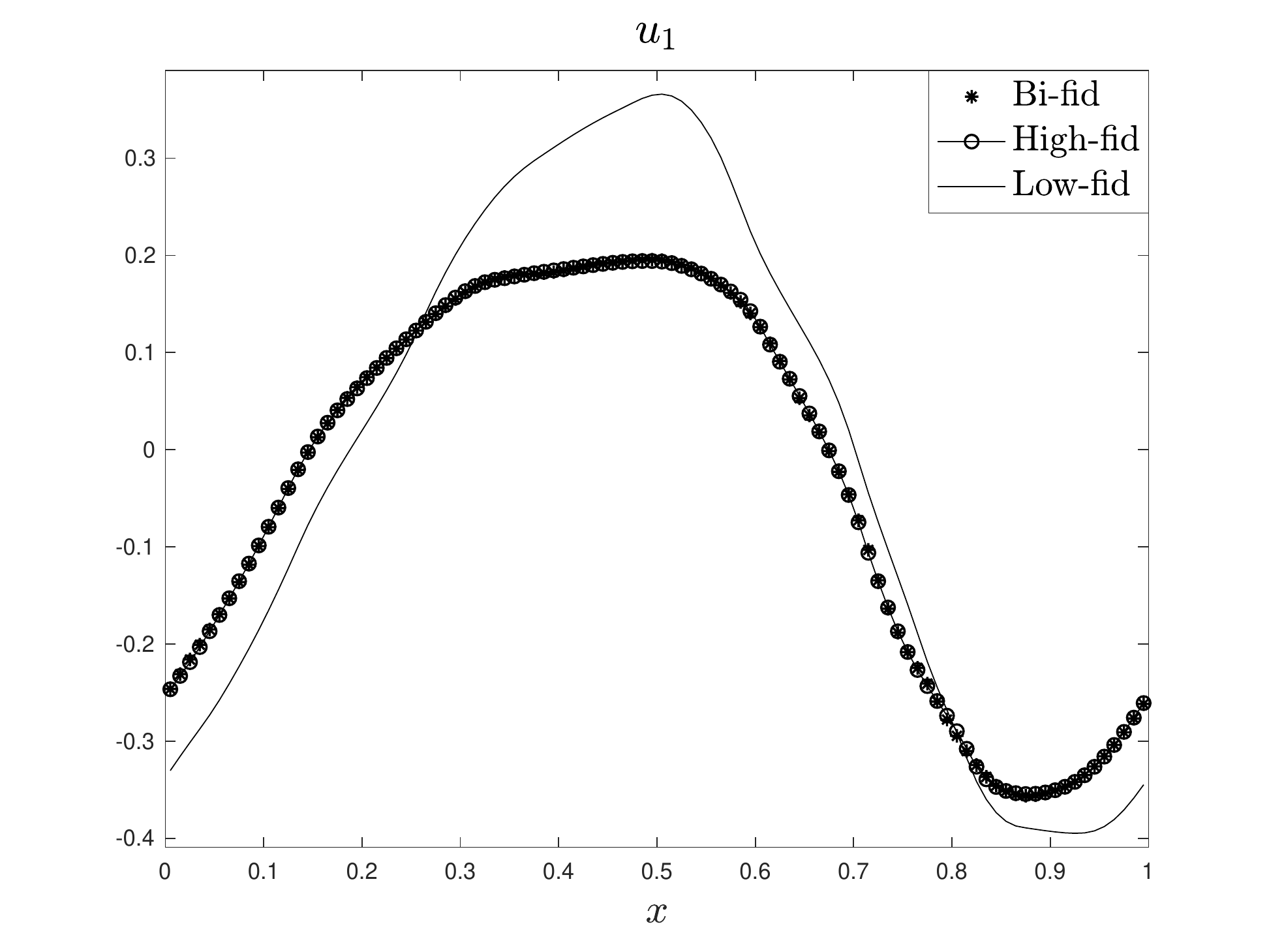}
\includegraphics[scale=0.08]{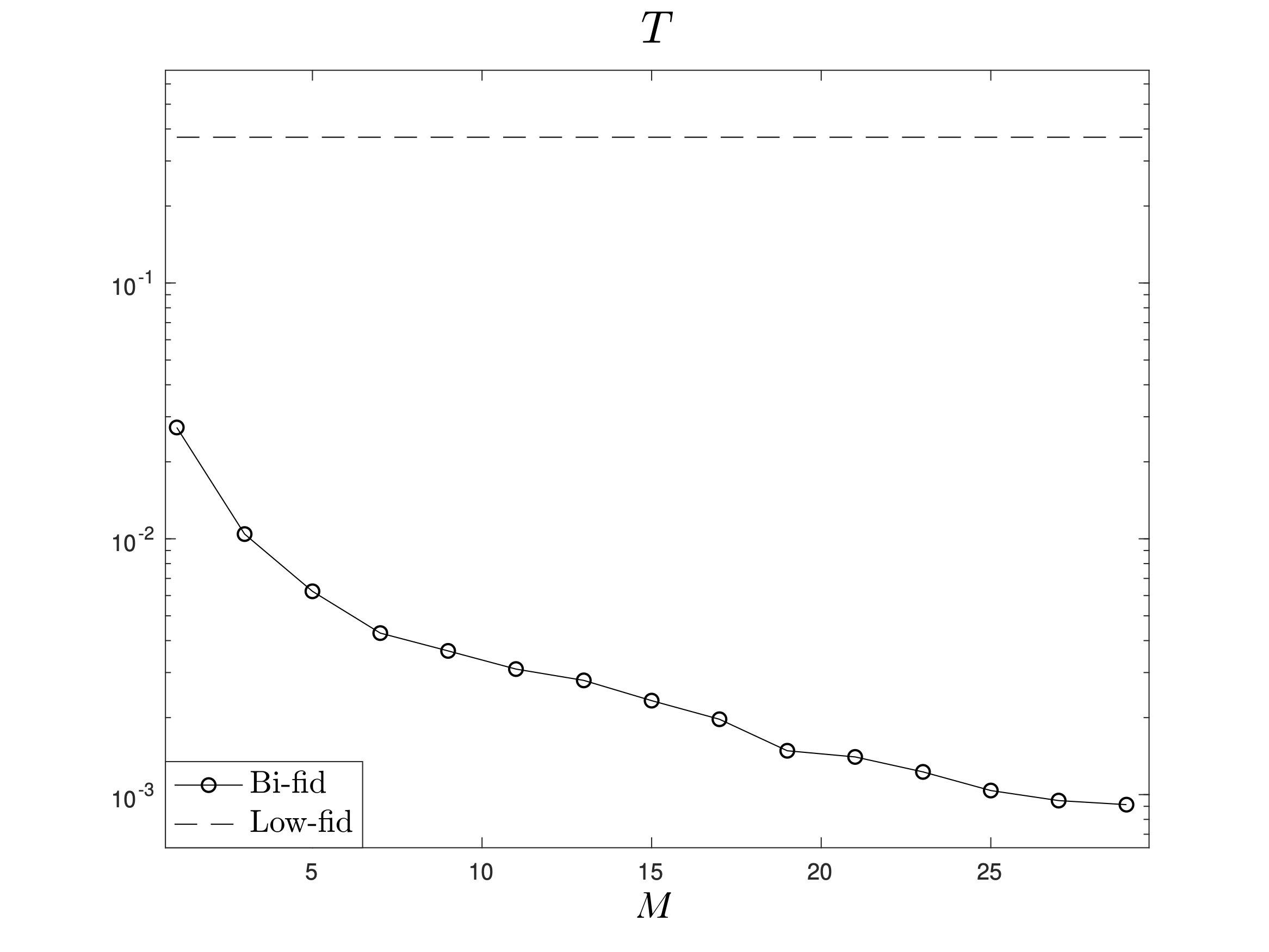}\hskip -.5cm
\includegraphics[scale=0.33]{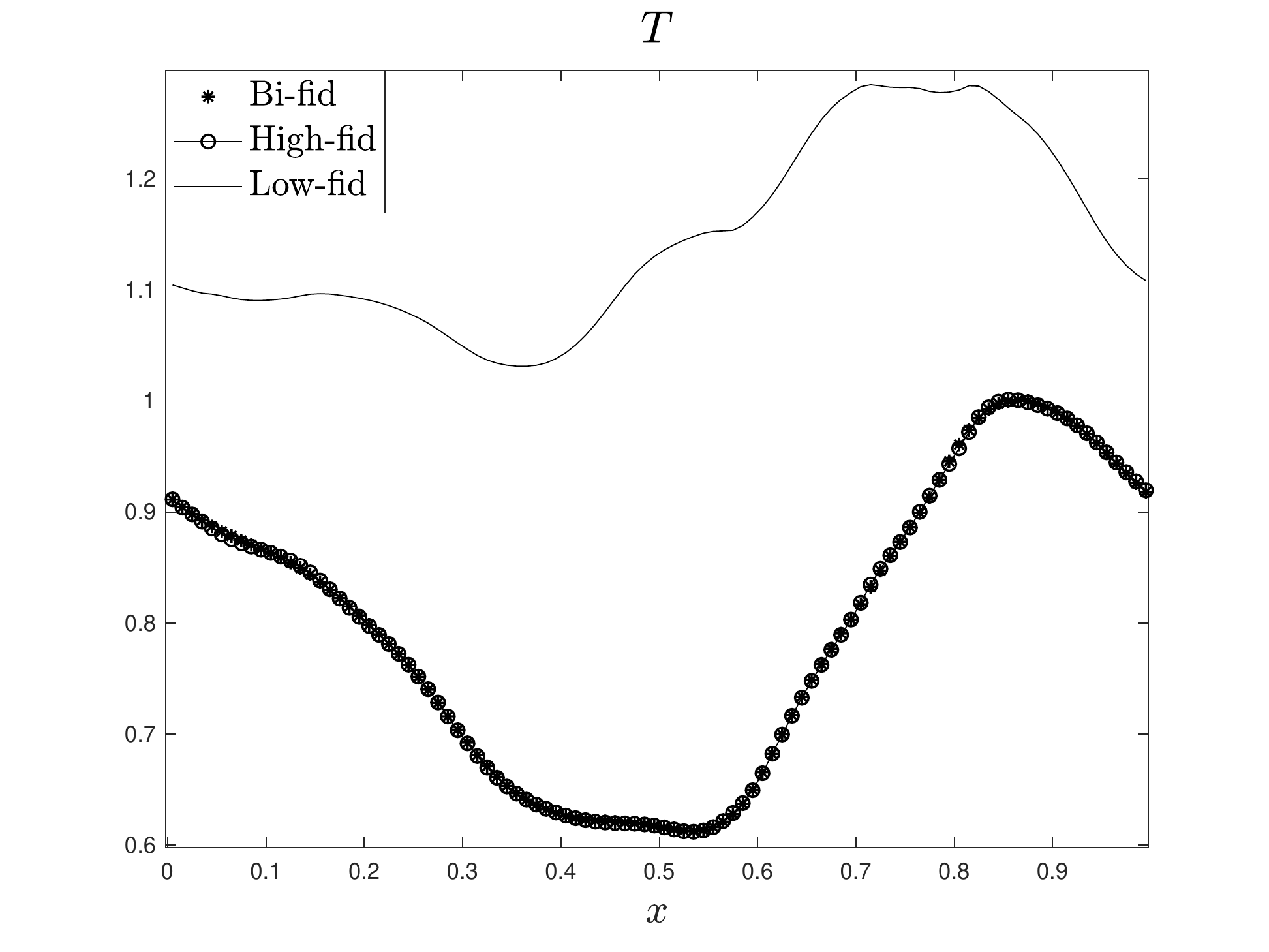}
\caption{{BFSC method for the Boltzmann equation.}  Results of the mixed regime test. (Left) The mean $L^2$ errors between high-fidelity and 
low- or bi-fidelity solutions with respect to the number of high-fidelity run; (Right) Comparison of the low-fidelity solution ($N_v^l=8$), high-fidelity solutions ($N_v^l=16$), and the corresponding bi-fidelity approximations $M=25$ for a fixed $z$. 
}
\label{Fig5-conv}
\end{figure}

\begin{figure}[!ht]
\includegraphics[width=0.5\linewidth]{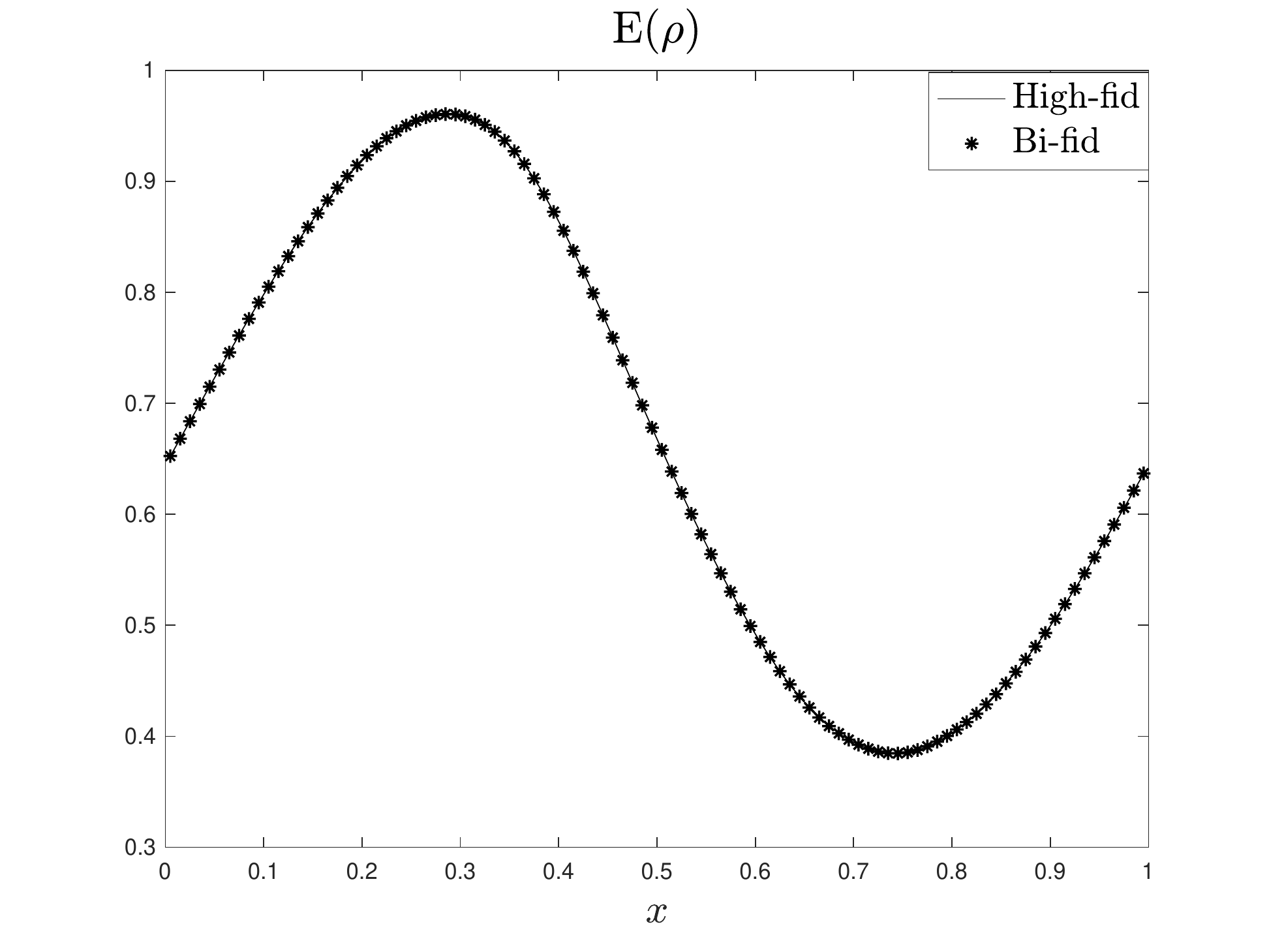}\hskip -.5cm 
\includegraphics[width=0.5\linewidth]{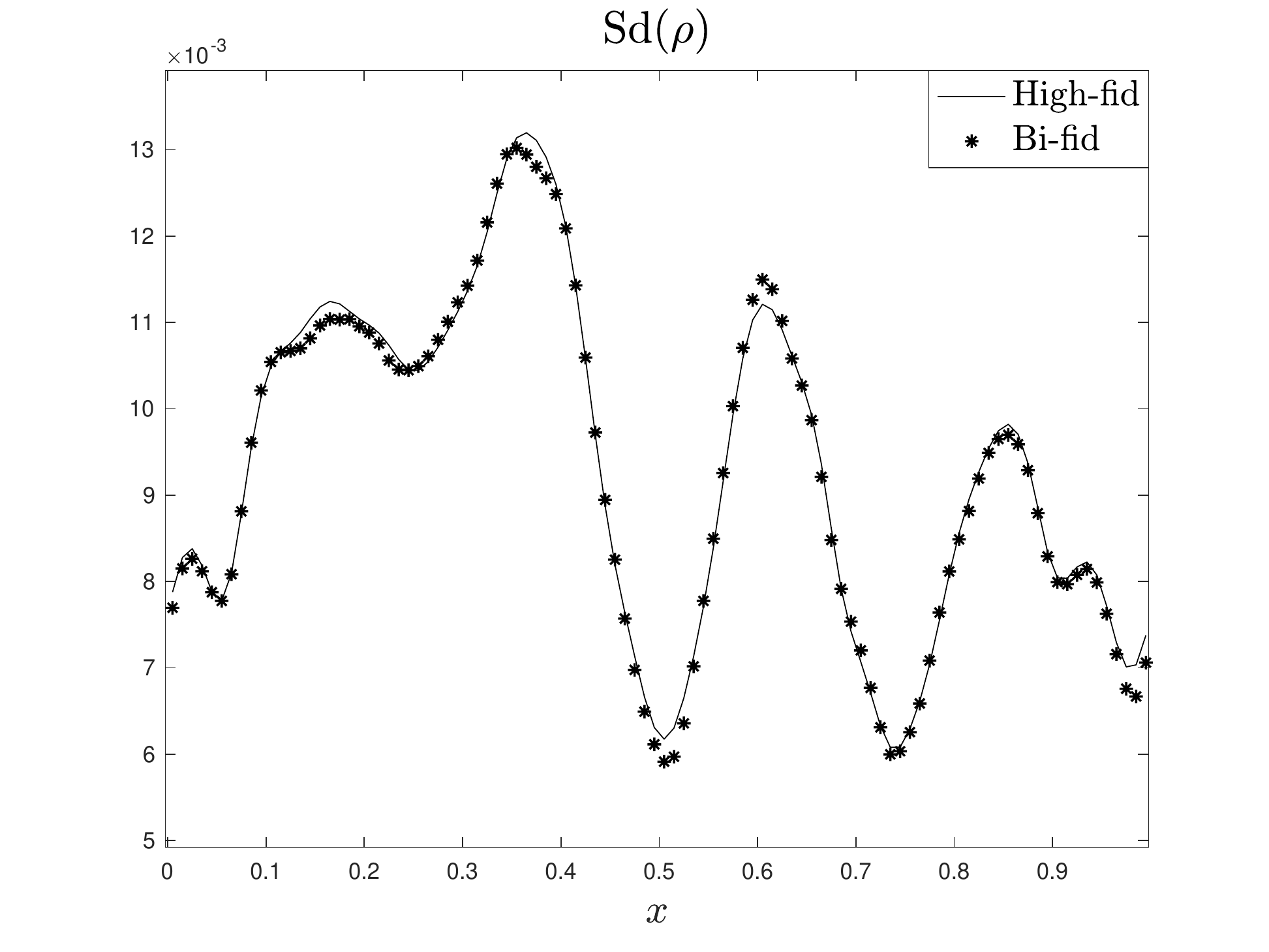}
\includegraphics[width=0.5\linewidth]{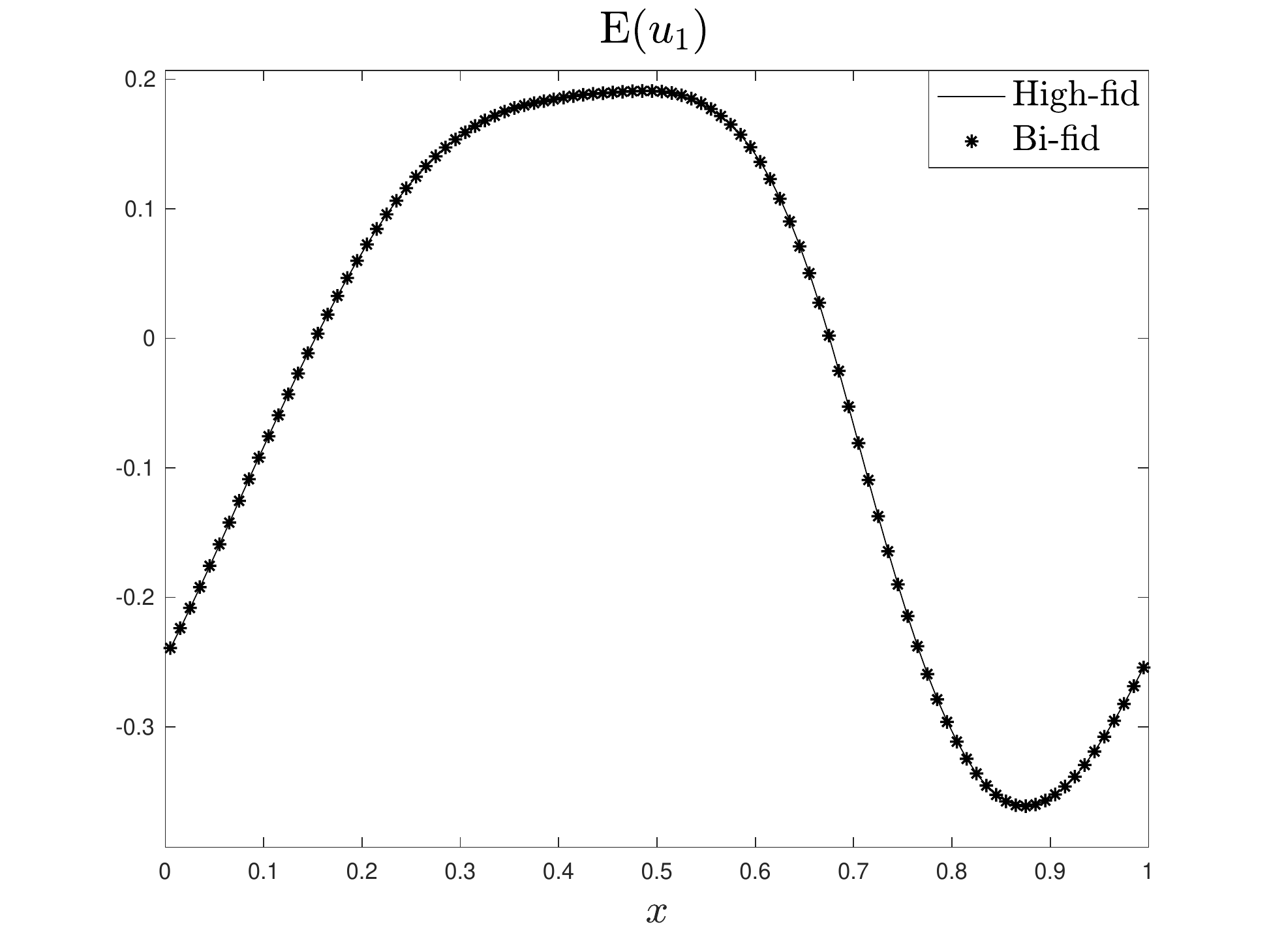}\hskip -.5cm
\includegraphics[width=0.5\linewidth]{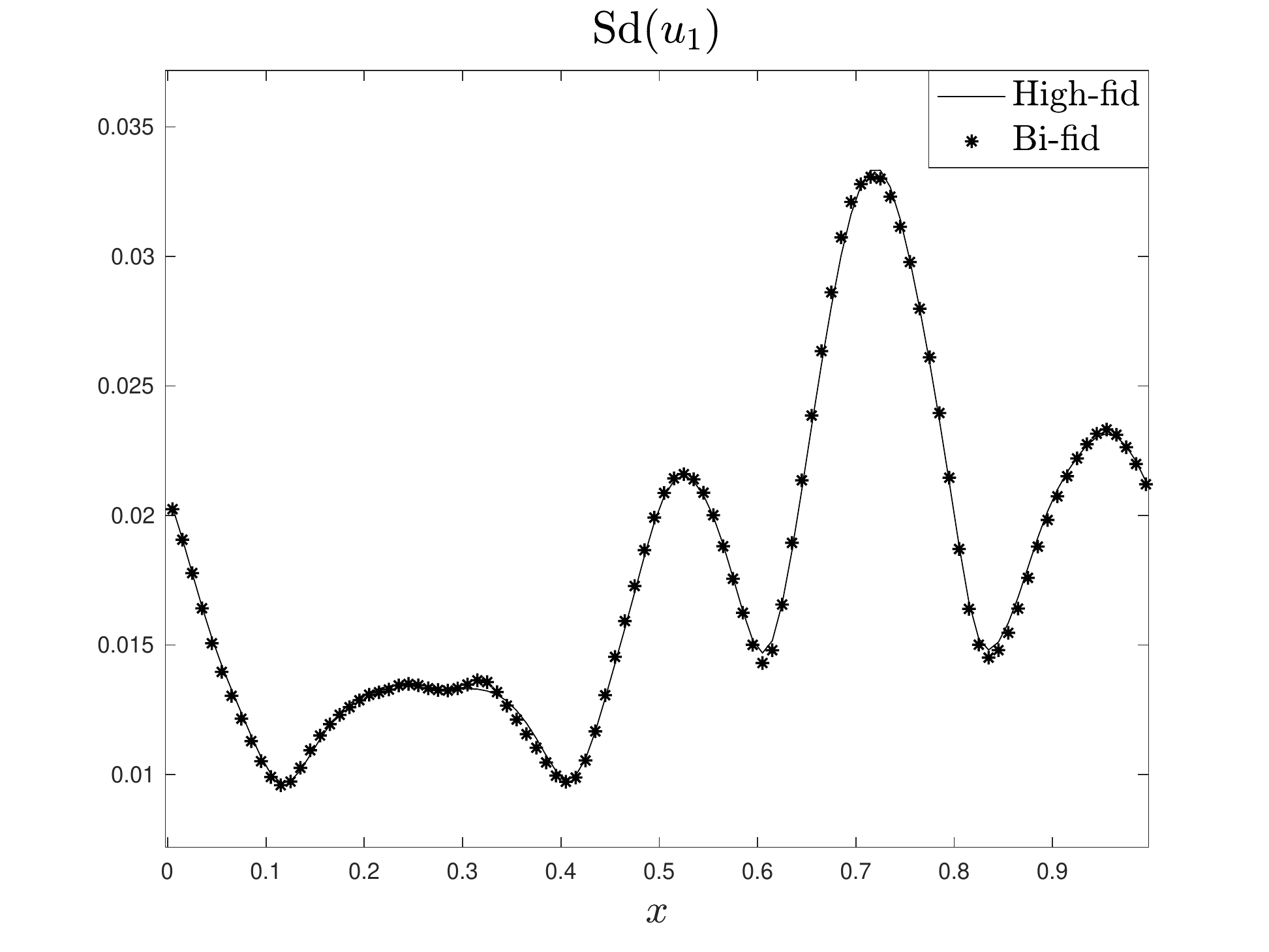}
\includegraphics[width=0.5\linewidth]{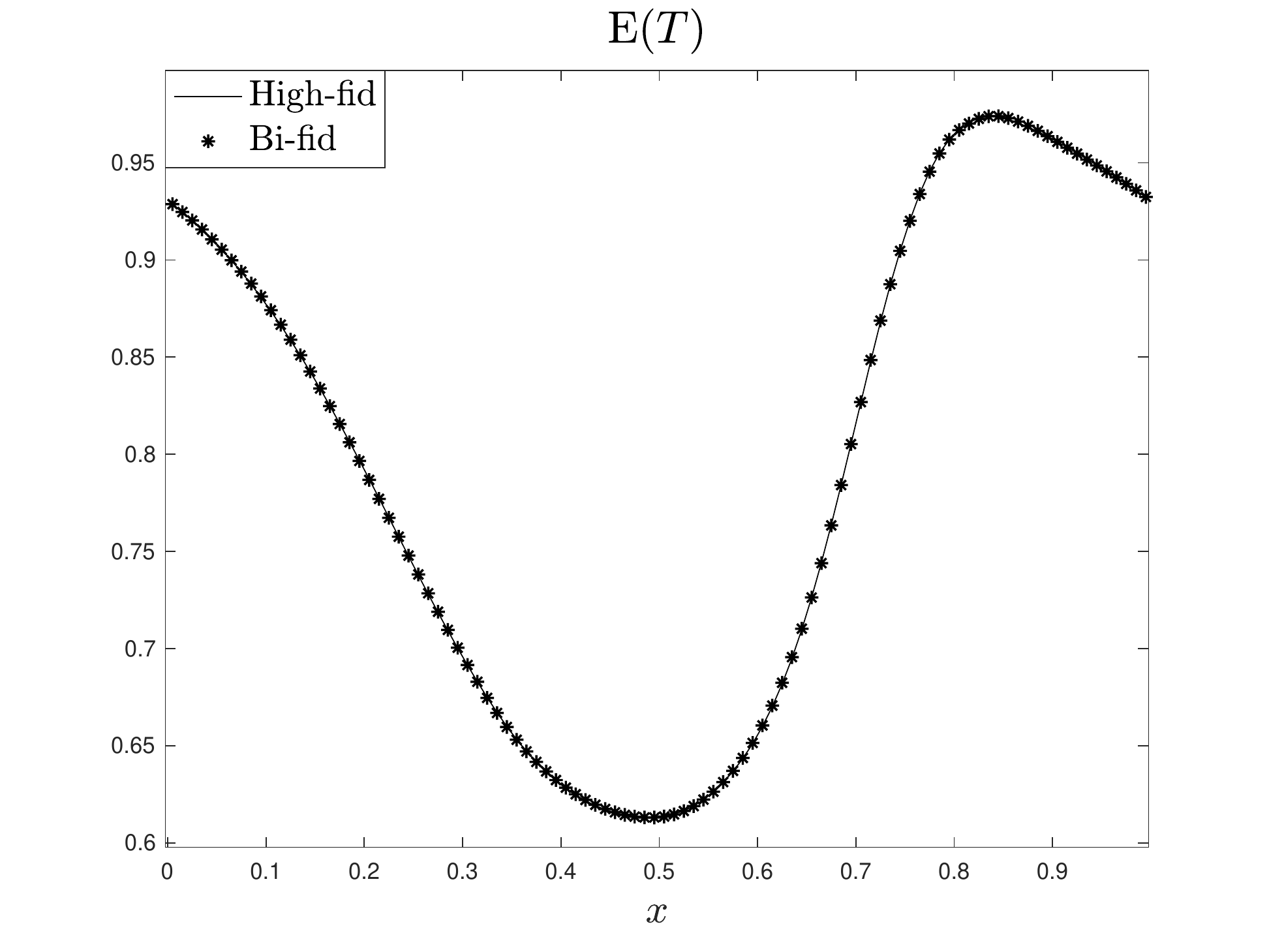}\hskip -.5cm
\includegraphics[width=0.5\linewidth]{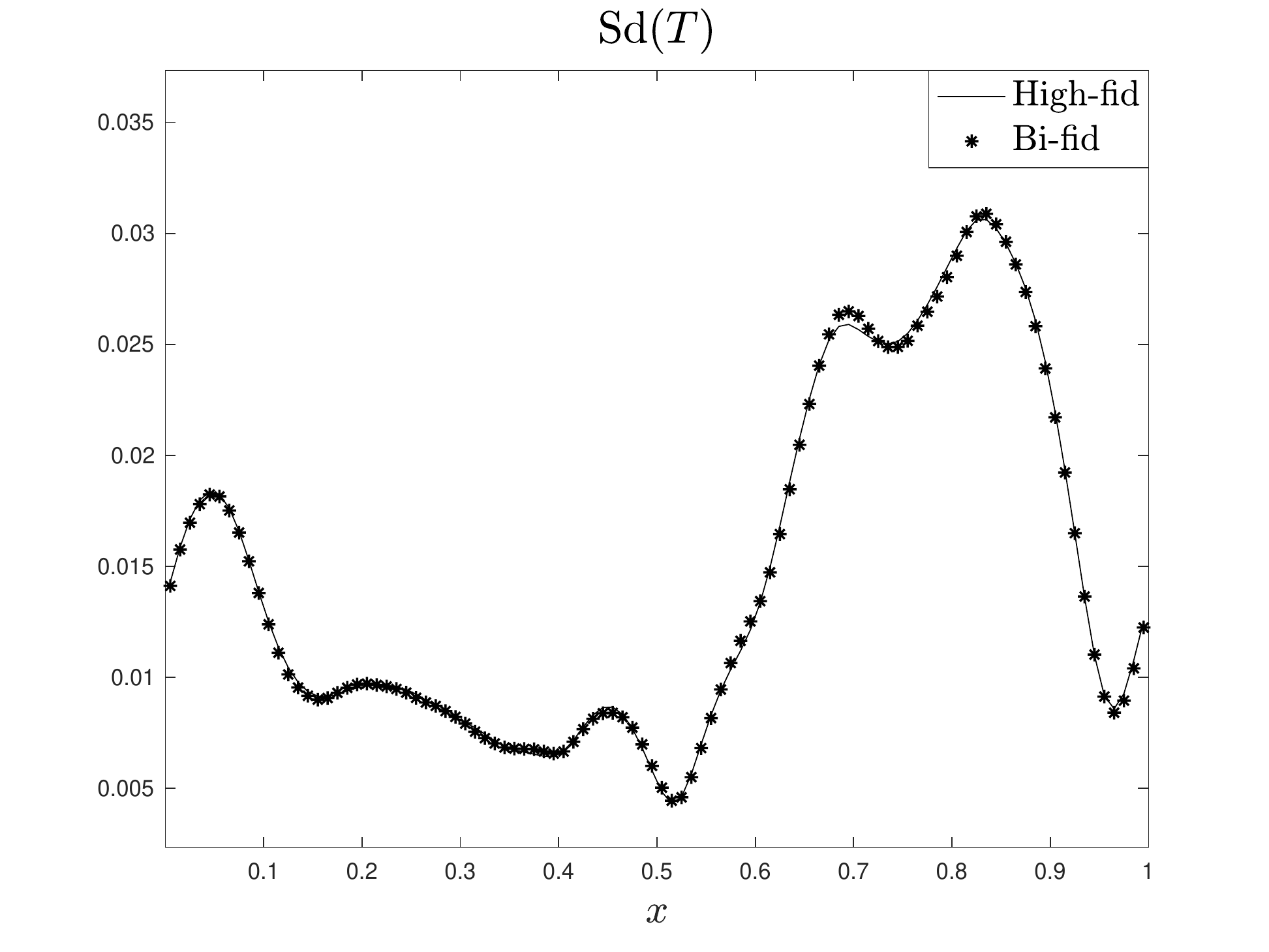}
\caption{{BFSC method for the Boltzmann equation.}  Results of the mixed regime test. The mean and standard deviation of $\rho$, $u_1$, $T$ of high-fidelity solutions and bi-fidelity solutions with $M=25$. 
}
\label{Fig5-mv}
\end{figure}

\subsubsection{Accuracy and convergence analysis}
\label{sec:3}
We discuss now some accuracy and convergence results for the bi-fidelity method applied to the Boltzmann equation. We first give a summary of the hypocoercivity framework studied in^^>\cite{LJ-UQ}, then we introduce the relation between the quantities of interests obtained from the Boltzmann and the compressible Euler system. The rough idea on which the analysis relies is that the bi-fidelity error can be to split it into two parts: the projection error and the remainder part. We mention that the error estimate of the bi-fidelity method for more general kinetic problems with multiple scales and uncertainties has been discussed in^^>\cite{GJL}. 

We first show that for each fixed $z\in \Omega$, the error between macroscopic quantities obtained from the Boltzmann equation and the compressible Euler system containing consistent initial data, is small and of order $\e$ as shown later in (\ref{h_Error}). Let us recall some useful notations for the norms as used in^^>\cite{LJ-UQ}. Let $f$ be the classical solution to the Boltzmann equation (\ref{eq:Boltzmann}). 
Consider a linearization around the global equilibrium together with a perturbation of $f$: 
$$ f = \mathcal M + \e \sqrt{\mathcal M}\, h, $$
with $$\mathcal M(v) = \frac{1}{(2 \pi)^{\frac{d_v}{2}}} e^{-\frac{|v|^2}{2}}. $$
In this setting, $h$ satisfies the perturbed equation 
\begin{equation}
\label{h-PE}
 \partial_t h + v\cdot\nabla_x h = \frac{1}{\e}\mathcal L(h) + \mathcal F(h, h),  
\end{equation} 
where the linearized operator $\mathcal L$ and the nonlinear operator $\mathcal F$ are given by \begin{align*}&\displaystyle 
\mathcal L(h) = \left(\sqrt{\mathcal M}\right)^{-1} \left[\mathcal Q(\sqrt{\mathcal M}h, \mathcal M)
+ \mathcal Q(\mathcal M, \sqrt{\mathcal M}h)\right], \\[2pt]
&\displaystyle \mathcal F(h, h) = 2 \left(\sqrt{\mathcal M}\right)^{-1}\mathcal Q(\sqrt{\mathcal M}h, \sqrt{\mathcal M}h). 
\end{align*}
Let denote $\partial_l^j := \partial/\partial v_j\, \partial/\partial x_l$ for multi-indices $j$ and $l$ and let introduce the following Sobolev norms: 
\begin{align}
\label{norm}
\begin{split}
\displaystyle \|h\|_{H_{x,v}^s}^2 & = \sum_{|j| + |l| \leq s} \| \partial_l^j h\|_{L_{x,v}^2}, \qquad
\|h\|_{H_{x,v}^{s,r}} = \sum_{|\nu|\leq r} \|\partial^{\nu} h\|_{H_{x,v}^s}^2, \\[4pt]
\displaystyle \|h\|_{H_{x,v}^s H_z^r} & = \int_{I_{z}} \|h\|_{H_{x,v}^{s,r}}\, \pi(z)d z, 
\qquad \|h\|_{H_{x,v}^{s,r} L_{z}^{\infty}} = \sup_{z\in I_{z}} \|h\|_{H_{x,v}^{s,r}}. 
\end{split}
\end{align}
We refer to^^>\cite[Theorem 2.5]{MB15} for details. Let $h_{\e}$ be the perturbed solution to the linearized equation (\ref{h-PE}). Suppose the initial data for (\ref{h-PE}) and (\ref{eq:Euler}) are consistent for each $z$. 
If the initial distribution $h_{in} \in \text{Null}(\mathcal L)$ and $h_{in}\in H_{x,v}^s$, then for each $z$, $\left(h_{\e} \right)_{\e>0}$ converges strongly to 
$$ h(t,x,v,z) = \left[\rho(t,x,z) + v\cdot u(t,x,z) + \frac{1}{2}(|v|^2 - d_v) T(t,x,z)\right]\mathcal M(v)$$
in $L_{[0,T]}^2 H_x^s L_v^2$ as the Knudsen number $\e\to 0$, where $\rho$, $u$, $T$ satisfy the Euler system (\ref{eq:Euler}). 
Then, for all $z\in \Omega$, 
if $h_{in}$ belongs to $H_x^s L_v^2$, we have 
\begin{equation}\label{h_Error}
	\begin{split}
&\sup_{t\in[0,\infty]} \| h(t,z) - h_{\e}(t,z)\|_{L^2_{x,v}} \leq \\
&\sup_{t\in[0,\infty]} \| h(t,z) - h_{\e}(t,z)\|_{H_x^s L_v^2}\leq 
C \max\{ \e, \, \e V_T(\e)\}, 
\end{split}
\end{equation}
where $\forall T>0$, $V_T(\e)$ is defined as
$$V_T(\e)=\sup_{t\in [0, T]}\|h(t,z) - h_{\e}(t,z)\|_{L_x^{\infty}L_v^2} \to 0, \quad\text{ as   }\e\to 0. $$

For each $z$, we can split the total error $\left\| u^H(z) - u^B(z)\right\|^H$ into two parts: 
\begin{equation}
\label{total-error}
\begin{split}
 &\left\| u^H(z) - u^B(z)\right\|^H \leq\\ &\left\| u^H(z) - P_{U^H(\gamma_M^L)}u^H(z) \right\|^H + \left\| P_{U^H(\gamma_M^L)}u^H(z) - u^B(z)\right\|^H,   
\end{split}
\end{equation}
where $u^H$ and $u^B$ denote the high-fidelity and bi-fidelity solutions. $\gamma_M^L = \{z_1^L,\hdots, z_M^L\}$. 
Also,^^>\cite[ the Lemma 4.3]{NGX14} shows that the estimate for the second term can be written as: 
\begin{equation}\label{TermII}
	\begin{split}
 &\left\| P_{U^H(\gamma_M^L)}u^H(z) - u^B(z)\right\|^H \leq\\& c \left\| P_{U^H(\gamma_M^L)}u^H(z)\right\|^H + 
\left\| \sqrt{{\bf G}^H}({\bf G}^L)^{-1} {\bf Q}\,{\bf f}^L\right\|, 
\end{split}
\end{equation}
where $c$ is a small constant such that $c=\e_1 + \e_2 + \e_1 \e_2$. 
Moreover ${\bf G}^L$ (or ${\bf G}^H$) is the Gramian matrix of $u^L(\gamma)$ (or $u^H(\gamma)$) given by \eqref{GM} and the vector ${\bf f}^L$ has entries
$$ f_i^L = \left\langle u^L({z}_i), u^L(z) \right\rangle^L, $$
with ${\bf Q}:={\bf I} - {\bf P}$, the orthogonal projection onto its kernel (with ${\bf P}$ the orthogonal projection matrix onto its range), see^^>\cite{NGX14} for details. Thus, the last term in \eqref{TermII} is related to the non-invertibility of high-fidelity Gramian matrix and it can be proven to be usually negligible. 
In addition, since $\left\| P_{U^H(\gamma_M^L)}u^H(z)\right\|^H \leq \|u^H(z)\|^H$, we can write 
\begin{equation}
\label{PU}
\left\| P_{U^H(\gamma_M^L)}u^H(z) - u^B(z)\right\|^H \leq c\, \|u^H(z)\|^H + 
\left\| \sqrt{{\bf G}^H}({\bf G}^L)^{-1} {\bf Q}\,{\bf f}^L\right\|. 
\end{equation} 

We study now the smoothness of the high-fidelity solution 
$u^H: \Omega \mapsto V^H$, where $z$ is a multivariate random parameter and $V^H$ is a Hilbert space with the usual inner product $\left\langle \,\cdot,\cdot \right\rangle^H$. Successively, we establish an estimate bound for the Kolmogorov width to show the convergence result of our bi-fidelity method. To that aim, let us recall the standard multivariate notation in^^>\cite{LZ}. We denote the countable set of finitely supported sequences of nonnegative integers by
$$ \mathcal F:= \left\{ \nu = (\nu_1, \nu_2, \cdots ): \nu_j \in \mathbb N, \, \text{ and } \nu_j \neq 0 \text {  for only a finite number of } j\right\}, $$
with $|\nu|:=\sum_{j \geq 1} |\nu_j|$. 
For $\nu \in \mathcal F$ supported in $\{1, \cdots, J \}$, one defines the partial derivative in $z$
\begin{equation}
\partial^{\nu} u = \frac{\partial^{|\nu|} u}{\partial^{\nu_1}z_1  \cdots \partial^{\nu_J}z_J},   
\end{equation} 
and the multi-factorial $\nu! := \prod_{j \geq 1}\nu_j !$, where $0! :=0$.  
If the initial distribution of the high-fidelity model satisfies 
\begin{equation}
\label{u-IC}
 \|h_{\e}^{in}(z)\|_{H_{x,v}^{1,r} L_z^{\infty}} \leq C_I,
\end{equation}
then for a fixed time $t>0$ and $|\nu|\leq r$, 
\begin{equation}\label{Anal}
\sup_{z\in \Omega}\|\partial^{\nu} u^H(t,z)\|^H \leq C^{\prime}\, e^{- \e \tau t} + \xi, 
\end{equation}
where $\xi$ depends on the order and discretization parameters $\Delta t$, $\Delta x$, $\Delta v$. 
Thus for all $z\in \Omega$, one gets 
$$ \|\partial^{\nu} u^H(t,z)\|^H \leq C^{\prime}\, e^{- \e \tau t} + \xi. $$
Here note that $C_I$, $C$, $C^{\prime}$ and $\tau$ are all positive generic constants independent of $\e$. We recall now the assumption on the random collision kernel: 
\begin{assumption}
\label{assp1}
Assume the collision kernel take the form
\begin{equation}\label{RB1}
\begin{split}
&B(|v-v^{\ast}|, \cos\theta, z) =\Phi(|v-v^{\ast}|) b(\cos\theta, z), \\ &\Phi(|v-v^{\ast}|) = C |v-v^{\ast}|^m, \, m \in [0,1],
\end{split}	
 \end{equation}
and $\left(\psi_j \right)_{j\geq 1}$ be an affine representer^^>\cite{Cohen15} of the cross section $b$, that is, 
\begin{equation}
\label{Assump-b}
  b(\eta, z) = \bar b(\eta) + \sum_{j \geq 1}z_j \psi_j(\eta), \quad z :=\left( z_j \right)_{j \geq 1}, \quad \eta = \cos\theta,    
\end{equation}
where the sequence $\left( \|\psi_j\|_{L^{\infty}(\eta)}\right)_{j\geq 1}\in \ell^p$ for $0<p<1$. 
In addition, one assumes that 
\begin{equation}\label{RB2}|b(\eta, z)| \leq C_0, \quad |\partial_{\eta}b(\eta, z)| \leq C_1, \quad
|\partial^{\nu}b(\eta, z)| \leq C_2, 
\end{equation}
for all $\eta\in[-1,1]$ and $|\nu|\leq r$ with $C$, $C_0$, $C_1$, $C_2$ are all positive constants. \end{assumption}

We now review the main result on convergence analysis in^^>\cite{LZ}:
\begin{theorem}
\label{MainThm}
If the assumptions for the random initial data, random collision kernel, namely (\ref{u-IC}) and {Assumption \ref{assp1}} are satisfied, for fixed time $t>0$ and fixed numerical discretization parameters $\Delta t$, $\Delta x$ and $\Delta v$, for all $z\in \Omega$, are chosen, then
\begin{equation}
\label{MainError}
\begin{split}
&\left\|u^H(t,z)- u^B(t,z)\right\|^H \leq\\& C_1\, \frac{e^{-\frac{\e \tau t}{2}}}{(N/2+1)^{q/2}} + C_2\, e^{-\e\tau t} + \chi
+ \left\| \sqrt{{\bf G}^H}({\bf G}^L)^{-1} 
{\bf Q}\,{\bf f}^L(z)\right\|,   
\end{split}
\end{equation} 
where $N$ is the size of the subspace $\gamma_M$ in Algorithm \ref{BiFi-pod}, with $q=\frac{1}{p}-1$ and $p$ dependent on the $\ell^p$-summability assumption of $(\psi_j)_{j \geq 1}$. Moreover,
$C_1$, $C_2$ and $\tau$ are all constants dependent on the initial data $u^{in}$ and {Assumption \ref{assp1}} about the collision kernel while $\delta_1$, $\delta_2$ are both sufficiently small. 
\end{theorem}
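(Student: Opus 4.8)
\emph{Sketch of the argument (following \cite{LZ}).} The starting point is the deterministic‑in‑$z$ splitting of the bi‑fidelity error already recorded in \eqref{total-error}, which for every fixed $z$ reads
\[
\left\|u^H(z)-u^B(z)\right\|^H \le \left\|u^H(z)-P_{U^H(\gamma_M^L)}u^H(z)\right\|^H + \left\|P_{U^H(\gamma_M^L)}u^H(z)-u^B(z)\right\|^H .
\]
The second (remainder) term is controlled by \eqref{PU}, i.e.\ by $c\,\|u^H(z)\|^H + \|\sqrt{{\bf G}^H}({\bf G}^L)^{-1}{\bf Q}\,{\bf f}^L\|$ with $c=\e_1+\e_2+\e_1\e_2$ small; the last summand is kept as the matrix term in \eqref{MainError}, while $c\,\|u^H(z)\|^H$ is absorbed once $\|u^H(t,z)\|^H$ is bounded. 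For the latter one uses \eqref{Anal} with $\nu=0$, giving $\|u^H(t,z)\|^H\le C'e^{-\e\tau t}+\xi$ uniformly in $z$; hence $c\,\|u^H(z)\|^H \le cC'e^{-\e\tau t}+c\xi$, which is distributed between the $C_2e^{-\e\tau t}$ contribution and the discretization‑error bucket $\chi$. Thus the whole theorem reduces to estimating the projection error $\|u^H(z)-P_{U^H(\gamma_M^L)}u^H(z)\|^H$.

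For the projection error I would proceed in three steps. First, promote \eqref{Anal} to a genuine analyticity/Gevrey statement in $z$: under Assumption~\ref{assp1}, with $\left(\|\psi_j\|_{L^\infty}\right)_{j\ge1}\in\ell^p$, differentiating the perturbed equation \eqref{h-PE} and propagating the hypocoercivity bounds yields mixed derivative estimates $\sup_{z}\|\partial^{\nu}u^H(t,z)\|^H\le \big(C'e^{-\e\tau t}+\xi\big)\,\rho^{-\nu}$ for a summable weight sequence $\rho^{-1}=(\rho_j^{-1})$ governed by the affine representers $\psi_j$. Second, feed these bounds into best $n$‑term Legendre‑expansion theory of Cohen--DeVore--Schwab type \cite{Cohen15}: the $\ell^p$‑summability transfers to the coefficient sequence and produces an algebraic decay of the Kolmogorov $n$‑width of the solution manifold, $d_n\big(\{u^H(t,\cdot)\}\big)\le C\,e^{-\e\tau t}\,(n+1)^{-q}+\chi$ with $q=\tfrac1p-1$, the exponential factor being carried through from \eqref{Anal} and the $\xi$‑part going into $\chi$. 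Third, connect the greedy point selection of Algorithm~\ref{BiFi-pod} (the pivoted Cholesky on the \emph{low‑fidelity} Gramian) to this width: because the low‑ and high‑fidelity solutions are $O(\e)$‑close in the relevant norm by the hypocoercivity estimate \eqref{h_Error}, the selection performed on the low‑fidelity snapshots acts as a weak‑greedy step for the high‑fidelity manifold, and the comparison between weak‑greedy errors and Kolmogorov widths of \cite{NGX14,zhu2017multi} then gives $\|u^H(z)-P_{U^H(\gamma_M^L)}u^H(z)\|^H\le C_1\,e^{-\e\tau t/2}\,(N/2+1)^{-q/2}+\chi$; the halving of both the exponent and of the time‑decay rate is exactly the loss inherent in that comparison in the present cross‑model setting.

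Collecting the three contributions — the greedy/width term $C_1 e^{-\e\tau t/2}(N/2+1)^{-q/2}$, the absorbed remainder $C_2 e^{-\e\tau t}$, the matrix term $\|\sqrt{{\bf G}^H}({\bf G}^L)^{-1}{\bf Q}\,{\bf f}^L(z)\|$, and all discretization residuals lumped into $\chi$ — yields \eqref{MainError}, with $C_1,C_2,\tau$ inherited from \eqref{Anal} and Assumption~\ref{assp1} and $\e_1,\e_2$ (hence $c$) taken sufficiently small. The step requiring the most care is the third one: one must (i) make rigorous that a greedy search driven by the low‑fidelity model still selects a quasi‑optimal subspace for the high‑fidelity solution — this is precisely where the model‑similarity hypothesis and the $O(\e)$ bound \eqref{h_Error} enter — and (ii) track the exponential‑in‑time factor $e^{-\e\tau t}$ through the width‑versus‑greedy comparison so that it appears as $e^{-\e\tau t/2}$ rather than being swallowed into a generic constant. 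By contrast, the best $n$‑term polynomial step is standard once the parametric regularity with $\ell^p$‑summable weights is in hand, and the absorption of $\chi$ is routine bookkeeping of the fixed discretization errors $\Delta t,\Delta x,\Delta v$. An extension to more general multiscale kinetic problems along the same lines is carried out in \cite{GJL}.
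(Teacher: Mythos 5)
Your sketch is correct and follows essentially the same route the paper takes: the survey itself only assembles the ingredients --- the splitting \eqref{total-error}, the remainder bound \eqref{PU} with \eqref{Anal}, the parametric regularity under Assumption \ref{assp1}, the $O(\e)$ closeness \eqref{h_Error}, and the width/greedy comparison from \cite{NGX14} --- and defers the detailed proof to \cite{LZ}, which is exactly the argument you reconstruct. Your identification of the $(N/2+1)^{-q/2}$ and $e^{-\e\tau t/2}$ factors as the loss in the weak-greedy versus Kolmogorov-width comparison, and of $c\,\|u^H\|^H$ being absorbed into the $C_2 e^{-\e\tau t}+\chi$ terms, matches the intended proof.
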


For details and the proof refer to^^>\cite{LZ}. This Theorem indicates that the error between the bi-fidelity and high-fidelity solutions decays algebraically with respect to the number of high-fidelity runs. The convergence rate $q/2$ is independent of dimension of the random parameters and regularity of the initial data, and only relates to the $\ell^p$ summability of the affine representative $(\psi_j)_{j \geq 1}$ shown in Assumption \ref{assp1}. 

\subsection{BFSC method in the diffusion limit}
\label{sec:diff2}
The linear transport equation in the diffusion scaling \eqref{pde_transport1d} introduced in Section 2 will be here employed in the framework of the bi-fidelity method. After the even and odd parity formulation of the transport equation \eqref{pde_transport1d_rj}, we have shown that the general linear transport equation degenerates to a diffusion equation in the limit $\varepsilon\to 0$. The idea we explore here is to use the so-called Goldstein-Taylor (GT) model--which is a discrete velocity approximation of the underlying kinetic equation with only two velocities--as the low-fidelity model. We show that the GT model share the same diffusive limit behavior of the original kinetic model \eqref{pde_transport1d}. However, the clear advantage of the GT model is that it is significantly less computationally expensive than the original kinetic equation. This permits to better explore the space spanned by the solution of the GT model in a random framework and successively to choose the best random points to be used in the high fidelity method. 

The one-dimensional Goldstein-Taylor (GT) model^^>\cite{Gol,Tay} with random inputs is given by
\begin{equation}
\label{GT}
\left\{
\begin{split}
& \partial_t u + \frac{1}{\epsilon}\partial_x u = \frac{\sigma(x,z)}{2\epsilon^2}(v-u), \\[4pt]
& \partial_t v - \frac{1}{\epsilon}\partial_x v = \frac{\sigma(x,z)}{2\epsilon^2}(u-v). 
\end{split}
\right.
\end{equation}
Here we assume a random scattering coefficient $\sigma(x,z)$ as for the original high fidelity model \eqref{pde_transport1d}. 
It is worth mentioning that the Goldstein-Taylor model \eqref{GT} can be regarded as a discrete-velocity kinetic counterpart of the linear transport equation where $u$ defines the density of particles traveling with velocity $1$, whereas $v$ that of particles traveling in the reverse direction with velocity $-1$. Besides, the GT model has significantly cheaper computational cost, yet shares the same limiting diffusion equations with the linear transport model as $\epsilon\to 0$.
We refer to^^>\cite{Lions1997} for rigorous results concerning the diffusion limit of two-velocity models and extensions to nonlinear diffusion coefficients.
This low-fidelity model is shown to be used in many interesting applications^^>\cite{GottliebX_CICP08}. We introduce now the macroscopic variables: the mass density $\rho$ and the flux $s$,
$$ \rho = u + v, \qquad s = \frac{u-v}{\epsilon}, $$
then the GT model \eqref{GT} is equivalent to the following system: 
\begin{equation}
\label{pde_transport1d_uv}
\left\{
\begin{split}
&\partial_t \rho +  \partial_x s = 0, \\[4pt]
&\partial_t s + \frac{1}{\epsilon^2}\partial_x\rho =
-\frac{\sigma(x,z)}{\epsilon^2}s. 
\end{split}
\right.
\end{equation}
The above system as said shares the same limit of the high fidelity model \eqref{pde_transport1d_rj}. In fact, in the diffusion limit $\epsilon\rightarrow 0$, the system \eqref{pde_transport1d_uv} can be approximated by the heat equation to the leading order, with random diffusion coefficient $\sigma(x,z)$, 
\begin{equation}
\label{pde_transport1d_sp_lowfi2_diff}
\left\{
\begin{split}
&s = -\frac{1}{\sigma(x,z)}\partial_x \rho, \\[4pt]
&\partial_t \rho = \partial_{x} 
\left[\frac{1}{\sigma(x,z)} \partial_x \rho \right].
\end{split}
\right.
\end{equation}
Comparing \eqref{transport1d_diff} and \eqref{pde_transport1d_sp_lowfi2_diff}, the two systems look similar except for the magnitude of the diffusion coefficient on the right hand-side which is different. However, if one defines the diffusion coefficient in \eqref{pde_transport1d_sp_lowfi2_diff} as $\sigma_{\text{GT}}$ and that in \eqref{transport1d_diff} as $\sigma_{\text{LTE}}$, then by assuming $\sigma_{\text{GT}}=\frac{1}{3}\sigma_{\text{LTE}}$ the two models share precisely the same diffusion limit. 

Motivated by the above observations, we employ the equivalent formulation of the GT equation, that is, system \eqref{pde_transport1d_uv} as our low-fidelity model.
Without loss of generality, the $d_z$-dimensional random variable $z=\{z_1, \cdots, z_d\}$ is assumed to follow the uniform distribution on $[-1,1]^{d_z}$, and the dimension of the random parameter is chosen as $d=5$. To compute the reference solutions for the mean and standard deviation of the high-fidelity quantities of interests, we use the high-order stochastic collocation method over 5-dimensional  sparse quadrature points with $5$-level Clenshaw-Curtis rules, i.e., evaluated on $2243$ quadrature points (cf.,\cite{XiuH_SISC05}).  
For the high-fidelity solver, at each given sample we employ the AP scheme^^>\cite{JPT2} developed for the deterministic linear transport equation \eqref{pde_transport1d} under the diffusive scaling. 
The standard 16-points Gauss-Legendre quadrature set is used for the velocity space to compute $\rho$ in \eqref{r_integral}.
For the low-fidelity (LF) solver, we use the deterministic AP method^^>\cite{JPT1} to solve the linear Goldstein-Taylor model \eqref{pde_transport1d_uv}. Periodic boundary conditions are considered. 

\begin{figure}[htb]
\centering
\includegraphics[scale=0.33]{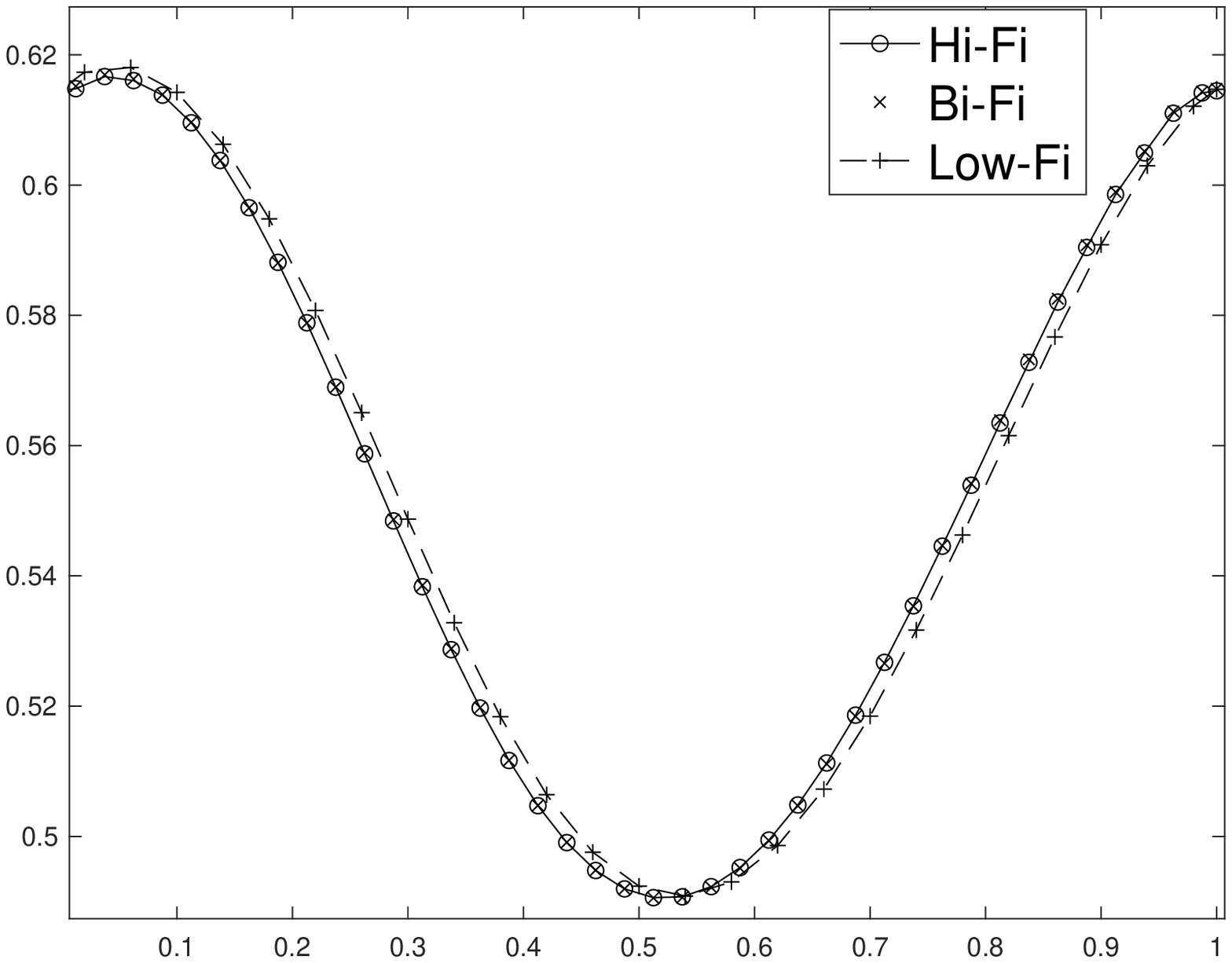}\hskip -.5cm
\includegraphics[scale=0.33]{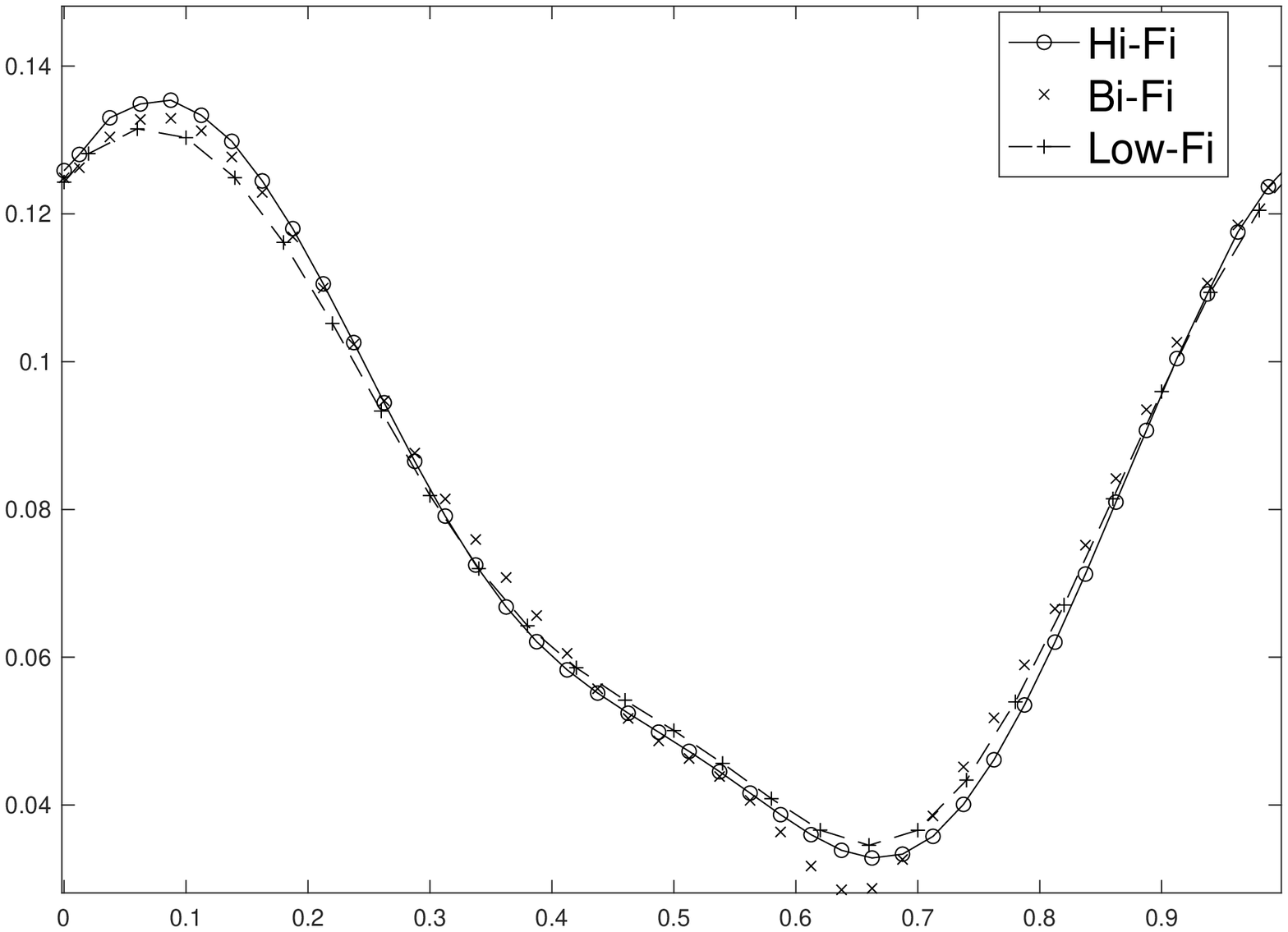}
\includegraphics[scale=0.33]{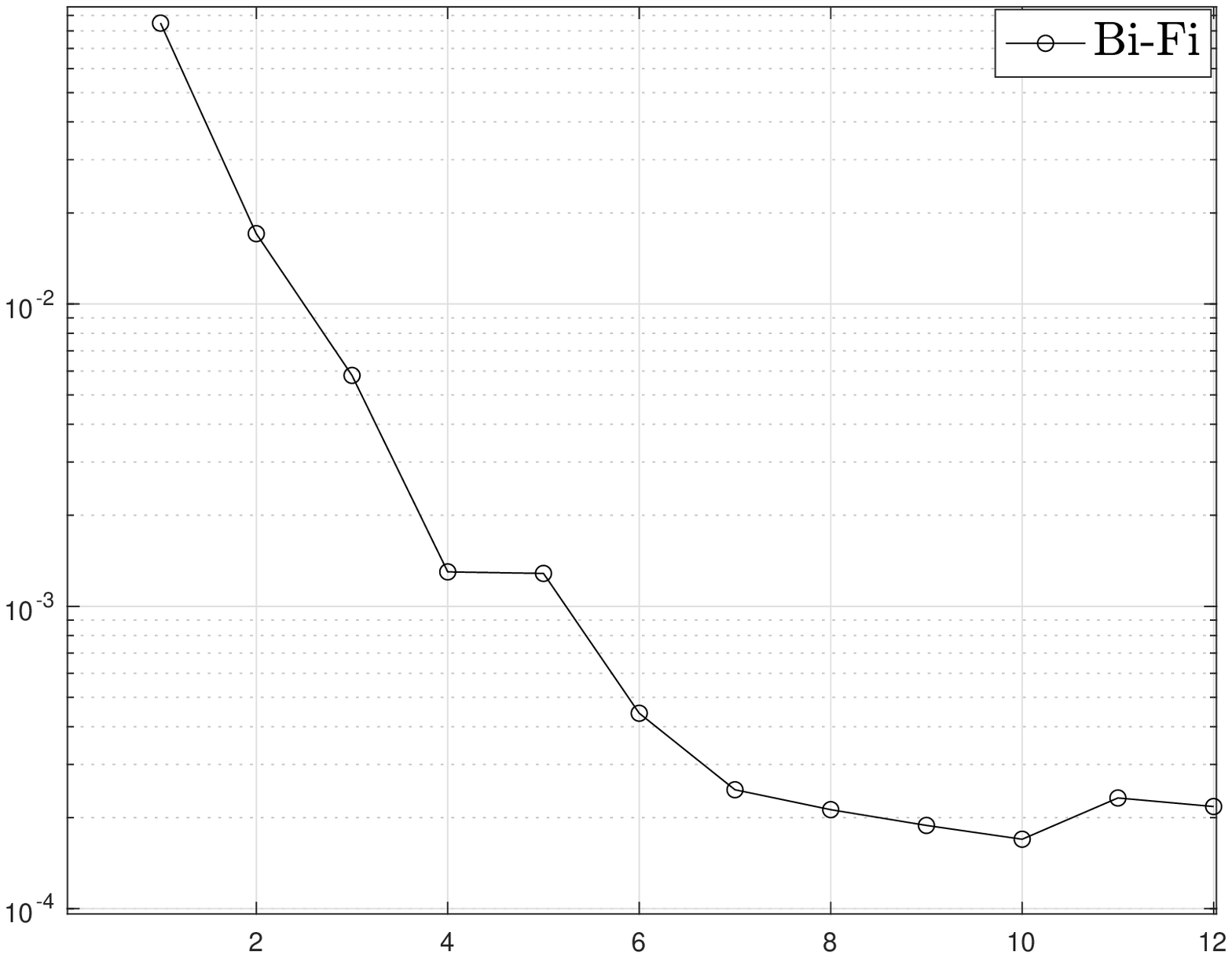}\hskip -.5cm
\includegraphics[scale=0.33]{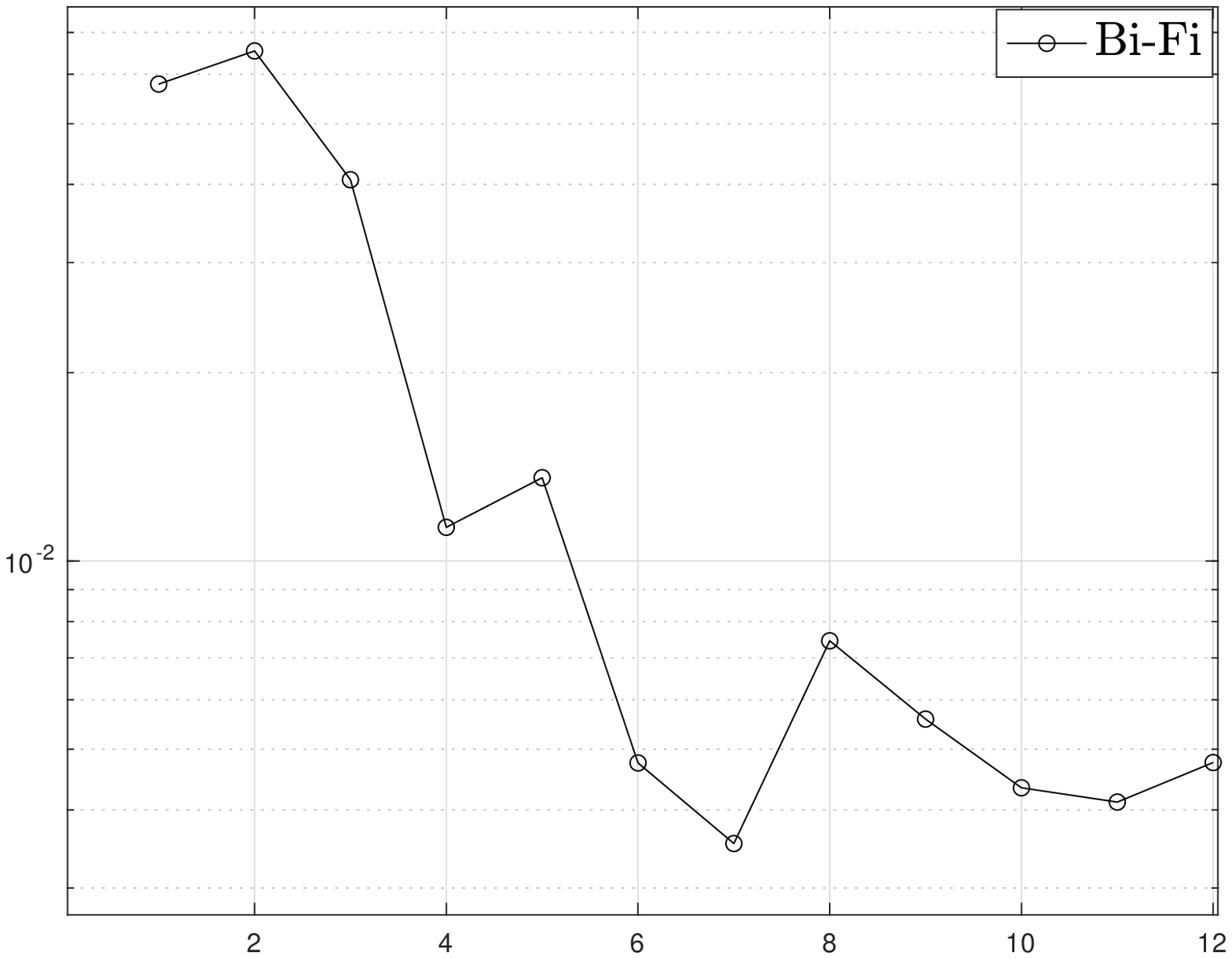}
\caption{{BFSC method for the linear transport.}  Test results of the uncertain cross-section and initial data in the diffusive scaling. The mean (left) and standard deviation (right) of the density $\rho$, obtained by $M=12$ high-fidelity runs and the sparse grid method with $2243$ quadrature points (crosses, first row). The corresponding errors are reported in the second row, $\epsilon=10^{-2}$.}
\label{Test4_Fig1}
\end{figure}

In the first test reported, we assume the random cross-section coefficient given by 
\begin{equation}
\label{Sigma}
\sigma (x,z) = 1 + 4 \sum_{i=1}^{d_z} \frac{1}{(i\pi)^2}\cos{(2\pi i x)} z_i,
\end{equation}
with $d=5$. Let also the uncertain initial data be
\begin{equation}
\label{Test2_IC}
f_0(x,v,z)=\rho_0\exp\left(-\left(\frac{v-0.5}{T_0}\right)^2\right)+ 
\rho_1 \exp\left(-\left(\frac{v+0.75}{T_1}\right)^2\right), 
\end{equation}
where 
\begin{align*}
& \rho_0(x,z) = 1 + 3 \sum_{k=1}^{d_z} \frac{\sin(2\pi k x)z_k}{(k\pi)^2} , 
\ T_0(x,z)=\frac{5+2\cos(2\pi x)}{20}\left(1+0.6 z_1 \right), \\[2pt]
& \rho_1(x,z) = 1 + 2 \sum_{k=1}^{d_z} \frac{\cos(2\pi k x)z_k}{(k\pi)^2} , 
\ T_1(x,z) = 0.5+0.2\cos(2\pi x)z_2. 
\end{align*}
We use $\Delta x=0.025$, $\Delta t=10^{-4}$ for the high-fidelity solver, and $\Delta x=0.04$, $\Delta t=2\times 10^{-4}$ for the low-fidelity solver. Numerical solutions and bi-fidelity errors at output time $T=0.02$ are shown in Figure~\ref{Test4_Fig1}. 
We observe that while the low-fidelity solution may miss details of the high-fidelity solution near the peaks, the mean and standard deviation of the bi-fidelity approximation agree well with the high-fidelity solution globally, when using $M=12$. A fast exponential decay of bi-fidelity errors is suggested from Figure ~\ref{Test4_EB}. With only $12$ simulations for the high-fidelity AP solver, the bi-fidelity mean can reach the accuracy level of less than $\mathcal{O}(10^{-4})$. In Figure~\ref{Test4_EB}, the error estimators bound well the true bi-fidelity errors, with the values of both metrics $R_e$ and $R_s$ around less than $10$.

\begin{figure}[tb]
\centering
\includegraphics[scale=0.31]{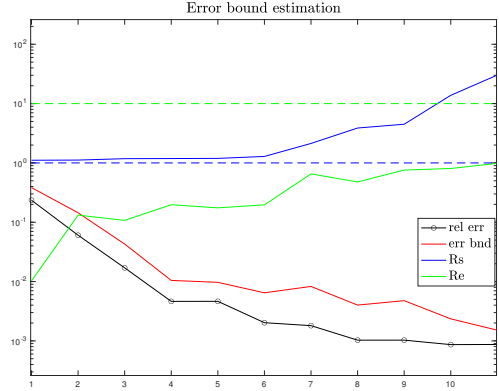}
\caption{{BFSC method for the linear transport.}  Error bound estimation in the diffusive scaling with uncertain cross-section and initial data. }
\label{Test4_EB}
\end{figure}

\begin{figure}[tb]
\centering
\includegraphics[scale=0.33]{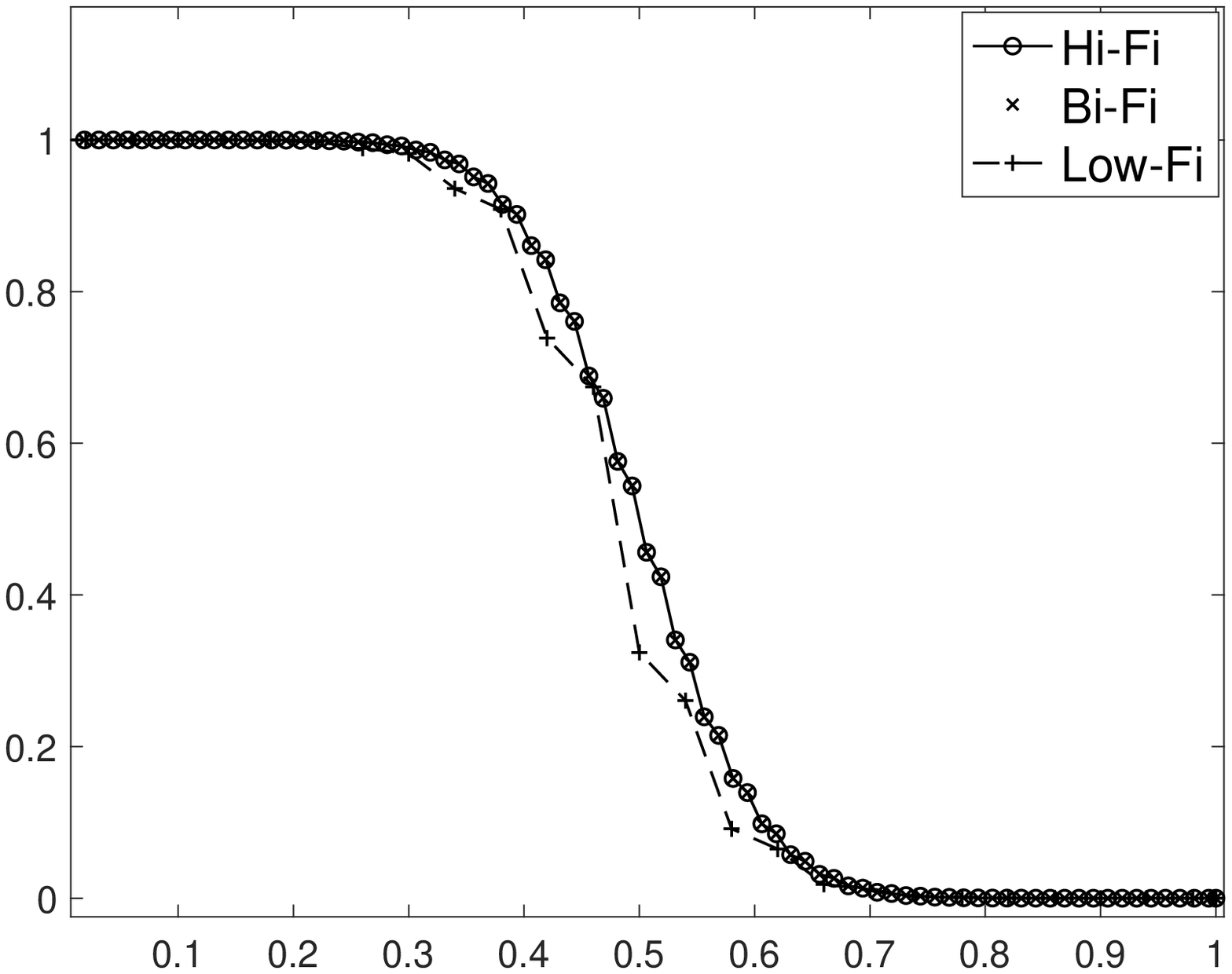}\hskip -.5cm 
\includegraphics[scale=0.33]{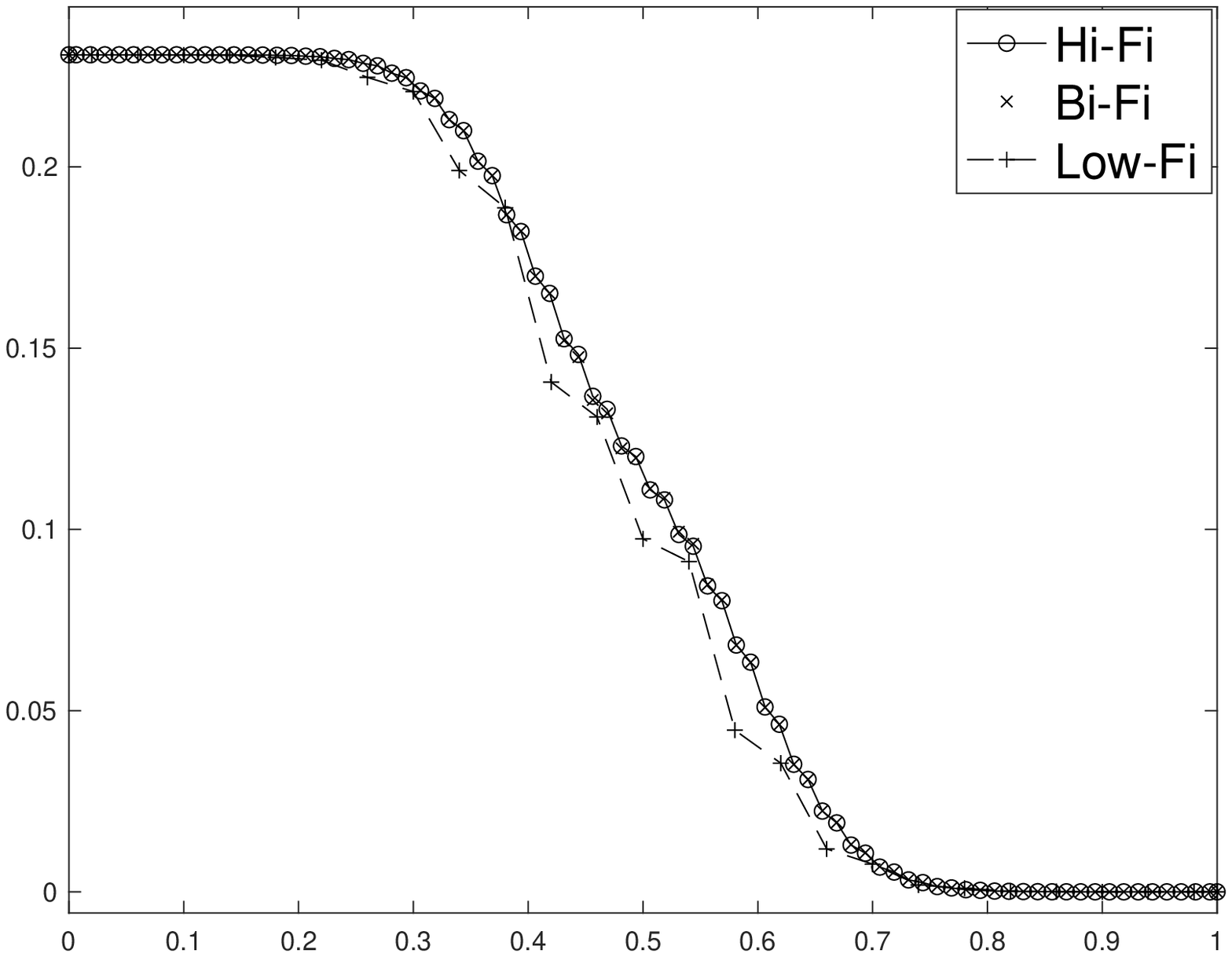}
\caption{{BFSC method for the linear transport.}  Results of the Riemann problem in the diffusive scaling. The mean (left) and standard deviation (right) of the density $\rho$, obtained by $M=12$ high-fidelity runs and the sparse grid method using $2243$ quadrature points (crosses). Here $\epsilon=10^{-8}$. } 
\label{Test2_Fig1}
\end{figure}

\begin{figure}[tb]
\centering
\includegraphics[scale=0.33]{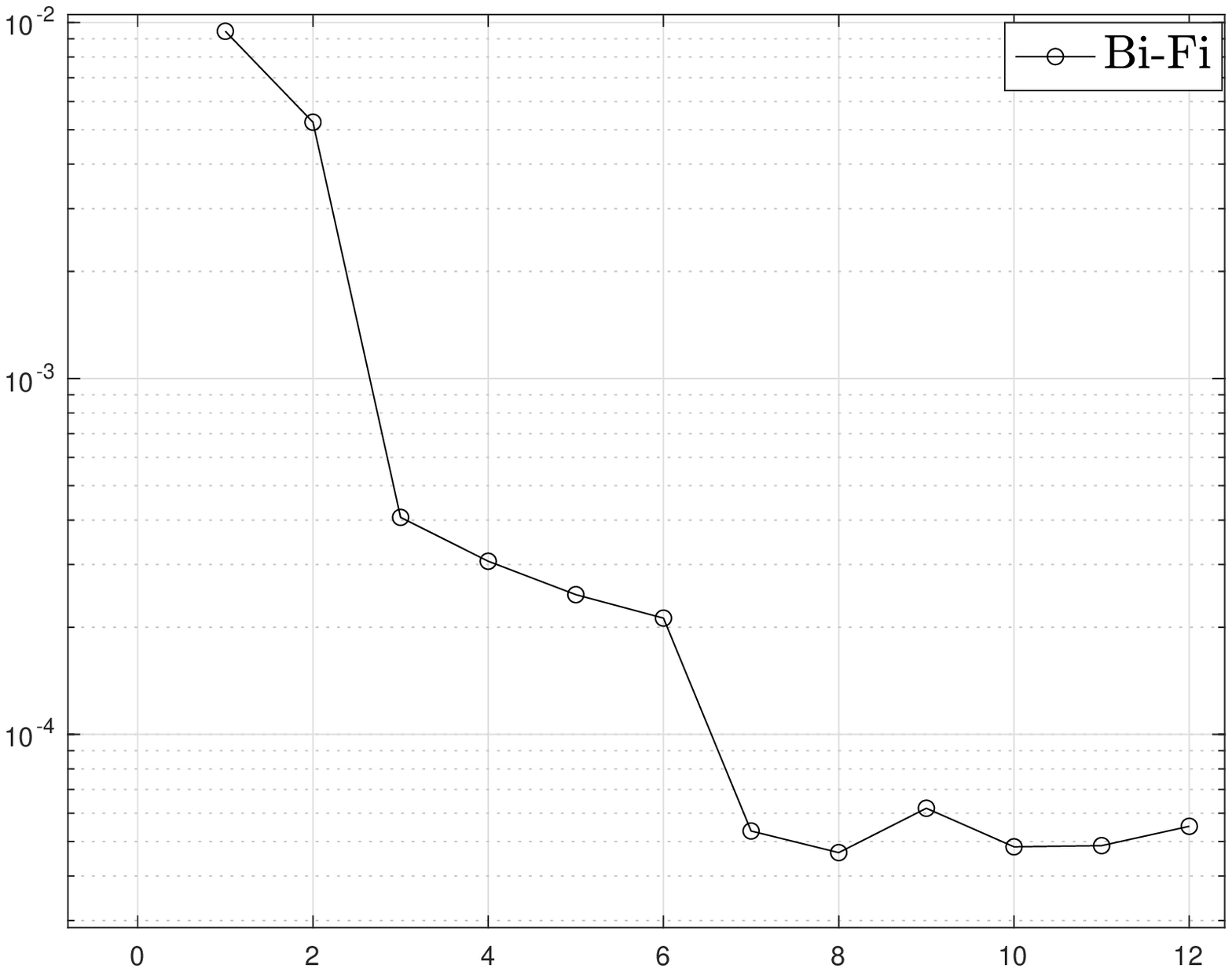}\hskip -.5cm
\includegraphics[scale=0.33]{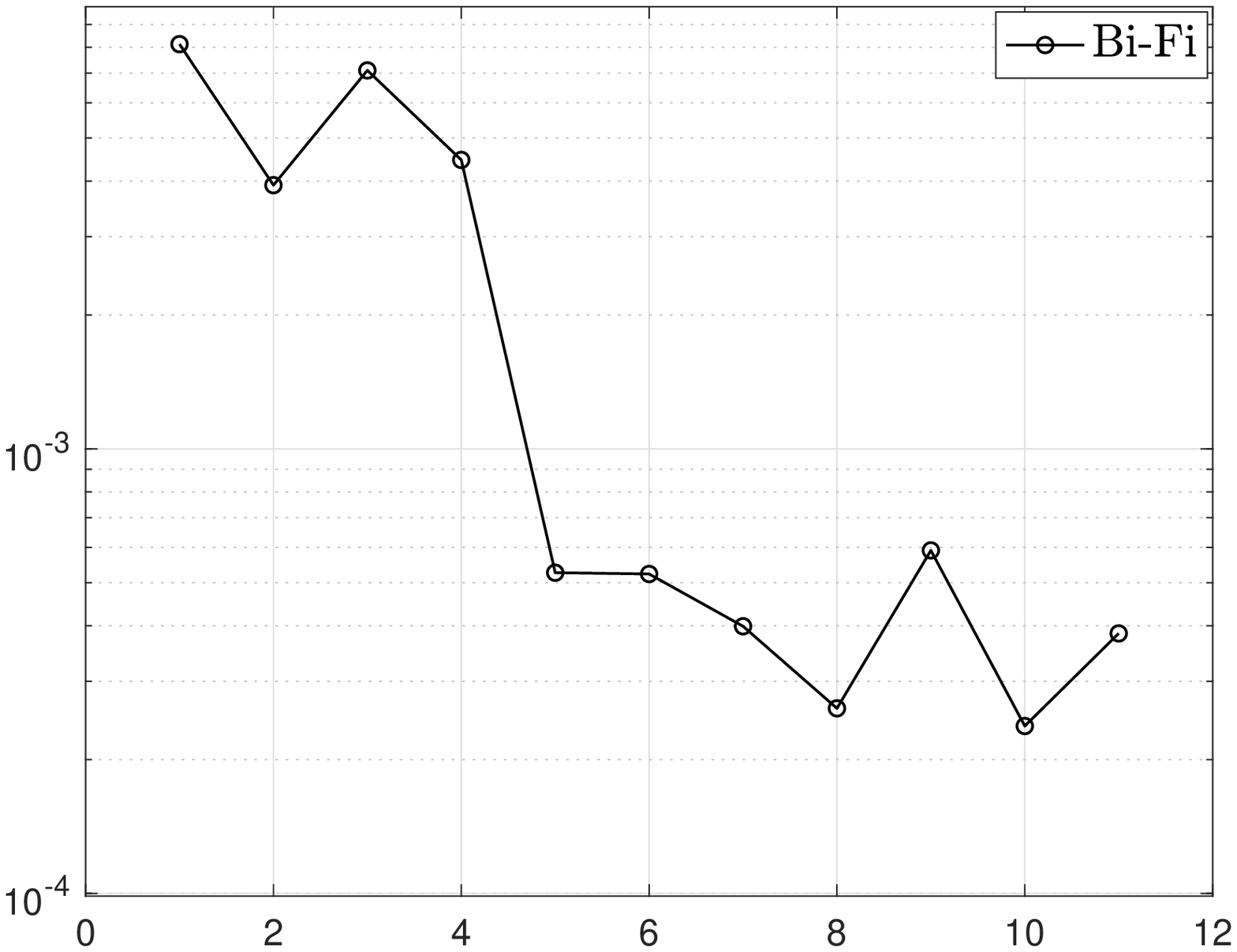}
\caption{{BFSC method for the linear transport.}  Results of the Riemann problem in the diffusive scaling. Errors of the bi-fidelity approximation mean (left) and standard deviation (right) of the density $\rho$ with respect to the number of high-fidelity runs. }
\label{Test2_Err}
\end{figure}

We now consider a Riemann problem. Let assume the same random coefficient \eqref{Sigma} with boundary conditions
\begin{eqnarray*}
	f(x=0,t,v,z)=1+0.4z_1, \quad\mbox{  if }v\ge 0 , \\[2pt]
	 f(x=1,t,v,z)=0, \quad\mbox{  if }v\le 0,
 \end{eqnarray*}
 with uncertain initial distribution given by 
 \begin{eqnarray*}
 	f(x,t=0,v,z)=1+0.4z_1, \qquad  0\leq x <0.5 , \\[2pt]
 	f(x,t=0,v,z)=0, \qquad  0.5\leq x \leq 1. 
 \end{eqnarray*}
In this example, we let the output time $T=0.01$ and $\epsilon=10^{-8}$. We set $\Delta t=2\times 10^{-4}$, $\Delta x=0.04$ in the low-fidelity solver, and $\Delta t=5\times 10^{-5}$, $\Delta x=0.0125$ in the high-fidelity solver. The Figure~\ref{Test2_Fig1} shows the numerical mean and standard deviation of the density $\rho$ for $\epsilon=10^{-8}$. While the low-fidelity solutions are not able to capture the detailed information around the transition region, the bi-fidelity approximation agrees with the high-fidelity $\rho$ at a quite satisfactory level. From Figure~\ref{Test2_Err}, it is obvious that the bi-fidelity errors decay fast with respect to the selected high-fidelity runs. With only $M=12$ high-fidelity simulation runs, the bi-fidelity errors for the mean and standard deviation of the density $\rho$ reach as small as $\mathcal{O}(10^{-4})$. Based on Figure~\ref{Test2_EB}, the error estimators bound well the true bi-fidelity errors, with both metrics $R_e$ and $R_s$ around or less than $10$. We also remark that further increasing the high-fidelity samples after $M=8$ will not help improving the quality of bi-fidelity approximation.

\begin{figure}[tb]
\centering
\includegraphics[scale=0.4]{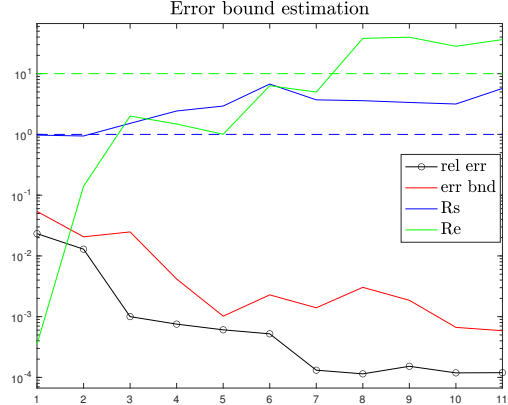}
\caption{{BFSC method for the linear transport.}  Error bound estimation for the Riemann problem in the diffusive scaling. }
\label{Test2_EB}
\end{figure}

\subsection{BFSC method for kinetic epidemic models}
\label{sec:epidem}
We consider in this last example the quantification of uncertainty by using the bi-fidelity approach for the kinetic model of disease spread \eqref{eq:kineticc} introduced in Section \ref{sec:epidemic}. 

The low-fidelity model, is based on considering individuals moving in two opposite directions (indicated by signs ``+'' and ``-''), with velocities $\pm \lambda_S$ for susceptible, $\pm \lambda_I$ for infected and $\pm \lambda_R$ for removed. This dynamics of the population through the two-velocity epidemic model^^>\cite{Bert} is given by 
\begin{eqnarray}
	\frac{\partial S^{\pm}}{\partial t} + \lambda_S \frac{\partial S^{\pm}}{\partial x} &=& - F(S^{\pm},I) \mp \frac{1}{2\tau_S}\left(S^+ - S^-\right)				
	\nonumber\\ 
	\frac{\partial I^{\pm}}{\partial t} + \lambda_I \frac{\partial I^{\pm}}{\partial x} &=&  F(S^{\pm},I) -\gamma I^{\pm} \mp \frac{1}{2\tau_I}\left(I^+ - I^-\right)								\label{eq.SIR_kinetic_diag}	\\ 
	\frac{\partial R^{\pm}}{\partial t} + \lambda_R \frac{\partial R^{\pm}}{\partial x} &=& \gamma I^{\pm} \mp \frac{1}{2\tau_R}\left(R^+ - R^-\right).									\nonumber
\end{eqnarray}
In the above system, individuals $S(x,t,{z})$, $I(x,t,{z})$ and $R(x,t,{z})$ are defined as
\begin{equation*}
 	S = S^+ + S^- , \quad 
 	I = I^+ + I^- , \quad 
 	R = R^+ + R^- .	
\end{equation*}
The transmission of infection is governed by the same incidence function as in the high-fidelity model, defined by \eqref{eq:incf}. The definition of the basic reproduction number $R_0$ is the same as introduced in \eqref{eq:R0}. The fluxes are now given by 
\begin{equation}
 	J_S = \lambda_S \left(S^+ - S^-\right) , \quad 
 	J_I = \lambda_I \left(I^+ - I^-\right) , \quad 
 	J_R = \lambda_R \left(R^+ - R^-\right) .	  	\label{eq.fluxes_kinetic_diag}
\end{equation}
Then we derive a hyperbolic model equivalent to \eqref{eq.SIR_kinetic_diag}, while presenting a macroscopic description of propagation of the epidemic at finite speeds. In this system the fluxes satisfy
\begin{eqnarray}
	\frac{\partial J_S}{\partial t} + \lambda_S^2 \frac{\partial S}{\partial x} &=& -F(J_S,I)  -\frac{J_S}{\tau_S} 
	\nonumber\\ 
	\frac{\partial J_I}{\partial t} + \lambda_I^2 \frac{\partial I}{\partial x} &=& \frac{\lambda_I}{\lambda_S}F(J_S,I)  -\gamma J_I -\frac{J_I}{\tau_I}	
	\label{eq.SIR_kinetic-fluxes}\\ 
	\frac{\partial J_R}{\partial t} + \lambda_R^2 \frac{\partial R}{\partial x} &=& \frac{\lambda_R}{\lambda_I}\gamma J_I -\frac{J_R}{\tau_R}\,.
	\nonumber
\end{eqnarray}
Let us now consider the behavior of the low-fidelity model in the diffusive regime. To this aim, we introduce the diffusion coefficients 
\begin{equation}
D_S=\lambda_S^2 \tau_S,\quad D_I=\lambda_I^2 \tau_I,\quad D_R=\lambda_R^2 \tau_R\,. 
\label{eq:diff2}
\end{equation}
Its diffusion limit can be formally recovered by letting the relaxation times $\tau_{S,I,R}\to 0$. Under this scaling, \eqref{eq.SIR_kinetic-fluxes} leads again to the parabolic reaction-diffusion system \eqref{eq:diff}. We remark that the motivation of choosing \eqref{eq.SIR_kinetic_diag} as our low-fidelity model in the bi-fidelity approximation is that it shares the same diffusion limit as the high-fidelity model. The only difference lies in the definition of two diffusion coefficients, as shown in \eqref{eq:diffcf} and \eqref{eq:diff2}.

\begin{figure}[tb]
\centering
\includegraphics[scale=0.33]{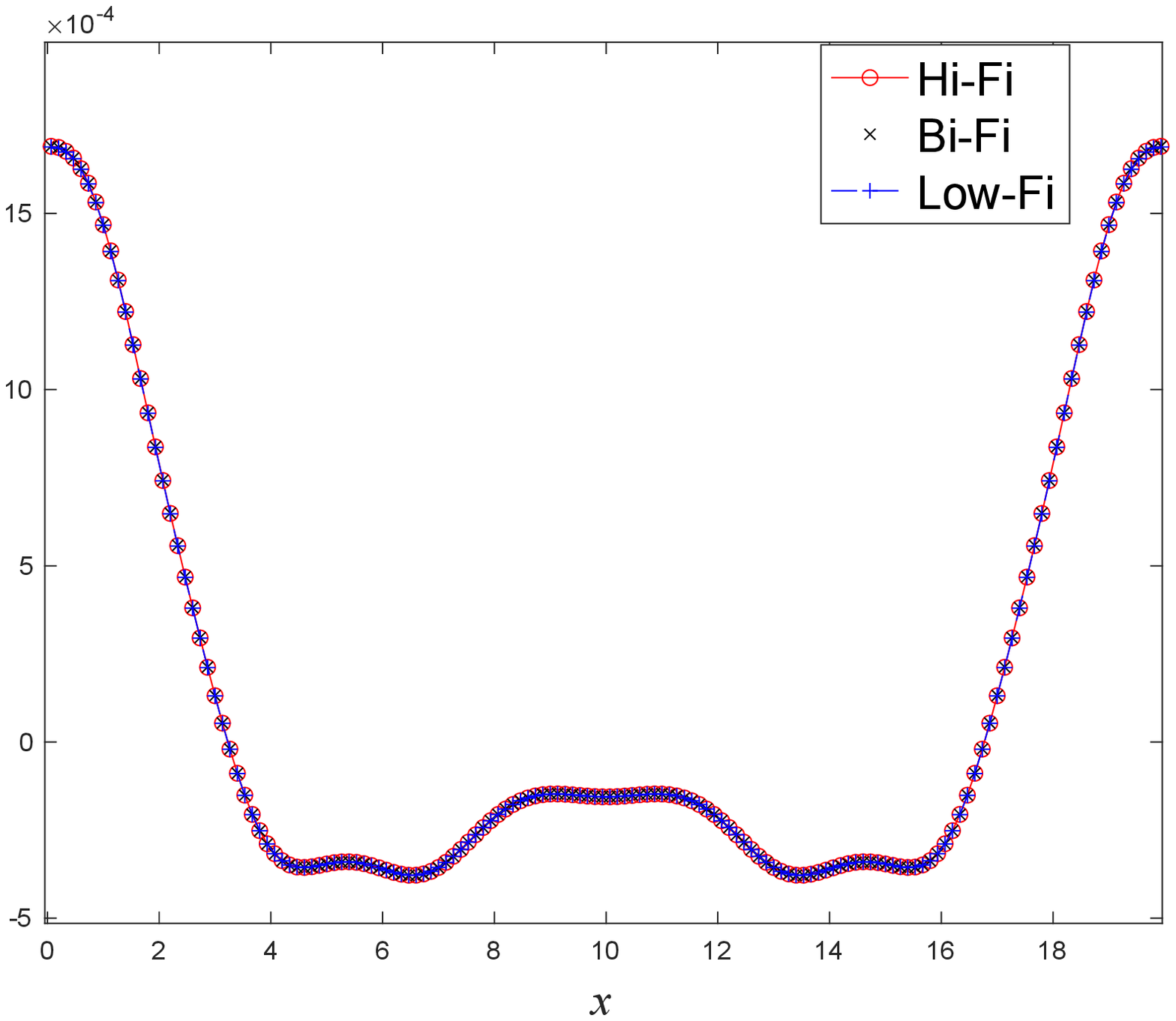}\hskip -.5cm
\includegraphics[scale=0.33]{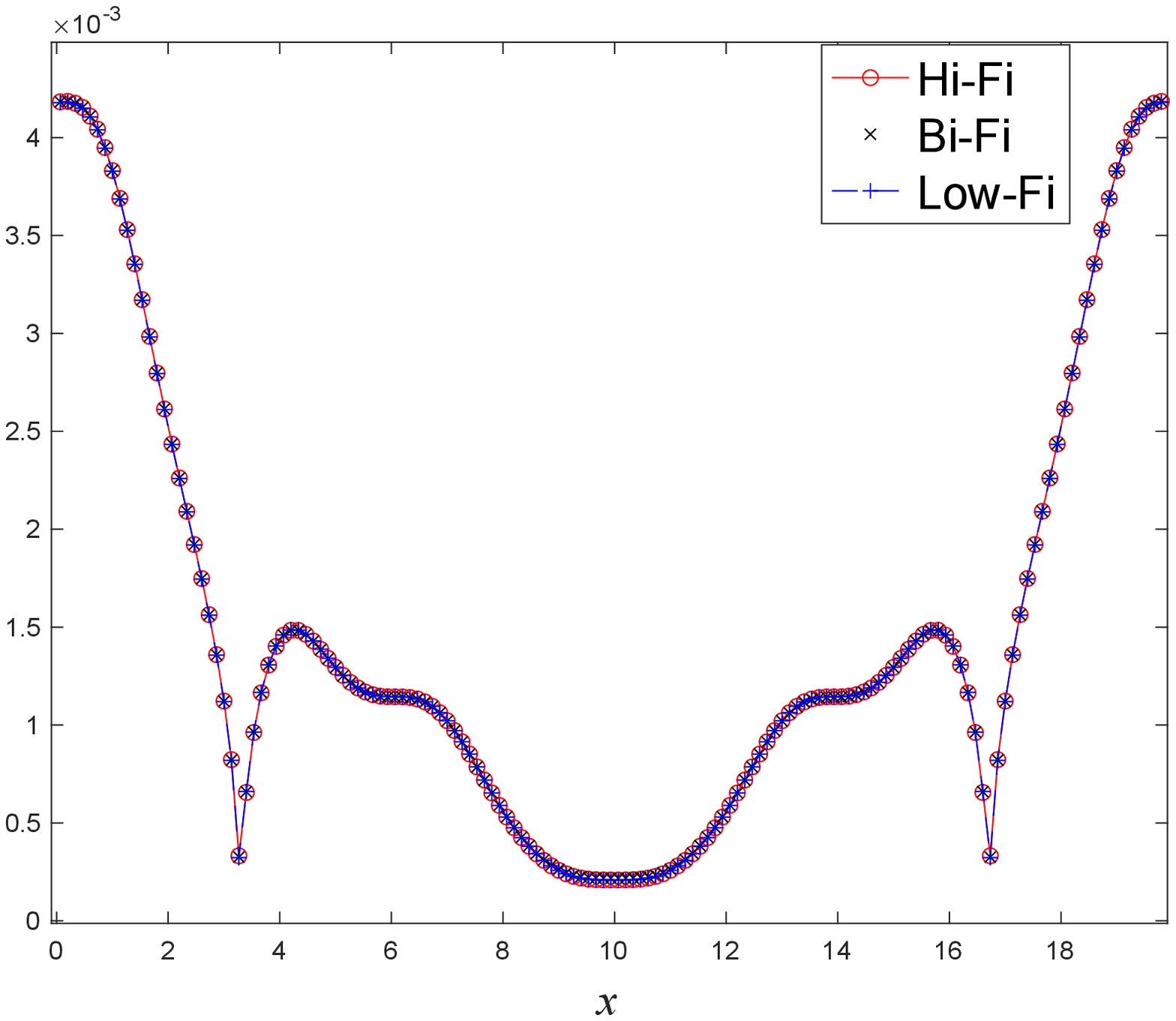}
\includegraphics[scale=0.08]{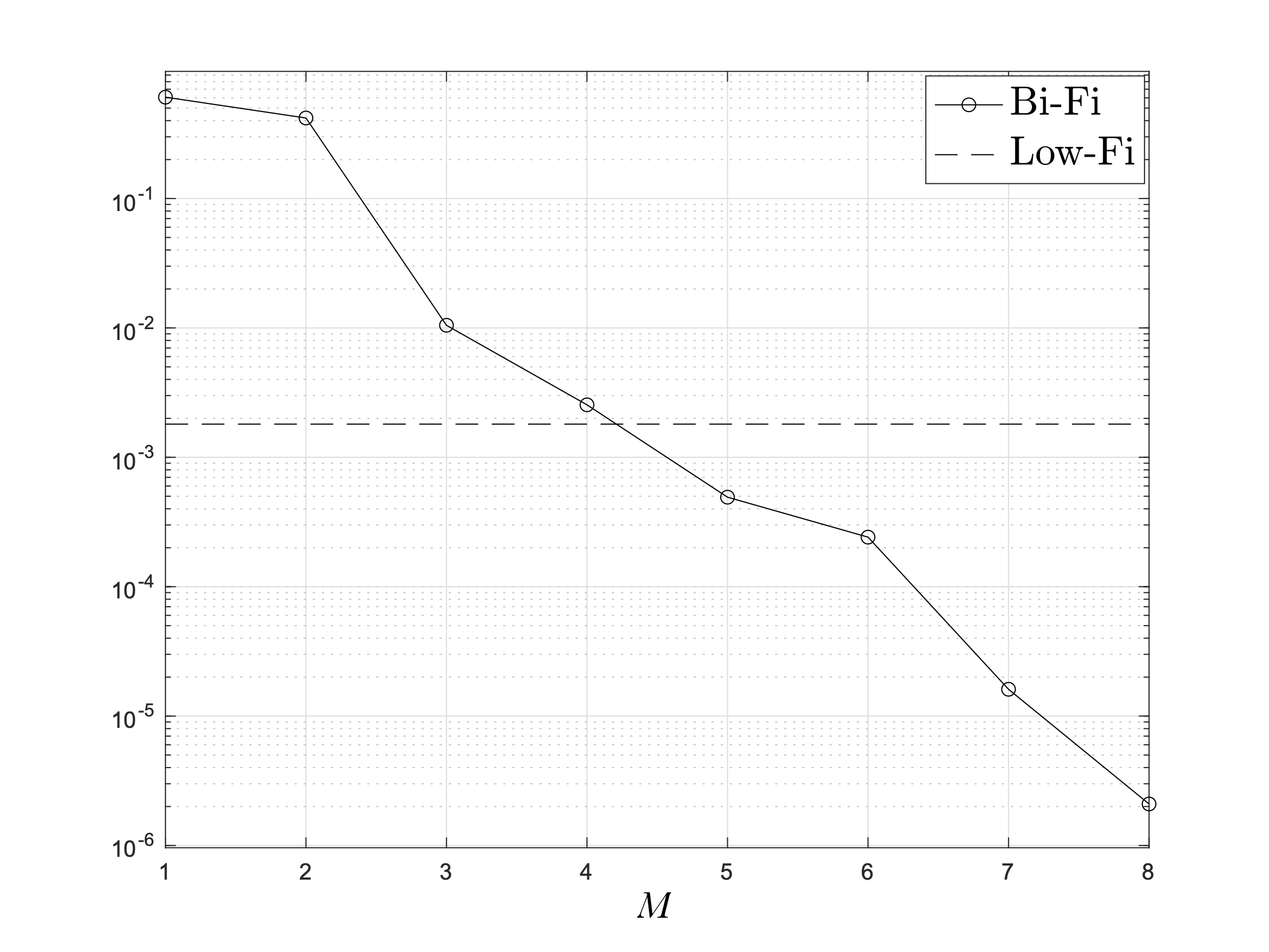}\hskip -.5cm
\includegraphics[scale=0.08]{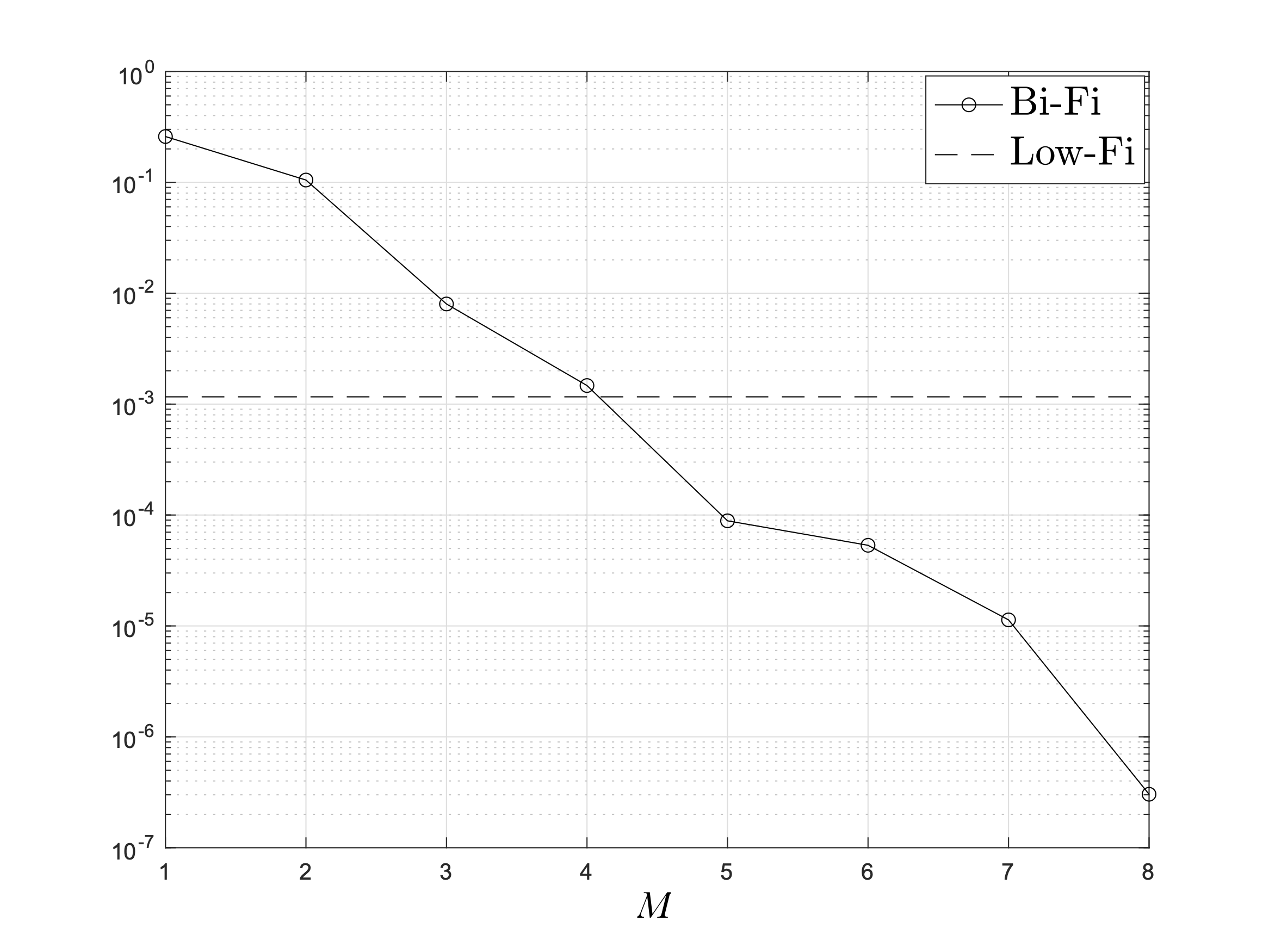}
\caption{{BFSC method for the kinetic SIR model.}  Results in the diffusive regime. First row: mean (left) and standard deviation (right) for the density $I$, by using $M=8$ high-fidelity runs. Second row: relative $L^2$ errors of the bi-fidelity approximation. }
\label{fig.test1a}
\end{figure}

In the following tests, we consider a 2-dimensional random vector ${\bf{z}}=(z_1, z_2)^T$, with independent random parameters $z_1$ and $z_2$ following a uniform distribution on $[-1,1]$. 
To compute the reference solutions, we adopt a $3$-rd level sparse grid quadrature based on Clenshaw-Curtis rules for the choice of the stochastic collocation nodes, with a total of $29$ points, for both the high-fidelity and low-fidelity models. Let assume the initial distributions of the high-fidelity kinetic SIR model \eqref{eq:kineticc} as follows:
\begin{equation}
f_i(x,v,0) = c\, i(x,0)\, e^{-\frac{v^2}{2}}, \qquad i \in \{S,I,R\}
\label{eq.ICf}
\end{equation}
where $c=\frac{1}{2} \sum_{i=1}^{N_v} w_i \, e^{-\frac{\zeta_i^2}{2}}$ is a re-normalization constant, with $N_v$ the number of Gauss-Legendre quadrature points used in velocity discretization. Let also the initial densities to be 
$$ S(x,0)= 1 - I(x,0), \qquad I(x,0) = 0.01 e^{-(x-10)^2}, \qquad R(x,0)=0, $$
with a physical domain $L=[0,20]$. Let finally the initial fluxes $J_S(x,0), J_I(x,0)$ and $J_R(x,0)$ be zeros and consider periodic boundary conditions. The same initial conditions for $S, I, R$ and $J_S, J_I, J_R$ are imposed in the low-fidelity SIR model \eqref{eq:density}-\eqref{eq.SIR_kinetic-fluxes}, to make the two models consistent. 

Concerning spatially heterogeneous environments, we assume the contact rate^^>\cite{Wang2020,Bert} depends on space and contains uncertainties 
\begin{equation*}
\beta(x,{{z}}) = \beta^0({{z}}) \left(1+0.05\sin\left(\frac{13\pi x}{20}\right)\right),
\end{equation*}
where 
$$\beta^0({{z}}) = 11(1 + 0.6 z_1).$$ 
The uncertain recovery rate is given by 
$$\gamma({{z}}) = 10(1 + 0.4 z_2).$$
In the incidence function, we set $\kappa=0$ and $p=1$. For spatial and velocity discretizations we use $N_x=150$ in both the high-fidelity and low-fidelity models and $N_v=8$ for the high-fidelity model. 
In the first test, a parabolic configuration of speeds and relaxation parameters is considered. We let $\lambda_i^2=10^5$, $i \in \{S,I,R\}$, and $\tau_i=10^{-5}$ in the low-fidelity model and $\tau_i=3\times 10^{-5}$ in the high-fidelity model, to maintain consistency of the two simulations. 
\begin{figure}[tb]
\centering
\includegraphics[scale=0.33]{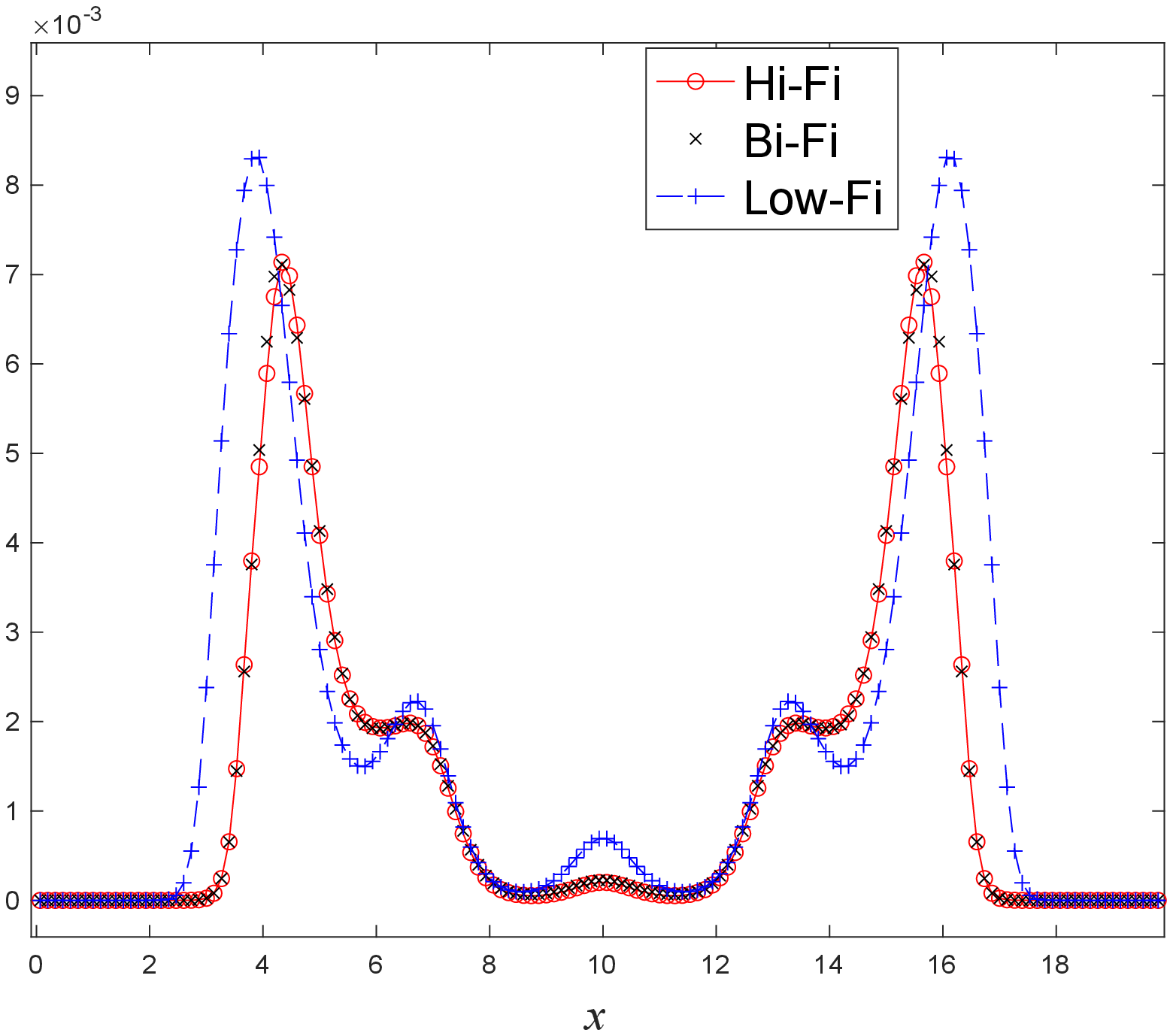}\hskip -.5cm
\includegraphics[scale=0.33]{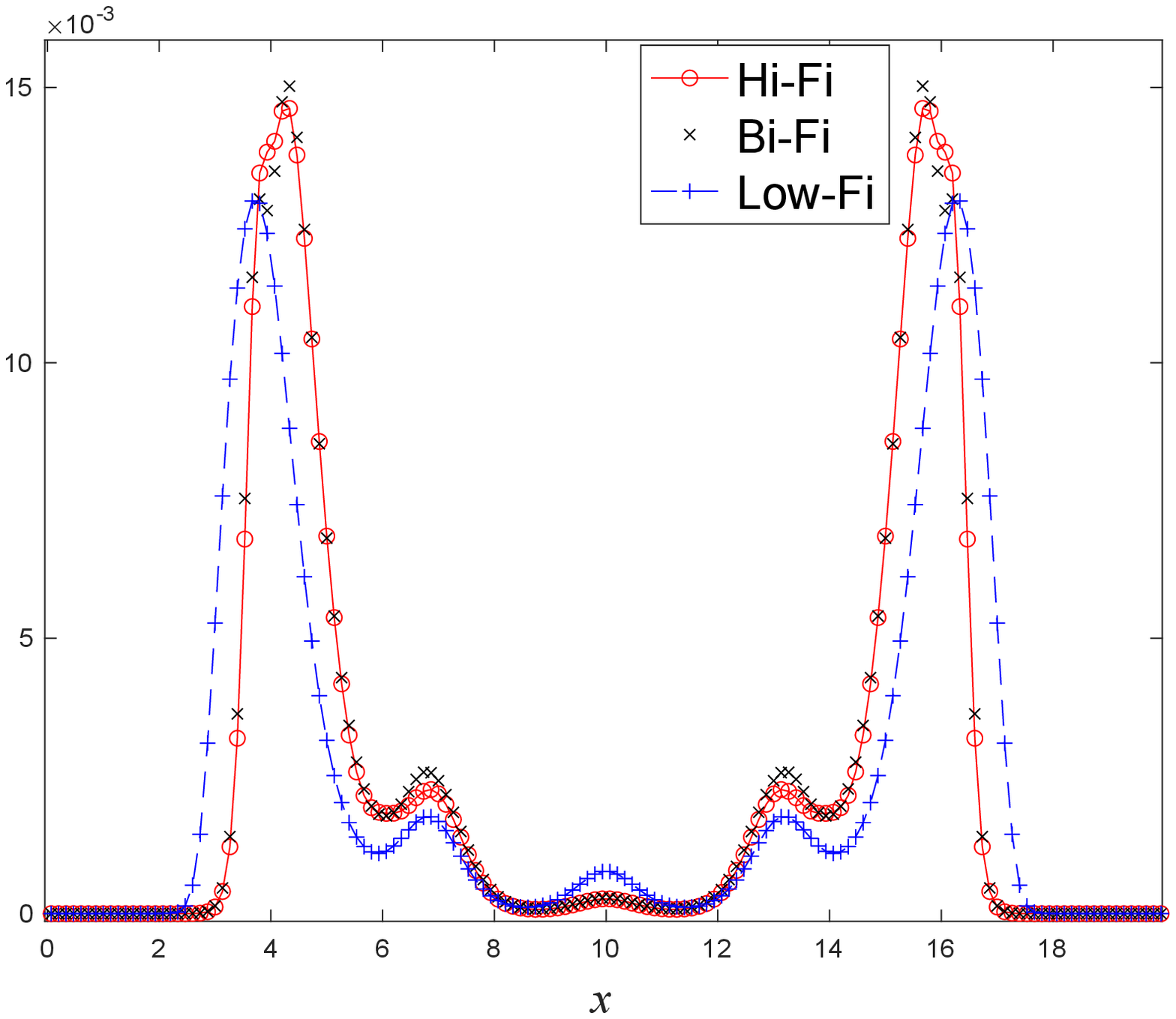}
\includegraphics[scale=0.08]{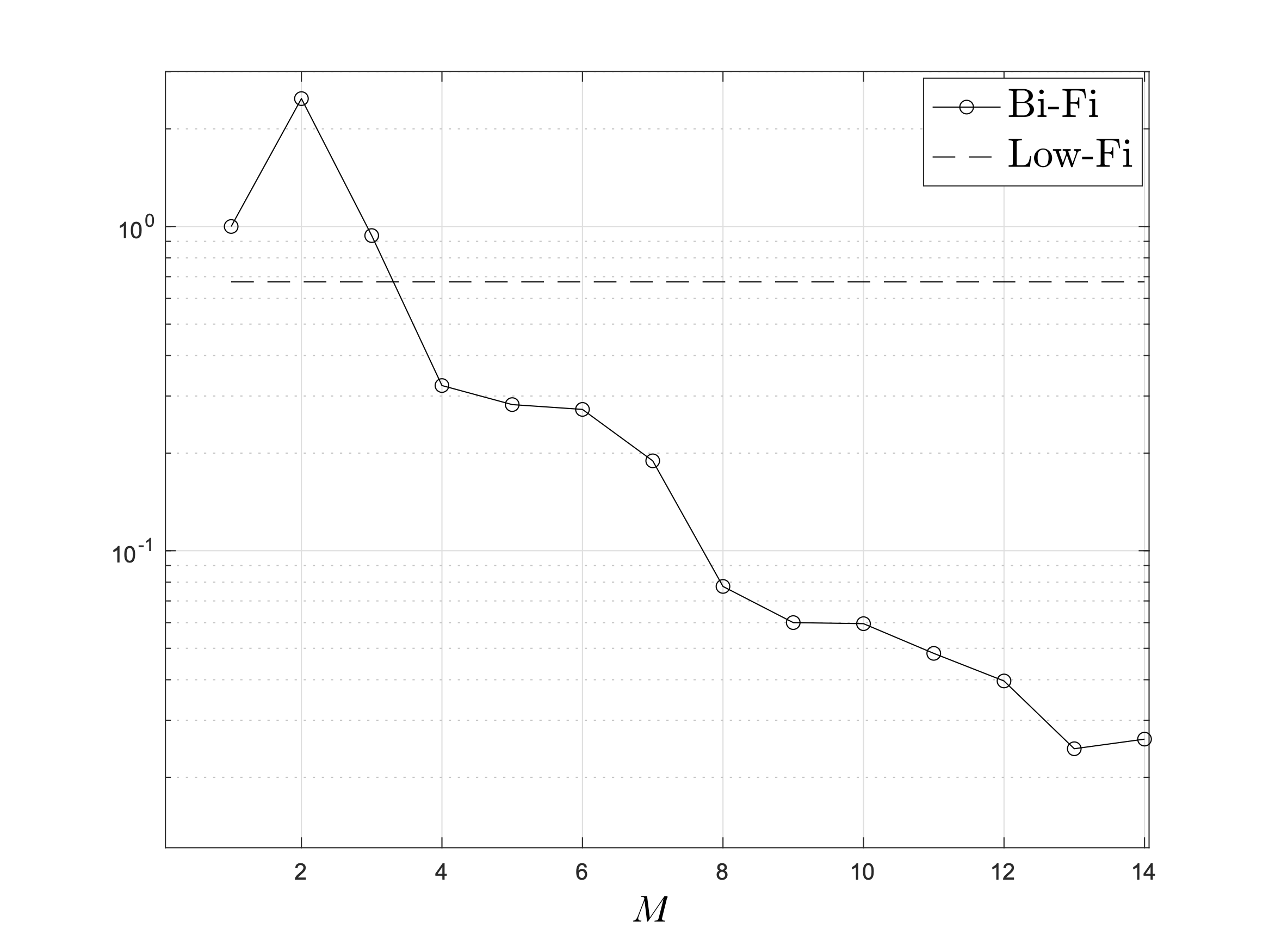}\hskip -.5cm
\includegraphics[scale=0.08]{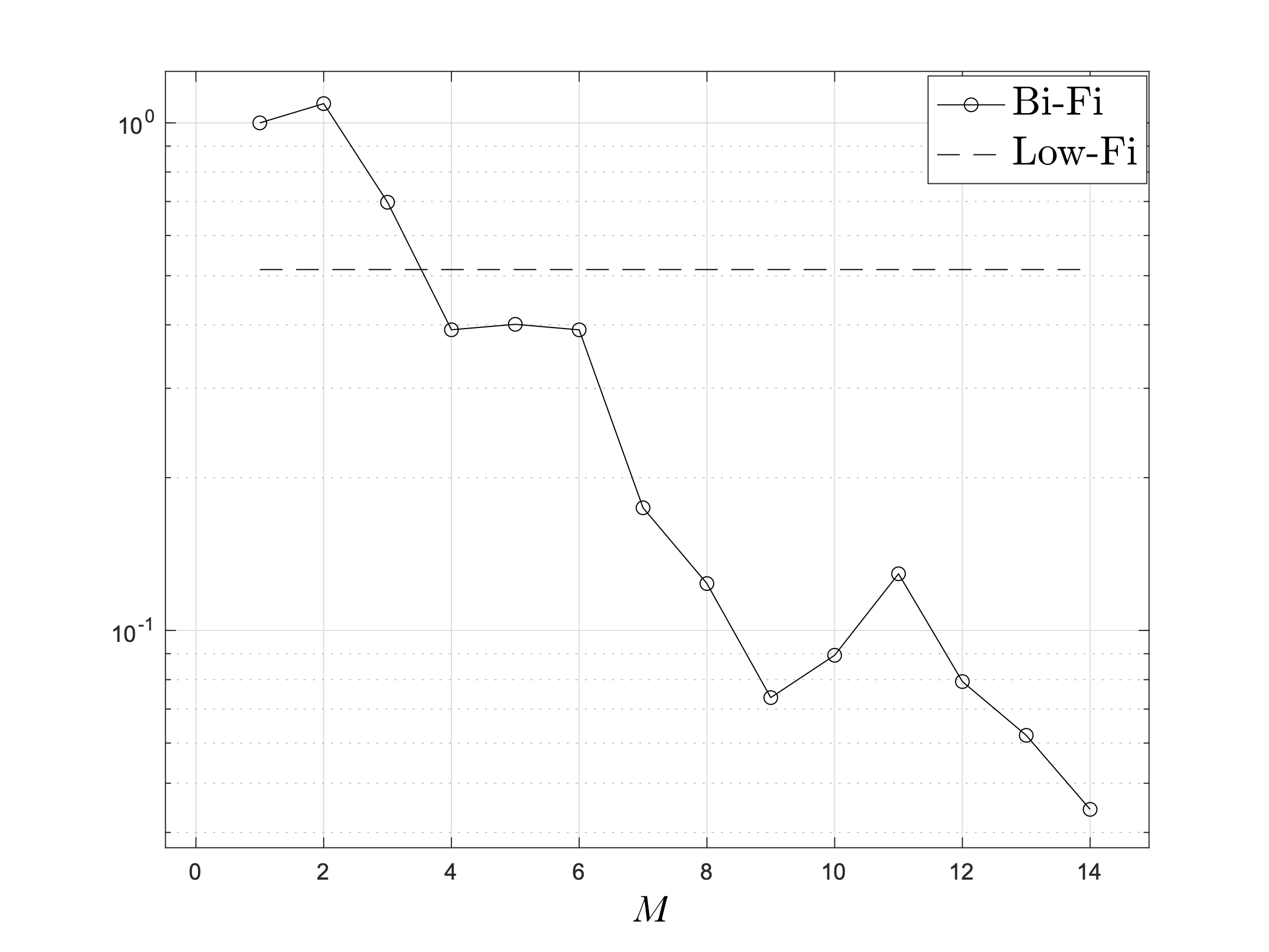}
\caption{{BFSC method for the kinetic SIR model.} Results in the hyperbolic regime. First row: mean (left) and standard deviation (right) for the density $I$, by using $M=8$ high-fidelity runs. Second row: relative $L^2$ errors of the bi-fidelity approximation. }
\label{fig.test1b}
\end{figure}


In the first row of Figure \ref{fig.test1a}, the mean and standard deviation of the solution of compartment $I$ for the high-fidelity model, the low-fidelity model and the bi-fidelity approximation at time $t=5$ are shown. 
Since the high-fidelity and low-fidelity model share the same diffusive limit, a perfect agreement of the solutions are observed. In the second row, we plot the relative $L^2$ errors of the mean and standard deviation between the bi-fidelity and high-fidelity solutions at $t=5$, with respect to the number of selected important points of the bi-fidelity algorithm. A fast error decay is clearly observed. One can conclude that with only $8$ hi-fidelity sample points, bi-fidelity approximation is able to achieve a relative error of $\mathcal{O}(10^{-6})$ for both the mean and standard deviation. 
In the second test case, we consider the hyperbolic regime for $\lambda_i=1$, $i \in \{S,I,R\}$, with $\tau_i=1$ in the low-fidelity model and $\tau_i=3$ in the high-fidelity model. We plot the results in Figure \ref{fig.test1b}. Similar conclusions can be drawn as in the previous case.

\section{Conclusions}
In this survey we have discussed some recent developments in the field of multi-fidelity methods for kinetic equations with uncertain factors. To emphasize the wide range of applicability of the various techniques we adopted a model oriented presentation ranging from classical rarefied gas dynamics and transport theory to socio-economic modelling and epidemiology. Here we have focused essentially on two main classes of methods for uncertainty propagation: multi-fidelity methods based on control variates and bi-fidelity stochastic collocation methods. The main guideline in the development of the corresponding low-fidelity models is given by the classical legacy of fluid-dynamic and diffusive limits in kinetic theory. In particular, this allows appropriate error estimates to be placed alongside the various methods and to identify regimes in which the statistical error vanishes.

There are numerous open problems of great interest for future research. Among these we mention the use of low-fidelity models capable of covering a broader spectrum than classical fluid dynamics, such as the moment equations of extended thermodynamics in gasdynamics^^>\cite{manuel}. In addition to the problem of determining appropriate low-fidelity models in other areas of application where kinetic equations play a relevant role, e.g. plasma physics^^>\cite{Frank}, an important challenge is to move beyond methods that focus exclusively on models. Among these, the inclusion of information from experimental data and other informations sources, and their fusion with computational models is certainly one of the main challenges^^>\cite{multi,peherstorfersurvey}.

\subsection*{Acknowledgemnts} 
This work has been written within the
activities of GNCS and GNFM groups of INdAM (National Institute of
High Mathematics). G.D. and L.P. acknowledge the partial support of MIUR-PRIN Project 2017, No. 2017KKJP4X “Innovative numerical methods for evolutionary partial differential equations and applications”. L.L. holds the Direct Grant for Research supported by Chinese University of Hong Kong and Early Career Scheme 2021/22, No. 24301021, from Research Grants Council of Hong Kong. X. Zhu was supported by the Simons Foundation (504054).

\bibliographystyle{siam}
\bibliography{Survey_Ref.bib}

\end{document}